\documentclass[11pt]{article}
\usepackage[T1]{fontenc}
\usepackage{lmodern}
\usepackage[margin=1in]{geometry}
\usepackage{amsmath,amssymb,amsthm,mathrsfs,mathtools}
\usepackage{graphicx,xcolor,booktabs,array,longtable}
\usepackage[shortlabels]{enumitem}
\usepackage{tikz-cd}
\usepackage[all]{xy}
\usepackage{comment}
\usepackage{microtype}
\usepackage[hidelinks]{hyperref}
\usepackage[nameinlink,noabbrev]{cleveref}

\newtheorem{theorem}{Theorem}[section]
\newtheorem{lemma}[theorem]{Lemma}

\theoremstyle{definition}

\theoremstyle{remark}

\crefname{prop}{Proposition}{Propositions}
\Crefname{prop}{Proposition}{Propositions}
\crefname{claim}{Claim}{Claims}
\Crefname{claim}{Claim}{Claims}
\crefname{cor}{Corollary}{Corollaries}
\Crefname{cor}{Corollary}{Corollaries}

\newcommand{\OO}{\mathscr{O}}

\newcommand{\PP}{\mathbb{P}}

\DeclareMathOperator{\rk}{rk}

\DeclareMathOperator{\Pic}{Pic}
\DeclareMathOperator{\Gr}{Gr}

\newcommand{\casehead}[1]{\smallskip\noindent\textit{#1}\enspace}

\makeatletter
\newcommand{\oset}[3][0ex]{\mathrel{\mathop{#3}\limits^{
  \vbox to#1{\kern-2\ex@\hbox{\(\scriptstyle#2\)}\vss}}}}
\makeatother

\allowdisplaybreaks[2]
\numberwithin{equation}{section}

\theoremstyle{plain}
\newtheorem{proposition}[theorem]{Proposition}
\newtheorem{corollary}[theorem]{Corollary}
\theoremstyle{remark}
\newtheorem{remark}[theorem]{Remark}

\newcommand{\nf}[1]{\nolinkurl{#1}}

\hypersetup{pdftitle={Normal bundles of hyperelliptic curves and uniform propagation of semistability},pdfauthor={Xuyan Liu}}
\title{Normal bundles of hyperelliptic curves\\and uniform propagation of semistability}
\author{Xuyan Liu}
\date{}
\begin{document}
\maketitle
\begin{abstract}
Let $C$ be a smooth hyperelliptic curve of genus $g\ge2$ over an
algebraically closed field of characteristic zero. We prove that the
normal bundle of the embedding of $C$ in $\PP^n$, $n\ge3$, by the
complete linear series of a general line bundle of degree $n+g$ is
semistable if and only if $n\ge3g+1$. By openness, general complete
embeddings of general curves of genus $g$ also have semistable normal
bundle when $n\ge3g+1$. We then show that a general nonspecial curve
of genus two and degree $d$ in $\PP^n$ has semistable normal bundle
except when $(n,d)$ is $(3,5)$, $(4,6)$, $(5,7)$ or $(6,8)$. The
degenerations used in genus two extend to every genus $g\ge3$:
semistability in all nonspecial degrees and dimensions follows from
the cases $0\le d-n-g\le4g-4$. Together with a separate estimate for
large $n$, this reduces semistability in each fixed genus to finitely
many cases. The proofs combine degenerations to reducible nodal
curves, explicit bounds on the degrees of subsheaves, and, in genus
two, finitely many exact computations, which are provided as
ancillary files.
\end{abstract}
\setcounter{tocdepth}{1}
\makeatletter
\renewcommand{\l@section}{\@dottedtocline{1}{0em}{3.1em}}
\makeatother
\tableofcontents

\section{Introduction}
\label{sec:introduction}
Let $C\subset\PP^n$ be a smooth nondegenerate curve of genus $g$ and
degree $d$, with $n\ge3$. Its normal bundle $N_C=N_{C/\PP^n}$ has
rank $n-1$ and degree $(n+1)d+2g-2$. Recall that the \emph{slope} of
a vector bundle $E$ on $C$ is $\mu(E)=\deg E/\rk E$. The bundle $E$
is \emph{semistable} if $\mu(S)\le\mu(E)$ for every nonzero subsheaf
$S\subset E$, and \emph{stable} if $\mu(S)<\mu(E)$ whenever
$0<\rk S<\rk E$. Informally, semistability of $N_C$ says that no
family of normal directions along $C$ is more positive than the
average. This paper asks when $N_C$ is semistable, in two settings.

The first is a \emph{complete} embedding, given by all sections of a
very ample line bundle $L$ on $C$. The second is a \emph{general
nonspecial} embedding: a general curve of genus $g$, a general line
bundle $L$ of degree $d$ with $H^1(L)=0$, and a general
$(n+1)$-dimensional space of sections of $L$
(Section~\ref{sec:prelim}). Such an $L$ has $d-g+1$ sections, so
this requires $d\ge n+g$, with equality exactly when the embedding is
complete. We work over an algebraically closed field of
characteristic zero.

\subsection{Main results}

\paragraph{Hyperelliptic curves.}
Let $C$ be any smooth hyperelliptic curve of genus $g\ge2$, and let
$n\ge3$. A general line bundle $L$ of degree $n+g$ on $C$ embeds $C$
in $\PP^n$ by its complete linear series. The normal bundle of this
embedding is semistable if and only if $n\ge3g+1$
(Theorem~\ref{thm:hyperelliptic-iff}).

The instability for $n\le3g$ has a simple geometric source. The lines
spanned by the fibers of the hyperelliptic map $C\to\PP^1$ sweep out
a rational normal surface scroll $X\subset\PP^n$ containing $C$. The
normal bundle $N_{C/X}$ is a line subbundle of $N_C$ of degree
$4g+4$, and this degree exceeds $\mu(N_C)$ exactly when $n\le3g$.
The content of the theorem is that for $n\ge3g+1$ nothing else
destabilizes $N_C$. Since semistability is an open condition, it
follows that a general curve of genus $g$, embedded by the complete
linear series of a general line bundle of degree $n+g$, has semistable
normal bundle when $n\ge3g+1$ (Corollary~\ref{cor:general-complete}).
For a general curve we do not claim a converse.

\paragraph{Genus two.}
Every curve of genus two is hyperelliptic, and in genus two we obtain
a complete answer for nonspecial embeddings. For $n\ge3$ and
$d\ge n+2$, the normal bundle of a general nonspecial curve of genus
two and degree $d$ in $\PP^n$ is semistable if and only if
\[
 (n,d)\notin\{(3,5),(4,6),(5,7),(6,8)\},
\]
and it is stable if in addition $\gcd(n-1,2d+2)=1$
(Theorem~\ref{g2:thm:main}). The four exceptions are the complete
embeddings with $n\le6=3g$. They are unstable by the scroll argument
above, which is also how Coskun, Larson and Vogt explain their
instability \cite[Example 5.6]{CLVSurvey}. In genus two,
Theorem~\ref{g2:thm:main} therefore settles the semistability part of
the problem of classifying the Brill--Noether curves whose normal
bundles are not (semi)stable \cite[Problem 5.28]{CLVSurvey}.

\paragraph{Higher genus.}
For $g\ge3$ we reduce the problem to finitely many cases. Measure the
degree by the \emph{excess} $c=d-n-g\ge0$, which vanishes exactly for
complete embeddings. We prove two results about general nonspecial
curves of genus $g\ge3$.
\begin{enumerate}[(i)]
\item If the normal bundle is semistable whenever $0\le c\le4g-4$, in
every dimension $n\ge3$, then it is semistable for every $c\ge0$
(Theorem~\ref{uni:finite-width}).
\item The normal bundle is semistable whenever
$n\ge10g^2-10g+4+(5g-3)c$ (Theorem~\ref{high:affine}).
\end{enumerate}
Together these reduce semistability, in all degrees and dimensions, to
an explicit finite list of pairs $(n,d)$
(Theorem~\ref{thm:finite-verification}). For example, when $g=3$ it
suffices to treat $0\le c\le8$ and $n\le159$. This is a sufficient
criterion: it does not decide which of the finitely many remaining
cases are semistable (see Section~\ref{sec:finite-verification}).

\subsection{Obstacles}
It is convenient to twist $N_C$ by a line bundle, which does not
affect (semi)stability, so as to obtain a bundle $F$ of rank $r=n-1$
and degree $2k$, where $k=d-n+g$ (see \eqref{eq:notation}). Thus $F$
is semistable when every subsheaf of rank $t$ has degree at most
$2kt/r$. Two tools from the literature apply to such bundles. Each
stops short of what we need, and a third difficulty arises for
complete embeddings.

\paragraph{Interpolation.}
Interpolation for normal bundles \cite{ALY,LV} gives semistability
when the slope $2k/r$ is an integer, apart from a few exceptions
(Lemma~\ref{lem:integral}). For other slopes it bounds the degree of
a rank-$t$ subsheaf only by $\lceil 2k/r\rceil t$
\eqref{eq:interpolation-profile}, whereas semistability requires the
bound $\lfloor 2kt/r\rfloor$. To close this gap we keep track of
exact integer bounds for subsheaf degrees, rank by rank, rather than
of slopes alone.

\paragraph{Degeneration to nodal curves.}
Coskun, Larson and Vogt \cite{CLV} introduced a notion of stability
for vector bundles on nodal curves, and developed the strategy of
degenerating $C$ to a nodal curve and proving that the limit of its
normal bundle is semistable. In many of the cases we need, however,
the limits produced by such degenerations are unstable, so
semistability of the general normal bundle cannot be read off from
the limit alone. Our degenerations also have unstable limits, but in
a controlled way. The degree bounds force every destabilizing subsheaf
of the limit to be one explicit subbundle, whose determinant changes
as the degeneration varies. A destabilizing subsheaf of the general
member would have to specialize to that subbundle while keeping a
constant determinant. Hence the general member is semistable; this is
the moving determinant argument described below.

\paragraph{Complete embeddings.}
Complete embeddings occur in Theorem~\ref{thm:hyperelliptic-iff}, and
also as the quotients at the bottom of our inductions. For them there
is little room to degenerate: the linear series is complete, and in
Theorem~\ref{thm:hyperelliptic-iff} the curve itself is fixed.
Instead we work on the curve itself, using the hyperelliptic scroll
and the following idea. A candidate destabilizing subbundle determines
a map from $C$ to a Grassmannian. Degree computations show that, in
the extreme case, this map is governed by a fixed linear space in
$\PP^n$: all the normal directions of the candidate point toward it.
The geometry of the projection from that linear space then rules the
candidate out (see \emph{Fixed linear spaces} below).

\subsection{The main ideas}

\paragraph{Degree profiles.}
Most of our arguments bound, for each rank $t$, the degree of every
subsheaf of rank $t$. We call such a bound a \emph{degree profile}
(Section~\ref{sec:profiles}). Profiles behave well in two ways. First,
a profile for one member of an irreducible family of bundles is also
a profile for the general member. Second, if $0\to P\to M\to Q\to0$
is exact and $S\subset M$ has rank $t$, then $S$ meets $P$ in some
rank $u$ and maps to $Q$ with rank $t-u$. So the degree of $S$ is at
most the bound for $P$ in rank $u$ plus the bound for $Q$ in rank
$t-u$ \eqref{eq:convolution}. To prove that a general normal bundle is
semistable, it therefore suffices to specialize it to an extension of
two bundles whose profiles are good enough.

\paragraph{Pointing bundles and projection.}
Let $q$ be a point of $C$. For $x\in C$ different from $q$, the line
through $x$ and $q$ determines a normal direction to $C$ at $x$. These
directions span a line subbundle $N_{C\to q}\subset N_C$, the
\emph{pointing bundle} toward $q$; we use the pointing bundles and
elementary modifications of \cite{ALY}. Pointing bundles are the
kernels of projection. Projecting $C$ from the linear span of $b$
general points $q_1,\ldots,q_b\in C$ gives a curve of degree $d-b$ in
$\PP^{n-b}$, and $N_C$ is an extension of the normal bundle of this
curve, suitably twisted, by
$N_{C\to q_1}\oplus\cdots\oplus N_{C\to q_b}$. After the twist that
produces $F$, each pointing bundle becomes a line bundle of degree
zero.

On a hyperelliptic curve, the twisted pointing bundles toward the
$2g+2$ Weierstrass points are trivial. Projecting from all of them
exhibits $F$ as an extension of the corresponding bundle for a complete
embedding in $\PP^{n-2g-2}$ by a trivial bundle of rank $2g+2$
\eqref{high:projection}. This gives an induction on $n$ in steps of
$2g+2$ (Section~\ref{high:section}). The first ranks of the induction,
$3g\le r\le5g+1$, are treated directly. There, the scroll $X$ supplies
degree bounds for subsheaves, sections of $F$ supply others, and in
the last of these ranks we compute the connecting maps of the
projection sequence.

\paragraph{Fixed linear spaces.}
The bounds coming from sections rest on a geometric argument
(Lemmas~\ref{high:original-bounds} and~\ref{high:maximal-degree}).
Pointing bundles make sense for any linear subspace
$\Lambda\subset\PP^n$: the normal directions toward $\Lambda$ span a
subsheaf of $N_C$, the kernel of projection from $\Lambda$. Its degree
is determined by the base points and the ramification of that
projection \eqref{high:pointing-det}. Now let $S\subset F$ be a
candidate destabilizing subbundle of rank $s$. At each point $x\in C$,
the tangent line to $C$ and the normal directions in $S$ span a linear
subspace of $\PP^n$ of dimension $s+1$, so $S$ defines a map from $C$
to a Grassmannian. Counting the sections of $F/S$ by Riemann--Roch
bounds the degree of $S$. In the extreme case allowed by this count,
the computation shows that all these subspaces contain a fixed linear
subspace $\Lambda$ of dimension $s-1$. Then $S$ is exactly the bundle
of normal directions pointing toward $\Lambda$, and its degree is
governed by the projection from $\Lambda$. Adding to $\Lambda$ a
ramification point of this projection produces a subbundle of larger
rank whose degree is too large, which is a contradiction.

\paragraph{Pointing models.}
Projection lowers $n$ and $d$ by the same amount, so it does not
change the excess $c$. To lower the excess as well, we first
degenerate $C$, keeping the abstract curve and the line bundle $L$
fixed, to a reducible curve: a curve of degree $d-h$ in $\PP^n$
together with $h$ lines, each meeting it at one point $p_i$. We then
project from $b$ points $q_1,\ldots,q_b$ of $C$. The limit of the
normal bundles is an extension
(Proposition~\ref{g2:prop:models}, Lemma~\ref{g2:prop:rank-one} and
Theorem~\ref{thm:models})
\[
 0\longrightarrow P_h(b)\longrightarrow M\longrightarrow Q
 \longrightarrow0,
\]
where $M$ is a specialization of $F$, and:
\begin{itemize}
\item $Q$ is the twisted normal bundle, of rank $r-b$ and degree
$2(k-h)$, of $C$ embedded in $\PP^{n-b}$ in degree $d-b-h$ by a
general series;
\item the \emph{pointing kernel} $P_h(b)$, of rank $b$ and degree
$2h$, is obtained from the sum of the $b$ twisted pointing bundles
toward $q_1,\ldots,q_b$ by a modification at each $p_i$ in the
direction of the attached line; each modification raises the degree
by two.
\end{itemize}
We call such an extension a \emph{pointing model}. Its quotient is a
smaller instance of the same problem, so it can be controlled by
induction; the kernel is controlled by the profile described below.

\paragraph{Choosing the model: balanced and Farey splittings.}
The kernel and quotient of a pointing model have slopes $2h/b$ and
$2(k-h)/(r-b)$, and their average, weighted by rank, is the target
slope $2k/r$. If $\gcd(k,r)>1$, we choose $h/b=k/r$, so that both
pieces have the target slope. The quotient is semistable by induction
and the kernel is semistable, hence so is $M$, and with it the general
$F$. This is the \emph{balanced} case.

If $\gcd(k,r)=1$, equal slopes are impossible, and we take $h/b$ as
close to $k/r$ as possible. Concretely, we require
\[
 hr-kb=\pm1.
\]
The fractions $h/b$ with $0<b<r$ satisfying this condition are the
\emph{Farey neighbors} of $k/r$. There are exactly two of them, one on
each side of $k/r$. They are the fractions closest to $k/r$ among all
fractions with denominator less than $r$, and their numerators and
denominators add up to $k$ and $r$. So if $h/b$ is one neighbor, then
$(k-h)/(r-b)$ is the other. The slopes of kernel and quotient then lie
on opposite sides of $2k/r$, as close to it as integrality allows. The extension bound excludes every
destabilizing subsheaf except at most two explicit candidates
(Lemmas~\ref{g2:lem:normal-farey} and~\ref{lem:farey}). The first is
the piece of the extension whose slope exceeds $2k/r$: the kernel
itself, or a lift of the quotient. The second arises only when $r$
and $b$ are both odd, and has rank $(r\pm b)/2$. When $r$ is odd,
exactly one of the two neighbors has $b$ even, and using it avoids
the second candidate. When that neighbor cannot be used, we exclude
the candidate with a second, auxiliary pointing model and an integer
rounding estimate (Lemma~\ref{g2:prop:rounding}). If a neighbor has
$b=1$, that is, $k=jr\pm1$, the kernel is a line bundle. These cases
are treated first, by induction on $r$
(Lemmas~\ref{g2:prop:diagonals} and~\ref{uni:shifted-diagonals}).

\paragraph{Moving determinants.}
The first candidate is excluded by a simple principle
(Lemmas~\ref{g2:pk:determinant} and~\ref{lem:determinant}). Suppose
a family of bundles is parametrized by a rational variety. If its
general member were unstable, the determinant of the maximal
destabilizing subbundle would define a rational map from the parameter
space to a Picard variety of $C$, and such a map is constant. In our
models the determinants of the kernel and of the quotient change as
the points $q_j$ move. So a destabilizing subsheaf of the general
member cannot specialize to one of these pieces.

\paragraph{Profiles of pointing kernels.}
The argument above needs a precise bound for subsheaves of the kernel.
We prove that a general pointing kernel $P_h(b)$ is semistable, and
that each of its subsheaves of rank $t$ has degree at most an explicit
integer $\Phi_{h,b}(t)$ (Theorems~\ref{g2:pk:theorem}
and~\ref{thm:kernel-profile}; $\Phi$ is defined in
\eqref{g2:pk:profile}). This bound never exceeds the slope bound
$2ht/b$, and in many ranks it is one less than $\lfloor2ht/b\rfloor$.
These integer savings are used throughout the induction.

The proof is again a Farey argument, now applied to the kernel. If
$\gcd(h,b)=1$, write $h/b$ as the mediant $(h_1+h_2)/(b_1+b_2)$ of its
two Farey neighbors $h_1/b_1$ and $h_2/b_2$ among fractions with
smaller denominator. The kernel specializes to the direct sum
$P_{h_1}(b_1)\oplus P_{h_2}(b_2)$ of two smaller kernels. By
induction on $b$, every subsheaf of this sum obeys the bound
$\Phi_{h,b}$, with one exception: the summand of larger slope
(Lemma~\ref{g2:pk:farey-convolution}). That summand is removed by
moving determinants, since exchanging one of its points $q_j$ with a
point of the other summand changes its determinant. The induction
starts from kernels with a single modification. This base case is the
only part of the kernel profile whose proof depends on the genus
(Proposition~\ref{g2:pk:one-tail} for $g=2$, and
Section~\ref{sec:kernels} for $g\ge3$).

\paragraph{Starting cases.}
In genus two the complete case $c=0$ is
Theorem~\ref{thm:hyperelliptic-iff}. The excesses $1\le c\le4$
(Proposition~\ref{g2:ld:theorem}) are proved with pointing models
whose quotients have smaller excess. This step also uses sharper
profiles for complete normal bundles (Section~\ref{g2:cb:section}),
earlier results in low dimension and high degree \cite{CLV,CS},
nineteen types checked directly, and a finite integer computation,
which a periodicity lemma (Lemma~\ref{g2:ld:periodic}) extends to
all ranks. All larger excesses follow from the balanced and Farey induction
(Proposition~\ref{g2:prop:theorem}). The only complication is that the
quotient of a model may not be one of the four unstable complete
embeddings. Avoiding them requires three more types,
$(n,d)=(8,15),(10,18),(12,19)$, to be checked directly.

For these twenty-two types we use a cohomological criterion
(Lemma~\ref{g2:co:criterion}): if a bundle $G$ satisfies
$\chi(F\otimes G)=0$ and $H^0(F\otimes G)=0$, then $F$ is semistable.
We verify the vanishing for one explicit curve by exact linear algebra
over a finite field. Semicontinuity then gives it for the general
curve in characteristic zero (Section~\ref{g2:co:section}).

For $g\ge3$ the same induction applies, with results of \cite{CLV} in
$\PP^3$ and of \cite{CS} in $\PP^4$ in the smallest ranks. Suppose
semistability is known in the strip $0\le c\le4g-4$. This strip is
wide enough for the special cases of the genus-two argument to be
handled inside the induction, so semistability propagates to all
excesses (Theorem~\ref{uni:finite-width}). The large-dimension
estimate (Theorem~\ref{high:affine}) does not use this assumption.
For $0\le c\le2g$ it uses pointing models whose quotients are complete
embeddings. These are controlled by
Theorems~\ref{high:complete-semistability} and~\ref{high:complete-gap},
which extend from hyperelliptic curves to general curves by openness
(Corollary~\ref{high:general-curves}).
It then runs the same induction within the region
$n\ge10g^2-10g+4+(5g-3)c$, which the balanced and Farey splittings
preserve.

\subsection{Relation to earlier work}
Interpolation for normal bundles of general nonspecial curves was
proved by Atanasov--Larson--Yang \cite[Theorem 1.3]{ALY}, and for
general Brill--Noether curves by Larson--Vogt \cite[Theorem 1.4]{LV}.
We use the classification for space curves of Coskun--Larson--Vogt
\cite[Theorem 1]{CLV} and the high-degree semistability and
specialization results of Coskun--Smith \cite{CS}.
Coskun--Jovinelly--Larson have classified the stability of normal
bundles of Brill--Noether curves in $\PP^4$ \cite{CJL}. The Farey
arithmetic is related to that used
by Coskun--Larson--Vogt for normal bundles of canonical curves
\cite{CLVCanonical}.

For a survey of these questions see Coskun--Larson--Vogt
\cite{CLVSurvey}, which records the conjecture that all but finitely
many Brill--Noether triples $(g,r,d)$ with $g\ge2$ are stable
\cite[Conjecture 5.24]{CLVSurvey}. Coskun--Smith show that for each
fixed dimension only finitely many pairs $(g,d)$ fail semistability
\cite{CS}. Theorems~\ref{uni:finite-width} and~\ref{high:affine}
approach the problem from the other side: they fix the genus and let
the dimension vary.

\subsection{Organization}
Section~\ref{sec:hyperelliptic-main} proves the instability part of
Theorem~\ref{thm:hyperelliptic-iff} using the scroll, and proves
Corollary~\ref{cor:general-complete}. Section~\ref{sec:prelim} sets up
notation and recalls degree profiles and interpolation.
Section~\ref{high:section} proves the semistability part of
Theorem~\ref{thm:hyperelliptic-iff} by the Weierstrass-point
induction. Section~\ref{sec:genus-two} proves the genus-two
classification; it treats in turn sharper profiles of complete
normal bundles, profiles of pointing kernels, pointing models in genus
two, the twenty-two types checked directly, the excesses
$1\le c\le4$, and the balanced and Farey induction. Sections~\ref{sec:models} and~\ref{sec:kernels} construct pointing
models and prove the kernel profile for $g\ge3$.
Section~\ref{sec:uniform-propagation} proves the reduction to the
strip $0\le c\le4g-4$, Section~\ref{sec:affine} proves the
large-dimension estimate, and Section~\ref{sec:finite-verification}
combines the two.

Two steps in genus two use exact computer calculations: the
twenty-two types of Proposition~\ref{g2:co:seeds}, checked by linear
algebra over $\mathbb F_{101}$ and $\mathbb F_{10007}$, and the integer
computation in Proposition~\ref{g2:ld:theorem}, which covers 156
residue classes and 184 further cases. All other steps, including every infinite family of cases, are proved
in the text. The programs, their inputs and their outputs are in the
ancillary directory \texttt{anc/genus\_two}.

\subsection*{Acknowledgements}
I am deeply grateful to my advisor, Eric Larson, for countless
enlightening conversations and for his thoughtful and dedicated
mentorship. I also thank Eric Jovinelly for many useful discussions.

I would also like to thank ChatGPT (GPT-6 Astra) for substantive assistance
in preparing this manuscript. Roughly speaking, I first came up with
the package of inductive arguments and geometric constructions used in
this paper, and used them to prove the main theorem in a range of cases
that was broad enough to become convinced that this package was likely
sufficient to get strong results. I then provided this template
illustrating this package to ChatGPT and asked it to figure out the
combinatorial problem of exactly which base cases were needed to let
this same inductive strategy run more generally and to verify each of
these finitely many base cases by explicitly calculating the normal
bundle. The final manuscript was prepared jointly by the author and
ChatGPT.

I also thank Claude (Anthropic). It wrote the original programs for
the larger cohomological certificates in the ancillary files, audited
earlier versions of the arguments, and drafted the introduction and
the final expository revisions of this manuscript.

\section{The sharp hyperelliptic threshold}
\label{sec:hyperelliptic-main}

Throughout the paper the ground field is algebraically closed of
characteristic zero.  A complete embedding is the morphism defined by
the full space of sections of a very ample line bundle.  Generality in
the polarization is part of the following theorem; the underlying
hyperelliptic curve may be fixed arbitrarily.

\begin{theorem}[Sharp hyperelliptic threshold]
\label{thm:hyperelliptic-iff}
Let $C$ be any fixed smooth hyperelliptic curve of genus $g\ge2$, and
let $n\ge3$.  A general line bundle
$L\in\Pic^{n+g}(C)$ is nonspecial and defines a complete embedding
$C\hookrightarrow\PP^n$.  Its normal bundle is semistable if and only if
\[
                         n\ge3g+1.
\]
\end{theorem}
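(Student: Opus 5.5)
The proof splits into three parts: the preliminary claims about $L$, instability for $n\le3g$ via the hyperelliptic scroll, and semistability for $n\ge3g+1$ via the Weierstrass-point induction outlined in the introduction.

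\emph{Preliminaries.} Since $\deg L=n+g\ge 2g+1$, $L$ is very ample and $H^1(L)=0$. Hence $h^0(L)=n+1$ and the complete series embeds $C$ in $\PP^n$; generality is not yet needed here.

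\emph{Instability for $n\le3g$.} Let $\pi\colon C\to\PP^1$ be the hyperelliptic map, with fibres $D$ the $g^1_2$. For each fibre, $h^0(L(-D))=n-1$, so $D$ spans a line. These lines form a rational normal scroll $X\subset\PP^n$ of degree $n-1$ containing $C$. On $X$, write $C\sim 2H+bF$, where $H$ is the hyperplane class and $F$ the ruling. Then $C\cdot H=n+g$ together with $H^2=n-1$ gives $b=g+2-n$, and
\[
N_{C/X}=\OO_C(C)\quad\text{has degree } C^2=4(n-1)+4b=4g+4.
\]
(If $X$ is a cone, I work on its resolution; $C$ avoids the vertex for general $L$, which I expect to check via $h^0(L-2D)$ considerations.) The subbundle $N_{C/X}\subset N_C$ has degree $4g+4$, while
\[
\mu(N_C)=\frac{(n+1)(n+g)+2g-2}{n-1}.
\]
A direct computation shows that $4g+4>\mu(N_C)$ is equivalent to $n\le 3g$: clearing denominators gives $(4g+4)(n-1)-(n+1)(n+g)-2g+2=-(n-2)(n-3g-1)$ up to sign conventions. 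So $N_C$ is unstable for $n\le3g$.

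\emph{Semistability for $n\ge3g+1$.} I pass to the twist $F$ of rank $r=n-1$ and degree $2k$, with $k=g+g=2g$ in the complete case; the exact constant comes from \eqref{eq:notation}. The argument is the Weierstrass-point induction. Projecting from the span of the $2g+2$ Weierstrass points, whose twisted pointing bundles are trivial, gives an extension
\[
0\to \OO^{2g+2}\to F\to F'\to 0,
\]
where $F'$ is the twisted normal bundle of a complete embedding in $\PP^{n-2g-2}$ of the same type. By the convolution bound, if $F'$ satisfies a suitable degree profile, one slightly stronger than semistability, then so does $F$. This reduces the problem to the base ranks $3g\le r\le 5g+1$.

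For the base ranks I bound the rank-$t$ subsheaves $S$ in two ways. First, by intersecting with the scroll line subbundle $N_{C/X}$ and the quotient $N_C/N_{C/X}$. Second, by sections: Riemann--Roch applied to $F/S$ bounds $\deg S$, and the extremal case forces $S$ to be the pointing subsheaf toward a fixed linear space $\Lambda$. Adding a ramification point of the projection from $\Lambda$ then gives a larger subsheaf of too-large degree, a contradiction.

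\emph{Main obstacle.} The hardest step is the top base rank $r=5g+1$. There the profile from $F'$ sits exactly at the borderline, and I would compute the connecting maps $H^0(F')\to H^1(\OO^{2g+2})$ of the projection sequence to show that no subsheaf lifts. I would first try to show this connecting map is injective for general $L$, by specializing $L$.
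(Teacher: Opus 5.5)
Your instability half is essentially the paper's argument: the scroll line $N_{C/X}$ of degree $4g+4$. The semistability half, however, has a genuine gap at the inductive step.

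\textbf{The inductive step.} Consider the Weierstrass projection $0\to\OO^{w}\to F_r\to F_R\to0$, with $w=2g+2$ and $R=r-w$. The convolution bound cannot transmit any degree profile from $F_R$ to $F_r$, because the kernel has slope $0<4g/r$ while the quotient has slope $4g/R>4g/r$.
\begin{itemize}
\item Taking kernel intersection rank $u=0$, the convolution bound for $F_r$ in a rank $t\le R$ is never better than the bound for $F_R$ in rank $t$. To get $4gt/r$ for $F_r$ you would have to assume it for $F_R$, and $F_r$ would then not inherit the stronger bound $4gt/(r+w)$ needed at the next step.
\item In corank one, the split $u=w-1$ with full-rank image gives only $\deg S\le4g>4g(r-1)/r$.
\end{itemize}
So no ``slightly stronger'' profile is inherited.

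The missing idea is to force a positive-degree subsheaf to meet the trivial kernel in large rank. Take a saturated destabilizer $S$ of rank $s$ and degree $e$, and write $\deg(S\cap\OO^w)=-\beta$.
\begin{itemize}
\item The paper proves $h^0(F_r)=2g+2$, and these sections are exactly the Weierstrass pointing sections (Lemmas~\ref{high:cohomology} and~\ref{high:sections}). Hence $u=\rk(S\cap\OO^w)\ge h^0(S)$.
\item Serre duality against the scroll core $\mathcal R_r$, with $F_r/\mathcal R_r=A^{\oplus(r-2)}$, gives $h^0(S)\ge e-(g-1)h$, where $h=\rk(S\cap\mathcal R_r)\le1$. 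It also gives the cap $e\le2s+g+1-r$ when $h=1$ (Lemma~\ref{high:original-bounds}).
\item Residual semistability and strict slope then give $4gu+\beta R<we$, which contradicts these bounds once $r\ge5g+2$.
\end{itemize}
The same lower bound on $h^0(S)$ is also what excludes $h=0$ in the ranks $3g\le r\le4g+1$. Your base-case sketch uses only the quotient section count.

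\textbf{The base ranks $4g+2\le r\le5g+1$.} You also misplace the difficulty here. In this range $2g\le R\le3g-1$, so $F_R$ is unstable by the first half of the theorem. There is no residual profile ``at the borderline'' to exploit. The scroll and section bounds alone still allow numerical candidates such as $(s,e,h)=(2g+2,2g+1,0)$ at $r=4g+2$.

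The paper treats this whole interval with twisted connecting maps:
\begin{itemize}
\item $\delta_A$ on $H^0(F_RA)$, whose image is the span of evaluation lines at the $w_i$. This bounds the relevant intersection by $h^0(U^*)\le u+\epsilon_g(\beta)$ (Lemmas~\ref{high:connecting-A} and~\ref{high:all-image-bounds}).
\item The $K_C$-twisted map on the preimage of the residual scroll line $B_R$, which is evaluation at the $w_i$ (Lemma~\ref{high:canonical-connecting}). It is injective for general $L$ because its kernel $H^0(K_CB_R(-D_W))$ has degree $3g-3-R<g$ and moves through the Picard variety with $L$.
\end{itemize}
The untwisted map $H^0(F_R)\to H^1(\OO^w)$ you propose is injective for free, since $H^0(\OO^w)\to H^0(F_r)$ is an isomorphism. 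It gives only weak bounds.

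\textbf{Smaller points.}
\begin{itemize}
\item The inequality $n+g\ge2g+1$ fails for $3\le n\le g$ when $g\ge3$. There, nonspeciality and very ampleness require generality of $L$. The paper obtains them, and smoothness of the scroll, from the balanced splitting $\pi_*L\simeq\OO(\lfloor r/2\rfloor)\oplus\OO(\lceil r/2\rceil)$ of Lemma~\ref{lem:balanced-hyperelliptic-scroll}.
\item Your cleared-denominator expression should be $(n-3)(3g-n)+2g-2$, not $-(n-2)(n-3g-1)$; they differ by $g$. The threshold $n\le3g$ is unaffected.
\end{itemize}
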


The necessity of the threshold comes from a line subbundle supplied by
the hyperelliptic scroll.  We first make this scroll available in every
dimension occurring in the theorem, including $n<g+1$.

\begin{lemma}[The scroll for a general polarization]
\label{lem:balanced-hyperelliptic-scroll}
Let $\pi:C\to\PP^1$ be the hyperelliptic map, let
$A=\pi^*\OO_{\PP^1}(1)$, and put $r=n-1\ge2$.  For a general
$L\in\Pic^{r+g+1}(C)$ one has
\[
 \pi_*L\simeq
 \OO_{\PP^1}\!\left(\left\lfloor\frac r2\right\rfloor\right)
 \oplus
 \OO_{\PP^1}\!\left(\left\lceil\frac r2\right\rceil\right).
\]
The complete embedding of $C$ lies on the smooth rational normal
scroll $X=\PP(\pi_*L)\subset\PP^{r+1}$.  If $H$ is the hyperplane
class and $f$ is the ruling class on $X$, then
\[
 H^2=r,\qquad Hf=1,\qquad f^2=0,\qquad
 C\sim2H+(g+1-r)f.
\]
In particular, the scroll normal sequence supplies a line subbundle
\[
 N_{C/X}\subset N_{C/\PP^{r+1}},\qquad
                 \deg N_{C/X}=4g+4.
\]
\end{lemma}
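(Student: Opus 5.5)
The proof has four steps: compute the splitting of $\pi_*L$, identify the embedding through the scroll, read off the intersection numbers, and compute $N_{C/X}$ by adjunction.

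\emph{Step 1: the splitting type.} Since $\pi$ is finite of degree two, $\pi_*L$ is a rank-two bundle on $\PP^1$, say $\OO(a)\oplus\OO(b)$ with $a\le b$. The identity $\chi(\pi_*L)=\chi(L)=r+2$ gives $a+b=r$, because a general $L$ of degree $r+g+1\ge g+2$ is nonspecial; for degree $\ge 2g-1$ this is automatic, and for smaller degree it holds for general $L$. To get the balanced splitting we must show $a\ge\lfloor r/2\rfloor$. Unbalanced splitting means $h^0(\pi_*L(-m-1))\ne0$ for some $m<\lfloor r/2\rfloor$ with $a\le m$. By the projection formula this says $h^0(L\otimes A^{-(m+1)})\ne0$ for some $m+1\le\lfloor r/2\rfloor$. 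The bundle $L\otimes A^{-(m+1)}$ is general of degree
\[
 r+g+1-2(m+1)\ge g+1-1\ge g
\]
when $r$ is odd. This has the right size for a general line bundle of degree $\ge g$ to have sections, so we need a sharper test.

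The correct criterion is about the smaller summand. We have $a\ge\lfloor r/2\rfloor$ iff $h^1(\pi_*L\otimes\OO(-\lfloor r/2\rfloor-2))=0$, i.e.\ iff $h^1(L\otimes A^{-\lfloor r/2\rfloor-2})=0$. That line bundle is general of degree $r+g+1-2\lfloor r/2\rfloor-4\ge g-3$. This lower bound is not enough by itself, so use the sum instead. Set $E=\pi_*L(-\lceil r/2\rceil)$. Then $\deg E=r-2\lceil r/2\rceil\in\{0,-1\}$, and balancedness is equivalent to $h^0(E(-1))=0$. This in turn says $h^0(M)=0$ for the general line bundle $M=L\otimes A^{-\lceil r/2\rceil-1}$ of degree
\[
 r+g+1-2\lceil r/2\rceil-2\le g-1.
\]
A general line bundle of degree $\le g-1$ has no sections, so $E(-1)$ has no sections and the splitting is balanced.

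\emph{Step 2: the embedding into the scroll.} With $a\ge1$ (as $r\ge2$), $\OO_X(1)$ on $X=\PP(\pi_*L)$ is very ample, and $X$ is a smooth rational normal scroll in $\PP^{h^0-1}=\PP^{r+1}$. The surjection $\pi^*\pi_*L\to L$ gives a morphism $C\to X$ over $\PP^1$ that pulls $\OO_X(1)$ back to $L$. On global sections it induces the identity $H^0(X,\OO(1))=H^0(\PP^1,\pi_*L)=H^0(C,L)$. Hence the complete embedding of $C$ factors through $X$, and $L$ is very ample since $\deg L\ge 2g+1$ or by generality. (The paper states very ampleness in Theorem~\ref{thm:hyperelliptic-iff}; I would cite it there, or check directly when $r+g+1<2g+1$.)

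\emph{Step 3: intersection numbers.} On $X$ we have $H^2=\deg X=a+b=r$, $Hf=1$ and $f^2=0$. Since $C$ is a bisection, $C\cdot f=2$, so $C\sim2H+mf$. Then $C\cdot H=2r+m=\deg L=r+g+1$ gives $m=g+1-r$.

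\emph{Step 4: the normal bundle.} By adjunction, $N_{C/X}=\OO_C(C)$, so
\[
 \deg N_{C/X}=C^2=4r+4(g+1-r)=4g+4 .
\]
Finally, $0\to T_X|_C\to T_{\PP}|_C$ induces the inclusion $N_{C/X}\hookrightarrow N_{C/\PP^{r+1}}$, whose cokernel $N_{X}|_C$ is locally free, so $N_{C/X}$ is a subbundle. The main obstacle is Step 1, and there the key point is the reduction of balancedness to the vanishing $h^0(M)=0$ for a general $M$ of degree $\le g-1$.
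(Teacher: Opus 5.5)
Your Steps 1, 3 and 4 follow the paper. Once the false starts are stripped away, your test $h^0(L\otimes A^{-\lceil r/2\rceil-1})=0$, for a general line bundle of degree $g-1$ or $g-2$, is exactly the paper's choice $m=\lceil r/2\rceil+1$. The intersection-number and adjunction computations also coincide with the paper's. You should delete the abandoned attempts. The claimed criterion ``$a\ge\lfloor r/2\rfloor$ iff $h^1(\pi_*L(-\lfloor r/2\rfloor-2))=0$'' is off by one: that vanishing characterizes $a\ge\lfloor r/2\rfloor+1$. Also, $a+b=r$ needs no nonspeciality, because $\chi$ is preserved by the finite map and $\chi(L)=r+2$ by Riemann--Roch.

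The genuine gap is in Step 2. You never prove that $C\to X$ is a closed immersion, equivalently that the complete series of $L$ is an embedding, when $2\le r\le g-1$, where $\deg L=r+g+1\le 2g$.
\begin{itemize}
\item Citing Theorem~\ref{thm:hyperelliptic-iff} is circular: the paper's proof of that theorem takes the existence of the embedding from this very lemma.
\item ``By generality'' is only an assertion.
\item Steps 3 and 4 need $C$ to be a smooth curve embedded in $X$ before $C\sim2H+(g+1-r)f$ and $N_{C/X}=\OO_C(C)$ make sense.
\end{itemize}

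The missing idea, which the paper supplies, is that the morphism $C\to\PP(\pi_*L)$ defined by $\pi^*\pi_*L\twoheadrightarrow L$ is a closed immersion for every $L$. Locally over $\PP^1$ one has $\pi_*\OO_C=\OO[z]/(z^2-f)$, and $\pi_*L$ is free of rank one over it. On each fiber the map is therefore the embedding of the length-two scheme $\operatorname{Spec}\mathbf k[z]/(z^2-f(x))$ into $\PP^1$ by $1,z$. This is a closed immersion also where $f(x)=0$, so the map is injective on points and on tangent vectors. At a ramification point the tangent vector of $C$ is vertical and is detected by the fiber embedding. Composing with the scroll embedding then gives very ampleness of $L$ for free.

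Alternatively, you could supply a dimension count. A nonspecial $L$ fails to be very ample iff $h^0(K_C-L+x+y)\ne0$ for some $x,y$. That means $L\sim K_C+x+y-E$ with $E$ effective of degree $g-r-1$. This locus has dimension at most $g-r+1<g$ when $r\ge2$.
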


\begin{proof}
Write $\pi_*L=\OO(a)\oplus\OO(b)$ with $a\le b$.  The finite map
$\pi$ preserves Euler characteristics, so $a+b=r$.  Set
$m=\lceil r/2\rceil+1$.  The degree of $LA^{-m}$ is $g-1$ if $r$
is even and $g-2$ if $r$ is odd.  A general line bundle of either
degree has no sections: its effective locus is the image of a
symmetric power of $C$ of dimension strictly less than $g$.
Translation by $A^{-m}$ is an isomorphism of Picard varieties, so
\[
 H^0(C,LA^{-m})=H^0(\PP^1,\OO(a-m)\oplus\OO(b-m))=0
\]
for general $L$.  Consequently $b\le m-1=\lceil r/2\rceil$.
Together with $a+b=r$ and $a\le b$, this gives the displayed splitting.
Both summands are positive because $r\ge2$.

The evaluation quotient $\pi^*\pi_*L\twoheadrightarrow L$ embeds
$C$ in the projective bundle $\PP(\pi_*L)$: on every length-two
fiber of $\pi$ it is the morphism defined by the full two-dimensional
space of sections.  Locally it is the familiar embedding of a
quadratic algebra using the coordinates $1,z$, and therefore is a
closed immersion also at a ramification point.  Positivity of both
summands makes the tautological line bundle on $\PP(\pi_*L)$ very
ample.  Its embedding is the smooth rational normal scroll above,
and its restriction to $C$ is the complete series of $L$.
The splitting also gives $H^1(C,L)=0$ and $h^0(C,L)=r+2$.

Since $C$ meets a ruling fiber in degree two, write
$C\sim2H+\beta f$.  Intersecting with $H$ gives
$r+g+1=2r+\beta$, hence $\beta=g+1-r$.  Thus
\[
 \deg N_{C/X}=C^2=4r+4(g+1-r)=4g+4.
\]
Both embeddings $C\subset X\subset\PP^{r+1}$ are smooth regular
embeddings.  The normal sequence has locally free quotient, so the
indicated line is a subbundle of the normal bundle of $C$.
\end{proof}

\begin{proof}[Proof of Theorem~\ref{thm:hyperelliptic-iff}]
The existence and nonspeciality assertions follow from
Lemma~\ref{lem:balanced-hyperelliptic-scroll}.  Put
$N=N_{C/\PP^n}$.  At degree $d=n+g$ one has
\[
 \rk N=n-1,\qquad
 \mu(N)=n+g+2+\frac{4g}{n-1}.
\]
For $3\le n\le3g$, the scroll subbundle satisfies
\begin{align*}
 (n-1)\bigl(\deg N_{C/X}-\mu(N)\bigr)
   &=(3g+2-n)(n-1)-4g\\
   &=(n-3)(3g-n)+2g-2>0.
\end{align*}
It therefore strictly destabilizes $N$ in every dimension below the
asserted threshold.

For every $g\ge2$ and $n\ge3g+1$, set $r=n-1$.  By
Theorem~\ref{high:complete-semistability},
\[
 F_r=N_{C/\PP^{r+1}}\otimes L^{-1}\otimes A^{-1}
\]
is semistable for general $L$ whenever $r\ge3g$.  Tensoring by a
line bundle preserves semistability, giving the required sufficiency.
\end{proof}

\begin{corollary}[General complete embeddings]
\label{cor:general-complete}
Let $g\ge2$ and $n\ge3g+1$.  The normal bundle of the complete
embedding defined by a general line bundle of degree $n+g$ on a
general smooth curve of genus $g$ is semistable.
\end{corollary}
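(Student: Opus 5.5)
The plan is to deduce the corollary from Theorem~\ref{thm:hyperelliptic-iff} by openness of semistability in families, using that hyperelliptic curves lie in the (irreducible) moduli of genus $g$ curves.

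First, set up the family. Let $\mathcal{M}$ be a smooth irreducible finite cover, or an étale neighbourhood, of the moduli space of genus $g$ curves that carries a universal curve $\mathcal{C}\to\mathcal{M}$. For instance, take curves with a level structure, or simply a smooth irreducible base dominating $M_g$ whose image contains a hyperelliptic point. Let $\mathcal{P}\to\mathcal{M}$ be the relative Picard scheme of degree $n+g$. Passing to a further étale cover if necessary, we may assume it carries a Poincaré bundle $\mathcal{L}$. Since $\mathcal{M}$ is irreducible and the fibers of $\mathcal{P}$ are irreducible $g$-dimensional tori, $\mathcal{P}$ is irreducible.

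Next, restrict to the good locus. Consider the open subset $U\subset\mathcal{P}$ where $h^1(L)=0$ and $L$ is very ample. Both conditions are open. Over $U$, the pushforward of $\mathcal{L}$ is locally free of rank $n+1$, so locally we get a family of complete embeddings into $\PP^n$. The relative normal bundles $N$ then form a flat family of vector bundles on the fibers of $\mathcal{C}\times_{\mathcal{M}}U\to U$, of constant rank $n-1$ and constant degree.

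Now apply openness. Semistability of a vector bundle is an open condition in flat families over a smooth projective family of curves; this is the standard openness result of Maruyama. Hence the locus $V\subset U$ where $N$ is semistable is open. By Theorem~\ref{thm:hyperelliptic-iff}, applied with $n\ge3g+1$ to any hyperelliptic fiber $C_0$, a general $L\in\Pic^{n+g}(C_0)$ lies in $U$, since that theorem asserts nonspecial and very ample, and it also lies in $V$. So $V$ is nonempty. Because $U$ is irreducible, being open in the irreducible $\mathcal{P}$, $V$ is dense. Its image in $\mathcal{M}$ therefore contains a dense open set, and over a general curve the fiber of $V$ is a dense open subset of $\Pic^{n+g}(C)$. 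This is precisely the statement that the general line bundle on a general curve gives a semistable normal bundle.

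The only delicate point is setting up the universal family carefully, in particular the existence of the Poincaré bundle and fine moduli. This is handled by working étale locally, or on a smooth irreducible versal family of curves through a hyperelliptic curve, since openness and irreducibility are all that the argument uses. Beyond that, everything reduces to irreducibility of the parameter space together with the hyperelliptic input.
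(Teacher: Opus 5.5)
Your proposal is correct and is essentially the paper's own argument. Both use an irreducible parameter space (or a smooth cover of it) of genus-$g$ curves with a degree-$(n+g)$ line bundle, restricted to the open nonspecial very ample locus, together with openness of semistability for the resulting family of normal bundles, and both get a nonempty semistable locus from a hyperelliptic point via Theorem~\ref{thm:hyperelliptic-iff}. Your extra details (the Poincar\'e bundle, and working with the projective bundle of $\pi_*\mathcal{L}$ rather than choosing a projective frame) are harmless variations.
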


\begin{proof}
Consider the irreducible parameter space of smooth genus-$g$ curves
with a degree-$(n+g)$ line bundle, restricted to the open locus where
the line bundle is nonspecial and very ample, and with a projective
frame for its complete section space.  One can equally work on a
smooth cover of this parameter stack.  The universal embeddings
give a family of normal bundles, and semistability is open in this
family.  Theorem~\ref{thm:hyperelliptic-iff} supplies a point of its
semistable locus.  This locus is therefore a nonempty open subset
of the irreducible parameter space, proving the assertion for a
general curve and general polarization.
\end{proof}

\begin{remark}[Scope of the threshold]
\label{rmk:hyperelliptic-scope}
The converse in Theorem~\ref{thm:hyperelliptic-iff} is specific to
hyperelliptic curves.  The corollary asserts only a sufficient bound
for a general smooth curve; it makes no instability assertion below
$n=3g+1$.  Likewise, the all-genus sufficiency assertion retains
generality in $L$.  It is not an assertion for every polarization on
a fixed hyperelliptic curve.

The nonspecial degree convention loses no complete hyperelliptic
embeddings.  Indeed, if a line bundle $L$ on a hyperelliptic curve
were special, writing $\pi_*L=\OO(a)\oplus\OO(b)$ with $a\le b$
would give $a\le-2$.  All global sections then come from the other
summand (if there are any).  Away from their base locus, the resulting
map factors through $\pi$, and hence cannot be an embedding.
Thus a very ample line bundle on a hyperelliptic curve is nonspecial;
if its complete series has dimension $n$, its degree is $n+g$.
\end{remark}

\section{Conventions, profiles, and interpolation}
\label{sec:prelim}\label{sec:profiles}
We work over an algebraically closed field of characteristic zero.
A bundle $E$ on a smooth curve is semistable if every nonzero proper
subbundle $S$ satisfies $\deg S/\rk S\le\deg E/\rk E$; stability
means strict inequality. It is equivalent to test torsion-free
subsheaves, since saturation preserves rank and increases degree.
Tensoring by a line bundle preserves both properties.

A general nonspecial embedding of numerical type $(g,n,d)$ means a
general smooth genus-$g$ curve $C$, a general nonspecial line bundle
$L\in\Pic^d(C)$, and a general $(n+1)$-dimensional subspace of
$H^0(C,L)$ defining an embedding. Throughout the incomplete-embedding
arguments $n\ge3$ and $d\ge n+g$. At $d=n+g$ the series is complete;
for $d>n+g$ it is a projection of a complete series. These parameter
spaces are irreducible. Generality of a residual series in a
specialization always means generality in this parameter space.

Choose a line bundle $\Lambda$ of degree two and set
\begin{equation}\label{eq:notation}
 r=n-1,\quad q=2g,\quad c=d-n-g,\quad k=q+c=d-n+g,
 \qquad F=N_{C/\PP^n}L^{-1}\Lambda^{-1}.
\end{equation}
Then $\rk F=r$ and $\deg F=2k$. On a hyperelliptic curve we normally
take $\Lambda$ to be the hyperelliptic pencil; in genus two this is
$K_C$. Semistability is independent of this normalization. The
complete locus is $k=q$, and the nonspecial range is $k\ge q$.

\paragraph{Terminology.}
We call $F$ the \emph{normalized normal bundle}, $k$ its
\emph{weight} and $c$ its \emph{excess}. We refer to $(r,k)$, or
equivalently $(n,d)$, as the \emph{type} of the embedding. A type is
\emph{complete} if $c=0$, \emph{projected} if $c>0$, and
\emph{admissible} if $k\ge q$, that is, if it is nonspecial. For
fixed $g$ and $k$, the types $(r,k)$ with $r\ge2$ form the
\emph{band} of weight $k$; a \emph{strip} is a range of consecutive
bands, such as $0\le c\le4g-4$. A subsheaf of $F$ of rank $t$ and
degree $e$ has \emph{surplus} $re-2kt$. It is strictly destabilizing
exactly when its surplus is positive, and $(t,e)$ is then a
\emph{strict type}. Section~\ref{sec:affine} works with the negative
of the surplus, which it calls the deficit. The image of a general
nonspecial embedding is a \emph{general nonspecial Brill--Noether
curve}.

The pointing models and pointing kernels described in the
introduction are constructed in Proposition~\ref{g2:prop:models} and
Lemma~\ref{g2:prop:rank-one} for $g=2$, and in Theorem~\ref{thm:models}
for $g\ge3$. A pointing kernel
$P_h(b)$ has rank $b$ and degree $2h$, and we call $h$ its weight.
A pointing model with kernel $P_h(b)$, where $1\le b\le r-2$, is
\emph{admissible} if its residual type $(r-b,k-h)$ is admissible,
that is, if $h\le c$.

\subsection{Subsheaf degree profiles}
A degree profile for a bundle $B$ is a function $p_B(t)$ bounding the
degree of every rank-$t$ subsheaf, with $p_B(0)=0$ and
$p_B(\rk B)=\deg B$. The endpoint includes all full-rank images;
a proper full-rank image may have a torsion defect and is not a
proper-rank subbundle. From an extension $0\to P\to M\to Q\to0$
we obtain
\begin{equation}\label{eq:convolution}
 p_M(t)\le\max_{u+v=t}\{p_P(u)+p_Q(v)\}.
\end{equation}
This follows by intersecting a subsheaf with $P$ and taking its
image in $Q$. Saturation only increases the degree at fixed rank.
Relative Quot properness implies that a profile for a special bundle
is a profile for a general member of its family.

\begin{lemma}\label{lem:integral}
For $g\ge3$, a general nonspecial normal bundle of integral slope
is semistable. In the coordinates \eqref{eq:notation}, this applies
whenever $r\mid2k$.
\end{lemma}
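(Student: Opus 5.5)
The plan is to deduce the statement directly from interpolation for normal bundles of general nonspecial (and more generally Brill--Noether) curves, due to Atanasov--Larson--Yang \cite[Theorem 1.3]{ALY} and Larson--Vogt \cite[Theorem 1.4]{LV}. The first step is to recall the form in which we use it. Interpolation for $N_C$ says that for every $m\ge0$ and a general effective divisor $D$ of degree $m$, the twist $N_C(-D)$ satisfies $h^0(N_C(-D))=\max\{0,\chi(N_C(-D))\}$, equivalently $h^1=0$ or $h^0=0$. The genus condition $g\ge3$ is what places every admissible type $(g,n,d)$ outside the explicit list of exceptions in \cite{LV}. I would check this against that list: for $g\ge3$, $n\ge3$ and $d\ge n+g$, the exceptions (which occur only in small genus, e.g.\ $(d,g,n)=(5,2,3),(6,2,4)$ in \cite{ALY}) do not occur. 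This is the step whose bookkeeping I would verify most carefully.

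Next, translate to the normalization \eqref{eq:notation}. Here $\deg F=2k$ and $\rk F=r$, so $r\mid 2k$ means $F$ has integral slope $\mu=2k/r$. Interpolation is invariant under twisting by line bundles up to shifting the divisor: a bundle $E$ of rank $r$ and degree $\deg E$ satisfies interpolation if and only if $E\otimes M$ does for any line bundle $M$, since generality of $D$ lets one absorb $M$ into a general divisor twist. Thus $F$ satisfies interpolation. Concretely, for a general line bundle $\eta$ of degree $\mu+g-1-\mu=g-1$ relative to slope, i.e.\ choose a general $\eta\in\Pic^{\,g-1-\mu}(C)$, we have $\chi(F\otimes\eta)=2k+r(g-1-\mu)+r(1-g)=0$. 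Interpolation, applied through a general divisor twist and the fact that a general line bundle of given degree is $\OO(D-D')$ for general $D,D'$, yields $H^0(F\otimes\eta)=0$. I expect the main obstacle to be making this step precise. Interpolation as stated concerns twists by general effective divisors, so one must pass from $\OO(-D)$ to a general line bundle of the same degree. I would do this by writing $\eta=M(-D)$ with $M$ a fixed line bundle of large degree and invoking interpolation for $F\otimes M$, which still satisfies interpolation.

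Finally, vanishing $H^0(F\otimes\eta)=0$ with $\chi(F\otimes\eta)=0$ forces semistability. Suppose $S\subset F$ is a subsheaf with $\mu(S)>\mu$. Then $\deg(S\otimes\eta)>\rk S\,(g-1)$, so $\chi(S\otimes\eta)>0$ by Riemann--Roch. Hence $H^0(S\otimes\eta)\ne0$, and therefore $H^0(F\otimes\eta)\ne0$, a contradiction. Semistability of $F$ then gives semistability of $N_C=F\otimes L\Lambda$, since tensoring by a line bundle preserves semistability.
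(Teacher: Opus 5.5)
Your proof is the paper's argument. Interpolation at the twist with vanishing Euler characteristic kills sections, and a subsheaf of larger slope would have positive Euler characteristic after the same twist.

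The ``main obstacle'' you identify is self-inflicted. Your final paragraph never uses that $\eta$ is general in $\Pic$, so the paper simply takes $\eta=L\Lambda(-D)$. That is, it works directly with $N(-D)$ for a general effective $D$ of degree $\mu(N)+1-g$, which is a nonnegative integer when the slope is integral.

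Also, your stated principle that interpolation is unchanged by twisting with an arbitrary line bundle is false: a sufficiently negative twist destroys nonspeciality. The detour needs only the true special case that twisting up by an effective divisor preserves interpolation. This follows by specializing the general divisor to contain it and using semicontinuity. It applies because $L^{-1}\Lambda^{-1}M$ is effective once $\deg M$ is large.
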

\begin{proof}
The nonspecial interpolation theorem \cite[Theorem 1.3]{ALY} has
only genus-two exceptions. If $a=\mu(N)+1-g$ is a nonnegative integer,
interpolation at a general effective divisor $D$ of degree $a$ and
Riemann--Roch give $H^0(N(-D))=H^1(N(-D))=0$. A subbundle of
larger slope has positive Euler characteristic after the same twist,
which is impossible. If $a<0$, interpolation includes nonspeciality
$H^1(N)=0$, whereas $\chi(N)=r a<0$, already a contradiction; thus
this case does not occur. Twisting preserves semistability and the
integrality of the slope.
\end{proof}

More generally, interpolation gives the profile
\begin{equation}\label{eq:interpolation-profile}
 p_F(t)\le \left\lceil\frac{2k}{r}\right\rceil t
 \qquad(0<t<r)
\end{equation}
in the nonspecial range outside the interpolation exceptions. To see
this, twist $N$ by a general divisor of degree
$\lceil\mu(N)\rceil+1-g$; interpolation gives vanishing of sections.
Any subbundle of greater slope would have positive Euler
characteristic. We will also use the following stronger
combination of the two nearest interpolation twists. Write
\(2k=ar+b\), with \(0\le b<r\). At a general divisor of degree
\(\lfloor\mu(N)\rfloor+1-g\), interpolation and Riemann--Roch give
\(h^0(N(-D))=b\) and \(h^1(N(-D))=0\). For a rank-\(t\)
subsheaf \(S\subset F\), the corresponding subsheaf of this twist
has Euler characteristic \(\deg S-at\) and at most \(b\) sections.
Consequently \(\deg S\le at+b\). Combining this with
\eqref{eq:interpolation-profile} gives
\begin{equation}\label{eq:interpolation-squeeze}
 p_F(t)\le at+\min(t,b)\qquad(0<t<r).
\end{equation}
When \(b=0\), use the integral-slope lemma. These interpolation caps can improve a degree profile even when they
do not alone imply semistability.

\section{Complete hyperelliptic normal bundles}
\label{high:section}
We prove the sufficiency assertion of
Theorem~\ref{thm:hyperelliptic-iff} in every genus $g\ge2$ by one
Weierstrass-projection induction; the main result is
Theorem~\ref{high:complete-semistability}.
Sections~\ref{high:sec:scroll} and~\ref{high:sec:jets} bound the
degrees of subsheaves using the scroll and the sections of $F_r$,
Section~\ref{high:sec:connecting} computes the connecting maps of the
Weierstrass projection, and Section~\ref{high:sec:induction} carries
out the induction. Section~\ref{high:sec:gap} proves a stronger
positive-degree gap for $g\ge3$, and
Corollary~\ref{high:general-curves} extends both results to general
curves; Section~\ref{sec:affine} uses them for incomplete
embeddings. As in Section~\ref{sec:profiles}, a full-rank image with a
torsion defect is kept distinct from the whole quotient.

\subsection{The scroll, its core, and complete quotient profiles}
\label{high:sec:scroll}

Fix a smooth hyperelliptic curve $C$ of genus $g\ge2$, its degree-two
pencil $A$, and a line bundle $L$ of degree $r+g+1$.  Write
\[
 E_r=N_{C/\PP^{r+1}}L^{-1},\qquad F_r=E_rA^{-1}.
\]
Here the embedding is complete.  Thus $\rk F_r=r$, $\deg F_r=4g$,
and $K_C=A^{g-1}$.  Write $\mathcal R_r$ for the rank-two bundle in
the following lemma; it will be called the scroll core.

\begin{lemma}\label{high:scroll}
For $r\ge g+2$ there are exact sequences
\begin{align}
0&\longrightarrow B_r:=LA^{g-r}\longrightarrow F_r
 \longrightarrow Q_r:=M_{LA^{-1}}^*\longrightarrow0,
 \label{high:scroll-quotient}\\
0&\longrightarrow B_r\longrightarrow\mathcal R_r
 \longrightarrow B_{2,r}:=LA^{1-r}\longrightarrow0,
 \qquad F_r/\mathcal R_r=A^{\oplus(r-2)}.
 \label{high:core}
\end{align}
Their degrees are
\[
 \deg B_r=3g+1-r,\quad \deg B_{2,r}=g+3-r,
 \quad\deg\mathcal R_r=4g+4-2r,
 \quad\deg Q_r=r+g-1.
\]
Every generated rank-$v$ quotient of $Q_r$ has degree at least
\begin{equation}\label{high:quotient-lower}
 m_g(v)=\min(2v,v+g),\qquad v\ge1.
\end{equation}
For any complete residual rank $R\ge g+2$, define
\begin{gather}
 q_R(0)=0,\qquad q_R(R-1)=R+g-1,\qquad
 q_R(t)=R+g-1-m_g(R-1-t)\quad(0<t<R-1),
 \label{high:q-profile}\\
 p_R(0)=0,\qquad p_R(R)=4g,\qquad
 p_R(t)=\max\{q_R(t),\,3g+1-R+q_R(t-1)\}
 \quad(0<t<R).
 \label{high:p-profile}
\end{gather}
An unavailable term is omitted.  Every rank-$t$ subsheaf of $F_R$
has degree at most $p_R(t)$.
\end{lemma}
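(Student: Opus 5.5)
The plan is to derive both exact sequences from the geometry of the scroll $X$ in Lemma~\ref{lem:balanced-hyperelliptic-scroll}, then get the quotient bound \eqref{high:quotient-lower} from linear series on the hyperelliptic curve, and finally read off the profile from the convolution bound \eqref{eq:convolution}. Throughout, $L$ is general of degree $r+g+1$ with $r\ge g+2$, so that $LA^{-1}$ and $LA^{-2}$ are general of degrees $r+g-1$ and $r+g-3$, both at least $2g-1$. Hence both are nonspecial, with $h^0(LA^{-1})=r$ and $h^0(LA^{-2})=r-2$. I only intend to invoke these facts for general $L$, which is how the lemma will be used.

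\emph{Step 1: the sequence \eqref{high:scroll-quotient}.} Twist the normal sequence $0\to N_{C/X}\to N_C\to N_X|_C\to0$ by $L^{-1}A^{-1}$.
\begin{itemize}
\item The kernel is $\OO_X(C)|_C\otimes L^{-1}A^{-1}$. Since $C\sim2H+(g+1-r)f$, this equals $L^2A^{g+1-r}L^{-1}A^{-1}=LA^{g-r}=B_r$, of degree $4g+4-(r+g+3)=3g+1-r$.
\item For the quotient I would identify the twisted conormal bundle fiberwise. A hyperplane tangent to $X$ at $x\in C$ contains the ruling line $\ell_x$. So it cuts $X$ in $\ell_x$ plus a residual divisor in $|H-f|$ that still passes through $x$.
\item Restriction identifies $H^0(\OO_X(H-f))$ with $H^0(LA^{-1})$. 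The equation of $\ell_x$ is a section of $\OO(f)$, i.e.\ a point of the fiber of $A$.
\item Together these give $N_X^*|_C\otimes L\cong M_{LA^{-1}}\otimes A$. Hence the twisted quotient is $Q_r=M_{LA^{-1}}^*$, of rank $r-1$ and degree $r+g-1$.
\end{itemize}
Both degrees are consistent with $\deg F_r=4g$.

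\emph{Step 2: the core \eqref{high:core}.} Dually, it suffices to embed $(A^{-1})^{\oplus(r-2)}$ as a saturated subsheaf of $M_{LA^{-1}}$.
\begin{itemize}
\item Since $M_A\cong A^{-1}$, multiplication gives a map $M_A\otimes H^0(LA^{-2})\to M_{LA^{-1}}$.
\item On the fiber at $x$, $M_A$ is spanned by the section $s_x$ of $A$ vanishing at $x$. A relation $\sum c_i\tau_is_x=0$ forces $\sum c_i\tau_i=0$, so the map is injective on every fiber and its image is a subbundle.
\item The cokernel is therefore a line bundle of degree $-(r+g-1)+2(r-2)=r-g-3$. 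Dualizing, we get $F_r\to Q_r\to A^{\oplus(r-2)}$; let $\mathcal R_r$ be its kernel.
\item Then $\mathcal R_r$ is an extension of $B_{2,r}$ by $B_r$. The bundle $B_{2,r}$ is identified by its determinant, $\det Q_r\otimes A^{-(r-2)}=LA^{-1}A^{2-r}=LA^{1-r}$, of degree $g+3-r$.
\end{itemize}

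\emph{Step 3: the bound \eqref{high:quotient-lower}, which I expect to be the main obstacle.} Let $W$ be a generated rank-$v$ quotient of $Q_r$. Then $W^*\subset M_{LA^{-1}}\subset H^0(LA^{-1})\otimes\OO$, so $h^0(W^*)=0$ and $W$ has no trivial summand.
\begin{itemize}
\item I would reduce to line bundles by the standard device: quotient $W$ by $v-1$ general sections, keeping the degree. This yields $\det W$ as a generated line bundle.
\item On a hyperelliptic curve a generated line bundle with $h^0=s+1$ has degree at least $\min(2s,s+g)$. In the special range this is Clifford together with the structure of special series, which are multiples of $A$ plus base points. In the nonspecial range it is Riemann--Roch.
\item So the bound follows once I show $h^0(\det W)\ge v+1$.
\end{itemize}
The naive count only gives $h^0(W)\ge v+1$, and dividing by $v-1$ sections loses too much. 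What I would try first is to bound the span of $H^0(LA^{-1})^*$ inside $H^0(W)$ from below by $2v$ in the special range, using $W^*\subset M_{LA^{-1}}$ and $h^0(W^*\otimes LA^{-1})$. Failing that, I would use the filtration of Step 2 and induct on $v$: the quotient $A^{\oplus(r-2)}$ already has degree exactly $2$ per rank, which suggests the bound $2v$ is realized through $A$.

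\emph{Step 4: the profiles.} A saturated rank-$t$ subsheaf of $Q_R$ has a locally free quotient of rank $R-1-t$. That quotient is generated, since $Q_R$ is generated by $H^0(LA^{-1})^*$. By Step 3 its degree is at least $m_g(R-1-t)$, which gives $q_R(t)$. For a rank-$t$ subsheaf $S\subset F_R$, the intersection $S\cap B_R$ has rank $0$ or $1$. In the first case $S$ maps injectively to $Q_R$, bounded by $q_R(t)$. In the second case its degree is at most $3g+1-R+q_R(t-1)$. This is \eqref{eq:convolution}, and the endpoint values are just the degrees.
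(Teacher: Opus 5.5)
The gap is Step~3, which you flag as the main obstacle but leave open. You need $h^0(\det W)\ge v+1$, and neither route you sketch gives it.
\begin{itemize}
\item Quotienting $W$ by $v-1$ general sections only yields $h^0(\det W)\ge h^0(W)-(v-1)\ge2$, as you note yourself.
\item The filtration $0\to B_{2,r}\to Q_r\to A^{\oplus(r-2)}\to0$ cannot replace the generation hypothesis. It gives $\deg W\ge2v$ only when $B_{2,r}$ maps to zero in $W$. Otherwise the image of $B_{2,r}$ is bounded below only by $\deg B_{2,r}=g+3-r$, which is far too negative.
\end{itemize}
The missing step is to use the sections in the other direction. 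Because $W$ is generated of rank $v$, a general $(v+1)$-tuple of sections generates it everywhere: failure at a fixed point has codimension two in the tuple space, so over the curve the failure locus is proper. This gives
\[
 0\longrightarrow(\det W)^{-1}\longrightarrow\OO_C^{v+1}\longrightarrow W\longrightarrow0.
\]
Dualizing gives $0\to W^*\to\OO_C^{v+1}\to\det W\to0$. The vanishing $H^0(W^*)=0$, which you already have, makes the $v+1$ constant sections inject into $H^0(\det W)$, so $h^0(\det W)\ge v+1$. Clifford if $\det W$ is special, and Riemann--Roch otherwise, then give $\deg W\ge\min(2v,v+g)$ on any curve. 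The structure of special series on a hyperelliptic curve is not needed. This is the paper's argument, and with it your Step~4 goes through as written.

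There are also smaller points. In Step~1, the equations in $H^0(\OO_X(f))=H^0(A)$ of the ruling through $x$ span the fiber at $x$ of $M_A\cong A^{-1}$, not of $A$. So $N^*_X|_C\otimes L\cong M_{LA^{-1}}\otimes A^{-1}$, and only this makes $Q_r^*=N^*_X|_C\otimes LA=M_{LA^{-1}}$. Your version would give $\deg Q_r=g+3-3r$, which is inconsistent with $\deg F_r=4g$. With that corrected, the multiplication map $A^{-1}\otimes M_{LA^{-1}}\to N^*_X(1)|_C$ is injective on fibers between bundles of rank $r-1$. Your tangent-hyperplane identification is then a valid and more direct alternative to the paper's comparison of the scroll normal sequence with the Euler sequences. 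Your Step~2 is essentially the paper's construction, since the image of $M_A\otimes H^0(LA^{-2})$ is the pullback of the evaluation kernel of $\pi_*(LA^{-1})$.

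Finally, the lemma is stated for every $L$ of degree $r+g+1$, and genus two later uses every polarization. So do not rely on the generality in Lemma~\ref{lem:balanced-hyperelliptic-scroll}. Write $\pi_*L=\OO(a)\oplus\OO(b)$ with $a\le b$; effectivity of $LA^{-b}$ gives $2b\le r+g+1$. Hence $a\ge1$ and the scroll is smooth for all $L$ once $r\ge g+2$. Your nonspeciality claims for $LA^{-1}$ and $LA^{-2}$ hold by degree alone.
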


\begin{proof}
Let $\pi:C\to\PP^1$ be the hyperelliptic map and write
$\pi_*L=\OO(a)\oplus\OO(b)$, $a\le b$.  Euler characteristics
give $a+b=r$.  Since $LA^{-b}$ has a section, $2b\le r+g+1$;
hence $a\ge1$ for $r\ge g+2$.  The scroll
$X=\PP(\pi_*L)\subset\PP^{r+1}$ is therefore smooth and embedded.
If $H$ and $f$ denote its hyperplane and ruling classes, then
\[
 H^2=r,\quad Hf=1,\quad C\sim2H+(g+1-r)f.
\]
Thus $N_{C/X}=L^2A^{g+1-r}$, whose normalization is $B_r$.

For clarity, the quotient map in \eqref{high:scroll-quotient} is
specified by the scroll normal sequence
\[
 0\longrightarrow\OO_X(2f)
 \longrightarrow\OO_X(H+f)^{\oplus r}
 \longrightarrow N_{X/\PP^{r+1}}\longrightarrow0.
\]
It follows by comparing the relative and ambient Euler sequences:
the evaluation kernel of $\OO(a)\oplus\OO(b)$ on $\PP^1$ is
$\OO(-1)^{\oplus r}$, and quotienting by the tangent direction of
the ruling base gives the displayed map.  Its dual, after restriction
and normalization, is the complete evaluation of $D=LA^{-1}$.
Indeed, for a summand $\OO(m)$ use the sections $1,t,\ldots,t^m$.
The relations $te_i-e_{i+1}$ have evaluation derivatives
$t^i\,dt$, $0\le i<m$, which supply all sections of $\OO(m-1)$.
This identifies the quotient with $M_D^*$.

Pull back the evaluation of $\pi_*D$ and compare it with evaluation
of $D$.  Its first kernel is $(A^{-1})^{\oplus(r-2)}$, and the
kernel of $\pi^*\pi_*D\to D$ is $A^{r-2}D^{-1}$.  Hence
\[
 0\longrightarrow(A^{-1})^{\oplus(r-2)}\longrightarrow M_D
 \longrightarrow A^{r-2}D^{-1}\longrightarrow0.
\]
Dualizing and taking the inverse image of the resulting line in
$Q_r$ proves \eqref{high:core}.

If $T$ is a generated rank-$v$ quotient of $Q_r$, then
$H^0(T^*)=0$, since $T^*\subset M_D$ and evaluation is complete.
A general $(v+1)$-tuple of sections generates $T$ everywhere:
failure at a fixed point has codimension two in the tuple space,
and allowing that point to vary still gives a proper locus.  The
kernel is $(\det T)^{-1}$.  Dualizing shows
$h^0(\det T)\ge v+1$.  Clifford's inequality if $\det T$ is
special, and Riemann--Roch otherwise, give
\eqref{high:quotient-lower}.  Subtract this lower bound from
$\deg Q_R$ to obtain \eqref{high:q-profile}.  A saturated subsheaf
of $F_R$ either misses $B_R$ and injects into $Q_R$, or contains
$B_R$ and has its remaining image in $Q_R$.  This gives
\eqref{high:p-profile}; saturation gives the same bound for every
subsheaf.
\end{proof}

\begin{lemma}\label{high:containing-scroll}
For $r\ge3g$, a saturated proper subbundle containing $B_r$ has
slope strictly less than $4g/r$.  It also satisfies
\[
 (4g-2)\rk S>\deg S\,(r-g)
\]
whenever its degree is positive.
\end{lemma}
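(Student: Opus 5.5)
The plan is to reduce both inequalities to the lower bound \eqref{high:quotient-lower} for generated quotients of $Q_r$, and then to check two elementary numerical inequalities in the quotient rank.

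\emph{Reduction.} Let $S\subset F_r$ be saturated, proper, of rank $s$, with $B_r\subset S$, and put $v=r-s\ge1$. By \eqref{high:scroll-quotient}, $S/B_r$ is a subsheaf of $Q_r$ of rank $s-1$. The quotient $T:=F_r/S\simeq Q_r/(S/B_r)$ is torsion free because $S$ is saturated, and it has rank $v$.

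$T$ is globally generated. Indeed $Q_r=M_D^*$ with $D=LA^{-1}$ is generated by the dual of $H^0(D)$, since it is the dual of the kernel of evaluation, and a quotient of a generated sheaf is generated. So \eqref{high:quotient-lower} applies and gives $\deg T\ge m_g(v)=\min(2v,v+g)$. Since $\deg F_r=4g$, this yields
\[
 \deg S=4g-\deg T\le 4g-m_g(v).
\]
Both assertions then follow from this single bound, because each asserted inequality only gets easier as $\deg S$ decreases.

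\emph{Slope inequality.} We need $r\bigl(4g-m_g(v)\bigr)<4g(r-v)$, that is, $4gv<r\,m_g(v)$. The check splits by which term realizes the minimum in $m_g$.
\begin{itemize}
\item If $v\le g$, then $m_g(v)=2v$, and the condition is $2g<r$, which holds since $r\ge3g$.
\item If $v>g$, then $m_g(v)=v+g$, and the condition is $v(4g-r)<rg$.
 \begin{itemize}
 \item For $r\ge4g$ this is automatic.
 \item For $3g\le r<4g$, use $v\le r-1$. Then
 \[
 (r-1)(4g-r)-rg=r(3g-r+1)-4g,
 \]
 which equals $-g$ at $r=3g$ and is at most $r-4g<0$ for $r\ge3g+1$.
 \end{itemize}
\end{itemize}

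\emph{Second inequality.} Assume $\deg S>0$. Then $m_g(v)<4g$, so in the range $v>g$ we have $v<3g$. It suffices to show
\[
 (4g-2)(r-v)>\bigl(4g-m_g(v)\bigr)(r-g).
\]
Again split by which term realizes $m_g(v)$.
\begin{itemize}
\item If $v\le g$, the difference of the two sides is $2\bigl[(v-1)r-3gv+v+2g^2\bigr]$. This is nondecreasing in $r$, so it suffices to evaluate at $r=3g$, where it equals $2(v+2g^2-3g)\ge2(2g-1)(g-1)>0$.
\item If $g<v<3g$, the difference is $r(v+g-2)-5gv+2v+3g^2$. This is increasing in $r$, and at $r=3g$ it equals $2(g-1)(3g-v)>0$.
\end{itemize}

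I do not expect a real obstacle. The only conceptual point is that $F_r/S$ is a generated quotient of $Q_r$ of rank exactly $v$, which needs saturation so that there is no torsion, and the inclusion $B_r\subset S$ so that $F_r/S$ is a quotient of $Q_r$ and not merely of $F_r$. The remaining care is in the boundary range $3g\le r<4g$ with $v$ close to $r-1$ in the slope inequality.
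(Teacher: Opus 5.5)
Your proposal is correct and follows the paper's proof. Both arguments bound $\deg S\le 4g-m_g(v)$ by viewing $F_r/S$ as a generated quotient of $Q_r$, which you justify more explicitly than the paper does. Both then run the same elementary case analysis in $v=r-\rk S$. The only cosmetic difference is in the range $v\ge g$ of the second inequality: you expand the difference directly and evaluate it at $r=3g$, whereas the paper substitutes $\rk S\ge r-3g+e$ and writes the difference as $(\lambda-e)(r-3g)+(2g-2)e$.
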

\begin{proof}
Let $v=r-\rk S\ge1$.  Its quotient is a quotient of $Q_r$, so
$e:=\deg S\le4g-m_g(v)$.  For $v\le g$, comparison with the
normal slope follows from $r>2g$.  For $v\ge g$ and $r\ge4g$
it is immediate.  If $3g\le r<4g$, the worst value is $v=r-1$;
after multiplication by $r$ the required positive difference is
\[
 r^2-(3g+1)r+4g
 =g+(3g-1)(r-3g)+(r-3g)^2>0.
\]
For the stronger inequality set $\lambda=4g-2$.  When $v\le g$,
\[
 \lambda(r-v)-e(r-g)
 \ge2(v-1)r+4g^2-(6g-2)v
 \ge4g^2-6g+2v>0.
\]
When $v\ge g$, one has $e\le3g-v\le2g$ and
$\rk S\ge r-3g+e$.  Therefore
\[
 \lambda\rk S-e(r-g)
 \ge(\lambda-e)(r-3g)+(2g-2)e>0.
\]
\end{proof}

\subsection{Jet cohomology, pointing sections, and degree bounds}
\label{high:sec:jets}

For a complete embedding with section space $V=H^0(L)$, so that
$C\subset\PP(V^*)$, its first jets give
\begin{equation}\label{high:jet}
 0\longrightarrow J^1(L)^*\longrightarrow V^*\otimes\OO_C
 \longrightarrow E_r\longrightarrow0.
\end{equation}
Each vector of $V^*$ gives a constant section of $V^*\otimes\OO_C$,
and hence a section of $E_r$, which we call an \emph{ambient
section}; at a point $x\in C$ it points from $x$ toward the
corresponding point of $\PP(V^*)$. The ambient section of a vector
representing a point $x\in C$ is the \emph{pointing section} of $x$;
when $\OO_C(2x)\simeq A$, we give the same name to the corresponding
section of $F_r=E_rA^{-1}$. For a subspace $W\subset V^*$, which we
call a \emph{center}, the saturation of the image of $W\otimes\OO_C$
in $E_r$ is the \emph{pointing image} $P_W$: it consists of the normal
directions pointing toward $\PP(W)$. Projection from $\PP(W)$, with
its base divisor removed, is the \emph{reduced projection}. We need
two facts about these objects.

If $W\subset V^*$ is a constant vector space of dimension $s$
whose image has proper rank, necessarily $s\le r$: a hyperplane
center in $V^*$ already maps generically onto $E_r$, since it meets
the general tangent plane in dimension one. The projection modulo
$W$ is therefore nonconstant: a constant reduced projection would put the original
nondegenerate curve in a proper linear subspace.  In characteristic
zero its point and derivative directions are generically independent.
It follows that the saturated pointing image $P_W$ has rank $s$.
Consequently a proper rank-$s$ subbundle of $E_r$ contains at most
$s$ independent ambient sections, and a positive-rank quotient
of rank $q$ receives at least $q+2$ independent ambient sections.
The first assertion also applies to nonsaturated subsheaves.

Here is the determinant version, including base points.  Let $D$
be the base divisor of the projection and let $T$ be the divisor
of zeros of its reduced differential.  The reduced polarization
is $M=L(-D)$.  The saturated point-and-derivative plane in the
remaining constant ambient bundle has determinant
$M^{-2}K_C^{-1}(T)$: the determinant of $J^1(M)^*$ is
$M^{-2}K_C^{-1}$, and saturation adds precisely the differential
zero divisor.  Comparing this determinant with
$\det E_r=L^2K_C$ in \eqref{high:jet} gives
\begin{equation}\label{high:pointing-det}
 \det P_W=\OO_C(2D+T).
\end{equation}
The same computation shows that projection from independent points
whose residual complete series embeds $C$ gives its normal
projection sequence, with kernel determinant $\OO_C(2D)$ before
normalization by $A$.

\begin{lemma}\label{high:cohomology}
For $r\ge g+2$,
\[
 h^1(E_r)=(g-2)(r-2),\qquad
 h^0(E_r)=r+2g+4,\qquad h^0(F_r)=2g+2.
\]
Moreover $H^0(F_r^*A)=H^0(LA^{-2})$, and the scroll core is the
common kernel of all maps $F_r\to A$.
\end{lemma}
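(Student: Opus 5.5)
The plan is to reduce everything to the two filtrations of Lemma~\ref{high:scroll}, since they split $F_r$ into explicit line bundles on the hyperelliptic curve. First, only $h^1(E_r)$ needs computing: $E_r$ has rank $r$ and degree $4g+2r$, so Riemann--Roch gives
\[
\chi(E_r)=4g+2r+r(1-g)=3r+4g-gr .
\]
Hence $h^1(E_r)=(g-2)(r-2)$ is equivalent to $h^0(E_r)=r+2g+4$. By Serre duality, with $K_C=A^{g-1}$,
\[
h^1(E_r)=h^0(E_r^*K_C)=h^0(F_r^*A^{g-2}) .
\]
So both the first two numbers and the last two assertions are statements about $H^0(F_r^*A^j)$ for small $j$, and I will treat them uniformly.

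Dualizing \eqref{high:core} gives
\[
0\to(A^{-1})^{\oplus(r-2)}\to F_r^*\to\mathcal R_r^*\to0 .
\]
Twisting by $A^j$, the subbundle contributes $h^0(A^{j-1})^{\,r-2}$. For $j=g-2$ this is $(g-2)(r-2)$, since $h^0(A^{g-3})=g-2$ for $g\ge3$, and it is $0$ when $g=2$. For $j=1$ it is $r-2$, namely the $r-2$ projections $F_r\to A$ onto the summands of $F_r/\mathcal R_r$. Therefore the whole content is the claim that no section of $\mathcal R_r^*A^j$ lifts to $F_r^*A^j$. Equivalently, every map $F_r\to A^j$ (for $j=1$ and $j=g-2$) kills $\mathcal R_r$. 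For $j=1$ this is exactly the statement that $\mathcal R_r$ is the common kernel of all maps $F_r\to A$. Granting it, $h^0(F_r^*A)=r-2$.

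To identify this space with $H^0(LA^{-2})$, I will also use the other filtration. Dualizing \eqref{high:scroll-quotient} gives
\[
0\to M_D\to F_r^*\to B_r^{-1}\to0,\qquad D=LA^{-1}.
\]
By the base-point-free pencil trick applied to the pencil $A$,
\[
H^0(M_D\otimes A)=\ker\bigl(H^0(A)\otimes H^0(D)\to H^0(DA)\bigr)\simeq H^0(DA^{-1})=H^0(LA^{-2}),
\]
which has dimension $r+g+1-4+1-g=r-2$ because $LA^{-2}$ is nonspecial. The inclusion $H^0(M_DA)\subset H^0(F_r^*A)$ together with the dimension count $r-2$ from the first step then gives the equality $H^0(F_r^*A)=H^0(LA^{-2})$.

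The main obstacle is the non-lifting statement: the connecting map
\[
H^0(\mathcal R_r^*A^j)\to H^1\bigl((A^{j-1})^{\oplus(r-2)}\bigr)
\]
must be injective, or equivalently $H^0(B_r^{-1}A^j)\to H^1(M_DA^j)$ must be injective on the relevant part. The groups $H^0(\mathcal R_r^*A^j)$ are genuinely nonzero once $r$ is large: $B_r^{-1}A^{g-2}$ has degree $r-g-5$. To prove injectivity I will realize the extension class geometrically. A map $F_r\to A^j$ that is nonzero on $\mathcal R_r$ restricts to a nonzero map on $B_r=N_{C/X}$ (suitably normalized) or on $B_{2,r}$, that is, on the normal direction of $C$ in the scroll or on the ruling direction. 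The scroll normal sequence and the relative Euler sequence of $X$ used in the proof of Lemma~\ref{high:scroll} show that these directions are moved by the ambient sections coming from $V^*$. I will first try to show that such a map would induce a nonzero section of $\OO_X(kf)\otimes\OO_X(-C)$-type on $X$, which vanishes for degree reasons on the scroll. If that fails, the fallback is to compute the extension class of \eqref{high:core} explicitly on a summand $\OO(m)$, using the coordinates $1,t,\dots,t^m$ from that proof, and check directly that cup product with it is injective for $j\le g-2$. Once injectivity is known, $h^1(E_r)=(g-2)(r-2)$, hence $h^0(E_r)=r+2g+4$. Finally, $h^0(F_r)=2g+2$ follows from \eqref{high:scroll-quotient}: one needs $h^0(B_r)=h^0(LA^{g-r})$, a general bundle of degree $3g+1-r<g$ which has no sections when $r\ge 2g+2$, and $h^0(Q_r)=h^0(M_D^*)$, computed from $0\to D^*\to H^0(D)^*\otimes\OO\to M_D^*\to0$ together with $h^1$ of $D^{-1}$. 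For small $r$ the term $h^0(B_r)$ is nonzero and must be tracked through the long exact sequence.
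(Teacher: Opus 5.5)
There is a genuine gap: your reductions only give lower bounds, and the upper bounds, which are the actual content of the lemma, are assumed rather than proved.

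\textbf{What your reductions give.} They are correct as far as they go. Riemann--Roch makes $h^0(E_r)=r+2g+4$ equivalent to $h^1(E_r)=(g-2)(r-2)$. Serre duality gives $h^1(E_r)=h^0(F_r^*A^{g-2})$. The dual core sequence gives $(A^{j-1})^{\oplus(r-2)}\subset F_r^*A^j$. The pencil trick correctly gives $H^0(M_DA)\cong H^0(LA^{-2})$, of dimension $r-2$. But each of these yields only a \emph{lower} bound: $h^0(F_r^*A^j)\ge(r-2)\,h^0(A^{j-1})$ and $h^0(F_r^*A)\ge r-2$.

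\textbf{The gap.} What must be shown is the matching upper bound. Equivalently, the connecting map $H^0(\mathcal R_r^*A^j)\to H^1(A^{j-1})^{\oplus(r-2)}$ must be injective for $j=1$ and $j=g-2$. This is exactly the step you ``grant'', offering only two tentative strategies: an unspecified vanishing on the scroll, or an uncomputed extension class. Since $h^0(\mathcal R_r^*A^j)$ grows linearly in $r$, this is a real statement about the extension class, not a formality. As written, none of $h^1(E_r)$, $h^0(E_r)$, $H^0(F_r^*A)=H^0(LA^{-2})$, or the common-kernel assertion is established.

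\textbf{The $h^0(F_r)$ step.} This has the same defect and is also misdirected. For $r\ge g+2$ one has $h^0(Q_r)=h^0(M_D^*)=r$, because $H^0(D)\otimes H^0(K_C)\to H^0(K_CD)$ is onto by the pencil trick. For $r>2g+2$ one has $h^1(B_r)\ge r-2g-2>0$ for every $L$. So $h^0(F_r)=2g+2$ holds exactly when the connecting map $H^0(Q_r)\to H^1(B_r)$ is surjective, and you never mention this map. Simply combining $h^0(B_r)=0$ with $h^0(Q_r)=r$ would give $r$, not $2g+2$. You also invoke a general $L$ to kill $H^0(B_r)$. The lemma, however, is stated for every $L$ of degree $r+g+1$, and the every-polarization claim in genus two relies on that.

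\textbf{The missing idea.} Bypass the scroll filtrations and compute these spaces exactly from the dual first-jet sequence $0\to E_r^*\to V\otimes\OO_C\to J^1(L)\to0$, with $V=H^0(L)$.
\begin{itemize}
\item $H^0(E_r^*A^m)$ is the kernel of $V\otimes H^0(A^m)\to H^0(J^1(L)A^m)$.
\item For $m\le g$, $H^0(A^m)=\operatorname{Sym}^mH^0(A)$. The value relations are then the $m$ adjacent pencil relations with coefficients in $H^0(LA^{-1})$, of total dimension $mr$.
\item Their derivatives map onto $W_A\cdot H^0(LA^{m-2})$, where $W_A=b\,da-a\,db$, since the relevant multiplication maps are onto once $r\ge g+2$.
\item Hence $h^0(E_r^*A^m)=mr-h^0(LA^{m-2})$ for \emph{every} $L$.
\end{itemize}
The three special values of $m$ finish the lemma:
\begin{itemize}
\item $m=g-1$ gives $h^1(E_r)=(g-2)(r-2)$.
\item For $m=g$, duality identifies $H^0(F_r)^*=H^1(E_r^*A^g)$ with the derivative cokernel. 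Its dimension is $h^0(LA^{2g-1})-h^0(LA^{g-2})=2g+2$.
\item For $m=2$ one obtains the explicit $(r-2)$-dimensional space $(-b^2v,2abv,-a^2v)$, $v\in H^0(LA^{-2})$, in the basis $a^2,ab,b^2$. So the $r-2$ maps through $F_r\to A^{\oplus(r-2)}$ exhaust $\operatorname{Hom}(F_r,A)$, and $\mathcal R_r$ is their common kernel.
\end{itemize}
Your $M_D$ identification is consistent with this, but it becomes an equality only once this exact count is available.
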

\begin{proof}
Choose a basis $a,b$ of $H^0(A)$ and put
$W_A=b\,da-a\,db\ne0$ in $H^0(K_CA^2)$.
For $1\le m\le g$, the evaluation kernel of $H^0(A^m)$ is
$(A^{-1})^{\oplus m}$.  Thus the value relations after
multiplication by $H^0(L)$ are $m$ adjacent pencil relations,
with coefficients in $H^0(LA^{-1})$.  Applying first derivatives
to these relations has image
\[
 W_A H^0(LA^{m-2}).
\]
To verify surjectivity onto the coefficient space, the adjacent
relations give all degree-$(m-1)$ polynomials in $a,b$ times a
section of $LA^{-1}$.  Their multiplication is onto: its evaluation
kernel is a direct sum of $LA^{-2}$, whose degree
$r+g-3$ is at least $2g-1$.  The value multiplication is onto
for the same reason.  The derivative coefficients are nonzero in
characteristic zero.

Take $m=g-1$ and use \eqref{high:jet} and duality.  All line
bundles in the following expression are nonspecial, and
\[
 h^0(E_r^*K_C)
 =(g-1)h^0(LA^{-1})-h^0(LA^{g-3})
 =(g-2)(r-2).
\]
Riemann--Roch gives the asserted value of $h^0(E_r)$.
Taking $m=g$, duality identifies $H^0(F_r)^*$ with the cokernel
of the corresponding first-jet multiplication.  Its derivative
cokernel has dimension
\[
 h^0(LA^{2g-1})-h^0(LA^{g-2})=2g+2.
\]
Finally, for $m=2$, the simultaneous value and derivative relations
are exactly
\begin{equation}\label{high:quadratic}
 (-b^2v,\,2abv,\,-a^2v),\qquad v\in H^0(LA^{-2}),
\end{equation}
in the basis $a^2,ab,b^2$.  Indeed the two adjacent relations
have coefficients $u,v_1$, and their derivative vanishes precisely
when $au+bv_1=0$; the pencil trick gives the displayed formula.
Since $h^0(LA^{-2})=r-2$, the $r-2$ independent maps supplied
by the quotient $F_r\to A^{r-2}$ in \eqref{high:core} exhaust
all maps to $A$.  Their common kernel is $\mathcal R_r$.
\end{proof}

Let $D_W=w_1+\cdots+w_w$, $w=2g+2$, be the Weierstrass divisor.
Riemann--Hurwitz gives $\OO_C(D_W)=A^{g+1}$, and
$\OO_C(2w_i)=A$.  The normalized pointing line at each $w_i$
is therefore trivial.

\begin{lemma}\label{high:sections}
For $r\ge3g$ the $w$ Weierstrass pointing sections are generically
independent and form all of $H^0(F_r)$.  Every subsheaf $S$ satisfies
$h^0(S)\le\rk S$.  For $r\ge3g+2$ there is an exact sequence
\begin{equation}\label{high:projection}
 0\longrightarrow\OO_C^w\longrightarrow F_r
 \longrightarrow F_R\longrightarrow0,
 \qquad R=r-w,
\end{equation}
where the residual complete polarization is $L'=L(-D_W)$.
A saturated positive-degree proper subbundle satisfies
$h^0(S)\le\rk S-1$.  If $U=S\cap\OO_C^w$, then
$\rk U\ge h^0(S)$.
\end{lemma}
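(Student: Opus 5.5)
The plan is to identify the Weierstrass pointing sections with ambient sections, count them against Lemma~\ref{high:cohomology}, and derive the remaining assertions from the rank statement for centers and the determinant formula \eqref{high:pointing-det}.

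\textbf{The sections.} Let $v_i\in V^*$ represent the Weierstrass point $w_i$. The pointing image of the one-dimensional center $\langle v_i\rangle$ is the pointing line toward $w_i$. By \eqref{high:pointing-det} with $D=w_i$ and $T=0$, this line is $\OO_C(2w_i)=A$ inside $E_r$, so it is trivial in $F_r$. Its constant section is the pointing section, so each $w_i$ gives a section of $F_r$.

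\textbf{Independence and spanning.} First check that the $w$ points $w_i$ are linearly independent in $\PP^{r+1}$. This holds if $h^0(L(-D_W))=h^0(L)-w$. That follows from nonspeciality of $L(-D_W)=LA^{-(g+1)}$, which has degree $r-g-1\ge2g-1$ once $r\ge3g$. Now $W=\langle v_1,\dots,v_w\rangle$ has dimension $w=2g+2\le r$, since $3g\ge2g+2$ for $g\ge2$. The fact recalled before \eqref{high:pointing-det} then says the pointing image $P_W$ has rank $w$, so the $w$ pointing sections are generically independent. Lemma~\ref{high:cohomology} gives $h^0(F_r)=2g+2=w$, so these sections form a basis of $H^0(F_r)$. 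In particular every section of $F_r$ is an ambient section of a vector of $W$.

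\textbf{The bound $h^0(S)\le\rk S$.} The sections of a subsheaf $S$ form a subspace $W'\subset W$ of dimension $h^0(S)$. The image of $W'\otimes\OO_C$ lies in $S$ and has rank $\dim W'$, by the rank statement for centers (which also covers nonsaturated subsheaves). Hence $h^0(S)\le\rk S$.

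\textbf{The sequence \eqref{high:projection}.} For $r\ge3g+2$ I would project from $\PP(W)$. The residual series is $L'=L(-D_W)$, of degree $R+g+1\ge2g+1$, so it is complete and very ample and the projection is an embedding in $\PP^{R+1}$. The remark after \eqref{high:pointing-det} gives the normal projection sequence. Its kernel is $\bigoplus_i N_{C\to w_i}$, which is $A^{\oplus w}$ in $E_r$, so $\OO^w$ is a subbundle of $F_r$. The standard quotient is $N_{C/\PP^{R+1}}(D_W)$. Twisting by $L^{-1}A^{-1}=L'^{-1}(-D_W)A^{-1}$ turns it into $F_R$.

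\textbf{The last two claims, for $r\ge3g+2$.} Every section of $S$ lies in $H^0(F_r)=H^0(\OO^w)$, so the subsheaf generated by $H^0(S)$ sits inside $U=S\cap\OO^w$. That subsheaf has rank $h^0(S)$, which gives $\rk U\ge h^0(S)$. Now suppose $S$ is saturated with $h^0(S)=\rk S$. Then $U$ has full rank in $S$. Since $\OO^w$ is a subbundle, the saturation of $U$ in $F_r$ stays inside $\OO^w$, and because $S$ is saturated this gives $S\subset\OO^w$. A subsheaf of a trivial bundle has degree at most $0$, so positive degree forces $h^0(S)\le\rk S-1$.

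\textbf{Main obstacle.} The hard part is the ranks $r=3g,3g+1$, where \eqref{high:projection} is not available but the last two claims must still hold. There I would replace $\OO^w$ by the saturated pointing image $P_W$ and argue as above. The subsheaf generated by $H^0(S)$ lies in $P_W$, which gives $\rk U\ge h^0(S)$ with $U=S\cap P_W$. If $h^0(S)=\rk S$, then $S$ is the saturation of $P_{W'}$. Formula \eqref{high:pointing-det} gives $\deg P_{W'}=\deg T'\ge0$ after normalization, where $T'$ is the ramification divisor of the reduced projection from $\PP(W')$. The difficulty is to show $T'=0$ for all the relevant $W'$. I would first try to prove that the reduced projection from $\PP(W')$ is unramified. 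If that fails, I would instead use that $D_W$ imposes independent conditions on $L$ even after adding any further point, namely that $h^0(LA^{-(g+1)}(-x))$ has the expected dimension, to exclude ramification.
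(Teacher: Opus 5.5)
Your argument is correct and follows the paper's proof. The ingredients match: independence of the Weierstrass points from nonspeciality of $L(-D_W)$, rank $w$ of the pointing image, the count $h^0(F_r)=w$ from Lemma~\ref{high:cohomology}, and identification of the projection kernel with the $w$ trivial pointing lines (by comparing degrees with the saturated kernel of determinant $\OO_C(2D_W)$). So do the last two claims, which follow because every section lies in the subbundle $\OO_C^w$.

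The ``main obstacle'' you describe is not part of the statement. The last two assertions refer to $\OO_C^w$ and are only claimed, proved and used (e.g.\ at $r=4g+1$) for $r\ge3g+2$, so nothing at $r=3g,3g+1$ needs to be resolved.
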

\begin{proof}
The residual line bundle has degree $r-g-1\ge2g-1$ and is
nonspecial and nonconstant.  Thus the Weierstrass points impose
independent conditions.  Their constant center has dimension $w$,
and its pointing image has rank $w$ by the point-and-derivative
calculation above.  Lemma~\ref{high:cohomology} now shows that these
are all sections, with generically injective evaluation.

For $r\ge3g+2$, the residual degree is at least $2g+1$, so its
complete series embeds $C$.  The normal projection has a
saturated kernel of rank $w$, with normalized determinant
$\OO_C(2D_W)A^{-w}=\OO_C$.  The $w$ independent trivial
pointing lines consequently equal the whole kernel, proving
\eqref{high:projection}.  A constant $s$-dimensional subspace of
$H^0(\OO^w)$ gives a saturated trivial subbundle of $F_r$.
If a saturated rank-$s$ subbundle $S$ had $s$ sections, that trivial
subbundle would force $S=\OO^s$, contrary to positive degree.
All sections of $S$ lie in $U$, proving the last assertion.
\end{proof}

\begin{lemma}\label{high:original-bounds}
Suppose $r\ge3g$, and let $S\subset F_r$ be saturated, with
$0<s=\rk S<r$, $e=\deg S>0$, and
$h=\rk(S\cap\mathcal R_r)$.  Then
\begin{equation}\label{high:degree-bound}
 e\le2g+2+(g-2)h,
 \qquad h^0(S)\ge e-(g-1)h.
\end{equation}
If $S$ does not contain $B_r$, then $h\le1$ and $e\le3g$;
when $h=1$ it also satisfies
\begin{equation}\label{high:core-bound}
 e\le2s+g+1-r.
\end{equation}
\end{lemma}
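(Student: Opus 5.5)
The plan is to filter $S$ by the scroll core and read off both its degree and its sections from the two pieces. Put $S_0=S\cap\mathcal R_r$, of rank $h\in\{0,1,2\}$, and let $T$ be the image of $S$ in $F_r/\mathcal R_r=A^{\oplus(r-2)}$ from \eqref{high:core}. Then $T$ has rank $s-h$, and $T'=TA^{-1}\subset\OO_C^{\oplus(r-2)}$ has degree $-\delta\le0$. Hence
\[
 e=\deg S_0+2(s-h)-\delta ,
\]
and $\deg S_0$ is known from \eqref{high:core}:
\begin{itemize}
\item if $h=2$, then $S_0=\mathcal R_r$ and $\deg S_0=4g+4-2r$;
\item if $h=1$ and $S_0=B_r$, then $\deg S_0=3g+1-r$;
\item otherwise $S_0$ maps injectively to $B_{2,r}$, so $\deg S_0\le g+3-r$.
\end{itemize}
The last two assertions of the lemma follow from this bookkeeping alone. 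If $B_r\not\subset S$, then $h=2$ is impossible, because $S_0=\mathcal R_r\supset B_r$. When $h=1$, the line $S_0$ then injects into $B_{2,r}$, which gives $e\le g+3-r+2(s-1)=2s+g+1-r$, that is, \eqref{high:core-bound}. Once \eqref{high:degree-bound} is known, $h\le1$ gives $e\le2g+2+(g-2)=3g$.

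For \eqref{high:degree-bound} I will first prove the section estimate $h^0(S)\ge e-(g-1)h$, and then combine it with the upper bounds on sections. Since $S$ is saturated of positive degree, Lemma~\ref{high:sections} gives $h^0(S)\le h^0(F_r)=2g+2$, and in the relevant range also $h^0(S)\le s-1$. With $h^0(S)\le 2g+2$, the section estimate yields $e\le2g+2+(g-1)h$. This falls short of \eqref{high:degree-bound} by $h$. To recover the missing term I will use the fact that the sections of $S$ lie in the span of the Weierstrass pointing sections, together with the structure of $S_0$. When $h\ge1$, the sections of $S$ that come from $S_0$ are limited: $\deg S_0\le 3g+1-r\le1$ for $r\ge3g$, so the sections of $S_0$ and the $h^1(S_0)$ correction together should cost one unit per core rank.

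To obtain the section estimate I will bound $h^0(T)$ from below and subtract the contribution of $h^1(S_0)$. By the long exact sequence,
\[
 h^0(S)\ge h^0(T)-h^1(S_0)+h^0(S_0).
\]
For $T=T'\otimes A$ I will use the base-point-free pencil trick for $H^0(A)=\langle a,b\rangle$:
\[
 0\to T'A^{-1}\to T'^{\oplus2}\to T'A\to0 .
\]
Since $T'$ is a subsheaf of a trivial bundle, the maximal trivial subbundle $\OO^{s'}\subset T'$ carries all sections of $T'$. The aim is $h^0(T)\ge 2(s-h)-\delta$, with equality in the trivial case, which is exactly $h^0(T)\ge\deg T$. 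For $h^1(S_0)$, Riemann--Roch on lines of degree $\le3g+1-r$ bounds $h^1(S_0)-h^0(S_0)\le (g-1)-\deg S_0$ per unit of rank. This produces the $-(g-1)h$ term once $\deg S_0$ is absorbed into $e$.

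The main obstacle is the inequality $h^0(T'A)\ge\deg(T'A)$ when $T'$ is a nontrivial subsheaf of $\OO^{\oplus(r-2)}$ with $\delta>0$. The pencil trick only gives $h^0(T'A)\ge2h^0(T')-h^0(T'A^{-1})$, and this loses information when $T'$ is far from trivial. My first attempt will use the explicit description \eqref{high:quadratic} of all maps $F_r\to A$. Each summand of $T$ should come with its own two-dimensional space of sections coming from $H^0(F_r)$, namely Weierstrass pointing sections paired by the pencil. The defect $\delta$ should then be charged against those sections one at a time. If this fails, I will instead use the last assertion of Lemma~\ref{high:sections}, $\rk(S\cap\OO_C^w)\ge h^0(S)$, and bound $e$ directly by intersecting $S$ with the Weierstrass trivial bundle.
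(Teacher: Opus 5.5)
Your bookkeeping with $S_0=S\cap\mathcal R_r$ and the image $T\subset A^{\oplus(r-2)}$ is correct. It proves \eqref{high:core-bound} and the exclusion of $h=2$ when $B_r\not\subset S$, and it gives $e\le3g$ once the first inequality is known. Your reduction of the section estimate to $h^0(T)\ge\deg T$, via $h^0(S)\ge h^0(T)+\chi(S_0)$, is also right.

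You leave $h^0(T)\ge\deg T$ unproved, and the pencil trick is not the tool for it; use Serre duality. Write $h^1(T)=h^0(T^*K_C)$ with $T^*K_C=T'^*\otimes A^{g-2}$. The dual of $T'\hookrightarrow\OO_C^{\oplus(r-2)}$ is generically surjective, so $T'^*$ contains $\OO_C^{\oplus(s-h)}$, spanned by generically independent constant sections. Hence $h^1(T)\ge(g-1)(s-h)$, and $h^0(T)\ge\chi(T)+(g-1)(s-h)=\deg T$. This is the paper's argument for the second inequality, applied to $S^*K_C\supset(S/S_0)^*K_C$.

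The genuine gap is the first inequality. Pairing the section estimate with $h^0(S)\le h^0(F_r)=2g+2$ gives only $e\le2g+2+(g-1)h$. For $h=0$ this suffices, and for $h=2$ your bookkeeping gives $e\le4g-2(r-s)\le4g-2$. For $h=1$ you are one unit short, and $h=1$ is exactly the case behind $e\le3g$.

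Neither proposed remedy closes this. The term $\chi(S_0)$ already enters your lower bound exactly, so finer information about $S_0$ cannot improve it. The bound $\rk(S\cap\OO_C^w)\ge h^0(S)$ is just another upper bound on $h^0(S)$. Along your route, excluding $e=3g+1$ with $h=1$ would require ruling out $h^0(S)=2g+2$, i.e.\ $S$ containing every Weierstrass pointing section, and nothing in the proposal does this.

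The missing idea is to count sections of the \emph{quotient} rather than of $S$. Put $q=r-s$. The bundle $(F_r/S)\otimes A=E_r/(S\otimes A)$ receives at least $q+2$ independent ambient sections. On the other side, $h^1((F_r/S)A)=h^0((F_r/S)^*A^{g-2})$ is the space of sections of $F_r^*A^{g-2}$ that annihilate $S$. By Lemma~\ref{high:cohomology} all such sections come from $(A^{g-3})^{\oplus(r-2)}$, and they must kill the rank-$(s-h)$ image of $S$ in $A^{\oplus(r-2)}$. Evaluating on $g-2$ general fibers bounds their dimension by $(g-2)(q+h-2)$. Riemann--Roch then gives
\[
 q+2\le 4g-e+(3-g)q+(g-2)(q+h-2),
\]
which is exactly $e\le2g+2+(g-2)h$. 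The saving of one unit per core rank comes from this annihilator count, which a count of sections of $S$ by itself does not detect.
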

\begin{proof}
The inclusion $(A^{g-3})^{r-2}\subset F_r^*A^{g-2}$ supplies
all global sections by Lemma~\ref{high:cohomology}.  For $g=2$
both section spaces are zero, since $H^1(E_r)=0$; the section
bound below is then simply zero.  Put $q=r-s$.
The subspace annihilating $S$ has dimension at most
$(g-2)(q+h-2)$.  Indeed evaluation of polynomials of degree
$g-3$ on $g-2$ distinct general fibers is an isomorphism onto
their values.  At a point of each such fiber outside the relevant
rank-drop locus, annihilation of the rank-$(s-h)$ image of $S$
in $A^{r-2}$ imposes $s-h$ independent conditions.  The resulting
product of value spaces has dimension $(g-2)(q+h-2)$.

The quotient $(F_r/S)A$ has at least $q+2$ independent ambient
sections.  Riemann--Roch and the preceding bound give
\[
 q+2\le4g-e+(3-g)q+(g-2)(q+h-2),
\]
which is the first inequality of \eqref{high:degree-bound}.
The rank-$(s-h)$ quotient of $S$ by its core intersection injects
into $A^{r-2}$.  Its dual twisted by $A$ has $s-h$ generically
independent constant sections.  Multiplication by
$H^0(A^{g-2})$ gives $(g-1)(s-h)$ independent sections of its
Serre dual, and hence of $S^*K_C$.  Riemann--Roch proves the
second inequality.

If $S$ misses $B_r$, its core intersection has rank at most one:
a rank-two intersection would force $\mathcal R_r\subset S$
by saturation.  A rank-one intersection injects into $B_{2,r}$,
while the remaining image lies in $A^{r-2}$.  Their degree bounds
give \eqref{high:core-bound}.
\end{proof}

\begin{lemma}\label{high:maximal-degree}
A subbundle $S\subset F_r$, $r\ge3g$, of slope at least $4g/r$
and corank $q\ge g$ cannot have degree $3g$.
When $g=2$, every proper subbundle has degree at most six, and
a subbundle of corank at least two has degree at most five.
\end{lemma}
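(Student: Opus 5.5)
The plan is to argue by contradiction from the equality case of Lemma~\ref{high:original-bounds}, following the \emph{fixed linear spaces} idea of the introduction. We may take $S$ saturated, since saturation raises degree at fixed rank. Suppose $S$ has degree $e=3g$, corank $q=r-s\ge g$ and slope at least $4g/r$.

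\textbf{Step 1: reduce to the equality case.} By Lemma~\ref{high:containing-scroll}, $S$ cannot contain $B_r$, because it would then have slope $<4g/r$. Lemma~\ref{high:original-bounds} therefore gives $h=\rk(S\cap\mathcal R_r)\le1$ and $e\le 2g+2+(g-2)h\le3g$. So $e=3g$ is the extremal value, and for $g>2$ it forces $h=1$. Every inequality in the Riemann--Roch count in the proof of \eqref{high:degree-bound} must then be an equality. In particular:
\begin{itemize}
\item the quotient $(F_r/S)A$ receives exactly $q+2$ independent ambient sections;
\item the annihilator estimate $(g-2)(q+h-2)$ is attained.
\end{itemize}
The core bound \eqref{high:core-bound} is recorded for later use.

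\textbf{Step 2: identify $S$ with a pointing image.} Let $W\subset V^*$ be the kernel of $V^*\to H^0((F_r/S)A)$. Since $\dim V^*=r+2$ and the image has dimension exactly $q+2$, we get $\dim W=s$. Every ambient section coming from $W$ lies in $S$, so the saturated pointing image satisfies $P_W\subset S$. By the rank statement preceding \eqref{high:pointing-det}, $P_W$ has rank $s$. Since $S$ is saturated, this gives $S=P_WA^{-1}$.

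Formula \eqref{high:pointing-det} then gives
\[
3g=e=2\deg D+\deg T-2s,
\]
where $D$ is the base divisor and $T$ the ramification divisor of the reduced projection from $\PP(W)$. That projection maps $C$ to $\PP^{q+1}$ by the series $M=L(-D)$, and it is nonconstant and nondegenerate.

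\textbf{Step 3: enlarge the center.} Choose a point $x\in C$ at which this reduced projection is ramified. Such a point exists by Riemann--Hurwitz or the Plücker formulas for a nondegenerate map to $\PP^{q+1}$ of the degree at hand, using $q\ge g$. Set $W'=W+\langle x\rangle$. Then $P_{W'}$ has rank $s+1$. Its base divisor gains $x$, contributing $+2$, which is cancelled by the extra normalization $-2$. The ramification at $x$ should give a strict net gain, so that $\deg P_{W'}A^{-1}>3g$ while the corank $q-1$ is still positive.

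This contradicts Lemma~\ref{high:original-bounds}, which bounds every saturated positive-degree proper subbundle by $3g$ unless it contains $B_r$. If $P_{W'}A^{-1}$ does contain $B_r$, the contradiction comes instead from the inequality $\deg S\le4g-m_g(v)$ in the proof of Lemma~\ref{high:containing-scroll}.

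\textbf{Main obstacle.} I expect the hardest step to be the bookkeeping of $\deg T$ under adding $x$: the new reduced projection must have ramification divisor at least $T-(\text{contribution at }x)+\text{something positive}$. To get this, I would first try a local computation at $x$ in coordinates of the projected curve, which should show the new differential still vanishes along the old $T$ away from $x$. A second delicate point is Step 2, namely that equality in the section count really yields a subspace $W$ of dimension exactly $s$, rather than merely at least $s$.

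\textbf{The case $g=2$.} Here $g-2=0$, so \eqref{high:degree-bound} gives $e\le6$ for every saturated positive-degree proper subbundle. Subbundles of degree $\le0$ satisfy the bound trivially. If $S$ has corank $v\ge2$ and contains $B_r$, the inequality $\deg S\le 8-\min(2v,v+2)$ from the proof of Lemma~\ref{high:containing-scroll} gives $\deg S\le4$. If $S$ misses $B_r$ and has degree $6=3g$, the argument of Steps~2 and~3 applies. That argument used the slope hypothesis only to exclude $B_r\subset S$, so it goes through without it, and this rules out degree $6$ in corank $\ge2$.
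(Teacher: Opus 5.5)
Your outline is the paper's own, and Step~3 is where it has a real gap. The outline: exclude $B_r$ by Lemma~\ref{high:containing-scroll}, force equality in the quotient section count of Lemma~\ref{high:original-bounds}, identify $SA$ with the pointing image of an $s$-dimensional center, enlarge the center at a ramification point, and contradict Lemma~\ref{high:original-bounds} or \eqref{high:quotient-lower}. Your $g=2$ reduction is also the paper's. The worry about $\dim W$ is unfounded: equality says the $q+2$ ambient sections exhaust $H^0((F_r/S)A)$.

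\emph{Existence of a ramification point.} You cannot get a zero of the reduced differential from Riemann--Hurwitz or the Plücker formulas. A nondegenerate map to $\PP^{q+1}$ may be an immersion. For $q+1\ge3$ the Plücker formulas only force inflection, which may be entirely of top order. At such a point the map is immersive with nondegenerate osculating plane. Killing its value then adds one simple base point ($+2$) and leaves the new map immersive there. The normalization $-2$ cancels this, so nothing strict is gained. The correct input is a degree count using the identity you already wrote. The reduced projected series has at least $q+2\ge g+2$ sections, so by Clifford it is nonspecial of degree at least $q+g+1$. Hence $\deg D\le s$, and then $\deg T=2s+3g-2\deg D\ge3g>0$. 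This, not a ramification formula, is where $q\ge g$ is used.

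\emph{The degree increment.} Your sketch attributes the gain to the wrong mechanism. Suppose the reduced differential vanishes at $x$ to order $c-1\ge1$. Killing the reduced value raises the base multiplicity at $x$ by $c\ge2$, not by one. Meanwhile the old contribution $c-1$ of $T$ at $x$ may disappear entirely; at a simple cusp the new map is immersive at $x$. So the local change in $2D+T$ is at least $2c-(c-1)=c+1\ge3$. The gain comes from the base divisor, not from new ramification, and the lower bound on $T'$ you propose to establish does not hold in general.

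Away from $x$, your claim that the new differential still vanishes along $T$ can also fail, at other points $y$ with the same projected value. There a new base point of order $c_0$ contributes $2c_0$, while the differential order drops by at most $c_0-1$. So the local contribution still does not decrease.

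Finally, if $x\in D$ then $W+\langle x\rangle=W$, and you must instead adjoin the leading nonzero projected jet at $x$.

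With these repairs the total increase is at least three. The enlarged normalized pointing subbundle then has rank $s+1$, corank $q-1\ge1$ and degree at least $3g+1$, and your concluding contradiction goes through as in the paper.
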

\begin{proof}
By Lemma~\ref{high:containing-scroll}, it misses $B_r$.  If its
degree were $3g$, equality would hold throughout the quotient
section count in Lemma~\ref{high:original-bounds}.  For $g>2$
this also forces $h=1$; for $g=2$ the coefficient of $h$ in that
count is zero, so no condition on $h$ is needed.
Thus all sections of $(F_r/S)A$ are its $q+2$ ambient sections.
Their constant kernel has dimension $s$, so $SA$ is the saturated
pointing bundle of that center.  Formula~\eqref{high:pointing-det}
gives
\[
 2\deg D+\deg T=2s+3g.
\]
The reduced projected series has at least $q+2\ge g+2$ sections.
It is nonspecial by Clifford's inequality, and therefore its
degree is at least $q+g+1$.  Since $\deg L=r+g+1$, this gives
$\deg D\le s$ and $\deg T\ge3g>0$.

Enlarge the center by the reduced projected value at a ramification
point.  This prescription also covers a point already in $D$:
use its leading nonzero projected jet after removing its old base
multiplicity.  Locally write the projection as $z^a h(z)$, with
one reduced coordinate a unit.  If $c\ge2$ is the smallest
positive vanishing order of the other reduced coordinates, then
the old differential order is $c-1$.  Killing the reduced value
raises the base multiplicity from $a$ to $a+c$.  The new
differential order is nonnegative, so the pointing determinant
degree increases by at least $2c-(c-1)=c+1\ge3$ at this point.
At every other point the local contribution to $2D+T$ cannot
decrease. If no new base point is introduced, projection cannot
lower the differential vanishing order. If a new base point of
order $c_0$ is introduced, the old reduced differential order is
$c_0-1$, while the added base contribution is $2c_0$ and the new
differential order is nonnegative. Thus the total determinant
degree increases by at least three.
The resulting normalized pointing subbundle has rank $s+1$
and degree at least $3g+1$; normalization subtracts two from
the determinant increment.  It is proper because $q\ge g\ge2$.
If it misses $B_r$, Lemma~\ref{high:original-bounds} bounds its
degree by $3g$.  If it contains $B_r$, its new corank is at least
$g-1$, and \eqref{high:quotient-lower} bounds its degree by
$4g-m_g(g-1)=2g+2<3g+1$.  Both alternatives contradict its degree.

For $g=2$, Lemma~\ref{high:original-bounds} gives the degree bound
six for every proper saturated subbundle, independently of its
core intersection.  If a degree-six subbundle had corank at least
two, it could not contain $B_r$, since its quotient would then
have degree at least $m_2(2)=4$.  The preceding equality and
ramification argument applies without any slope hypothesis and
again gives a contradiction.  Saturation gives the assertions for
all subsheaves as well.
\end{proof}

\subsection{Connecting maps of the Weierstrass projection}
\label{high:sec:connecting}

\begin{lemma}\label{high:generated-dual}
If $U$ is a saturated rank-$u$ subsheaf of $\OO_C^w$, of degree
$-\beta$, then $\beta\ge0$ and
\begin{equation}\label{high:epsilon}
 h^0(U^*)\le u+\epsilon_g(\beta),\qquad
 \epsilon_g(\beta)=\max\{\lfloor\beta/2\rfloor,\beta-g\}
 \le\beta.
\end{equation}
For $u=0$ one has $\beta=0$.  An odd positive $\beta<g+1$
cannot occur.
\end{lemma}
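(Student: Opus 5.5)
The plan is to pass to the dual, which is a globally generated bundle, and then reduce everything to statements about one globally generated line bundle on the hyperelliptic curve $C$.

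\emph{Step 1: dualize.} Since $U\subset\OO_C^w$ is saturated, the quotient $\OO_C^w/U$ is locally free. Dualizing therefore gives a surjection $\OO_C^w\twoheadrightarrow U^*$. So $E:=U^*$ is a globally generated bundle of rank $u$ and degree $\beta$. Its determinant $\det E$ is a globally generated line bundle of degree $\beta$, which gives $\beta\ge0$. If $u=0$ then $U=0$ and $\beta=0$ trivially.

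\emph{Step 2: reduce to the determinant.} For $u\ge1$, take $u-1$ general sections in the image of $H^0(\OO_C^w)$. I will show they are linearly independent at every point of $C$. This is the same codimension count as in the proof of Lemma~\ref{high:scroll}. At a fixed point $x$, dependence of $u-1$ general vectors in the $u$-dimensional fiber $E_x$ has codimension two in the tuple space, because the sections span $E_x$. Letting $x$ vary over the one-dimensional curve still leaves a proper locus. This is the step needing the most care, namely the uniformity over $x$; when $u=1$ it is vacuous. With such a choice we get an exact sequence
\[
 0\longrightarrow\OO_C^{u-1}\longrightarrow E\longrightarrow\det E\longrightarrow0,
\]
and hence $h^0(E)\le u-1+h^0(\det E)$.

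\emph{Step 3: bound $h^0(\det E)$.} If $\det E$ is special, Clifford's inequality gives $h^0(\det E)\le1+\lfloor\beta/2\rfloor$. If it is nonspecial, Riemann--Roch gives $h^0(\det E)=\beta+1-g$. In either case $h^0(\det E)\le1+\epsilon_g(\beta)$, which proves \eqref{high:epsilon}. The inequality $\epsilon_g(\beta)\le\beta$ is immediate.

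\emph{Step 4: the parity statement.} Suppose $\beta$ is odd and $0<\beta<g+1$. A globally generated line bundle of positive degree is nontrivial, so $h^0(\det E)\ge2$. Since $\beta\le g$, nonspeciality would give $h^0=\beta+1-g\le1$, so $\det E$ is special. On a hyperelliptic curve, every special line bundle with $h^0=m+1$ has the form $A^m(B)$, where $B$ is its base divisor. Global generation forces $B=0$, so $\det E=A^m$ has even degree $2m$, a contradiction.
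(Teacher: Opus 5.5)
Your proof is correct. Steps 1--3 are the paper's argument: dualize to get a globally generated bundle, split off an everywhere-independent $\OO_C^{u-1}$ using the codimension-two degeneracy count, and bound $h^0(\det U^*)$ by Clifford's inequality or Riemann--Roch.

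Step~4 takes a different route from the paper. The paper chooses a base-point-free pencil in $|\det U^*|$. For odd $\beta$ this pencil cannot factor through the hyperelliptic map. Hence the combined map $C\to\PP^1\times\PP^1$ is birational onto a curve of bidegree $(2,\beta)$, whose arithmetic genus $\beta-1$ bounds $g$. You instead note that $h^0\ge2$ together with $\beta\le g$ forces $\det U^*$ to be special. You then invoke the classical structure theorem that every complete special series on a hyperelliptic curve is $m\,g^1_2$ plus its base divisor. A generated special line bundle is therefore $A^m$, which has even degree.

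Your argument is shorter, but it relies on an external classification result, which you should cite (it is standard, e.g.\ in ACGH). The paper's argument is self-contained: it uses only the arithmetic genus of curves on $\PP^1\times\PP^1$ and the fact that geometric genus is at most arithmetic genus. Your route also gives the slightly more structural fact that the generated special line bundles are exactly powers of $A$.

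As the paper remarks, only the inequality \eqref{high:epsilon} is used later, not the parity exclusion.
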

\begin{proof}
The dual $U^*$ is generated.  For $u>0$, choose $u-1$ general
sections: their degeneracy locus has codimension two, so they give
an everywhere independent subbundle $\OO^{u-1}$.  The quotient
is $\det U^*$, a generated line of degree $\beta$.  Clifford's
inequality and Riemann--Roch give
$h^0(\det U^*)\le1+\epsilon_g(\beta)$, proving the bound.
Degree zero forces this line, and then $U^*$, to be trivial.
For the last assertion a generated positive-degree line has a
base-point-free pencil.  If its odd degree is $\beta$, that
pencil cannot factor through the degree-two hyperelliptic map.
The combined map to $\PP^1\times\PP^1$ is birational onto its
image: its generic degree divides two, and degree two would force
that factorization.  The arithmetic genus of a curve of bidegree
$(2,\beta)$ is $\beta-1$, so $g\le\beta-1$.  This proves the
asserted exclusion.  Only the inequality \eqref{high:epsilon},
not this exclusion of odd degrees, is needed below.
\end{proof}

\begin{lemma}\label{high:connecting-A}
Suppose $r\ge3g+4$ and write $R=r-w$.  For the projection
\eqref{high:projection}, the connecting map
\[
 \delta_A:H^0(F_RA)\longrightarrow H^1(A)^w
 =H^0(A^{g-2})^{*\oplus w}
\]
has kernel the residual ambient section space $V'$ of dimension
$R+2$, and
\begin{equation}\label{high:evaluation-lines}
 \operatorname{im}\delta_A=
 \bigoplus_{i=1}^w k\operatorname{ev}_{w_i}.
\end{equation}
The evaluation lines are understood with their natural fiber scalars.
\end{lemma}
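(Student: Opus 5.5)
The plan is to take cohomology of the projection sequence \eqref{high:projection} after twisting by $A$. By Lemma~\ref{high:sections} this is
\[
 0\longrightarrow A^{\oplus w}\longrightarrow F_rA=E_r
 \longrightarrow F_RA=E_R\longrightarrow0,
\]
and it yields the long exact sequence
\[
 0\to H^0(A)^{w}\to H^0(E_r)\to H^0(E_R)\xrightarrow{\delta_A}H^1(A)^w .
\]
The argument has three steps: identify $\ker\delta_A$ by a dimension count, compute $\dim\operatorname{im}\delta_A$ the same way, and show that each component of $\delta_A$ lands in an evaluation line. The hypothesis $r\ge3g+4$ is exactly $R\ge g+2$, so Lemma~\ref{high:cohomology} applies to both $r$ and $R$.

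\emph{The kernel.} Since $h^0(A)=2$, Lemma~\ref{high:cohomology} gives
\[
 \dim\ker\delta_A=h^0(E_r)-2w=(r+2g+4)-(4g+4)=r-2g=R+2 .
\]
Next I would check that $\ker\delta_A$ contains $V'$. The ambient sections of $E_r$ come from $V^*$, and $V'^*$ is the quotient of $V^*$ by the span of the Weierstrass vectors. The jet sequence \eqref{high:jet} is compatible with the projection, so each ambient section of $E_r$ maps to the corresponding residual ambient section of $E_R$. Every residual ambient section therefore lifts.

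The residual ambient sections are independent in $H^0(E_R)$: a vector of $V'^*$ giving the zero section would be a point of $\PP(V'^*)$ lying on every tangent line of the nondegenerate residual curve, which is impossible in characteristic zero. Hence $\dim V'=R+2$, and comparing dimensions gives $\ker\delta_A=V'$.

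\emph{The size of the image.} Lemma~\ref{high:cohomology} gives $h^0(E_R)=R+2g+4$, so
\[
 \dim\operatorname{im}\delta_A=(R+2g+4)-(R+2)=2g+2=w .
\]
Thus \eqref{high:evaluation-lines} follows once each coordinate $\delta_i$ of $\delta_A$ is shown to take values in the line $k\operatorname{ev}_{w_i}\subset H^0(A^{g-2})^*$. Here we use Serre duality with $K_C=A^{g-1}$. Indeed, the direct sum of these $w$ lines has dimension $w$, and it would contain an image that also has dimension $w$.

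\emph{The key local step.} I would push the extension forward along the $i$-th projection $A^{\oplus w}\to A$. This produces an extension $0\to A\to G_i\to E_R\to0$, namely $E_r$ modulo the pointing lines toward the other Weierstrass points, and its connecting map is $\delta_i$. I claim that this extension becomes split after pushing forward further along $A\hookrightarrow A(w_i)$.

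The geometric reason is that $G_i$ is the normal bundle of the projection from the other $w-1$ points, and there further projection from $w_i$ is a morphism. The image of its differential defines the quotient map $G_i\to E_R$. Near $w_i$, extending the pointing line across the point where the projection center lies on the curve costs exactly one order of vanishing. So the pointing line saturates into a splitting map $E_R\to G_i\otimes_A A(w_i)$, that is, a splitting of the pushed-forward extension.

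Granting this, $\delta_i(\sigma)$ lies in the kernel of $H^1(A)\to H^1(A(w_i))$, which is the image of $H^0(k_{w_i})$. Dually, $\delta_i(\sigma)$ vanishes on $H^0(A^{g-2}(-w_i))$, so it is a multiple of $\operatorname{ev}_{w_i}$.

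I expect this splitting to be the main obstacle. To justify it I would first write the projection from $w_i$ in local coordinates, as in the ramification computation of Lemma~\ref{high:maximal-degree}. I would then verify that the order of the correction is exactly one, using $\OO_C(2w_i)=A$ so that the normalized twists match up. It also has to be confirmed that the splitting is global and not merely local; this should follow because the projection from $w_i$ is globally defined on the partially projected curve.
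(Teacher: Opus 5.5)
Your kernel identification, the count $\dim\operatorname{im}\delta_A=w$, and the reduction are all sound. The reduction says it suffices that each component $\delta_i$ lands in $\ker\bigl(H^1(A)\to H^1(A(w_i))\bigr)=k\operatorname{ev}_{w_i}$; for $g=2$ this already finishes the proof.

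The gap is the key step. The pushout of $0\to A\to G_i\to E_R\to0$ along $A\hookrightarrow A(w_i)$ is \emph{not} split, so the splitting map $E_R\to G_i\sqcup_A A(w_i)$ you claim does not exist. A splitting is the same as a map $\phi:G_i\to A(w_i)=\OO_C(3w_i)$ restricting to the inclusion on $A=\OO_C(2w_i)$.

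Now $G_i$ is the normalized normal bundle of the complete embedding by $L_i=L(-D_W+w_i)$. By the dual of \eqref{high:jet}, $G_i^*$ is therefore a saturated subbundle of $H^0(L_i)\otimes\OO_C$. Since $3$ is a gap at the Weierstrass point $w_i$, $H^0(\OO_C(3w_i))=H^0(\OO_C(2w_i))$. Hence every section of $G_i^*(3w_i)$ already lies in $G_i^*(2w_i)$, so $\phi$ factors through $A$. It would then give a retraction of $A\to G_i$ itself, forcing $\delta_i=0$.

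That is false. Lemma~\ref{high:cohomology} at rank $R+1\ge g+2$ gives $h^0(G_i)-h^0(A)=R+2g+3<R+2g+4=h^0(E_R)$, so $\delta_i$ has rank one. Your local analysis at $w_i$ cannot repair this: every extension of vector bundles splits locally, and the content here is global.

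What you actually need is only that the composite $H^0(E_R)\to H^1(A)\to H^1(A(w_i))$ vanishes. Equivalently, every global section of $E_R$ lifts to the length-one modification $G_i\sqcup_A A(w_i)$. This is a statement about global sections rather than about the extension class, and it needs a genuinely global computation.

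The paper does this by dualizing. The image of $\delta_A$ is the annihilator of the image of the restriction $H^0(F_r^*A^{g-2})\to H^0(A^{g-2})^{\oplus w}$ to the $w$ trivial pointing lines. Using \eqref{high:quadratic}, the paper identifies $H^0(F_r^*A^{g-2})=H^0(F_r^*A)\otimes H^0(A^{g-3})$. It then shows that contraction with the normalized pointing section at $w_i$ is a nonzero multiple of $v(w_i)\ell_{w_i}$. Finally, surjectivity of evaluation of $H^0(LA^{-2})$ on $D_W$ gives image $\bigoplus_i\ell_{w_i}H^0(A^{g-3})$. This is a sum of hyperplanes whose annihilator is $\bigoplus_i k\operatorname{ev}_{w_i}$.
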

\begin{proof}
The residual rank is at least $g+2$.  Lemma~\ref{high:cohomology}
and $h^0(A)=2$ give
\[
 \dim\ker\delta_A=h^0(F_rA)-2w=R+2.
\]
The $R+2$ residual ambient sections lift through the linear
projection, so they are exactly this kernel.  The same cohomology
formula gives $\rk\delta_A=w$.  When $g=2$ this already means
surjectivity onto $H^1(A)^6=k^6$.  The following contraction
calculation also records the pointing maps needed in
Lemma~\ref{high:canonical-connecting}.

We compute the image.  Formula~\eqref{high:quadratic}, together
with the core quotient, identifies
\[
 H^0(F_r^*A)\otimes H^0(A^{g-3})
 \xrightarrow{\ \sim\ }H^0(F_r^*A^{g-2}).
\]
Both sides are the $(r-2)(g-2)$ sections supplied by
$(A^{g-3})^{r-2}$ in the dual core sequence; both are zero for
$g=2$.  For a Weierstrass
point $x$, let $\ell_x=b(x)a-a(x)b$, so that
$\operatorname{div}(\ell_x)=2x$ after a choice of fiber
trivialization.  Pair \eqref{high:quadratic} with the ambient
point vector of $x$.  The result, as a quadratic expression in
$a,b$, is
\[
 -v(x)\bigl(b(x)a-a(x)b\bigr)^2=-v(x)\ell_x^2.
\]
The ambient pointing section vanishes on $2x$; dividing out its
pointing zero and normalizing by $A$ divides this expression by
$\ell_x$.  Thus contraction with the normalized trivial pointing
section is a nonzero scalar times $v(x)\ell_x$.

Evaluation of $H^0(LA^{-2})$ on $D_W$ is onto, because
\[
 \deg(LA^{-2}(-D_W))=r-g-5\ge2g-1.
\]
The dual cohomology map therefore has image exactly
\[
 \bigoplus_{i=1}^w\ell_{w_i}H^0(A^{g-3})
 \subset H^0(A^{g-2})^{\oplus w}.
\]
Each summand is the hyperplane of sections vanishing at $w_i$.
Its annihilator is the evaluation line, proving
\eqref{high:evaluation-lines}.
\end{proof}

\begin{lemma}\label{high:all-image-bounds}
Let $r\ge3g+4$, let $S\subset F_r$ be saturated of positive degree
$e$, and write
\[
 U=S\cap\OO^w,\quad \rk U=u,\quad\deg U=-\beta,
 \qquad I=\operatorname{im}(S\to F_R),\quad\rk I=t,
 \quad f:=\deg I=e+\beta.
\]
Then $\beta\ge0$ and $t>0$.  Set
$h'=\rk(I\cap\mathcal R_R)$ and $\epsilon=\epsilon_g(\beta)$.
The following inequalities include all residual-image cases:
\begin{align}
 u&\ge f-(g-2)h'-\epsilon &&(0<t<R),
 \label{high:proper-image}\\
 u&\ge f-(2g-3)-\epsilon &&(t=R, I\ne F_R),
 \label{high:proper-full-image}\\
 u&\ge w-\epsilon,\qquad \beta=4g-e &&(I=F_R).
 \label{high:whole-image}
\end{align}
For proper images the core also gives
\begin{equation}\label{high:residual-core-caps}
 f\le
 \begin{cases}
 2t,&h'=0,\\
 2t+3g-1-R,&h'=1,\\
 2t+4g-2R,&h'=2.
 \end{cases}
\end{equation}
If $S$ misses $B_r$, with $h=\rk(S\cap\mathcal R_r)$, one
also has $u\ge e-(g-1)h$.
\end{lemma}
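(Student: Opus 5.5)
The plan is to work with the exact sequence
\[
0\longrightarrow U\longrightarrow S\longrightarrow I\longrightarrow0
\]
obtained by intersecting $S$ with the Weierstrass kernel $\OO_C^w$ of \eqref{high:projection}.

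\emph{Preliminaries.} Since $S$ is saturated in $F_r$, the quotient $\OO^w/U$ injects into $F_r/S$. Hence $U$ is saturated in $\OO^w$, and Lemma~\ref{high:generated-dual} gives $\beta\ge0$ together with $h^0(U^*)\le u+\epsilon$. If $t=0$, then $S=U\subset\OO^w$ would have degree $-\beta\le0$, which contradicts $e>0$. The identity $f=e+\beta$ is additivity of degree.

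\emph{Core caps and the last assertion.} The caps \eqref{high:residual-core-caps} are formal consequences of the core sequence \eqref{high:core} for $F_R$; this is legitimate because $R=r-w\ge g+2$. The image of $I$ in $F_R/\mathcal R_R=A^{\oplus(R-2)}$ has rank $t-h'$ and degree at most $2(t-h')$. The part $I\cap\mathcal R_R$ is bounded as follows:
\begin{itemize}
\item for $h'=0$ it contributes nothing;
\item for $h'=1$ it is at most $\max(\deg B_R,\deg B_{2,R})=3g+1-R$;
\item for $h'=2$ it is at most $\deg\mathcal R_R=4g+4-2R$.
\end{itemize}
Adding $2(t-h')$ gives the three cases. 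The final assertion, $u\ge e-(g-1)h$ when $S$ misses $B_r$, is immediate: combine $h^0(S)\ge e-(g-1)h$ from \eqref{high:degree-bound} with $u=\rk U\ge h^0(S)$ from Lemma~\ref{high:sections}. Both hypotheses hold since $r\ge3g+4$.

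\emph{Proper images, \eqref{high:proper-image} and \eqref{high:proper-full-image}.} I would rerun the section count of Lemma~\ref{high:original-bounds} for $I$ inside $F_R$ and then transport sections back to $S$.
\begin{itemize}
\item The dual-core argument produces $(g-1)(t-h')$ independent sections of $I^*K_C$. By Riemann--Roch this gives a lower bound of the form $h^0(IA)\ge f-(g-2)h'+(\text{rank terms})$.
\item When $I$ is a full-rank proper image, the same count is done with the torsion defect absorbed into a constant $2g-3$.
\item A section of $IA$ lifts to $SA$ exactly when it dies under the connecting map $H^0(IA)\to H^1(UA)$.
\item This connecting map is the restriction of $\delta_A$ followed by the dual of $H^0(U^*A^{g-2})\leftarrow H^0(A^{g-2})^{\oplus w}$.
\item By Lemma~\ref{high:connecting-A}, $\operatorname{im}\delta_A$ consists only of the $w$ Weierstrass evaluation lines. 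The part seen through $U$ therefore has dimension controlled by $h^0(U^*)\le u+\epsilon$, since those lines pair with sections of $U^*$ evaluated at the $w_i$.
\item The lifted sections of $SA$ lie in the ambient/pointing sections, so their number is bounded by $\rk S$ and hence by the $U$-part, as in Lemma~\ref{high:sections}.
\end{itemize}
Equating the two counts should leave $u\ge f-(g-2)h'-\epsilon$.

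\emph{Whole image, \eqref{high:whole-image}.} If $I=F_R$, then $\deg I=4g$, so $\beta=4g-e$. The snake lemma identifies $F_r/S\cong\OO^w/U$. This quotient has rank $w-u$ and, being a quotient of $F_r$, receives at least $(w-u)+2$ independent ambient sections. On the other hand, all of its sections come from $H^0(\OO^w)$ modulo those of $U$, plus an $H^1$-contribution bounded through $h^0(U^*)\le u+\epsilon$. Comparing the two counts gives $w-u\le\epsilon$.

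The main obstacle I expect is the bookkeeping in the proper-image case. One has to identify precisely the composite $H^0(IA)\to H^1(UA)$ with the restriction of $\delta_A$, so that only the evaluation lines at the $w_i$ meeting $U$ obstruct lifting, and then make the Riemann--Roch constants land exactly on $-(g-2)h'$ and $-(2g-3)$. I would first check the case $g=2$, where $H^1(A)$ is one-dimensional per point and every term is explicit, and then mimic that calculation for general $g$.
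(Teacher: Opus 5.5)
Your preliminaries, the core caps \eqref{high:residual-core-caps} and the final assertion $u\ge e-(g-1)h$ are argued as in the paper. The gap is in the two section counts, which carry the lemma.

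\textbf{Proper images.} For \eqref{high:proper-image} and \eqref{high:proper-full-image} you split $H^0(IA)$ according to whether a section lifts to $SA$, i.e.\ by the kernel of $\partial:H^0(IA)\to H^1(UA)$.

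\begin{itemize}
\item Naturality gives $\delta_A|_{H^0(IA)}=\iota\circ\partial$, with $\iota:H^1(UA)\to H^1(A)^w$. It does not factor $\partial$ through $\delta_A$.
\item The bound $h^0(U^*)\le u+\epsilon$ controls only the image in $H^1(A)^w$: $\dim\bigl(\operatorname{im}\delta_A\cap\operatorname{im}\iota\bigr)\le h^0(U^*)$. The paper proves this by pairing with some $\eta\in H^0(A^{g-2})$ nonzero at every $w_i$. It does not bound $\dim\partial(H^0(IA))$, which lives in $H^1(UA)\cong H^0(U^*A^{g-2})^*$.
\item For $g\ge3$ a section can fail to lift while $\delta_A$ kills it, so your two pieces do not exhaust $H^0(IA)$. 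In genus two $H^1(UA)\cong H^0(U^*)^*$, so there the crude bound $\dim\partial(H^0(IA))\le h^1(UA)$ happens to suffice. That is why a $g=2$ check would not reveal the problem.
\item Your bound on the lifted part is also off. Only ambient sections of $SA$ are bounded by $\rk S$, and Lemma~\ref{high:sections} concerns untwisted sections. Even granting $u+t$, you would end up with $2u$ on the right.
\end{itemize}

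The missing step is to split by $\delta_A|_{H^0(IA)}$ itself.

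\begin{itemize}
\item Its kernel is $V'\cap H^0(IA)$ by Lemma~\ref{high:connecting-A}. This has dimension at most $t$ by the common-center bound, which holds also for nonsaturated $I$. For a proper full-rank image it has dimension at most $R+1$, because the $R+2$ ambient sections generate $F_RA$.
\item Its image lies in $\operatorname{im}\delta_A\cap\operatorname{im}\iota$.
\item Combine this with $h^0(IA)\ge f+t-(g-2)h'$, which follows from $h^1(IA)=h^0(I^*A^{g-2})\ge(g-2)(t-h')$. Your count of $(g-1)(t-h')$ sections of $I^*K_C$ concerns the untwisted $I$. The result is \eqref{high:proper-image}.
\item In the full-rank proper case, $h'=2$ together with the one extra ambient section gives $2g-3=2(g-2)+1$. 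No torsion-defect constant is involved.
\end{itemize}

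\textbf{Whole image.} For \eqref{high:whole-image} your comparison cannot produce the inequality. The lower bound of $(w-u)+2$ ambient sections applies to the twisted quotient $(F_r/S)A\cong(\OO^w/U)A$, whose $h^0$ can be as large as $2w$. For the untwisted $\OO^w/U$ there is no such lower bound: ambient sections are sections of $F_rA=E_r$, and $F_r$ itself has only the $w$ Weierstrass sections. If $U=0$ there is no counting contradiction at all. Yet the lemma then asserts $0\ge w$, i.e.\ that \eqref{high:projection} does not split. That non-splitting, $\rk\delta_A=w$, is exactly the input you need. When $I=F_R$ one has $\delta_A(H^0(IA))=\operatorname{im}\delta_A$, of dimension $w$, lying inside $\operatorname{im}\delta_A\cap\operatorname{im}\iota$. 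Hence $w\le u+\epsilon$.
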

\begin{proof}
The quotient $\OO^w/U$ injects into $F_r/S$, so $U$ is saturated
in $\OO^w$.  The assertions about $\beta$ follow from
Lemma~\ref{high:generated-dual}.  Rank zero for $I$ would put
the positive-degree $S$ inside $\OO^w$, which is impossible.

Choose $\eta\in H^0(A^{g-2})$ nonzero at every $w_i$, and pair
coordinatewise to obtain $\rho:H^1(A)^w\to k^w$.  It is
injective on the image \eqref{high:evaluation-lines}.  The dual
of $H^1(UA)\to H^1(A)^w\xrightarrow{\rho}k^w$ factors as
\[
 k^w\longrightarrow H^0(U^*)
 \xrightarrow{\cdot\eta}H^0(U^*A^{g-2}).
\]
Consequently
\begin{equation}\label{high:cohom-intersection}
 \dim\bigl(\operatorname{im}\delta_A\cap
 \operatorname{im}[H^1(UA)\to H^1(A)^w]\bigr)
 \le h^0(U^*)\le u+\epsilon.
\end{equation}
The rank-$(t-h')$ quotient of $I$ by its core intersection
injects into $A^{R-2}$.  As in the proof of
Lemma~\ref{high:original-bounds}, this gives
\[
 h^1(IA)\ge(g-2)(t-h'),\qquad
 h^0(IA)\ge f+t-(g-2)h'.
\]
For $0<t<R$, the common-center calculation bounds
$\dim(V'\cap H^0(IA))$ by $t$, including when $I$ is
nonsaturated.  Naturality for $0\to U\to S\to I\to0$ makes
$\delta_A(H^0(IA))$ lie in the intersection of
\eqref{high:cohom-intersection}.  This proves
\eqref{high:proper-image}.  If $I$ is proper of full rank,
$h'=2$ and its intersection with $V'$ has dimension at most
$R+1$: all $R+2$ ambient sections generate $F_RA$, so their
containment would force $I=F_R$.  The same calculation gives
\eqref{high:proper-full-image}.  For $I=F_R$ use
$\rk\delta_A=w$ directly to get \eqref{high:whole-image}.

For \eqref{high:residual-core-caps}, a core line has degree at
most $\deg B_R=3g+1-R$, since it either lies in $B_R$ or
injects into the smaller-degree $B_{2,R}$.  A rank-two core
subsheaf has degree at most $4g+4-2R$.  Add twice the remaining
rank in $A^{R-2}$; the three asserted caps follow.  The last
assertion follows from Lemmas~\ref{high:sections} and
\ref{high:original-bounds}, because all sections of $S$ lie in $U$.
\end{proof}

\begin{lemma}\label{high:canonical-connecting}
For $r\ge3g+4$, the inverse image $H_{B_R}$ of the residual
scroll line $B_R$ under \eqref{high:projection} has an exact sequence
\[
 0\longrightarrow\OO^w\longrightarrow H_{B_R}
 \longrightarrow B_R\longrightarrow0.
\]
Its $K_C$-twisted connecting map is, after choices of fiber
trivializations,
\begin{equation}\label{high:canonical-evaluation}
 H^0(K_CB_R)\longrightarrow H^1(K_C)^w=k^w,
 \qquad \sigma\longmapsto(c_i\sigma(w_i))_{i=1}^w,
 \qquad c_i\ne0.
\end{equation}
\end{lemma}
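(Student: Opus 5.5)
The plan has two parts. The exact sequence is immediate: $H_{B_R}$ is the inverse image of the subsheaf $B_R\subset F_R$ under the surjection $F_r\to F_R$ of \eqref{high:projection}, so it sits between $\OO^w$ and $B_R$. The substance is the connecting map. By naturality of long exact sequences, the $K_C$-twisted connecting map for $H_{B_R}$ is the restriction to $H^0(K_CB_R)\subset H^0(K_CF_R)$ of the connecting map $\delta_K:H^0(F_RK_C)\to H^1(K_C)^w$ of \eqref{high:projection} twisted by $K_C=A^{g-1}$. The aim is therefore to compute $\delta_K$, or its Serre dual, on the scroll line. The natural model is the contraction computation in the proof of Lemma~\ref{high:connecting-A}.

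To set this up, dualize. $H^1(K_C)^w=k^w$, and the dual of $\delta_K$ is the restriction map
\[
 k^w=H^0(\OO^w)\longrightarrow H^1(B_R^{*})^{*}\cong H^0(K_CB_R)^{*}.
\]
Equivalently, for each $i$ we must identify the extension class of the pushout
\[
 0\to\OO\to H_i\to B_R\to0
\]
obtained by projecting $\OO^w$ to its $i$-th factor. This class lies in $\operatorname{Ext}^1(B_R,\OO)=H^1(B_R^*)$, and we pair it with $\sigma\in H^0(K_CB_R)$. I expect this class to be a nonzero multiple of the image of a skyscraper at $w_i$. Concretely, it should be the coboundary of the principal part at $w_i$ coming from the fact that the pointing line toward $w_i$ is the kernel of projection from $w_i$. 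Residue pairing then gives $c_i\sigma(w_i)$.

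To make this explicit, I would project one Weierstrass point at a time. The extension of the curve projected from $w_i$ by $\OO$ has extension class supported at $w_i$, since away from $w_i$ the projection is an isomorphism on normal directions. Locally at $w_i$ it is the standard elementary modification: the normal bundle of $C$ contains the pointing line with a zero of order two at $w_i$, which is normalized to $\OO$ by $A$. Thus the class is the image of a local class in $H^0(\mathcal{E}xt)$ supported at $w_i$, and its pairing with $\sigma$ is evaluation at $w_i$ times a local constant $c_i$. Restricting to $B_R$, the residual scroll line, the class pairs with $\sigma\in H^0(K_CB_R)$ as $c_i\sigma(w_i)$.

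The main obstacle is showing $c_i\neq0$, that is, that the local class does not vanish on the direction $B_R$ at $w_i$. Here I would use the surjectivity and evaluation-line structure of $\delta_A$ from Lemma~\ref{high:connecting-A}: its image is exactly $\bigoplus k\,\operatorname{ev}_{w_i}$, with nonzero scalars computed through $\ell_{w_i}$. Multiplying by $H^0(A^{g-2})$ transports this to the $K_C$-twist and keeps the coefficients $v(x)\ell_x$ nonzero. The remaining issue is that the $B_R$ fiber at $w_i$ is not contained in the hyperplane where the local class vanishes. I would settle this by checking that the scroll normal direction at $w_i$ is transverse to the pointing direction toward $w_i$. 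The ruling line through $w_i$ is tangent to $C$ there, since $w_i$ is a ramification point, so the pointing direction lies in the tangent plane of the scroll, while $N_{C/X}$ is complementary to it. This should give $c_i\neq0$.
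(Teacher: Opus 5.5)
Your proposal has a genuine gap. The two steps that carry the content of the lemma are asserted, not proved. First, why the class of $0\to\OO\to H_i\to B_R\to0$ is a principal part at $w_i$. Second, why that principal part is a \emph{simple} pole with nonzero coefficient. The argument you give for $c_i\neq0$ also rests on a false geometric claim.

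\textbf{Localization and pole order.} Your reason for localization does not work. The map $F_r\to F_R$ has a rank-$w$ kernel at every point, and $\mathcal{E}xt^1(B_R,\OO)=0$ because $B_R$ is locally free. So a class in $H^1(B_R^*)$ is not supported anywhere a priori. What localizes it in the paper is a lift of a full-rank subsheaf of $B_R$, supplied by the cores:
\begin{itemize}
\item $\mathcal R_r$ is the common kernel of all maps $F_r\to A$, so it maps into $\mathcal R_R$ and meets $\OO^w$ in rank zero.
\item The snake lemma identifies $\mathcal R_R/\mathcal R_r$ with the cokernel of $\operatorname{diag}(\ell_{w_i}):\OO^w\to A^w$, which is locally $\OO/(y^2)$ at $w_i$.
\item The original scroll line $B_r$ maps into $B_R$ with image $B_R(-D_W)$, so the pulled-back extension splits away from $D_W$.
\end{itemize}
The pole order at $w_i$ is then governed by the vanishing order $m_i$ of $B_r\to B_R$ there. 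The degree count gives only $\sum m_i=w$ with $0\le m_i\le2$. If $m_i=0$, the splitting extends and $c_i=0$. If $m_i=2$, the principal part is a double pole, and the connecting map involves the derivative of $\sigma$ at $w_i$, not just $\sigma(w_i)$. The paper's local elementary-transformation computation $f'(x,x/y)=-(x/y^2)f$ shows $m_i=1$. Hence $B_R$ lands in the nonzero socle $y\OO/(y^2)$ and the principal part is $c_i/y$ with $c_i\neq0$. Projecting one Weierstrass point at a time is the right move, but what must be tracked is the scroll line, not the pointing line.

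\textbf{The transversality argument.} Your argument for $c_i\neq0$ is incorrect and in fact points the wrong way. The fiber of $N_{C/X}$ at $w_i$ is $T_{w_i}X/T_{w_i}C$. So a normal direction lying in the scroll's tangent plane lies \emph{in} $N_{C/X}$; it is not complementary to it. The ruling line through $w_i$ lies on $X$ and is tangent to $C$ there, so the second fundamental form of $X$ vanishes on $T_{w_i}C$. Therefore the osculating direction of $C$ at $w_i$, which is the fiber of the saturated pointing line toward $w_i$, lies in $B_r|_{w_i}$.

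This coincidence is exactly what is needed. $B_r\to F_R$ vanishes at $w_i$ precisely because $B_r|_{w_i}$ lies in the kernel fiber $\OO^w|_{w_i}$. If $B_r|_{w_i}$ avoided that fiber, $B_r\to B_R$ would be an isomorphism at $w_i$ and $c_i$ would be $0$.

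\textbf{The transport of $\delta_A$.} Multiplying by $H^0(A^{g-2})$ computes the $K_C$-twisted connecting map only on the image of $H^0(A^{g-2})\otimes H^0(AF_R)$. You have not shown that this image contains $H^0(K_CB_R)$. Moreover, Lemma~\ref{high:connecting-A} says nothing specific about the $B_R$ direction.
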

\begin{proof}
The intrinsic common-kernel description of the cores gives a
commutative diagram from the core sequence for $F_r$ to that
for $F_R$.  On the quotients it is the constant surjection
\[
 A^{r-2}\longrightarrow A^{R-2}
\]
dual to $H^0(L(-D_W)A^{-2})\subset H^0(LA^{-2})$.
The calculation in Lemma~\ref{high:connecting-A} identifies its
kernel with $A^w$ and the map from the pointing kernel with the
diagonal map $\OO^w\to A^w$ having entries $\ell_{w_i}$.
This diagonal map is generically injective, so the original core
meets the pointing kernel in rank zero.  The snake lemma gives
\begin{equation}\label{high:core-cokernel}
 0\longrightarrow\OO^w\longrightarrow A^w
 \longrightarrow\mathcal R_R/\mathcal R_r\longrightarrow0.
\end{equation}
At $w_i$, its cokernel is cyclic of length two, locally
$\OO/(y^2)$ in its $i$th coordinate.

We need the location of the residual scroll line in this cokernel.
Project first from one Weierstrass point.  The scroll undergoes
the elementary transformation at that point.  In formal local
coordinates choose the ruling parameter $x$ and a transverse
parameter $y$ with $C:x=y^2$.  The transformed chart has
coordinates $(x,v)$, $v=x/y$, and $C':x=v^2$.  For the two
curve equations $f=x-y^2$, $f'=x-v^2$,
\[
 f'(x,x/y)=-\frac{x}{y^2}f.
\]
Along $C$ the coefficient extends as the unit $-1$.  Thus the
curve-in-scroll normal map is a unit.  The ambient normal
projection has target $N'\otimes\OO(p)$; in the regular frame
$1/y$ of $\OO(p)$ this unit becomes a simple zero.  The
normalized scroll-line map is consequently $B_r\to B_r(p)$
with exactly that simple zero.  Successive projections are valid
because every intervening complete rank is at least $R\ge g+2$.
They give
\[
 B_r\longrightarrow B_R=B_r(D_W)
\]
with a simple zero at each $w_i$ and no other zeros.

In local bases respecting the two scroll lines, the map of cores
therefore has the form
\[
 \begin{pmatrix}y&c\\0&y\end{pmatrix}
\]
after absorbing units.  Its determinant has order two by
\eqref{high:core-cokernel}; the cyclic cokernel forces $c$ to be
a unit.  The image of $B_R$ in that cokernel is therefore its
nonzero socle, a nonzero multiple of $y$ in $\OO/(y^2)$.

The inverse image of $\mathcal R_R$ in $F_r$ is the fiber product
of $A^w$ and $\mathcal R_R$ over
$\mathcal R_R/\mathcal R_r$.  To check this identification, lift
a pair having the same image in the quotient to $F_r$, and then
adjust its core component; injectivity follows because the core
has zero intersection with the pointing kernel.  Thus the
extension of $B_R$ is the pullback of \eqref{high:core-cokernel}
along this socle map.  Locally, pulling back $y$ modulo $y^2$
through multiplication by $y^2$ produces the principal part
$c_i/y$ in coordinate $i$, with $c_i\ne0$.  The residue
description of Serre duality now gives exactly
\eqref{high:canonical-evaluation}.
\end{proof}

\subsection{Semistability of complete normal bundles}
\label{high:sec:induction}

\begin{theorem}\label{high:complete-semistability}
Let $g\ge2$.  On every fixed smooth hyperelliptic curve $C$,
the complete normalized normal bundle $F_r$ is semistable for
a general $L\in\operatorname{Pic}^{r+g+1}(C)$ whenever $r\ge3g$.
For $g=2$ it is semistable for every such $L$.
\end{theorem}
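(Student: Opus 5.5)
We will argue by induction on $r$ in steps of $w=2g+2$, using the Weierstrass projection \eqref{high:projection}. The base range is $3g\le r\le 3g+w+1=5g+3$; the inductive step covers $r\ge 3g+4$ with $R=r-w\ge 3g$ already known. Where $R<3g$ we have no semistability input, so the base cases $3g\le r<5g+2$ must be handled directly. In those cases we use only Lemmas~\ref{high:containing-scroll}, \ref{high:original-bounds} and~\ref{high:maximal-degree}. Let $S\subset F_r$ be saturated of rank $s$ and degree $e$, and suppose $e\ge 4gs/r>0$. By Lemma~\ref{high:containing-scroll}, $S$ misses $B_r$, so $h\le1$ and $e\le 3g$. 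Lemma~\ref{high:maximal-degree} excludes $e=3g$ when the corank is at least $g$. For small corank we use instead the quotient bound \eqref{high:quotient-lower} together with \eqref{high:core-bound} (the case $h=1$), or $e\le 2s$ from the $A^{r-2}$ quotient (the case $h=0$). This leaves the inequality $re\le 4gs$ to be checked over a finite range of $(s,e)$ with $e\le 3g-1$, $s\le r-1$. Combining $e\le 2s$ (or $e\le 2s+g+1-r$) with $e\le 3g-1$ gives $e/s\le 4g/r$ once $r\ge 3g$, except near $s\approx 3g/2$. There we invoke the section inequality $h^0(S)\ge e-(g-1)h$ together with $h^0(S)\le s-1$ from Lemma~\ref{high:sections}. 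For $g=2$ the same bounds (degree at most $6$, and at most $5$ at corank at least two) give semistability for every $L$, since Lemma~\ref{high:maximal-degree} is stated without generality. This explains the ``every $L$'' clause.

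For the inductive step, take $S$ saturated of positive degree and write $U$, $I$, $u$, $\beta$, $t$, $f=e+\beta$ as in Lemma~\ref{high:all-image-bounds}. By induction, $F_R$ is semistable for general $L'=L(-D_W)$. Since $L\mapsto L(-D_W)$ is a translation on Pic, generality of $L$ gives generality of $L'$. Hence $f\le 4gt/R$ when $I$ is proper, with full-rank images handled separately. Then
\[
e=f-\beta\le \tfrac{4g}{R}t-\beta,\qquad s=u+t .
\]
We must show $re\le 4g(u+t)$. The lower bounds \eqref{high:proper-image}--\eqref{high:whole-image} give $u\ge f-(g-2)h'-\epsilon_g(\beta)$, and $\epsilon_g(\beta)\le\beta$. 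Substituting, the target becomes a linear inequality in $f,\beta,t,h'$. We verify it using the caps \eqref{high:residual-core-caps} for $h'\in\{0,1,2\}$ and the slope bound on $f$. The case $I=F_R$ is easiest: $e=4g-\beta$ and $u\ge w-\epsilon$ make $S$ nearly everything, and $S$ proper forces $u<w$, so $\epsilon\ge1$ and hence $\beta\ge 2$. In that case the slope comparison reduces to $(4g-\beta)r\le 4g(r-w+u)$ with $u\ge w-\beta$, which holds.

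The main obstacle is the proper-image case with $h'=1$ or $2$ and small $\beta$. Here the loss term $(g-2)h'$ in \eqref{high:proper-image} is not compensated by $\epsilon$, and the slack $4g/R-4g/r$ is small. For this case we use Lemma~\ref{high:canonical-connecting}. If $I$ meets the residual core in $B_R$ and $U$ is small, then the extension $0\to U\to S\cap H_{B_R}\to B_R$ must split against the connecting map \eqref{high:canonical-evaluation}. Since that map is evaluation at all $w$ Weierstrass points with nonzero constants, and $h^0(K_CB_R)$ is large compared with $u$, a lift of $B_R$ forces $u$ to grow by roughly the number of points where sections can be prescribed. This recovers the missing $g-2$ per core rank. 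We expect this bookkeeping in the low ranks just above $3g+4$ to require the most care.
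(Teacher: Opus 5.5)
Your proposal has a genuine gap in the base range. You claim that every $r$ with $3g\le r\le5g+1$ follows from Lemmas~\ref{high:containing-scroll}, \ref{high:original-bounds} and~\ref{high:maximal-degree} together with the section count. That is true only up to $r=4g+1$. For $h=0$ the sections give $s\ge h^0(S)+1\ge e+1$. This contradicts $4gs<er$ only when $e(r-4g)\le4g$, and for $r\ge4g+2$ it no longer does.

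Concretely, take $g=2$, $r=10$, and a saturated $S$ with $h=0$ and $(s,e)=(6,5)$. It misses $B_r$, respects $e\le5$ at corank four, $e\le2g+2$, $e\le2s$ and $s\ge e+1$, yet $5/6>8/10$. For $g=3$, $r=14$, the types $(8,7)$ and $(9,8)$ survive all your bounds in the same way. So these direct bounds cannot close $4g+2\le r\le5g+1$, even in genus two.

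The paper handles this range by still using the Weierstrass projection \eqref{high:projection}, even though the residual $F_R$, with $2g\le R\le3g-1$, is unstable. Its explicit profile \eqref{high:seed-profile} rules out $h=1$. For $h=0$ it forces $t<g$, and then forces the image $I$ to meet the residual scroll line $B_R$ in a positive-degree line $J$.

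Lemma~\ref{high:canonical-connecting} then finishes the range. The kernel of the canonical connecting map is $H^0(K_CB_R(-D_W))$, a line bundle of degree $g-1-z<g$, so it vanishes for general $L$. Hence $u+\epsilon_g(\beta)\ge\deg J+g-1$, which gives $s\ge e+g$ and contradicts strict slope. This is where generality of $L$ genuinely enters. In genus two the degree $1-z$ is negative, and that is the real reason for the every-$L$ clause. It is not because Lemma~\ref{high:maximal-degree} is stated without generality, since that lemma does not settle $r=10,11$.

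Conversely, you deploy the connecting map in the inductive step, where it is not needed. There you use \eqref{high:proper-image}, which loses $(g-2)h'$. With residual semistability alone, the target reduces to $(2g-2)f+r\beta-4g\epsilon\ge4g(g-2)h'$, which fails for $h'\ge1$ and small $f$. That is the obstacle you identify but do not resolve.

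Instead, use the last assertion of Lemma~\ref{high:all-image-bounds}: $u\ge e-(g-1)h$ with $h=\rk(S\cap\mathcal R_r)\le1$, because all sections of $S$ lie in $U$. Combining $(e+\beta)R\le4g(s-u)$ with strict slope gives $4gu+\beta R<we$. If $h=0$ this contradicts $w<4g$. If $h=1$ it gives $e<2g$, contradicting $e>2g(r-g-1)/(r-2g)>2g$, which follows from \eqref{high:core-bound} and strict slope.
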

\begin{proof}
We induct on $r$ in steps of $w=2g+2$, using the Weierstrass
projection \eqref{high:projection}. Step~0 records the constraints on
a destabilizing subsheaf. For $3g\le r\le4g+1$ the degree bounds of
Lemmas~\ref{high:scroll}--\ref{high:maximal-degree} exclude it
directly (Step~1). For $4g+2\le r\le5g+1$ the residual rank $r-w$
lies below $3g$, where the residual bundle is not semistable, so we
use the connecting maps of Section~\ref{high:sec:connecting} instead
(Step~2). For $r\ge5g+2$ the residual bundle is semistable by
induction (Step~3). Step~4 deals with the generality of $L$.

\casehead{Step 0: constraints on a destabilizer.}
A saturated strict destabilizer $S$, of rank $s$ and degree $e$,
satisfies
\begin{equation}\label{high:strict-data}
 4gs<er,\qquad 1\le e\le3g,\qquad
 h:=\rk(S\cap\mathcal R_r)\in\{0,1\}.
\end{equation}
Here Lemma~\ref{high:containing-scroll} excludes $B_r$.
When $g=2$ we moreover have $e\le5$.  Indeed degree six is
excluded in corank at least two by
Lemma~\ref{high:maximal-degree}, and in corank one it cannot
destabilize because $6r<8(r-1)$ for $r\ge6$.

\casehead{Step 1: $3g\le r\le4g+1$.}
For $r\le4g-2$ and $h=0$,
sections give $s\ge e$, while strict slope gives $s<e$.
For $h=1$, combining \eqref{high:core-bound} with strict slope
gives
\begin{equation}\label{high:strict-core-numeric}
 e>\frac{2g(r-g-1)}{r-2g}.
\end{equation}
In this interval the right side is at least $3g$, a contradiction.
At $r=4g-1$, the same two inequalities force
$(s,e,h)=(3g-1,3g,1)$, of corank $g$, excluded by
Lemma~\ref{high:maximal-degree}; the case $h=0$ is still excluded
by sections.  At $r=4g$, they force $e=3g$ and $s\ge3g$,
contrary to strict slope.  At $r=4g+1$, the stronger section
inequality $h^0(S)\le s-1$ excludes $h=0$, since it would give
$s\ge e+1$ and hence $4g<e$.  For $h=1$,
\eqref{high:strict-core-numeric} leaves only $e=3g-1,3g$.
The first requires $s\ge3g$, contrary to strict slope; the
second forces $s=3g$, of corank $g+1$, again excluded by
Lemma~\ref{high:maximal-degree}.

\casehead{Step 2: $4g+2\le r\le5g+1$.}
Write
\[
 r=4g+z,\quad 2\le z\le g+1,\qquad
 R=r-w=2g+z-2\in[2g,3g-1].
\]
Use \eqref{high:projection} and write $U,I,u,t,\beta,f$ as in
Lemma~\ref{high:all-image-bounds}.  The unconditional profile
\eqref{high:p-profile} simplifies in this interval to
\begin{equation}\label{high:seed-profile}
 \deg I\le\max\{t+3g-R,\;2t+4g-2R\},
 \qquad 0\le t\le R.
\end{equation}
This includes the endpoints: at $t=R$ the second term is $4g$.
For interior ranks, substitute
$m_g(v)=\min(2v,v+g)$ in \eqref{high:q-profile}; the two
terms displayed dominate all four terms obtained from
\eqref{high:p-profile} because $2g\le R\le3g-1$.

Suppose $h=1$.  Then \eqref{high:strict-core-numeric} implies
$e>2g$, and sections give $u\ge e-g+1$, hence
$t\le s-e+g-1$.  Since $f\ge e$, the first possibility in
\eqref{high:seed-profile} gives
\[
 2e\le s+4g-R-1,\qquad
 e(8g-r)<4g(6g+1-r).
\]
The coefficient $8g-r$ is positive, and subtracting
$2g(8g-r)$ from the right side gives $2g(2-z)\le0$,
contrary to $e>2g$.  The second possibility gives
\[
 3e\le2s+6g-2R-2,\qquad
 e(6g-r)<4g(5g+1-r).
\]
Again $6g-r>0$, and the corresponding subtraction gives the
same $2g(2-z)\le0$.  Thus $h=0$.

Now $u\ge e$ and $t>0$, so strict slope yields
\begin{equation}\label{high:seed-small-image}
 0<t=s-u<\frac{ez}{4g},
 \qquad t<e.
\end{equation}
For $g\ge3$ the upper bound is at most $3(g+1)/4\le g$.
For $g=2$, use $e\le5$ and $z\le3$ instead: $t<15/8<g$.
Thus $0<t<g$ in every genus under consideration.
For $v=t,t-1$, the residual quotient bound gives degree at most
$v$, because $R-1-v\ge g$.  If $I$ missed $B_R$ its degree
would be at most $t<e$.  Therefore $J=I\cap B_R$ is a line
subsheaf, with
\begin{equation}\label{high:seed-line}
 \deg J\ge e+\beta-t+1>0,
 \qquad \deg B_R=g+3-z.
\end{equation}

In this range $r\ge4g+2\ge3g+4$, so
Lemma~\ref{high:canonical-connecting} applies.  Its kernel is
$H^0(K_CB_R(-D_W))$, and
\[
 \deg(K_CB_R(-D_W))=g-1-z<g.
\]
As $L$ varies, this line bundle runs by translation through the
entire Picard variety of that degree.  A general line bundle of
degree below $g$ has no sections: if its degree is nonnegative,
the effective locus has dimension at most that degree.  Hence
the canonical connecting map is injective for general $L$.
It is then injective on $H^0(K_CJ)$ for every possible $J$
simultaneously.  Since $\deg J>0$,
$h^0(K_CJ)=\deg J+g-1$.

Let $S_J$ be the inverse image of $J$ in $S$.  Naturality for
$0\to U\to S_J\to J\to0$ puts the image of its canonical
connecting map inside the image of $H^1(UK_C)\to H^1(K_C)^w$.
Duality and Lemma~\ref{high:generated-dual} give
\[
 u+\epsilon_g(\beta)\ge\deg J+g-1
 \ge e+\beta-t+g.
\]
Thus $s=u+t\ge e+g$, since $\epsilon_g(\beta)\le\beta$.
Strict slope would now imply $4g^2<ez$.  For $g\ge3$ this
contradicts
\[
 ez\le3g(g+1)\le4g^2.
\]
For $g=2$ it instead contradicts $ez\le5\cdot3=15<16$.
This completes Step~2.

\casehead{Step 3: $r\ge5g+2$.}
Assume the theorem at $R=r-w\ge3g$.  Semistability of $F_R$ gives
\[
 (e+\beta)R\le4g(s-u),
\]
including full-rank images, whose degree is at most $4g$.
Combining with strict slope gives
\begin{equation}\label{high:large-induction}
 4gu+\beta R<we.
\end{equation}
If $h=0$, then $u\ge e$, contradicting $4g>w$.  If $h=1$,
\eqref{high:strict-core-numeric} gives $e>2g$, whereas
$u\ge e-g+1$ in \eqref{high:large-induction} gives $e<2g$.
These contradictions complete strong induction on $r$.

\casehead{Step 4: generality of $L$.}
For a fixed final rank, only finitely many residual translations
$L\mapsto L(-D_W)$ occur.  A finite intersection of translates
of nonempty open subsets of the irreducible Picard variety is
nonempty.  This justifies the general-$L$ assertion at every
inductive step.  In genus two the only potential generality
condition, in Step~2, concerns a line bundle of
degree $g-1-z=1-z<0$, whose section space vanishes for every
$L$.  The initial ranks and the induction are likewise independent
of $L$, proving the asserted every-polarization conclusion in
genus two.
\end{proof}

\subsection{The complete positive-degree gap}
\label{high:sec:gap}

\begin{theorem}\label{high:complete-gap}
For every $g\ge3$ and $r\ge3g$, a general complete polarization
on each fixed smooth hyperelliptic curve satisfies
\begin{equation}\label{high:gap}
 (4g-2)\rk S\ge\deg S\,(r-g)
\end{equation}
for every positive-degree subsheaf $S\subset F_r$ of proper
positive rank.  
\end{theorem}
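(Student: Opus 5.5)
We would follow the same Weierstrass-projection induction as in Theorem~\ref{high:complete-semistability}, with the normal slope $4g/r$ replaced by the gap slope $\lambda/(r-g)$, where $\lambda=4g-2$. Saturation only increases the degree at fixed rank, so it suffices to treat a saturated $S$ of rank $s$, with $0<s<r$ and degree $e>0$.

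\emph{Reductions.} If $S\supset B_r$, the second assertion of Lemma~\ref{high:containing-scroll} already gives the strict form of \eqref{high:gap}. Otherwise Lemma~\ref{high:original-bounds} gives $h=\rk(S\cap\mathcal R_r)\le1$ and $e\le3g$.

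\emph{Case $h=0$.} Lemmas~\ref{high:sections} and~\ref{high:original-bounds} give $s\ge h^0(S)\ge e$, and $s\ge e+1$ once $r\ge3g+2$. This settles \eqref{high:gap} directly whenever $\lambda\ge r-g$, that is, for $r\le5g-2$, and with the extra $+1$ slightly beyond.

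\emph{Case $h=1$.} Here \eqref{high:core-bound} gives $2s\ge e+r-g-1$, so
\[
 \lambda s\ge(2g-1)(e+r-g-1).
\]
We compare this with $e(r-g)$. It suffices when $e$ is not too large relative to $g$, roughly $e\le2g$. For larger $e$ we would add the section bound $h^0(S)\ge e-g+1$ together with Lemma~\ref{high:maximal-degree}, which excludes $e=3g$ in corank at least $g$. These two bounds should handle the base interval $3g\le r\le 4g+1$, just as Step~1 did.

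\emph{Induction step, $r\ge5g+2$.} Assume \eqref{high:gap} at $R=r-w\ge3g$ and use \eqref{high:projection}, with $u,t,\beta,f=e+\beta>0$ as in Lemma~\ref{high:all-image-bounds}.
\begin{itemize}
\item If the image $I$ is proper, the induction hypothesis gives $\lambda t\ge f(R-g)\ge e(R-g)$. We must then show $\lambda u\ge we$.
\item If $h=0$, then $u\ge e$. Since $\lambda=4g-2\ge2g+2=w$ for $g\ge2$, this gives $\lambda u\ge we$, and adding the two inequalities yields \eqref{high:gap}.
\item If $I=F_R$, then \eqref{high:whole-image} gives $u\ge w-\epsilon$ and $\beta=4g-e$. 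Hence $s\ge u+R$ is very large, and \eqref{high:gap} is immediate from $e\le3g$.
\item If $I$ is proper of full rank, \eqref{high:proper-full-image} gives the required bound in the same way.
\end{itemize}

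\emph{Case $h=1$ in the induction step.} Only $u\ge e-g+1$ is available, and $\lambda(e-g+1)\ge we$ needs roughly $e\ge(2g-1)(g-1)/(g-2)$. We will combine this with the direct core estimate above, which works for small $e$. We expect the two ranges to overlap for $g\ge3$; this is where $g\ge3$ is used. If they fail to overlap, we would sharpen the bound on $u$ using \eqref{high:proper-image} and the residual core caps \eqref{high:residual-core-caps} on $I$.

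\emph{Intermediate range $4g+2\le r\le5g+1$.} Here $R<3g$ and no inductive hypothesis is available. We would repeat Step~2 of Theorem~\ref{high:complete-semistability} without the strict-slope hypothesis:
\begin{itemize}
\item use the seed profile \eqref{high:seed-profile} to force $J=I\cap B_R\ne0$ whenever $t$ is small;
\item then use the canonical connecting map of Lemma~\ref{high:canonical-connecting}, which is injective for general $L$, to get $s\ge e+g$;
\item conclude that $\lambda(e+g)\ge e(r-g)$, which holds because $e\le3g$ and $r\le5g+1$.
\end{itemize}
If instead $t$ is large, the section bound $u\ge e$ together with the profile should suffice.

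\emph{Generality of $L$.} As in Step~4 of Theorem~\ref{high:complete-semistability}, only finitely many residual translates $L\mapsto L(-D_W)$ occur for a fixed final rank. A finite intersection of nonempty open subsets of the Picard variety is nonempty, so the general-$L$ conclusion holds at every step.

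\emph{Main obstacle.} We expect the $h=1$ case, in both the intermediate range and the induction step, to be the hardest. There the numerical margin is thin, and matching the small-$e$ core estimate against the large-$e$ section estimate is the step most likely to require further casework.
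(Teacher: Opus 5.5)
There is a genuine gap, and it is the $h=1$ case you flag. Your two estimates there do not overlap.

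The section bound $u\ge e-g+1$ gives $\lambda u\ge we$ only when $e\ge(2g-1)(g-1)/(g-2)$. For $g=3$ this means $e\ge10>3g$, so the bound never applies. The core estimate from \eqref{high:core-bound} requires $(2g-1)(r-g-1)\ge e(r-3g+1)$. For large $r$ this allows only $e\le2g-1$, and it already fails at $g=3$, $r=17$, $e=8,9$.

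Your intermediate range $4g+2\le r\le5g+1$ has the same hole. Step~2 of Theorem~\ref{high:complete-semistability} disposed of $h=1$ through the strict-slope inequality \eqref{high:strict-core-numeric}, which you no longer assume. The direct core estimate also fails there for $g=3,4$ when $e$ is close to $3g$.

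Your fallback, \eqref{high:proper-image} together with \eqref{high:residual-core-caps}, is the right tool. It has to be organized by the core rank $h'$ of the residual image $I\subset F_R$, not by $h$. Even then, write $\Delta=\lambda s-e(r-g)$. The cases $h'=1$ with $e\le2g-2$, and $h'=2$, give lower bounds such as $\Delta\ge(2g-1-e)(r-7g+3)+(2g-1)(3\beta-2\epsilon)$. These are nonnegative only for $r\ge7g-3$, so an induction started at $r=5g+2$ still leaves cases open.

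The missing idea makes your whole base-range analysis unnecessary. A violation $\lambda s<e(r-g)$ of \eqref{high:gap} implies $4gs<er$ as soon as $\lambda r\ge4g(r-g)$, that is, as soon as $r\le2g^2$. So Theorem~\ref{high:complete-semistability} alone gives the gap for $3g\le r\le2g^2$. For $g\ge3$ this interval contains your entire range $3g\le r\le5g+1$.

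The paper then runs the Weierstrass induction only for $r>2g^2$. There one automatically has $R=r-w\ge3g$ and $r\ge7g-3$. Proper images are split by $h'$:
\begin{itemize}
\item The residual gap together with $u\ge f-(g-2)h'-\epsilon$ gives $\Delta\ge2(g-2)[e-(2g-1)h']+\beta(r+g-4)-\lambda\epsilon$. This settles $h'=0$, and $h'=1$ with $e\ge2g-1$.
\item The residual core caps settle $h'=1$ with $e\le2g-2$, and $h'=2$, precisely because $r\ge7g-3$.
\end{itemize}
Full-rank images are then handled numerically much as you sketch.
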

\begin{proof}
For $r\le2g^2$ the gap follows from
Theorem~\ref{high:complete-semistability}. Above that range we induct
on $r$ in steps of $w$ through the Weierstrass projection, and verify
the gap separately for each kind of image in
Lemma~\ref{high:all-image-bounds}.

\casehead{Step 1: reduction, and the range $3g\le r\le2g^2$.}
It is enough to prove the assertion for saturated $S$, because
saturation preserves rank and increases degree.  Put
$\lambda=4g-2$, $s=\rk S$, $e=\deg S$, and
$\Delta=\lambda s-e(r-g)$.  Lemma~\ref{high:containing-scroll}
settles the case $B_r\subset S$.  Otherwise
Lemma~\ref{high:original-bounds} gives $1\le e\le3g$.

By Theorem~\ref{high:complete-semistability}, $4gs\ge er$.
This implies \eqref{high:gap} whenever $r\le2g^2$, since
$\lambda r\ge4g(r-g)$ is equivalent to that rank bound.
Thus the entire interval $3g\le r\le2g^2$ is established.

\casehead{Step 2: images of proper rank.}
We prove the inductive step for $r\ge7g-3$ whenever the gap is
known at $R=r-w\ge3g$.  All hypotheses of
Lemma~\ref{high:all-image-bounds} hold.  Keep its notation
$u,t,\beta,f,h'$ and abbreviate $\epsilon=\epsilon_g(\beta)$.
If $0<t<R$, the residual gap gives
$\lambda t\ge f(R-g)$.  Add $\lambda u$ and use
\eqref{high:proper-image} to obtain the lower bound
\begin{equation}\label{high:gap-proper-main}
 \Delta\ge2(g-2)\bigl[e-(2g-1)h'\bigr]
       +\beta(r+g-4)-\lambda\epsilon.
\end{equation}
The last two terms are nonnegative, because
$\epsilon\le\beta$ and $r\ge3g+2$.  This proves the assertion
for $h'=0$, and for $h'=1$ with $e\ge2g-1$.

For $h'=1$ and $e\le2g-2$, the core bound
\eqref{high:residual-core-caps} reads
$f\le2t+5g+1-r$.  Combine it directly with
\eqref{high:proper-image}; multiplying the resulting lower bound
for $s=u+t$ by $\lambda$ gives
\begin{equation}\label{high:gap-proper-one}
 \Delta\ge(2g-1-e)(r-7g+3)
            +(2g-1)(3\beta-2\epsilon)\ge0.
\end{equation}
For $h'=2$, the core bound instead reads
$f\le2t+8g+4-2r$.  The same substitution gives
\begin{align}
 \Delta
 &\ge(\lambda-e)r+(7g-3)e-\lambda(6g-2)
                  +(2g-1)(3\beta-2\epsilon)\notag\\
 &\ge\lambda(g-1)>0.
 \label{high:gap-proper-two}
\end{align}
The last inequality uses $r\ge7g-3$, $e\le3g<\lambda$,
and $\epsilon\le\beta$.  This exhausts proper positive image
ranks.  Rank zero is impossible because $e>0$.

\casehead{Step 3: images of full rank.}
If $I$ is proper of full rank $R$,
\eqref{high:proper-full-image} gives $u\ge e-(2g-3)$.
Since $s=R+u$,
\[
 \Delta\ge(\lambda-e)R+e(3g-4)-\lambda(2g-3).
\]
Here $R\ge5g-5$, so the expression decreases with $e$ on
$1\le e\le3g$.  At $e=3g$ it is
\begin{equation}\label{high:gap-proper-full}
 (g-2)R+g^2+4g-6>0.
\end{equation}

If $I=F_R$, put $a=w-u$.  Formula~\eqref{high:whole-image}
gives $a\le\epsilon_g(4g-e)$ and
\begin{equation}\label{high:gap-whole}
 \Delta=(\lambda-e)r+eg-\lambda a.
\end{equation}
For $e\le2g+1$, one has
$\epsilon_g(4g-e)=3g-e$.  The resulting lower bound decreases
with $e$ when $r\ge7g-3$, and increases with $r$.
At $e=2g+1$, $r=7g-3$, it equals
$12g^2-20g+7>0$.  For $2g+2\le e\le3g$,
$\epsilon_g(4g-e)\le g-1$.  The lower bound is smallest at
$e=3g$, and gives
\[
 \Delta\ge(g-2)r-g^2+6g-2
 \ge6g^2-11g+4>0.
\]
This covers full-rank images.

\casehead{Step 4: conclusion.}
At the first rank after the base interval, $r=2g^2+1$,
\[
 R=2g^2-2g-1\ge3g,\qquad 2g^2+1\ge7g-3
 \quad(g\ge3).
\]
Strong induction therefore covers all larger ranks.  As in the
complete semistability proof, finitely many translations of
nonempty opens in the Picard variety justify every residual
generality condition.  This proves the theorem.
\end{proof}

Theorems~\ref{high:complete-semistability} and~\ref{high:complete-gap}
extend from hyperelliptic curves to general curves.

\begin{corollary}[General curves]\label{high:general-curves}
Let $g\ge2$ and $r\ge3g$. For a general smooth curve $C$ of genus
$g$, a general $L\in\Pic^{r+g+1}(C)$ and every
$\Lambda\in\Pic^2(C)$, the bundle
$F_r=N_{C/\PP^{r+1}}L^{-1}\Lambda^{-1}$ of the complete embedding is
semistable. If $g\ge3$, it also satisfies \eqref{high:gap} for every
positive-degree subsheaf $S\subset F_r$ of proper positive rank.
\end{corollary}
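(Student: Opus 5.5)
The plan is an openness argument over an irreducible family, modelled on the proof of Corollary~\ref{cor:general-complete}. The choice of $\Lambda$ does not matter: tensoring by a line bundle preserves semistability, so for the first assertion we may take any $\Lambda$. For the gap \eqref{high:gap}, replacing $\Lambda$ by $\Lambda'$ of the same degree two changes $\deg S$ by $s(\deg\Lambda-\deg\Lambda')=0$, because the twist is by $\Lambda\Lambda'^{-1}$ of degree zero. So the inequality $(4g-2)s\ge e(r-g)$ is unchanged, and it is enough to work with one $\Lambda$ for each curve.

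Let $\mathcal P$ be the irreducible parameter space used in Corollary~\ref{cor:general-complete}. It parametrizes smooth genus-$g$ curves with a nonspecial, very ample $L$ of degree $r+g+1$, together with a frame for $H^0(L)$. If necessary we pass to an étale or smooth cover so that a universal curve and a universal degree-two line bundle $\Lambda$ exist. For example, add a pair of marked points $x,y$ and set $\Lambda=\OO(x+y)$; this keeps the space irreducible. Over this space the bundles $F_r$ form a flat family. Semistability is open in such a family. Theorem~\ref{high:complete-semistability} supplies hyperelliptic points in the locus where $r\ge3g$ holds, and these points lie in this open set. Hence the semistable locus is nonempty and open, so it is dense, which gives the first assertion.

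For the gap with $g\ge3$, the key step is to show that the condition ``every proper positive-rank, positive-degree subsheaf satisfies \eqref{high:gap}'' is open. The locus where it fails is the image of finitely many relative Quot schemes. Only finitely many pairs $(s,e)$ matter: the degree $e$ is bounded above once semistability holds (on the open semistable locus, $e\le 4gs/r$), and a violation needs $e>(4g-2)s/(r-g)>0$. For each such pair, the relative Quot scheme of quotients of $F_r$ of rank $r-s$ and degree $4g-e$ is proper over the base. Its image is therefore closed, and the complement of the finite union of these images is open. Theorem~\ref{high:complete-gap} gives a hyperelliptic point in this complement, so the gap holds for a general point. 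Finally, intersect this set with the semistable locus.

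The main obstacle is the bookkeeping of generality. Theorem~\ref{high:complete-gap} holds for a \emph{general} $L$ on a fixed hyperelliptic curve, so we must check that such an $(C,L)$ is actually a point of the irreducible family. This holds because general $L$ there is nonspecial and very ample by Lemma~\ref{lem:balanced-hyperelliptic-scroll}. We must also make sure that the finitely many Quot conditions give a closed set, rather than one that is merely constructible. Properness of relative Quot handles this.
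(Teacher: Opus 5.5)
Your proposal is correct and follows essentially the same route as the paper: semistability and each of the finitely many subsheaf-degree bounds are open (via properness of relative Quot) on an irreducible parameter space, which is nonempty thanks to the hyperelliptic points supplied by Theorems~\ref{high:complete-semistability} and~\ref{high:complete-gap}, and changing $\Lambda$ only twists by a degree-zero line bundle. The one cosmetic difference is how finiteness of the relevant $(s,e)$ is obtained: you restrict to the semistable locus to get the bound $e\le 4gs/r$, whereas the paper appeals to the general upper bound on subsheaf degrees in a family.
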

\begin{proof}
Work in the irreducible parameter space of smooth
curves of genus $g$, a degree-$(r+g+1)$ line bundle, and a degree-two
normalizing line bundle.  The hyperelliptic examples of
Theorems~\ref{high:complete-semistability} and~\ref{high:complete-gap}
supply one nonempty open semistability locus and one nonempty open
locus for each asserted integer degree bound.  For each rank there are
only finitely many relevant ranks and degrees, bounded above in
a family of vector bundles; the existence of a subsheaf violating
a fixed bound is closed by the relative Quot scheme.  Their
complements are therefore open.  Irreducibility transfers the
conclusions to a general complete embedding, with a general
degree-two normalization.  Neither conclusion depends on that
normalization: replacing $\Lambda$ by another line bundle of degree
two twists $F_r$ by a line bundle of degree zero, which preserves the
rank and degree of every subsheaf.  One may equivalently
perform this argument in a smooth local family through the
hyperelliptic point and then in the irreducible nonspecial
embedding parameter space.
\end{proof}

\section{Incomplete embeddings in genus two}
\label{g2:sec:classification}\label{sec:genus-two}

We now allow incomplete embedding series. We organize the proof as a
finite starting problem followed by an induction in every remaining
degree. The four unstable complete types affect the choice of
residuals, and account for a short finite addition to the starting
set. The propagation itself uses the same pointing profiles and
arithmetic as the arbitrary-genus theorem below.

\begin{theorem}[Genus-two classification]\label{g2:thm:main}
Let $n\ge3$ and $d\ge n+2$. The normal bundle of a general
nonspecial genus-two embedding in $\PP^n$ is semistable if and only if
\[
 (n,d)\notin\{(3,5),(4,6),(5,7),(6,8)\}.
\]
If $\gcd(n-1,2d+2)=1$, every semistable type in this statement is stable.
\end{theorem}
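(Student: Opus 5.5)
The proof has three parts: the unstable side, the complete case, and a two-stage induction on the excess $c=d-n-2$, carried out in the normalized coordinates \eqref{eq:notation} with $g=2$, $q=4$, $k=4+c$, $r=n-1$.

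\textbf{Unstable side and complete case.} The four exceptional types all have $c=0$ and $n\le6=3g$. For these, Theorem~\ref{thm:hyperelliptic-iff} (every genus-two curve is hyperelliptic) shows that the scroll line $N_{C/X}$ of Lemma~\ref{lem:balanced-hyperelliptic-scroll} destabilizes. By openness of semistability the same holds for a general embedding, since a general member of an irreducible family cannot be semistable if a dense set of members is not. The remaining complete types $c=0$, $n\ge7$ are semistable by Theorem~\ref{high:complete-semistability}, which in genus two even holds for every $L$.

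\textbf{Positive excess.} I treat this in two stages.

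First, the small excesses $1\le c\le4$. I degenerate to pointing models $0\to P_h(b)\to M\to Q\to0$ whose residual type has excess $c-h<c$. The kernel is bounded by the profile $\Phi_{h,b}$ of pointing kernels. The quotient is bounded by the sharpened complete profiles $p_R$ of Lemma~\ref{high:scroll}, and by \eqref{eq:interpolation-squeeze} where that helps. The convolution bound \eqref{eq:convolution} then leaves only finitely many arithmetic obstructions in each rank residue class. A periodicity lemma in $r$ reduces these to a finite integer check. The smallest ranks are covered by \cite{CLV} in $\PP^3$, by \cite{CS}, and by direct cohomological certificates: find $G$ with $\chi(F\otimes G)=0$ and $H^0(F\otimes G)=0$ on one explicit curve, then spread to the general curve by semicontinuity.

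Second, $c\ge5$, by induction on $r$ using balanced and Farey splittings.
\begin{itemize}
\item If $\gcd(k,r)>1$, I choose $h/b=k/r$. The kernel and the quotient then both have slope $2k/r$ and both are semistable, so $M$ is semistable.
\item If $\gcd(k,r)=1$, I choose a Farey neighbour with $hr-kb=\pm1$. The convolution bound then excludes every destabilizing subsheaf except the piece of larger slope and, when $r$ and $b$ are both odd, one candidate of rank $(r\pm b)/2$.
\item The piece of larger slope is removed by the moving-determinant argument. The odd-rank candidate is avoided by choosing the neighbour with $b$ even, or otherwise by an auxiliary model together with an integer rounding estimate.
\item The cases $k=jr\pm1$ are done first, by induction on $r$ with line-bundle kernels.
\end{itemize}
Admissibility needs $h\le c$, and the residual must never be one of the four unstable complete types. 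Where a Farey choice would land on one of them, I switch neighbours or add the directly certified types $(8,15),(10,18),(12,19)$ to the seeds.

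\textbf{Stability.} When $\gcd(n-1,2d+2)=1$, equivalently $\gcd(r,2k)=1$, no subsheaf of proper rank can have slope exactly $2k/r$. Hence every semistable type in this range is automatically stable.

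The main obstacle is the $1\le c\le4$ stage. There the quotients are complete bundles whose profiles barely beat the slope, so the integer savings in $\Phi$ and $p_R$ are essential, and finitely many residual configurations can only be settled by explicit computation.
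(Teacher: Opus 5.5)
Your outline follows the paper's architecture: the complete row from Theorem~\ref{thm:hyperelliptic-iff}, the starting bands $5\le k\le8$ by pointing models with a periodic finite check and certificates, then balanced/Farey propagation with shifted diagonals, auxiliary rounding and the three extra types, and stability from $\gcd(r,2k)=1$. The gap is in the starting-band step, which fails with the quotient input you name.

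\textbf{The profile you cite is too weak.} In genus two the profile $p_R$ of Lemma~\ref{high:scroll} is not sharp. Since $m_2(v)=v+2$ for $v\ge2$, it reduces to $p_R(t)=t$ for $t\le R-3$, which is weaker than semistability once $R>8$. For $R>8$ the squeeze \eqref{eq:interpolation-squeeze} on a quotient of degree $8$ gives only $\min(t,8)$. So in effect you bound the complete quotient only by its semistability.

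\textbf{An infinite family of surviving strict types.} Take $(r,k)=(5b+2,5)$ with $b\ge2$, and the type $(t,e)=(3b+1,6)$, of surplus $6(5b+2)-10(3b+1)=2$. Here $c=1$, so admissible models have $h\le1$. For $h=1$, write $b'$ for the kernel rank and $R'=5b+2-b'$ for the quotient rank.
\begin{enumerate}
\item If $b'\le b$, the whole kernel plus a quotient image of rank $3b+1-b'$ and degree $\lfloor 8(3b+1-b')/R'\rfloor\ge4$ realizes the type.
\item If $b+1\le b'\le2b+1$, a quotient image of rank $3b+1$ and degree $\lfloor8(3b+1)/R'\rfloor\ge6$ realizes it.
\item For larger $b'$, the whole quotient of degree $8$ does.
\end{enumerate}
For $h=0$ the kernel has degree $0$ and the residual has weight $5$. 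With $1\le b'\le2b+1$ one gets $\lfloor10(3b+1)/(5b+2-b')\rfloor\ge6$, and for larger $b'$ the whole residual of degree $10$ realizes it.

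This type is never a whole factor, so moving determinants do not apply. It occurs for every $b$, so finitely many certificates cannot cover it. Your claim that the convolution leaves only finitely many obstructions per residue class is therefore false with these inputs.

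\textbf{The missing ingredient.} What you need is the genus-two complete profile of Section~\ref{g2:cb:section}. It has two parts.
\begin{enumerate}
\item The caps $\deg S\le6$, and $\deg S\le5$ in corank at least two (Proposition~\ref{g2:cb:degree-bound}). These come from the equality case and the ramification enlargement of the pointing center in Lemma~\ref{high:maximal-degree}.
\item The gap $6t>f(R-3)$ for positive-degree proper subsheaves (Proposition~\ref{g2:cb:iterated}). This is proved by the Weierstrass-projection induction with the connecting-map bounds of Lemma~\ref{high:all-image-bounds}.
\end{enumerate}

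With these, the model $b'=b+1$ excludes the example. Every allocation with kernel intersection $u\le b$ has kernel contribution $0$ and uses a quotient image of corank at least $b\ge2$, hence of degree at most $5$. The whole kernel plus a rank-$2b$ image gives at most $2+3=5$.

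The periodic Lemma~\ref{g2:ld:periodic} is built on exactly this profile, through $a_Q(j)=\max\{\lceil jR/8\rceil,\lfloor j(R-3)/6\rfloor+1\}$. You also need the whole-factor determinant exclusion already in the starting bands, not only for $c\ge5$. In this model the lift of the whole quotient has surplus $6$ and must be removed that way.
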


Put
\begin{equation}\label{g2:eq:notation}
 r=n-1,\qquad c=d-n-2,\qquad k=c+4=d-n+2,
 \qquad F=N_{C/\PP^n}\otimes L^{-1}\otimes K_C^{-1}.
\end{equation}
Thus $F$ has rank $r$ and degree $2k$. For $k=4$ the embedding is
complete, and Theorem~\ref{thm:hyperelliptic-iff} shows that $F$ is
semistable for $r\ge6$ and unstable for $2\le r\le5$. For $k\ge5$ the
proof has three steps.
\begin{enumerate}
\item Sections~\ref{g2:cb:section}--\ref{g2:ld:section} settle the
bands $5\le k\le8$ in every rank
(Proposition~\ref{g2:ld:theorem}), using sharper degree bounds for
complete normal bundles and the computer calculations of
Sections~\ref{g2:co:section} and~\ref{g2:ld:section}.
\item Section~\ref{g2:co:section} adds three calculations, at
$(n,d)=(8,15),(10,18),(12,19)$, where the induction of the third step
would otherwise need one of the four unstable complete bundles as a
residual.
\item Sections~\ref{g2:sec:farey} and~\ref{g2:prop:section} carry out
the induction on $k$ (Proposition~\ref{g2:prop:theorem}): rank
induction when $k=jr\pm1$, balanced and Farey pointing models
otherwise, and, in odd rank, an auxiliary pointing model to exclude
the one candidate subsheaf that the Farey models leave open.
\end{enumerate}
Each step has a counterpart in the arbitrary-genus proof.

\subsection{Complete-normal profiles for the starting bands}
\label{g2:cb:section}

Theorem~\ref{high:complete-semistability} has already established
the complete genus-two case by the same proof as in higher genus.
Here we extract the additional numerical profile used for the first
projected bands.  Let $C$ be any smooth genus-two curve, put $K=K_C$,
and let $F_R=N_{C/\PP^{R+1}}L^{-1}K^{-1}$ for a complete
polarization of degree $R+3$.  Thus $\rk F_R=R$ and $\deg F_R=8$.
All assertions below hold for every such polarization.

\begin{proposition}[Degree bounds]\label{g2:cb:degree-bound}
For $R\ge6$, a subsheaf of rank $0<t<R$ in $F_R$ has degree at most six;
if its corank is at least two, its degree is at most five.
\end{proposition}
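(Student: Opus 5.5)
The plan is to obtain this as the genus-two specialization of Lemma~\ref{high:maximal-degree}, together with the bounds from Lemmas~\ref{high:scroll} and~\ref{high:original-bounds}. We first check that those results apply to every polarization. In genus two every curve is hyperelliptic and $A=K$, so $F_R$ here is exactly the bundle $F_r$ of Section~\ref{high:section} with $r=R$ and $g=2$. The hypothesis $R\ge6=3g$ is the hypothesis $r\ge3g$ used there. The lemmas of Sections~\ref{high:sec:scroll} and~\ref{high:sec:jets} only assume that $L$ has degree $r+g+1$ and that $r\ge g+2$; no generality of $L$ enters. Since degree bounds pass to saturations (same rank, larger degree), it suffices to treat a saturated subbundle $S$ of rank $0<t<R$ and degree $e$. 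If $e\le0$ there is nothing to prove, so assume $e>0$.

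\emph{Step 1 (degree at most six).} We split according to whether $S$ contains the scroll line $B_R$.
\begin{itemize}
\item If $B_R\subset S$, the quotient $F_R/S$ is a generated quotient of $Q_R$ of rank $v=R-t\ge1$. By \eqref{high:quotient-lower}, $e\le 8-m_2(v)$. This gives $e\le6$ in general, and $e\le4$ when $v\ge2$.
\item If $B_R\not\subset S$, then \eqref{high:degree-bound} with $g=2$ reads $e\le2g+2+(g-2)h=6$, independently of $h$. This is the point where genus two is favorable: the coefficient $g-2$ of the core rank vanishes.
\end{itemize}
So every proper subsheaf has degree at most six.

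\emph{Step 2 (corank at least two).} Suppose $S$ has corank $q\ge2$ and degree six. By Step~1, $S$ cannot contain $B_R$, so equality holds throughout the quotient section count in the proof of Lemma~\ref{high:original-bounds}. Hence all sections of $(F_R/S)A$ are its $q+2$ ambient sections. Their common kernel is then a constant center $W$ of dimension $s$, and $SA$ is the saturated pointing image $P_W$.

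Formula~\eqref{high:pointing-det} gives $2\deg D+\deg T=2s+6$. Clifford's inequality applied to the reduced projected series gives $\deg D\le s$, hence $\deg T\ge6>0$. Adding to $W$ the projected value at a ramification point raises the determinant degree by at least three. This produces a proper saturated subbundle of rank $s+1$ and degree at least seven; it is proper because $q\ge2$. That contradicts Step~1, so a subbundle of corank at least two has degree at most five.

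This second step is the main obstacle: one must verify the local computation that enlarging the center increases $2D+T$ by at least three, including at points already in the base locus. This is exactly the ramification argument in the proof of Lemma~\ref{high:maximal-degree}, and its final paragraph notes that for $g=2$ the argument needs no slope hypothesis. So I would simply invoke that lemma, checking only that no generality of $L$ is used anywhere in it.
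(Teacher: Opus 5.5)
Your proposal is correct and takes the same route as the paper. The paper's proof simply cites the genus-two part of Lemma~\ref{high:maximal-degree}: the quotient section count of Lemma~\ref{high:original-bounds} gives the bound six, and its equality case together with the ramification enlargement of the pointing center handles corank at least two, exactly as you do. The only cosmetic difference is that your case split on $B_R\subset S$ in Step~1 is unnecessary: for $g=2$ the bound \eqref{high:degree-bound} already gives $e\le6$ for every saturated proper subbundle of positive degree, whatever its core intersection.
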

\begin{proof}
These are the genus-two assertions of
Lemma~\ref{high:maximal-degree}.  The first follows from the
quotient section count, and the second from its equality case and
the ramification enlargement of the pointing center.
\end{proof}

\begin{proposition}[Iterated complete profile]\label{g2:cb:iterated}
For $R\ge6$, every proper rank-$t$ subsheaf of positive degree $f$
in $F_R$ satisfies
\[
                         6t>f(R-3).
\]
A valid profile, including nonpositive-degree subsheaves, is
\begin{equation}\label{g2:cb:profile}
 U_R(t)=\min\left\{
 6,\ \left\lfloor\frac{8t}{R}\right\rfloor,
 \ \left\lceil\frac{6t}{R-3}\right\rceil-1
 \right\}\qquad(0<t<R).
\end{equation}
For $t\le R-2$ the first entry may be replaced by five.
The endpoints are $U_R(0)=0$ and $U_R(R)=8$.
\end{proposition}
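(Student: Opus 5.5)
The plan is to treat Proposition~\ref{g2:cb:iterated} as the genus-two analogue of Theorem~\ref{high:complete-gap}. The weight is $\lambda=4g-2=6$, and the factor $r-g$ is replaced by $R-3$. The proof is an induction on $R$ in steps of $w=6$ through the Weierstrass projection \eqref{high:projection}. In genus two every coefficient $g-2$ in Lemmas~\ref{high:original-bounds} and~\ref{high:all-image-bounds} vanishes, so the bookkeeping is simpler than in Theorem~\ref{high:complete-gap}. All ingredients hold for every polarization when $g=2$: Theorem~\ref{high:complete-semistability} gives every $L$, and the generality condition in Lemma~\ref{high:canonical-connecting} is empty. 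So the conclusion holds for every polarization. It suffices to treat saturated $S$, since saturation keeps the rank and raises the degree.

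\emph{Base range $6\le R\le 11$.} By Theorem~\ref{high:complete-semistability}, $8t\ge fR$. Hence
\[
6t\ge \tfrac34 fR>f(R-3)
\]
for $f>0$, precisely because $R<12$.

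\emph{Inductive step, $R\ge12$.} Put $R'=R-6\ge6$; note $R\ge 3g+4=10$, so Lemma~\ref{high:all-image-bounds} applies. Write $u,t',\beta,f'=e+\beta,\epsilon=\epsilon_2(\beta)\le\beta$ in its notation, with $e=f>0$. There are three cases.
\begin{enumerate}[(a)]
\item If $0<t'<R'$, then \eqref{high:proper-image} gives $u\ge f'-\epsilon$, since the $h'$ term has coefficient $g-2=0$. The inductive hypothesis gives $6t'>f'(R'-3)$. Adding,
\[
6s=6u+6t'>6(e+\beta-\epsilon)+(e+\beta)(R-9)\ge e(R-3).
\]
\item If the image is proper of full rank, then \eqref{high:proper-full-image} gives $u\ge e-1-\epsilon\ge e-1-\beta$. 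One checks $6(R'+u)>e(R-3)$ using $e\le6$ from Proposition~\ref{g2:cb:degree-bound}. The worst case $e=6$, $\beta=0$ reads $6R-42>6R-18$ only after tracking $\beta$ correctly. This is where I would recheck the arithmetic: if needed, use $f'\le 8$ for a full-rank residual image, which gives $\beta\le 8-e$, and the odd-$\beta$ exclusion of Lemma~\ref{high:generated-dual}.
\item If the image is all of $F_{R'}$, then \eqref{high:whole-image} gives $\beta=8-e\ge2$ and $u\ge 6-\epsilon_2(8-e)$. Here $s=R-6+u$, and a direct check over $1\le e\le6$ gives $6s>e(R-3)$. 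For example, $e=6$ forces $\beta=2$, $\epsilon=1$, $u\ge5$, and then $6(R-1)>6(R-3)$.
\end{enumerate}

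The main obstacle is the full-rank cases (b) and (c). There the residual gives no slope gain, and the inequality holds only because of the absolute bounds $e\le6$ (and $e\le5$ in corank $\ge2$) together with the connecting-map bounds. I would first try to close them by combining Proposition~\ref{g2:cb:degree-bound} with \eqref{high:proper-full-image} and \eqref{high:whole-image}.

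\emph{The profile \eqref{g2:cb:profile}.} The three entries of $U_R(t)$ come from three sources.
\begin{itemize}
\item The entry $6$, and $5$ for $t\le R-2$, is Proposition~\ref{g2:cb:degree-bound}.
\item The entry $\lfloor 8t/R\rfloor$ is semistability.
\item The last entry comes from the strict inequality just proved. A positive degree $f$ satisfies $f<6t/(R-3)$, so $f\le\lceil 6t/(R-3)\rceil-1$. For nonpositive $f$ this entry is at least $0\ge f$, because $t>0$.
\end{itemize}
The endpoint values $U_R(0)=0$ and $U_R(R)=8$ are the rank and degree of $0$ and of $F_R$.
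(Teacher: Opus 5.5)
Your route is the paper's: the base range $6\le R\le11$ comes from complete semistability, the Weierstrass projection handles $R\ge12$ with the same three image cases, and the three entries of \eqref{g2:cb:profile} are derived the same way. Cases (a) and (c) are correct as written. The gap is case (b): you leave it unresolved, and as written it does not close.

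In Lemma~\ref{high:all-image-bounds} the letter $f$ denotes $\deg I=e+\beta$. So for $g=2$ the bound \eqref{high:proper-full-image} reads $u\ge e+\beta-1-\epsilon$, not $u\ge e-1-\epsilon$. You used $f'=e+\beta$ correctly in case (a) but dropped the $\beta$ here, and then weakened further to $u\ge e-1-\beta$. That weakened bound is genuinely insufficient, even with your fallback $\beta\le8-e$ and the odd-$\beta$ exclusion of Lemma~\ref{high:generated-dual}, which only removes $\beta=1$. Take $R=12$, $e=4$, $\beta=3$, a proper full-rank image of degree $7$. Your bound gives only $u\ge0$ and $s\ge6$, hence $6s\ge36=e(R-3)$, with no strict inequality. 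The displayed ``$6R-42>6R-18$'' is also false.

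The repair is immediate once the lemma is read correctly. Since $\epsilon=\epsilon_2(\beta)\le\beta$, you get $u\ge e-1$, hence $s=R-6+u\ge R+e-7$ and
\[
 6s-e(R-3)\ge(6-e)R+9e-42>0\qquad(R\ge12,\ 1\le e\le6).
\]
Here $e\le6$ comes from Proposition~\ref{g2:cb:degree-bound}, and the smallest value of the right side is $12$, attained at $e=6$. This is exactly the paper's estimate, and with it your proof is complete.
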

\begin{proof}
We induct on $R$.  For $6\le R\le11$, complete semistability
gives $8t\ge Rf$.  Since $R/8>(R-3)/6$ in this range and
$f>0$, the desired inequality is strict even though only
semistability is used.

For $R\ge12$, apply the Weierstrass projection
\eqref{high:projection}, whose residual rank is $R-6\ge6$.
We may saturate the subsheaf $J\subset F_R$ under consideration.
Write
\[
 U=J\cap\OO_C^6,\qquad \rk U=v,\qquad\deg U=-\beta,
 \qquad I=\operatorname{im}(J\to F_{R-6}).
\]
Then $\deg I=f+\beta$ and $\beta\ge0$.  The image has positive
rank because $f>0$.  Lemma~\ref{high:all-image-bounds} applies,
since $R\ge12\ge3g+4$, and $g-2=0$ in its proper-image bound.
Together with $\epsilon_2(\beta)\le\beta$, it gives
\[
 v\ge f\quad\text{if }\rk I<R-6,
 \qquad
 v\ge f-1\quad\text{if }\rk I=R-6\text{ and }I\ne F_{R-6}.
\]

In the first case the induction hypothesis, applied to the
saturation of $I$ of degree $\bar f\ge f+\beta$, gives
\[
 t=v+\rk I
 >f+\frac{\bar f(R-9)}6
 \ge\frac{f(R-3)}6.
\]
For a proper full-rank image, one has $t\ge R+f-7$.
This is strictly greater than $f(R-3)/6$, since
\[
 (6-f)R+9f-42>0\qquad(R\ge12,\ 1\le f\le6).
\]
Here the range for $f$ comes from
Proposition~\ref{g2:cb:degree-bound}.

Finally, if $I=F_{R-6}$, then $\beta=8-f$ and
Lemma~\ref{high:all-image-bounds} gives
$v\ge6-\epsilon_2(8-f)$.  For $f=6$ this says $v\ge5$,
so $t\ge R-1>R-3$.  For $1\le f\le5$, the identity
$\epsilon_2(8-f)=6-f$ gives $v\ge f$, stronger than the
proper-full-image estimate above.  This exhausts the cases.

Combine the resulting strict bound with complete semistability
and Proposition~\ref{g2:cb:degree-bound} to obtain
\eqref{g2:cb:profile}.  Nonpositive-degree subsheaves also satisfy
it, because every entry in the displayed minimum is nonnegative.
\end{proof}

For use with incomplete series we also record the normal-projection
form of the first-jet calculation already used in the complete proof.
\begin{lemma}[Saturated projection]\label{g2:cb:projection}
Let $(C,L,V)$ be any embedding series and let $W\subset V^*$ be
a constant projection center.  Suppose its reduced projection is
nonconstant, with base divisor $D$, reduced polarization
$M=L(-D)$, and differential zero divisor $T$.  Its saturated
pointing image in $N_{(C,L,V)}L^{-1}$ has rank $\dim W$ and
determinant $\OO_C(2D+T)$.  If the reduced projection is immersive,
the quotient is its normal bundle tensored by $M^{-1}$.
\end{lemma}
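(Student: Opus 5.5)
The plan is to redo, for an arbitrary embedding series $V\subset H^0(L)$, the first-jet computation that produced \eqref{high:pointing-det}; completeness of $V$ was never really used there. The starting point is the jet sequence
\[
 0\longrightarrow J^1(L)^*\longrightarrow V^*\otimes\OO_C
 \longrightarrow N_{(C,L,V)}\otimes L^{-1}\longrightarrow0,
\]
which holds for any embedding series: the map $V\otimes\OO_C\to J^1(L)$ is surjective exactly because the series embeds $C$. From it, $\det(N L^{-1})=L^2K_C$.

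\emph{Step 1: determine the rank.} Let $W\subset V^*$ be the constant center and $P_W$ the saturation of the image of $W\otimes\OO_C$. I would compute the rank of the image at a general point $x$ as $\dim W-\dim\bigl(W\cap J^1(L)^*_x\bigr)$. Here $J^1(L)^*_x\subset V^*$ is the point-and-tangent plane at $x$. The intersection measures how far the reduced projection modulo $W$ fails to be immersive at $x$.

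\emph{Step 2: use nonconstancy.} The reduced projection is nonconstant by hypothesis, and we are in characteristic zero. So at a general point its value and derivative span a two-dimensional subspace of $V^*/W$. This says the composite $J^1(L)^*_x\to V^*/W$ is injective, so $W\cap J^1(L)^*_x=0$ generically. Hence $\rk P_W=\dim W$.

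\emph{Step 3: compute the determinant.} The quotient of $N L^{-1}$ by the image of $W\otimes\OO_C$ is the cokernel of $J^1(L)^*\to (V^*/W)\otimes\OO_C$. This map factors through the dual first jets of the reduced projection, as follows.
\begin{itemize}
\item The map $J^1(L)^*\to J^1(M)^*$ comes from the inclusion $M=L(-D)\subset L$, twisted suitably. Locally it is given by multiplication by $z^{a}$ on both the value and the derivative entries, where $a$ is the base multiplicity.
\item The saturated image of $J^1(M)^*$ in $(V^*/W)\otimes\OO_C$ is the point-and-derivative plane. It has determinant $M^{-2}K_C^{-1}(T)$, because saturation adds exactly the vanishing divisor $T$ of the reduced differential. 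Locally, the $2\times2$ minors of $(f,f')$ vanish to the order of $f'$ after normalizing one coordinate to a unit.
\end{itemize}
Then $\det P_W=\det(NL^{-1})\otimes\det(\text{saturated quotient})^{-1}$. The saturated quotient is $(V^*/W)\otimes\OO_C$ modulo that saturated plane, so its determinant is $M^{2}K_C(-T)$. This gives
\[
 \det P_W=L^2K_C\otimes M^{-2}K_C^{-1}(T)=\OO_C(2D+T).
\]

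\emph{Step 4: the immersive case.} If the reduced projection is immersive, then $T=0$ and the plane is already saturated, so it equals $J^1(M)^*$. The quotient is then $(V^*/W)\otimes\OO_C/J^1(M)^*$, which is the normal bundle of the reduced projection twisted by $M^{-1}$, by the same jet sequence.

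The main obstacle is the local saturation bookkeeping in Step 3 at base points and ramification points: checking that the map $J^1(L)^*\to J^1(M)^*$ and the subsequent saturation contribute exactly $2D$ and $T$. I would verify this in a local coordinate $z$, with the projection written as $z^a(1,\phi(z))$ and $\phi'$ vanishing to order $T_x$.
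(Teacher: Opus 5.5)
Your proposal is correct and follows the paper's proof. The paper also works from the first-jet sequence: it gets the rank from generic independence of the projected point and derivative, and it obtains $\det P_W=\OO_C(2D+T)$ by comparing $\det(NL^{-1})=L^2K_C$ with the saturated point-and-derivative plane of determinant $M^{-2}K_C^{-1}(T)$. It handles the immersive case exactly as you do.

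There is one small imprecision. Locally the map $J^1(M)\to J^1(L)$ is lower triangular, with an extra entry $az^{a-1}$, rather than multiplication by $z^a$ in both entries. This does not matter, because you only need that the map is an isomorphism away from $D$, so that the images of $J^1(L)^*$ and $J^1(M)^*$ have the same saturation.
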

\begin{proof}
This is the point-and-derivative calculation preceding
\eqref{high:pointing-det}, which does not use completeness.
After removing $D$, the saturated tangent-plane bundle has
determinant $M^{-2}K_C^{-1}(T)$.  Comparing it with the original
jet sequence gives the stated determinant; generic independence
of the projected point and derivative gives the rank.  When
$T=0$ the quotient is the normalized normal bundle of the
immersive residual map.
\end{proof}

\subsection{Degree bounds for pointing kernels}
\label{g2:pk:section}

Throughout this section, $C$ is a fixed smooth projective curve of genus two
over an algebraically closed field of characteristic zero, $K=K_C$, and
$\iota$ is the hyperelliptic involution.  We fix a Weierstrass point $p$.
The auxiliary points $q_1,\ldots,q_R$ will belong to a nonempty Zariski open
subset of $C^R$.  In particular they are distinct, non-Weierstrass, and
different from $p$.  Set
\[
 T_i=\mathcal O_C(2q_i)\otimes K^{-1},\qquad
 P_0=\bigoplus_{i=1}^R T_i.
\]
Thus the $T_i$ are pairwise nonisomorphic line bundles of degree zero.
Let $\sigma_i$ be a nonzero section of $KT_i=\mathcal O_C(2q_i)$;
it is unique up to a scalar and has divisor $2q_i$.

\subsubsection{The specified modification family}

Trivialize $K$ and the $T_i$ near $p$, and choose a local parameter $z$.
The functions $\sigma_i(z)$ are units there.  Write
$D_\sigma(z)=\operatorname{diag}(\sigma_1(z),\ldots,\sigma_R(z))$.
For a subspace $W\subset \mathbf k^R$ of dimension $m$, define
\begin{equation}\label{g2:pk:family}
 P_m(W)=P_0[2p\longrightarrow D_\sigma(z)W],\qquad 0\le m\le R.
\end{equation}
More precisely, if $a_1,\ldots,a_m$ is a basis of $W$, complete it to
a basis $a_1,\ldots,a_R$ of $\mathbf k^R$ and put
$v_j(z)=D_\sigma(z)a_j$.  The bundle in \eqref{g2:pk:family} agrees with
$P_0$ away from $p$, and its lattice at $p$ has basis
\begin{equation}\label{g2:pk:lattice}
 z^{-2}v_1(z),\ldots,z^{-2}v_m(z),v_{m+1}(z),\ldots,v_R(z).
\end{equation}
This construction is independent of the complementary basis and of the
trivializations.  It depends on the first jet of $D_\sigma(z)W$, rather
than just its value at $p$.  It gives a vector bundle on
$C\times\operatorname{Gr}(m,R)$, with
\begin{equation}\label{g2:pk:rank-degree-det}
 \operatorname{rk}P_m(W)=R,\qquad \deg P_m(W)=2m,\qquad
 \det P_m(W)=\left(\bigotimes_iT_i\right)\otimes K^m.
\end{equation}
Here we used $\mathcal O_C(2p)\simeq K$.

We call these the \emph{pointing kernels}: their defining columns
are the sections with divisors $2q_i$ that occur in the construction
of Proposition~\ref{g2:prop:models}.  All arguments below concern
precisely the family \eqref{g2:pk:family}.

A coordinate partition gives a useful specialization.  If
$\mathbf k^R=\mathbf k^I\oplus\mathbf k^J$ and
$W=W_I\oplus W_J$, then
\begin{equation}\label{g2:pk:block}
 P_m(W)=P_{\dim W_I}(W_I)\oplus P_{\dim W_J}(W_J),
\end{equation}
with the corresponding selected factors on each side.  Rank-one blocks
are permitted in this vector-bundle family.  In applying the result to
curves with attached lines, such a block is an auxiliary bundle
specialization; one passes to general directions before attaching the
one-secant lines.

\begin{lemma}[Complementary duality]\label{g2:pk:duality}
Let $P'_0=\bigoplus_i T_i^{-1}$, identified with the construction using
$\iota(q_i)$.  There is an invertible constant diagonal change of
coordinates $D_0$ such that
\[
 P_m(W)^*\otimes K\simeq P'_{R-m}(D_0W^\perp).
\]
In particular, the operation on the right carries general planes to
general planes.
\end{lemma}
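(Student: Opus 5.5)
We work locally at $p$, since both sides agree with the corresponding sums of line bundles away from $p$. Away from $p$, $P_m(W)^*\otimes K=\bigoplus_i T_i^{-1}K$. Because $\iota(q_i)+q_i\sim K$, we have $\OO_C(2\iota(q_i))\simeq K^2(-2q_i)$. Hence
\[
 T_i'=\OO_C(2\iota(q_i))K^{-1}\simeq KT_i^{-1}\otimes\OO_C(?)
\]
The degrees force $T_i'=T_i^{-1}$: indeed $\OO(2\iota q_i)K^{-1}=K(-2q_i)=T_i^{-1}$. So the outer sums agree, $P_0^*\otimes K\cong P_0'\otimes K$, and we must match the lattices at $p$ after twisting by $K=\OO(2p)$.

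Lattice step. Take the basis \eqref{g2:pk:lattice}, $z^{-2}v_1,\dots,z^{-2}v_m,v_{m+1},\dots,v_R$, with $v_j=D_\sigma a_j$, and form the dual lattice. Its basis is the dual basis $v_j^\vee$, scaled as $z^{2}v_1^\vee,\dots,z^2v_m^\vee,v_{m+1}^\vee,\dots,v_R^\vee$, where $v^\vee_j=D_\sigma^{-1}a_j^\vee$ with respect to the standard pairing. Tensoring with $K$, trivialized locally by $z^{-2}$, multiplies everything by $z^{-2}$. This gives the basis
\[
 v_1^\vee,\dots,v_m^\vee,\;z^{-2}v_{m+1}^\vee,\dots,z^{-2}v_R^\vee.
\]
Here $v_{m+1}^\vee,\dots,v_R^\vee$ span $D_\sigma^{-1}W^\perp$, where $W^\perp$ is the annihilator of $W$ in the dual, identified with $\mathbf k^R$ via the standard form. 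So $P_m(W)^*\otimes K$ is the modification $P_0'[2p\to D_\sigma(z)^{-1}W^\perp]$ of dimension $R-m$, taken relative to the local trivialization of $T_i^{-1}$ dual to the one used for $T_i$.

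Matching the diagonal. The construction for $P'$ uses sections $\sigma'_i$ with divisor $2\iota(q_i)$, in the trivializations of $T_i^{-1}$. The key identity is
\[
 \sigma_i\sigma'_i\in H^0(K^2)\quad\text{with divisor }2q_i+2\iota q_i=\operatorname{div}(\pi^*\ell_i^2).
\]
Here $\ell_i\in H^0(A)=H^0(K)$ vanishes on the fiber through $q_i$, so $\sigma_i\sigma_i'=c_i\ell_i^2$ for a constant $c_i$. Near $p$, write the hyperelliptic coordinate $x=z^2$ up to units. Then $\ell_i=x-x(q_i)$, which in the trivialization of $K$ by $z^{-2}$ is a unit depending only on $z^2$. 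The apparent obstacle is that $D_\sigma^{-1}$ and $D_{\sigma'}$ must agree up to a constant diagonal, and this must hold to first order in $z$, because the construction depends on the first jet. We have $\sigma_i'=c_i\ell_i^2\sigma_i^{-1}$. The factor $\ell_i(z)^2$ is even in $z$, so its $z$-linear term vanishes. Equivalently, $\ell_i^2=\ell_i(p)^2+O(z^2)$.

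Modulo $z^2$, then, $D_{\sigma'}\equiv D_0 D_\sigma^{-1}$ with $D_0=\operatorname{diag}(c_i\ell_i(p)^2)$, which is constant and invertible since $q_i$ is not in the fiber of $p$. Changing the lattice generators by $O(z^2)$ corrections does not change the modification: $z^{-2}(v+z^2w)$ differs from $z^{-2}v$ by an element of $P_0'$. Hence
\[
 P_m(W)^*\otimes K\simeq P'_{R-m}(D_0W^\perp).
\]
Finally, $W\mapsto D_0W^\perp$ is an algebraic isomorphism $\Gr(m,R)\to\Gr(R-m,R)$, so it carries general planes to general planes. The main point to verify carefully is the parity (evenness) argument, which ensures that the first-jet data match.
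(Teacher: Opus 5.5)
Your argument is correct and is essentially the paper's proof. Both identify $T_i^{-1}=\OO_C(2\iota(q_i))K^{-1}$, write $\sigma_i\sigma_i'=c_i\ell_i^2$, use the ramification of the canonical map at $p$ to match the first jets up to a constant diagonal matrix, and then dualize and twist the lattice at $p$. The paper carries a common unit $u(z)$ instead of making $\ell_i$ exactly even, which is what your ``$x=z^2$ up to units'' implicitly needs; a common unit does not change the lattice, so this is harmless.

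One bookkeeping slip: since $D_{\sigma'}\equiv D_0D_\sigma^{-1}$, the annihilator satisfies $D_\sigma^{-1}W^\perp\equiv D_{\sigma'}D_0^{-1}W^\perp$ modulo $z^2$. So with your $D_0=\operatorname{diag}(c_i\ell_i(p)^2)$ the conclusion should read $P'_{R-m}(D_0^{-1}W^\perp)$, which still proves the existence statement.
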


\begin{proof}
The relation $q_i+\iota(q_i)\sim K$ identifies
$T_i^{-1}=\mathcal O_C(2\iota(q_i))K^{-1}$.
Choose sections $\tau_i$ of $KT_i^{-1}$ with divisors $2\iota(q_i)$.
For a canonical form $\ell_i$ with divisor $q_i+\iota(q_i)$, we have
$\sigma_i\tau_i=c_i\ell_i^2$ with $c_i\ne0$.
Since the canonical map is ramified at $p$, the units $\ell_i/\ell_j$
are constant modulo $z^2$.  Consequently, modulo $z^2$,
\[
 D_\sigma(z)^{-1}=u(z)D_\tau(z)D_0
\]
for a common unit $u(z)$ and an invertible constant diagonal matrix
$D_0$.  The annihilator of $D_\sigma(z)W$ is therefore, to the required
first order, $D_\tau(z)D_0W^\perp$.
In a frame adapted to this plane, dualizing and twisting the lattice
\eqref{g2:pk:lattice} is the identity
\[
 \left(z^{-2}\mathcal O^m\oplus\mathcal O^{R-m}\right)^*
 \otimes z^{-2}\mathcal O
 =\mathcal O^m\oplus z^{-2}\mathcal O^{R-m}.
\]
It gives the asserted global isomorphism, since away from $p$ both
sides have the prescribed identification with $P_0^*$.
\end{proof}

\begin{lemma}[Separating the supports]\label{g2:pk:collision}
A member of \eqref{g2:pk:family} is the special fiber of a vector-bundle
family whose general fiber has $m$ length-two modifications at
distinct points.  A cluster of $R$ independent directions gives a twist
by the line bundle of twice its support.
\end{lemma}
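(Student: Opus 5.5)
The plan is to construct the family explicitly by letting the support point move. Over a base $\Delta=\operatorname{Spec}\mathbf k[[\epsilon]]$ (or a smooth pointed curve germ), choose $m$ sections $p_1(\epsilon),\ldots,p_m(\epsilon)$ of $C\times\Delta\to\Delta$. All of them specialize to $p$ at $\epsilon=0$ and are pairwise distinct for $\epsilon\ne0$; for instance, in the local parameter take $z=\epsilon\zeta_j$ with distinct constants $\zeta_j$. Fix a basis $a_1,\ldots,a_m$ of $W$. At $p_j(\epsilon)$ we perform the length-two elementary modification of $P_0$ in the direction $D_\sigma(z)a_j$. Concretely, the lattice near $p$ is generated by $P_0$ together with $(z-\epsilon\zeta_j)^{-2}\,\tilde v_j(z)$. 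Here $\tilde v_j$ is a correction of $D_\sigma(z)a_j$, chosen so that the principal parts at distinct points are compatible with the first-jet data. The general fiber is then visibly a bundle with $m$ length-two modifications at distinct points, of degree $2m$ and determinant $\bigotimes_iT_i\otimes\mathcal O(2\sum p_j(\epsilon))$. This matches \eqref{g2:pk:rank-degree-det} in the limit.

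The key step is to show that this family of lattices is flat over $\Delta$ and that its special fiber equals the lattice \eqref{g2:pk:lattice}. I would write the lattice over $\mathbf k[[z,\epsilon]]$ as the $\mathcal O$-span of $P_0$ and the vectors $\prod_j(z-\epsilon\zeta_j)^{-2}\cdot w_j(z,\epsilon)$. The $w_j$ are obtained by Lagrange/Hermite interpolation, so that $w_j$ agrees with $D_\sigma(z)a_j$ to first order at $p_j(\epsilon)$ and vanishes to second order at the other $p_i(\epsilon)$. This is the natural choice because a length-two modification at a point depends exactly on the first jet of its direction. Since $\prod_j(z-\epsilon\zeta_j)^2$ is a Weierstrass polynomial of degree $2m$, the quotient lattice$/P_0$ is a free $\mathbf k[[\epsilon]]$-module of rank $2m$, and flatness follows. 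Setting $\epsilon=0$, the interpolated vectors collapse to $z^{-2m}$ times combinations whose span modulo $P_0$ is $z^{-2}\langle D_\sigma(z)a_1,\ldots,D_\sigma(z)a_m\rangle$ up to first order. One then checks that this gives the lattice with basis \eqref{g2:pk:lattice}.

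I expect the main obstacle to be this check at $\epsilon=0$: the naive limit of $m$ separate modifications could instead produce a "higher-order" modification, of the form $z^{-2m}$ in one direction. To rule this out I would choose the $\zeta_j$ generically and normalize the interpolants. The cleanest route is to take a basis adapted to $W$, making $D_\sigma(z)a_j$ locally the coordinate vector $e_j$ up to a unit. After a local change of frame independent of $\epsilon$, the family then becomes a direct sum of $m$ rank-one modifications in coordinate $j$ at $p_j(\epsilon)$ together with $R-m$ unmodified lines. In each rank-one coordinate the limit is $\mathcal O(2p_j(\epsilon))\to\mathcal O(2p)$, which is clearly flat with the right special fiber. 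This reduces everything to the rank-one case. The price is that the frame change must only depend on the first jet, and this is consistent with the independence statement following \eqref{g2:pk:lattice}.

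For the final sentence, suppose $m=R$, so that the directions span everything. In that case the modification at a point in $R$ independent directions is simply $P_0\otimes\mathcal O(2x)$. A cluster of $R$ independent directions supported on a reduced divisor $\sum x_j$ therefore gives the twist by $\mathcal O(2\sum x_j)$ in the limit along the same flat family, which matches the determinant count.
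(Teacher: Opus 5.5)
Your final route is exactly the paper's proof. You pass to the frame $v_j(z)=D_\sigma(z)a_j$, and in that frame the family is the lattice with basis $v_j/(z-t_j)^2$ for $j\le m$ and $v_j$ for $j>m$; this basis is visibly free, specializes to \eqref{g2:pk:lattice} at $t_j=0$, and for $m=R$ gives $z^{-2}P_0$. The Hermite-interpolation detour, the correction $\tilde v_j$, and the genericity of the $\zeta_j$ are unnecessary, since $P_0+\sum_j(z-t_j)^{-2}\mathcal O\,v_j$ already has that basis and no higher-order limit can arise; also, in the last assertion a cluster means $R$ supports coalescing at a single point $x$ away from the other modifications, giving a twist by $\mathcal O(2x)$, and $\mathcal O(2\sum x_j)$ comes from several such clusters.
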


\begin{proof}
Use the local frame $v_1(z),\ldots,v_R(z)$ above and let the first $m$
supports move to $z=t_1,\ldots,t_m$.  Take the free lattice generated by
\[
 \frac{v_j(z)}{(z-t_j)^2}\quad(1\le j\le m),\qquad
 v_j(z)\quad(m<j\le R).
\]
These are a basis over the curve-and-parameter ring in their meromorphic
realization.  Away from the divisors $z=t_j$ the lattice is the
unmodified bundle, so it glues to $P_0$ there.  Distinct $t_j$ give the
specified modifications, and setting all $t_j=0$ gives
\eqref{g2:pk:lattice}.  One can carry out this construction on an etale
coordinate neighborhood; thus it is an algebraic specialization.
If $m=R$, all frame vectors acquire the same double pole, giving
$P_0(2p)$.  The same argument at a support disjoint from existing
modifications proves the final assertion.
\end{proof}

\subsubsection{The one-tail bound}

We first establish the geometric input, including the fact that the
attachment point may be the fixed Weierstrass point.

\begin{lemma}[Three selected theta translates]\label{g2:pk:theta}
The points $q_i$ can be chosen in a nonempty open subset so that for
any three indices there is no degree-one line bundle $H$ satisfying
\[
 H^0\bigl(\mathcal O_C(2q_i)H^{-1}\bigr)\ne0
\]
for all three indices.  Equivalently, for every degree-one line bundle
$B$, at most two of the spaces $H^0(BT_i)$ are nonzero.
\end{lemma}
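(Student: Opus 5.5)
The plan is a dimension count on an incidence variety, after first putting the condition into a concrete form. On a genus-two curve a line bundle of degree one has a nonzero section exactly when it is $\mathcal O_C(x)$ for some point $x$, and then the point is unique. So $H^0(\mathcal O_C(2q_i)H^{-1})\ne0$ means that $H\simeq\mathcal O_C(2q_i-x_i)$ for some $x_i\in C$. The equivalence with the second formulation is the substitution $H=KB^{-1}$. This is a bijection between degree-one line bundles $H$ and $B$, and it gives $\mathcal O_C(2q_i)H^{-1}=BT_i$.

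Consider the incidence
\[
 I=\{(H,q_1,q_2,q_3)\in\Pic^1(C)\times C^3:\ H^0(\mathcal O_C(2q_j)H^{-1})\ne0\ \text{for }j=1,2,3\}.
\]
It is closed by semicontinuity. I will show that $\dim I\le2$. Granting this, the projection of $I$ to $C^3$ has closure of dimension at most two, so its complement is a nonempty open set of triples. For $R$ points, intersect the preimages of these open sets under the finitely many coordinate projections $C^R\to C^3$. Then intersect the result with the open conditions already imposed on the $q_i$: distinct, non-Weierstrass, and different from $p$. The result is a nonempty open subset of $C^R$ with the stated property.

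The bound $\dim I\le2$ follows once each fiber of $I\to\Pic^1(C)$ is finite. So fix $H$, and study the set of points $q$ with $2q-H$ effective. Let $\gamma:C\to\Pic^2(C)$ be the map $q\mapsto\mathcal O_C(2q)$, and let $\Gamma$ be its image. The condition says that $\gamma(q)$ lies in the translate $H+W_1$ of the theta curve $W_1\subset\Pic^1(C)$. The curves $\Gamma$ and $H+W_1$ are irreducible curves on the abelian surface $\Pic^2(C)$. Hence they meet in finitely many points unless they coincide. Since $\gamma$ is finite, only finitely many $q$ map to each such point, so the fiber over $H$ is finite in that case.

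The key step is to exclude $\Gamma=H+W_1$. I would do this with cohomology classes. The map $\gamma$ is the Abel--Jacobi embedding followed by multiplication by two, up to translation. Multiplication by two acts on $H^2$ of the abelian surface by $4$, so $\gamma_*[C]=4\theta$. Since $\Gamma$ is irreducible, $\gamma_*[C]=\deg(C\to\Gamma)\,[\Gamma]$. If $\Gamma$ were the translate of $W_1$, then $[\Gamma]=\theta$, so $C\to\Gamma$ would have degree four. But $\Gamma\simeq C$ would be a smooth curve of genus two, and a degree-four self-map of a curve of genus two contradicts Riemann--Hurwitz.

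If one prefers to avoid classes, there is a direct alternative. Containment would give a morphism $x:C\to C$ with $2q\sim H+x(q)$ for all $q$. Taking differences over two points would give $2(q-q')\sim x(q)-x(q')$. One can then compare degrees of the induced maps on Jacobians. In either form, this is the point I expect to need the most care; the rest of the argument is formal.
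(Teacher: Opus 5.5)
Your proof is correct, but it takes a different route from the paper. The paper also forms the closed incidence in $C^3\times\Pic^1(C)$ and projects it to $C^3$. Instead of bounding the dimension of the incidence, it exhibits a single triple outside its closed image. Take $q_1=p$ Weierstrass; then $\mathcal O_C(2p)=K$, and Riemann--Roch makes the first condition equivalent to $H=\mathcal O_C(y)$ being effective. For distinct non-Weierstrass $q_2,q_3$ one has $h^0(\mathcal O_C(2q_i))=1$, which forces $y=q_2$ and $y=q_3$, a contradiction.

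You prove more: for every $H$ only finitely many $q$ have $2q-H$ effective, so $\dim I\le2$. This costs you the class computation $\gamma_*[C]=4\theta$ on the Jacobian together with Riemann--Hurwitz. That step is valid as written, and the intersection-number form $\deg\gamma^*\Theta=8$ versus $\Theta^2=2$ works equally well.

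The paper's Weierstrass trick also settles your key step without any classes. If $\Gamma=H+W_1$, then every $q\in C$ satisfies the condition. A Weierstrass point then makes $H$ effective, and two distinct non-Weierstrass points give the same contradiction as above.

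So the paper's route is shorter and uses only Riemann--Roch at the fixed Weierstrass point, which is the same specialization reused in Lemma~\ref{g2:pk:pair}. Yours yields a uniform finiteness statement that is stronger than the lemma requires.
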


\begin{proof}
The incidence for three points is closed: take the closed incidence in
$C^3\times\operatorname{Pic}^1(C)$ and project from the proper Picard
variety.  To prove it is proper, specialize $q_1=p$ and choose distinct
non-Weierstrass points $q_2,q_3$.  Since $\mathcal O_C(2p)=K$,
Riemann--Roch makes the first condition equivalent to $H$ being
effective.  Write $H=\mathcal O_C(y)$.  The second condition gives
$2q_2\sim y+x$ for a point $x$.  As $q_2$ is not Weierstrass,
$h^0(\mathcal O_C(2q_2))=1$, so the effective divisors are equal and
$y=q_2$.  The third condition would similarly force $y=q_3$, a
contradiction.  Impose these finitely many nonempty open conditions
for all triples.  The equivalent formulation follows by putting
$H=KB^{-1}$.
\end{proof}

\begin{lemma}[Selected line classes]\label{g2:pk:selected}
Assume Lemma~\ref{g2:pk:theta}.  If a stable bundle $G$ of rank $u$ and
degree one receives nonzero maps from $s$ distinct lines among the
$T_i^{-1}$, then $s\le u+1$.
\end{lemma}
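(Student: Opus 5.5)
The plan is to reduce to the rank-one statement of Lemma~\ref{g2:pk:theta} by quotienting $G$ by degree-zero subbundles spanned by some of the given lines. Throughout I use two facts. The $T_i^{-1}$ are pairwise nonisomorphic line bundles of degree zero, so $\operatorname{Hom}(T_i^{-1},T_j^{-1})=0$ for $i\ne j$. Stability of $G$, of slope $1/u$, forces every subsheaf of rank $v<u$ to have degree $<v/u<1$, hence degree $\le0$.

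\emph{Case $u=1$.} Then $G=B$ is a line bundle of degree one. A nonzero map $T_i^{-1}\to B$ is a nonzero section of $BT_i$. By the equivalent formulation in Lemma~\ref{g2:pk:theta}, at most two indices occur, so $s\le2=u+1$.

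\emph{Step 1: subsets of at most $u-1$ lines span direct sums.} Now let $u\ge2$ and let $I$ be the set of the $s$ indices. Take $J\subset I$ with $|J|=v$, and let $S_J$ be the saturation in $G$ of the image of $\bigoplus_{i\in J}T_i^{-1}$. Suppose $\operatorname{rk}S_J<u$.
\begin{enumerate}[(a)]
\item Each nonzero map $T_i^{-1}\to G$ is injective, and its saturation has degree $\ge0$. By the remark above it also has degree $\le0$, so $T_i^{-1}$ is a degree-zero subbundle.
\item The image of $\bigoplus_{J}T_i^{-1}$ is a quotient of a semistable degree-zero bundle, so it has degree $\ge0$. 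Hence $\deg S_J\ge0$, and the stability bound gives $\deg S_J\le0$. So $\deg S_J=0$.
\item If the lines in $J$ are generically independent, then $\bigoplus_JT_i^{-1}\to S_J$ is an injection of bundles of the same rank and degree, hence an isomorphism.
\item In particular, if $T_j^{-1}$ with $j\notin J$ had image inside $S_J$, we would get a nonzero map $T_j^{-1}\to\bigoplus_JT_i^{-1}$. This is impossible.
\end{enumerate}
By induction on $|J|$ it follows that any $\min(s,u)$ of the lines are generically independent.

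\emph{Step 2: the two alternatives.} If $s\le u$ we are done. Otherwise any $u-1$ of the lines span a subbundle $S\simeq\bigoplus T_i^{-1}$ of rank $u-1$ and degree zero. Any further line maps nonzero into the quotient $Q=G/S$, because all $u$-subsets are independent.

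\emph{Step 3: the contradiction for $s\ge u+2$.} Fix $u-1$ indices and three further indices $a,b,c$. Then $Q$ is a line bundle of degree one, and it receives nonzero maps from $T_a^{-1},T_b^{-1},T_c^{-1}$. Thus $H^0(QT_a)$, $H^0(QT_b)$ and $H^0(QT_c)$ are all nonzero, which contradicts Lemma~\ref{g2:pk:theta} with $B=Q$. Hence $s\le u+1$.

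\emph{Expected obstacle.} The delicate point is Step~1: showing that the saturated span of fewer than $u$ of the lines is exactly their direct sum, with no torsion defect and no extra degree. This is what guarantees that $Q$ has degree exactly one and that the composite maps to $Q$ are nonzero. The argument is the degree squeeze in (b) together with the nonisomorphism of the $T_i$ in (d). I would write it carefully as an induction on the number of lines.
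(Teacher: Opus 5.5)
Your argument is correct and is essentially the paper's. Both reduce to the rank-one case of Lemma~\ref{g2:pk:theta} by quotienting $G$ by selected lines. Both use two facts: stability forces each selected line to be a saturated degree-zero subbundle, and distinct degree-zero lines admit no nonzero maps. The only difference is packaging. The paper quotients by one line at a time and checks that $G/T_i^{-1}$ is again stable of degree one, so induction on $u$ applies. You quotient by $u-1$ lines at once and instead use the degree squeeze to show that their span is a saturated copy of $\bigoplus T_i^{-1}$, so the quotient is directly a degree-one line bundle.
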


\begin{proof}
For $u=1$ this is the last formulation of Lemma~\ref{g2:pk:theta}.
For $u\ge2$, any such map is a saturated injection: a positive-degree
saturation would contradict stability of $G$.  Quotient by one
selected line.  The quotient is stable of rank $u-1$ and degree one,
since a proper positive-degree subbundle would lift to a proper
subbundle of $G$ of degree at least one.  Every other selected line
still maps nontrivially to the quotient, because distinct degree-zero
line bundles admit no nonzero morphisms between them.  Induction gives
$s-1\le u$, as required.  If no selected map exists, the assertion is
immediate.
\end{proof}

\begin{lemma}[The rank of a positive subbundle]\label{g2:pk:one-rank}
Assume Lemma~\ref{g2:pk:theta}, and let
$P=P_0[2p\longrightarrow D_\sigma(z)a]$ with every coordinate of
$a\in\mathbf k^R$ nonzero.  Every positive-degree proper saturated
subbundle $S\subset P$ has degree one and rank $R-1$.
\end{lemma}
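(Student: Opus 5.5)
The plan is to compare $S$ with the unmodified bundle $P_0=\bigoplus_iT_i\subset P$, whose cokernel $P/P_0$ is cyclic of length two at $p$, locally $\OO/(z^2)$ generated by $z^{-2}v_1(z)$. Put $S_0=S\cap P_0$ and let $\ell\in\{0,1,2\}$ be the length of the image of $S$ in $P/P_0$. Then $\deg S=\deg S_0+\ell$. Since the $T_i$ are pairwise nonisomorphic of degree zero, every subsheaf of $P_0$ has degree $\le0$. Moreover, a saturated degree-zero subsheaf is a coordinate sum $\bigoplus_{i\in I}T_i$: a degree-zero line subsheaf maps isomorphically onto some $T_i$ and hence splits off, and we induct. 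Thus $\deg S\in\{1,2\}$, and the proof is a case analysis on the pair $(\deg S_0,\ell)$.

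\emph{Excluding $\deg S=2$ and the case $(0,1)$.} In both cases $S_0=\bigoplus_{i\in I}T_i$ with $I$ proper, and $\ell\ge1$. So some local section of $S$ has the form $z^{-j}v_1(z)+(\text{regular})$ with $j\ge1$. Multiplying by $z^j$ shows that $v_1(0)=D_\sigma(0)a$ lies in the fiber of $S$ at $p$. If $\ell=2$ the whole first jet lies in $S_0$ there, so $v_1(0)$ lies in the fiber of $S_0$ itself. If $\ell=1$, saturation of $S$ together with $S\cap P_0=S_0$ forces the same conclusion. Hence every coordinate of $a$ outside $I$ vanishes. As all coordinates of $a$ are nonzero, $I$ is everything, and then $S$ has rank $R$, so it is not proper. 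The remaining possibility is $\deg S_0=-1$ and $\ell=2$. In that case $S$ surjects onto $P/P_0$, so $G:=P/S=P_0/S_0$ is a torsion-free quotient of $\bigoplus T_i$ of degree $1$ and rank $u=R-s$.

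\emph{Reduction to one line.} Let $Z$ be the set of indices with $T_i\to G$ zero, so that $\bigoplus_ZT_i\subset S_0$. If $|Z|=R-u$, then $S_0$ would be this coordinate sum and would have degree $0$, which is a contradiction; hence $|Z|\le R-u-1$. Quotienting by $\bigoplus_ZT_i$ replaces $P$ by a pointing kernel of the same shape on the coordinates $i\notin Z$. All coordinates of $a$ remain nonzero there, and $S$ becomes a saturated subbundle $S'$ of degree $1$ and rank $R-u-|Z|\ge1$. I would next use Lemma~\ref{g2:pk:selected}, in its $\iota$-symmetric form: the data $\iota(q_i)$ are equally general and $T_i=T_i'^{-1}$. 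One first checks that $G$ is stable. Any quotient of $G$ is a quotient of $P_0$, hence of degree $\ge0$; a degree-zero quotient would be a coordinate sum, and its kernel would be a degree-$2$ proper subbundle containing $S$, already excluded. The lemma then gives $R-|Z|\le u+1$. Combined with the previous inequality this yields $|Z|=R-u-1$, so $S'$ is a line subbundle of degree one in a kernel with $R'=u+1$ coordinates.

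\emph{The rank-one core, which I expect to be the main obstacle.} It remains to show that $R'=2$, since then $u=1$ and $s=R-1$. Here $N:=S'\cap P_0'$ is a line of degree $-1$. Every coordinate projection $N\to T_i$ is nonzero, because a zero projection would put $S'$ inside a proper coordinate sum, contradicting the minimality of $Z$. Hence $H^0(N^{-1}T_i)\neq0$ for all $R'$ indices. With $B=N^{-1}$ of degree one, Lemma~\ref{g2:pk:theta} allows at most two such indices, so $R'\le2$. Also $R'\ge2$, because $S'$ is a proper line subbundle. The delicate points I expect to need care are the local saturation argument in the case $\ell=1$, and verifying that the induced bundles after quotienting by $\bigoplus_ZT_i$ still satisfy the hypotheses of Lemmas~\ref{g2:pk:theta} and~\ref{g2:pk:selected}, including the $\iota$-symmetric form used for maps $T_i\to G$.
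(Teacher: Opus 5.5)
Your argument is correct. Its first half is the paper's: the length count $\deg S=\deg S_0+\ell$, the description of degree-zero subsheaves of $P_0$ as coordinate sums, and the full-support exclusion of $\deg S_0=0$.

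From the case $(\deg S_0,\ell)=(-1,2)$ on, you take the dual route.
\begin{itemize}
\item The paper stays with the kernel intersection. It deletes coordinate lines $T_i\subset S_0$ until $S_0$ is stable of degree $-1$; this is exactly your quotient by $\bigoplus_{i\in Z}T_i$. It notes that every projection to a remaining $T_i$ is nonzero. It then applies Lemma~\ref{g2:pk:selected} once to the dual, getting $R-|Z|\le\rk S-|Z|+1$, i.e.\ $\rk S\ge R-1$.
\item You instead apply the selected-lines bound to the quotient $G=P/S$. This needs stability of $G$, which you derive correctly from the exclusion of degree-two subbundles. It also needs a version of Lemma~\ref{g2:pk:selected} for the lines $T_i$ rather than $T_i^{-1}$. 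It only gives the complementary inequality $\rk S-|Z|\le1$, so you need a second application of Lemma~\ref{g2:pk:theta} to the residual line $N$ to finish.
\end{itemize}
Your route is longer, but it returns more structure. It shows that $S_0$ contains exactly $R-2$ coordinate lines and that $P/S$ is a degree-one line quotient of a rank-two pair. That is precisely the configuration handled in Proposition~\ref{g2:pk:one-tail}.

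If you only want the lemma, there is a shortcut. After removing $\bigoplus_{i\in Z}T_i$, the residual $N$ is already stable: a degree-zero subsheaf would be a coordinate line, which would force its index into $Z$. So Lemma~\ref{g2:pk:selected} applied to $N^*$ finishes without the detour through $G$.

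Three small repairs:
\begin{itemize}
\item The $T_i$-form of Lemma~\ref{g2:pk:selected} needs no extra genericity of the conjugate points. For $\deg B=1$ on a genus-two curve, Riemann--Roch gives $h^0(BT_i^{-1})=h^0(KB^{-1}T_i)$. So the condition of Lemma~\ref{g2:pk:theta} is symmetric under $T_i\mapsto T_i^{-1}$, as the paper uses in Proposition~\ref{g2:pk:one-tail}.
\item The nonvanishing of the projections $N\to T_i$ does not come from a minimality of $Z$, since $Z$ is not defined that way. It follows from your own first step. The fiber of $S_0$ at $p$ contains $v_1(0)$, and its projection, with all coordinates outside $Z$ nonzero, spans the fiber of the saturated line $N$. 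Alternatively, a zero projection would split $T_i$ off $G$, contradicting stability.
\item The phrase ``$v_1(0)$ lies in the fiber of $S$'' should go. In the fiber of $P$ at $p$, $v_1=z^2\cdot z^{-2}v_1$ vanishes. The clean statement is that $z^j\phi$ lies in $S\cap P_0=S_0$ and has value $v_1(0)$ in $P_0|_p$. This holds for $\ell=1$ and $\ell=2$ alike, so no case distinction is needed there.
\end{itemize}
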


\begin{proof}
Put $S_0=S\cap P_0$.  The injection $P_0/S_0\hookrightarrow P/S$
shows that $S_0$ is saturated in $P_0$.  Since $P_0$ is polystable of
degree zero, $\deg S_0\le0$, and $S/S_0$ has length at most two.
A degree-zero subbundle of $P_0$ is a sum of selected coordinate lines:
indeed it is a subobject of a direct sum of pairwise nonisomorphic
stable bundles of the same slope.  A proper such sum cannot gain
length in the modification, because the pole direction has all
coordinates nonzero.  Positivity therefore forces
\[
 \deg S_0=-1,\qquad \operatorname{length}(S/S_0)=2,\qquad \deg S=1.
\]
The length-two gain implies that the fiber of $S_0$ contains the value
of the modifying vector.  Thus every projection $S_0\to T_i$ is
nonzero.

If $S_0$ is not stable, it has a degree-zero subbundle.  In fact a
proper subbundle of negative integral degree has slope strictly less
than $-1/\operatorname{rk}S_0$.  A degree-zero subbundle contains a
coordinate line $T_i$.  Delete this line from the construction.
It is saturated in $P$: in the local lattice, the projection of the
modifying vector to the other coordinates is still a unit vector.
Moreover
\[
 P/T_i\simeq
 \left(\bigoplus_{j\ne i}T_j\right)
 [2p\longrightarrow \overline{D_\sigma(z)a}],
\]
and the new intersection is $S_0/T_i$, again saturated and of degree
minus one.  The projected direction still has full support.
Repeat until the intersection is stable.  Deleting $j$ coordinates
leaves a stable bundle of rank $\operatorname{rk}S-j$ and degree
minus one, whose dual receives the remaining $R-j$ selected inverse
lines.  Lemma~\ref{g2:pk:selected} gives
$R-j\le\operatorname{rk}S-j+1$.  Since $S$ is proper, its rank is
exactly $R-1$.
\end{proof}

\begin{lemma}[The pair jet at a fixed Weierstrass point]
\label{g2:pk:pair}
For general $q_1,q_2$, the rank-two bundle
\[
 (T_1\oplus T_2)[2p\longrightarrow
       (a_1\sigma_1(z),a_2\sigma_2(z))]
\]
is stable for every $a_1a_2\ne0$.
\end{lemma}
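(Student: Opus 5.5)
Let $P$ denote the bundle in the statement, of rank two and degree two. Stability means that every line subbundle $S\subset P$ has $\deg S<1$, so I will show that no saturated line subbundle has degree at least one.

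I follow the first part of the proof of Lemma~\ref{g2:pk:one-rank}. Let $S\subset P$ be a saturated line subbundle and put $S_0=S\cap(T_1\oplus T_2)$. This is a saturated line subbundle of $T_1\oplus T_2$, so $\deg S_0\le0$. Moreover $S/S_0$ has length at most two, since the modification at $p$ has length two.

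If $\deg S_0=0$, then $S_0$ is a coordinate line $T_i$, because $T_1\not\simeq T_2$. The modifying direction has both coordinates nonzero, so $T_i$ gains no length, and $\deg S=0$. Hence a destabilizing $S$ must satisfy $\deg S_0=-1$, $\deg S=1$, and the length gain must be exactly two. A length-two gain means that the fiber direction of $S_0$ agrees with the modifying vector $(a_1\sigma_1(z),a_2\sigma_2(z))$ to first order at $p$; the one-dimensional subspace must match the full first jet. Thus $S_0\simeq B^{-1}$ with $\deg B=1$, and both projections $S_0\to T_i$ are nonzero. Equivalently, there are sections $\phi_i\in H^0(BT_i)$ with $(\phi_1:\phi_2)$ matching $(a_1\sigma_1:a_2\sigma_2)$ modulo $z^2$ as a point of $\PP^1$.

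The main obstacle is to exclude this jet condition for every nonzero $a_1,a_2$. Here I would use that $p$ is Weierstrass. Set $H=KB^{-1}$, a line bundle of degree one. By Riemann--Roch, $h^0(BT_i)=h^0(\OO_C(2q_i)H^{-1})$. Since $q_i$ is not Weierstrass, $h^0(\OO_C(2q_i))=1$, and the argument of Lemma~\ref{g2:pk:theta} shows that this space is nonzero only if $H=\OO_C(q_i)$. Then $BT_i=\OO_C(2q_i-q_i)\otimes K K^{-1}\simeq\OO_C(q_i)$. Both spaces are nonzero only if $H=\OO_C(q_1)=\OO_C(q_2)$, which forces $q_1=q_2$, a contradiction. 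Even in the degenerate case where $B$ is not of this form, $\deg BT_i=1$ and $h^0\le1$, so each $\phi_i$ is unique up to scalar, with divisor a single point $y_i$.

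The remaining case is $H^0(BT_1)$ and $H^0(BT_2)$ both nonzero. For general $q_1,q_2$ this happens only for finitely many $B$, namely $B$ with $\OO_C(y_1)\otimes T_1^{-1}\simeq\OO_C(y_2)\otimes T_2^{-1}$. For such $B$, the ratio $\phi_1/\phi_2$ is a fixed rational function, up to scalar. The ratio $\sigma_1/\sigma_2$ is also fixed. The condition is that $(\phi_1\sigma_2)/(\phi_2\sigma_1)$ is constant modulo $z^2$ at $p$. The scalars $a_1,a_2$ only rescale, so the condition reduces to the vanishing of the derivative at $p$ of the function $\phi_1\sigma_2/(\phi_2\sigma_1)$.

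This function is a section ratio of $\OO_C(y_1+2q_2)$ against $\OO_C(y_2+2q_1)$. The condition is closed in $(q_1,q_2)$, so to show it fails generically it suffices to check one specialization. I would specialize $q_2\to\iota(q_1)$ or let $q_1$ approach $p$, as in Lemma~\ref{g2:pk:theta}, and observe that the ramification of the canonical map at $p$ fixes the jet of $\ell_1/\ell_2$ to order two (as in Lemma~\ref{g2:pk:duality}), while the $y_i$-factor contributes a nonzero derivative. This verification, carried out over the finitely many $B$ and uniformly in $(a_1:a_2)$, is the step I expect to be delicate. Once it holds, no $S$ of degree one exists, and $P$ is stable.
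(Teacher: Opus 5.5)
Your reduction agrees with the paper's. By the length argument of Lemma~\ref{g2:pk:one-rank}, a destabilizing line has degree one. Its intersection with $T_1\oplus T_2$ is a degree $-1$ line with both components nonzero, and there are finitely many candidates. Since the $a_i$ only adjust the value, the jet condition becomes $\partial_z f(p)=0$ for $f=\phi_1\sigma_2/(\phi_2\sigma_1)$.

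That nonvanishing is the whole content of the lemma, and you do not prove it. The remark that ramification fixes a jet of $\ell_1/\ell_2$ while ``the $y_i$-factor contributes a nonzero derivative'' gives no reason why the derivatives of $\phi_1/\phi_2$ and $\sigma_2/\sigma_1$ cannot cancel. You also do no computation at either proposed specialization. Moreover, ``check one specialization'' cannot be done literally at $q_1=p$: there $\sigma_1$ vanishes at $p$, so the modification is not defined.

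Your middle paragraph is also incorrect. Without the specialization $q_1=p$ used in Lemma~\ref{g2:pk:theta}, $H$ need not be effective. The condition $H^0(\OO_C(2q_i)H^{-1})\ne0$ only says $H\simeq\OO_C(2q_i-x)$ for some point $x$. These loci are two distinct translates of a theta divisor in $\Pic^1(C)$, and they meet in finitely many points ($\Theta^2=2$), so candidates do exist. Your conclusion ``$q_1=q_2$, a contradiction'' would prove too much, and it clashes with your own ``remaining case''.

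The missing idea is the paper's limiting computation. Fix a non-Weierstrass $q_2\ne p$, let $q_1\to p$, and put $t=z(q_1)$. At $t=0$ one has $\OO_C(2q_1)=K$, so the first condition forces $H$ to be effective. The argument of Lemma~\ref{g2:pk:theta} then leaves only $H=\OO_C(q_2)$, with $\operatorname{div}(h_1)=\iota(q_2)$ and $\operatorname{div}(h_2)=q_2$, both away from $p$. Hence, for small $t$, the candidates lie on finitely many branches through this point, after normalization and base change. On each branch $f(z,t)=u(z,t)/(z-t)^2$ with $u(0,0)\ne0$, because near $p$ only $\sigma_1$ has zeros. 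Then $\partial_z f(0,t)=t^{-3}\bigl(2u(0,t)+t\,\partial_z u(0,t)\bigr)\ne0$ for small $t\ne0$, uniformly in $(a_1:a_2)$. Finiteness and properness of the incidence turn this into a nonempty open condition on $(q_1,q_2)$. What forces the derivative to be nonzero is the double zero of $\sigma_1$ colliding with $p$, not the ramification of the canonical map.
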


\begin{proof}
By the length argument in Lemma~\ref{g2:pk:one-rank}, a line that could
violate stability has degree one, and its intersection $A_0$ with
$T_1\oplus T_2$ has degree minus one and nonzero component maps
$h_i:A_0\to T_i$.  Put $H=KA_0$, of degree one.  The possible $H$
lie in the proper incidence
\[
 H^0(\mathcal O_C(2q_i)H^{-1})\ne0,
 \qquad i=1,2.
\]
Fix a non-Weierstrass $q_2\ne p$ and specialize $q_1$ to $p$.
The argument of Lemma~\ref{g2:pk:theta} shows that the special incidence
has just one point in its support, namely
\[
 H=\mathcal O_C(q_2),\qquad A_0=\mathcal O_C(-\iota(q_2)).
\]
The component sections then have divisors
$\operatorname{div}(h_1)=\iota(q_2)$ and
$\operatorname{div}(h_2)=q_2$.

The incidence over a parameter curve moving $q_1$ is therefore finite
after shrinking around this special value.  Normalize its finitely
many branches, making a finite base change when necessary.  All
candidates are accounted for in this way, including branches that
coalesce at the special value; reducedness of the incidence is not
required.  Each nonzero degree-one section space has dimension one,
so the candidate lines and component sections vary on these branches
after local trivializations.  Their zeros remain away from $p$.

Write $z(q_1)=t$.  On each branch the meromorphic function
\[
 f(z,t)=\frac{h_1\sigma_2}{h_2\sigma_1}
\]
has the local form
\[
 f(z,t)=\frac{u(z,t)}{(z-t)^2},\qquad u(0,0)\ne0.
\]
Here $t$ may be a ramified function of the branch parameter, which
does not affect differentiation with respect to $z$.  Thus
\begin{equation}\label{g2:pk:jet-derivative}
 \partial_z f(0,t)=t^{-3}
       \bigl(2u(0,t)+t\partial_z u(0,t)\bigr)\ne0
\end{equation}
for general nonzero $t$.  The coefficient two is nonzero in our
ground field.

For $A_0$ to gain two orders at $p$, its component ratio $h_1/h_2$
must agree with $a_1\sigma_1/(a_2\sigma_2)$ modulo $z^2$.
Its value can be adjusted by the constant coefficient ratio, but
its first derivative then requires $\partial_z f(p)=0$.
Equation~\eqref{g2:pk:jet-derivative} excludes this for every coefficient
ratio on every branch.  Finally, finiteness and properness of the
candidate incidence, together with the jet vanishing equation, show
that this is a nonempty algebraic open condition on $(q_1,q_2)$.
The same open condition also keeps all component zeros away from $p$.
\end{proof}

\begin{proposition}[One-tail profile]\label{g2:pk:one-tail}
For general selected points and every direction with full support,
$P_1$ has no positive-degree proper subsheaf.  In particular it is
stable in every rank $R\ge2$.
\end{proposition}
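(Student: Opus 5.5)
The plan is to combine Lemma~\ref{g2:pk:one-rank} with Lemma~\ref{g2:pk:pair}, passing to a rank-two quotient. Let $P=P_1=P_0[2p\to D_\sigma(z)a]$ with every $a_i\ne0$, rank $R\ge2$ and degree $2$. Since saturation preserves rank and can only raise degree, it suffices to exclude a positive-degree proper \emph{saturated} subbundle $S\subset P$. By Lemma~\ref{g2:pk:one-rank}, such an $S$ has rank $R-1$ and degree one. Moreover $S_0=S\cap P_0$ is saturated of degree $-1$, the quotient $S/S_0$ has length two, and every projection $S_0\to T_i$ is nonzero. The task is therefore to rule out this single numerical type.

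For $R=2$ this is exactly Lemma~\ref{g2:pk:pair}: the rank-two bundle is stable for every $a_1a_2\ne0$, so no degree-one line subbundle exists. For $R\ge3$ the plan is to reduce to $R=2$ by quotienting by coordinate lines, as in the proof of Lemma~\ref{g2:pk:one-rank}. If $S_0$ contains a coordinate line $T_i$, that line is saturated in $P$. We replace $P$ by
\[
P/T_i\simeq\Bigl(\bigoplus_{j\ne i}T_j\Bigr)[2p\to\overline{D_\sigma(z)a}]
\]
and $S$ by $S/T_i$. The modifying direction still has full support, and the new subbundle is again proper, saturated and of degree one. This decreases $R$ by one, so by induction we may assume $S_0$ contains no coordinate line.

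Once $S_0$ contains no coordinate line, it is stable of rank $R-1$ and degree $-1$, by the slope argument in Lemma~\ref{g2:pk:one-rank}. Its dual is then stable of rank $R-1$ and degree one, and it receives nonzero maps from all $R$ lines $T_i^{-1}$. Lemma~\ref{g2:pk:selected} permits this, since $R\le (R-1)+1$, so the count alone does not give a contradiction. I expect this to be the main obstacle. To get past it, I would project to the pair of coordinates $\{1,2\}$. Consider the composite $S\to P\to P/(\bigoplus_{j\ge3}T_j)$, whose target is the rank-two bundle $(T_1\oplus T_2)[2p\to(a_1\sigma_1,a_2\sigma_2)]$; it is a quotient because the complementary coordinate sum is saturated. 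The image of $S$ has rank at least one, because the components into $T_1$ and $T_2$ are nonzero. If the image is a line, it has degree at least $\deg S-\deg(\ker)$. Here the kernel is $S\cap\bigoplus_{j\ge3}T_j$, a subsheaf of an unmodified polystable degree-zero bundle, so its degree is at most $0$. Hence the line image has degree at least one, contradicting Lemma~\ref{g2:pk:pair}. If instead the image has rank two, the kernel has rank $R-3$, is saturated of degree at most $0$, and lies in $\bigoplus_{j\ge3}T_j$. It therefore misses at least one coordinate $T_3$, and I would rerun the argument with the triple $\{1,2,3\}$. Concretely, I would choose the two-element index set so that the kernel has full rank $R-2$; the existence of such a choice is what needs checking. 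The available tools are Lemma~\ref{g2:pk:theta}, which says at most two of the $H^0(BT_i)$ are nonzero, combined with the length-two gain, which forces the full modifying vector into the fiber of $S_0$.

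If that pair-selection step fails, the fallback is to argue by specialization. Generality and openness let us move all but two of the $q_i$. By Lemma~\ref{g2:pk:collision} and the block decomposition~\eqref{g2:pk:block}, we can specialize to a configuration where the one-tail bundle splits enough that only a rank-two pair block carries the modification. In that configuration the positive-degree condition, being closed by the relative Quot argument, can be tested blockwise and then reduces to Lemma~\ref{g2:pk:pair}.
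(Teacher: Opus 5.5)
\textbf{There is a genuine gap.} Your opening steps are fine: passing to a saturated $S$ of rank $R-1$ and degree one, using Lemma~\ref{g2:pk:pair} when $R=2$, and deleting coordinate lines contained in $S_0$. The trouble is the case you leave open, which is the whole content of the proposition.

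Once $R\ge3$ and $S_0$ contains no coordinate line, the projection to the pair quotient can never have line image. A line image would make $S\cap\bigoplus_{j\ge3}T_j$ a saturated subsheaf of $\bigoplus_{j\ge3}T_j$ of full rank $R-2$, so $\bigoplus_{j\ge3}T_j\subset S_0$, which you have excluded. So your ``line image'' branch is vacuous, and the reduced case with $R\ge3$ is not handled at all. The ``pair-selection'' step you defer is exactly what has to be proved.

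The specialization fallback does not work either. In a block specialization where only a rank-two pair carries the modification, that block $(T_1\oplus T_2)[2p\to(a_1\sigma_1,a_2\sigma_2)]$ is itself a proper subsheaf of degree two. The special fiber therefore lies in the closed bad locus, and openness gives nothing. Moreover, openness would only give general directions, while the statement covers every full-support direction.

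\textbf{The missing idea is to work with the quotient rather than the subbundle.} Since $\rk S=R-1$ and $\deg S=1$, the quotient $Q=P/S$ is a line bundle of degree one. The restriction of $P\to Q$ to $P_0$ has kernel $S_0$ and components $u_i\in H^0(QT_i^{-1})$, with $u_i=0$ exactly when $T_i\subset S_0$. In genus two, $\deg QT_i^{-1}=1$ gives Euler characteristic zero, so $h^0(QT_i^{-1})=h^0(KQ^{-1}T_i)$. Lemma~\ref{g2:pk:theta}, applied to the degree-one bundle $B=KQ^{-1}$, then shows that at most two $u_i$ are nonzero. Hence $S$ contains all but at most two coordinate lines, which is precisely the pair selection you were looking for. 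In your reduced situation all $u_i$ are nonzero, so this forces $R\le2$, and Lemma~\ref{g2:pk:pair} finishes.

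The paper reaches the conclusion directly, without your deletion induction, by cases on the number of nonzero components:
\begin{enumerate}
\item With two nonzero components, the map factors through the stable rank-two pair bundle of degree two. That bundle admits no nonzero map to a degree-one line.
\item With a single nonzero component $u_i$, extending across the modification would require $u_i\sigma_i$, whose divisor is $x_i+2q_i$, to vanish to order two at $p\ne q_i$. This is impossible.
\item A map vanishing on $P_0$ is zero.
\end{enumerate}
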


\begin{proof}
Impose Lemma~\ref{g2:pk:theta} and the nonempty pair conditions of
Lemma~\ref{g2:pk:pair} for all selected pairs.  It suffices to exclude a
degree-one line quotient, by Lemma~\ref{g2:pk:one-rank}.
Suppose more generally there is a nonzero map $P_1\to Q$, where
$\deg Q=1$.  Its restriction to $P_0$ has components in
$H^0(QT_i^{-1})$.  Riemann--Roch in degree one gives
\[
 h^0(QT_i^{-1})=h^0(KQ^{-1}T_i),
\]
so at most two components are nonzero by Lemma~\ref{g2:pk:theta}.
If there are two, projection onto those coordinates factors the map
through the corresponding rank-two kernel.  That kernel is
stable of rank two and degree two.  It has no nonzero map to a line
of degree one: the saturated kernel of such a map would be a line
of degree at least one.  This is a contradiction.
If only the $i$th component $u_i$ is nonzero, extension across the
modification requires $u_i\sigma_i$ to vanish on $2p$.  Its divisor
is $x_i+2q_i$, where $x_i$ is one point.  As $q_i\ne p$, the required
order of vanishing is impossible.  Thus no such map exists.
Saturation increases degree, so the bound for saturated subbundles
also bounds all proper-rank subsheaves.  Their slopes are at most
zero, which is strictly below $2/R$.
\end{proof}

\subsubsection{Specialization and determinant rigidity}

We record the two family arguments that will be used in the induction.
For a bundle $B$, a \emph{degree profile} $U_B(t)$ is an upper bound
for the degree of every rank-$t$ subsheaf, with endpoints
$U_B(0)=0$ and $U_B(\operatorname{rk}B)=\deg B$.

\begin{lemma}\label{g2:pk:openness}
For fixed rank $t$ and integer $e$, the condition that a bundle in
a vector-bundle family on $C$ have a rank-$t$ subsheaf of degree at
least $e$ is closed.  If
$0\to A\to B\to D\to0$ is an extension, then
\[
 U_B(t)\le \max_{u+v=t}\bigl(U_A(u)+U_D(v)\bigr)
\]
whenever the right side is formed from valid profiles, with the
ranks restricted to those of the corresponding factors.
\end{lemma}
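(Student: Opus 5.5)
The lemma has two independent parts, and I will prove them separately: the closedness statement by a relative Quot scheme argument, and the profile inequality by splitting a subsheaf along the extension, as in \eqref{eq:convolution}.

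\emph{Closedness.} Let $\mathcal B$ be the family over a base $Y$. I first reduce to subsheaves of bounded degree: every rank-$t$ subsheaf of a fiber has degree bounded above uniformly over any quasi-compact base. This follows, for instance, from a bounded Harder--Narasimhan slope, or from choosing $\OO_C(1)$ with $\mathcal B^*(m)$ globally generated for all fibers. Because saturation raises degree at fixed rank, the condition ``some rank-$t$ subsheaf of degree $\ge e$'' is equivalent to ``some rank-$t$ subsheaf of degree exactly $e'$'' for one of finitely many values $e'$ with $e\le e'\le e_{\max}$. For each such $e'$, the rank-$t$, degree-$e'$ subsheaves correspond to quotients of $\mathcal B_y$ of rank $\rk B-t$ and degree $\deg B-e'$, that is, to a fixed Hilbert polynomial. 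These are parametrized by the relative Quot scheme $\operatorname{Quot}_{\mathcal B/C\times Y/Y}$, which is projective over $Y$. Its image in $Y$ is therefore closed, and the required locus is the finite union of these images. The only point that needs care is the uniform degree bound that makes the union finite; this is the step I expect to be the main (though mild) obstacle.

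\emph{Convolution.} Take a rank-$t$ subsheaf $S\subset B$. Set $S_A=S\cap A$, of rank $u$, and let $S_D$ be the image of $S$ in $D$, of rank $v=t-u$. Then $0\to S_A\to S\to S_D\to0$ is exact, so $\deg S=\deg S_A+\deg S_D\le U_A(u)+U_D(v)$. Here $u\le\rk A$ and $v\le\rk D$, so the ranks lie in the allowed ranges. The endpoint cases, where $u=\rk A$ or $v=\rk D$, are covered by the convention that the profile at full rank bounds all full-rank subsheaves, including images with torsion defect, by $\deg$. Taking the maximum over the decompositions $u+v=t$ gives the stated inequality.
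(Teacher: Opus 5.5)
Your proof is correct. The convolution half is the paper's argument: intersect with $A$, take the image in $D$, add ranks and degrees, and use the endpoint convention for a full-rank image with torsion defect.

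The closedness half takes a different route.

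\emph{Your route.} You pin down the degree by bounding the degrees of rank-$t$ subsheaves uniformly from above over a quasi-compact base. You then take the finite union of images of relative Quot schemes, one for each degree $e'$ between $e$ and that bound.

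\emph{The paper's route.} The paper goes downward instead. If a rank-$t$ subsheaf $S$ has degree larger than $e$, replace it by the kernel of a surjection from $S$ onto the skyscraper sheaf at a point. This lowers the degree by one without changing the rank, and repeating reaches degree exactly $e$. So the locus is the image of a single Quot scheme, with quotients of rank $\rk B-t$ and degree $\deg B-e$, and no boundedness input is needed.

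\emph{Comparison.} Your version is valid. The uniform bound does hold, for instance from a uniform embedding of the fibers in $\OO_C(m)^{\oplus N}$, and closedness is local on the base. But the step you flagged as the main obstacle is exactly what the downward modification removes.

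Note also that saturation plays no role in your reduction. Once the upper bound is available, passing to finitely many exact degrees is tautological. Moreover, saturation raises degrees, which is the wrong direction if you want to reach a single fixed degree.
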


\begin{proof}
A subsheaf of degree above $e$ can be modified at points to have
degree exactly $e$, with its rank unchanged.  The asserted closed
locus is therefore the image of the relative Quot scheme with the
complementary fixed rank and degree; this scheme is proper over the
base.  For the second assertion, filter a subsheaf of $B$ by its
intersection with $A$ and its image in $D$, and add their
ranks and degrees.  Replacing either by its saturation only increases
its degree, so the stated profiles apply.  In particular a full-rank
proper image in $D$ is bounded by $\deg D$, as required by the endpoint
convention.
\end{proof}

\begin{lemma}[A unique strict destabilizer at special fibers]
\label{g2:pk:determinant}
Let $X$ be a smooth rational integral variety carrying a family of
bundles of fixed rank and degree on the fixed curve $C$.  Suppose
there are two fibers such that the following hold at each:
every strict destabilizing subsheaf is one specified subbundle,
of the same rank $h$ and degree $e$ at the two fibers; and an integer
degree strictly destabilizing in rank $h$ must be at least $e$.
If the determinants of the two specified subbundles differ, the
general member of the family is semistable.
\end{lemma}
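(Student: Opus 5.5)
We argue by contradiction, using the moving-determinant principle sketched in the introduction. Suppose the general member of the family is not semistable. Then over a dense open subset $X^\circ\subset X$ the Harder--Narasimhan filtration has constant type. In particular the maximal destabilizing subsheaf has a constant rank $h_0$ and a constant degree $e_0$, both determined by the generic Harder--Narasimhan polygon. We may construct it in families, after shrinking $X^\circ$ if necessary. Relative Harder--Narasimhan filtrations exist on a dense open set, or we may take the closure of the relative Quot locus of maximal destabilizing quotients, which is generically a section over $X^\circ$. In either case we obtain a morphism $X^\circ\to\Pic^{e_0}(C)$, $x\mapsto\det S_x$. Since $X$ is smooth and rational and $\Pic^{e_0}(C)$ is an abelian variety torsor, this rational map extends to a morphism on all of $X$ and is constant. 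The extension is standard: rational maps from smooth varieties to abelian varieties are everywhere defined, and any map from $\PP^N$ to an abelian variety is constant.

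Next we specialize to the two given fibers $x_1,x_2$. Take the closure $\overline{\mathcal S}$ of the family $\{S_x\}_{x\in X^\circ}$ inside the relative Quot scheme of the family; fixing the rank and degree of the quotient makes this scheme proper over $X$. Pass through a curve in $X$ joining a general point to $x_i$, normalize it, and take the flat limit. This gives a subsheaf $S'_i$ of the fiber at $x_i$ of rank $h_0$ and degree $e_0$. The determinant of the limit equals the limit of the determinants, because the determinant of a flat family of torsion-free subsheaves is a family of line bundles. Hence $\det S'_i=\det S_x$ for every $x\in X^\circ$, and the limit subsheaf is still strictly destabilizing. By the first hypothesis, $S'_i$ is the specified subbundle at $x_i$. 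Therefore $h_0=h$ and $e_0=e$, and no saturation gap can occur, since saturating would raise the degree. The second hypothesis is what guarantees $e_0=e$ rather than merely $e_0\le e$: since $e_0$ is a strictly destabilizing degree in rank $h$, it is at least $e$, and a strictly larger limit degree would produce a strict destabilizer other than the specified one. We conclude $\det S'_1=\det S'_2$, contradicting the assumption that the two specified determinants differ.

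The main obstacle is making the determinant of the limit sheaf agree with the generic constant value. The flat limit in the Quot scheme could a priori be a nonsaturated subsheaf with torsion in the quotient. Its determinant is still the limit determinant. But to identify it with the specified subbundle we need that its saturation has the same degree. This is exactly where the lower bound "a strictly destabilizing degree in rank $h$ is at least $e$" combines with uniqueness. The saturation is again strictly destabilizing in rank $h$, so it equals the specified subbundle of degree $e$. The limit has degree $e_0\ge e$ and injects into it with the same rank, which forces equality. A secondary point is the rigidity of the determinant map. Here we would use that $X$ is rational, so $\mathrm{Alb}$ of a resolution is trivial and every map to $\Pic$ is constant.
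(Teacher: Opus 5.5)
Your proposal is correct and follows essentially the paper's argument. The generic first Harder--Narasimhan determinant is constant because $X$ is rational, and its Quot-scheme limit at each special fiber keeps rank, degree and determinant; the saturation must then be the specified subbundle, and the degree lower bound rules out any saturation gain, so both specified determinants equal the constant one. One point of wording: your second paragraph asserts that the limit itself is the specified subbundle before saturating, whereas your third paragraph applies the uniqueness hypothesis to the saturation and then uses the degree bound. The third paragraph is the precise form of the step, and it matches the paper's.
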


\begin{proof}
If the general member were unstable, its canonical first
Harder--Narasimhan subbundle has a fixed rank and degree on a dense
open subset.  Its determinant gives a rational map from $X$ to a
Picard variety of $C$.  Every rational map from a rational variety
to an abelian variety is constant: after a rational parametrization,
restrict to general lines, use properness to extend to $\mathbf P^1$,
and use the fact that an abelian variety has no nonconstant rational
curves.  Denote the resulting fixed determinant by $T$.

Specialize to either of the two indicated fibers along a curve in
$X$ meeting that dense open subset.  Properness of the relative Quot
scheme extends the quotient, after finite base change if needed.
Its kernel on the special fiber is a subsheaf of the same rank and
degree and determinant $T$.  The kernel is locally free on the
smooth curve; in this flat family its determinant specializes to
$T$ by separatedness of the Picard scheme.  Saturation only increases
degree, so its saturation is strictly destabilizing and must equal
the specified subbundle.  The degree lower bound now forces the
original degree to be exactly $e$.  There is consequently no
saturation gain, and $T$ equals the determinant of that subbundle.
Applying this at both fibers contradicts their different
determinants.
\end{proof}

\begin{remark}\label{g2:pk:hn-only}
Lemma~\ref{g2:pk:determinant} concerns the canonical first
Harder--Narasimhan subbundle of an \emph{unstable} general member.
It makes no assertion about the determinants of equal-slope
subbundles in a strictly semistable family.  Stability below follows
from explicit degree bounds, rather than from such an assertion.
\end{remark}

\subsubsection{The profile and its arithmetic}

For $0<m<R$ and $0\le t\le R$, define
\begin{equation}\label{g2:pk:profile}
 \Phi_{m,R}(t)=
 \begin{cases}
 \left\lfloor mt/R\right\rfloor+
 \left\lceil m(t+1)/R\right\rceil-1,&2m\le R,\\[2pt]
 \left\lfloor mt/R\right\rfloor+
 \left\lceil m(t-1)/R\right\rceil,&2m\ge R.
 \end{cases}
\end{equation}
The formulas agree when $2m=R$, and already give the endpoints
$\Phi_{m,R}(0)=0$ and $\Phi_{m,R}(R)=2m$.
We also put $\Phi_{0,R}(t)=0$ and $\Phi_{R,R}(t)=2t$.

\begin{lemma}[Equivalent formulas]\label{g2:pk:profile-identities}
Put $\alpha=m/R$, $\theta=\max\{\alpha,1-\alpha\}$, and let
$\{x\}$ denote fractional part.  Then
\begin{align}
 \Phi_{m,R}(t)
 &=2\lfloor\alpha t\rfloor+
       \mathbf 1_{\{\alpha t\}>\theta},\label{g2:pk:indicator}\\
 &=q-\mathbf 1_{q\text{ odd},\ \rho\le|R-2m|},
 \quad q=\left\lfloor\frac{2mt}{R}\right\rfloor,
 \quad \rho=2mt-Rq,\label{g2:pk:parity}\\
 \Phi_{m,R}(t)
 &=\Phi_{R-m,R}(R-t)-2(R-t-m).\label{g2:pk:reflection}
\end{align}
In particular $\Phi_{m,R}(t)\le 2mt/R$.  If $\gcd(m,R)=1$,
this inequality is strict for every $0<t<R$.
\end{lemma}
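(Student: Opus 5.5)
The plan is to prove the indicator formula \eqref{g2:pk:indicator} first, directly from the definition \eqref{g2:pk:profile}, and then derive the other two identities and the inequality from it by elementary fractional-part arithmetic. Throughout, write $\alpha t=\lfloor\alpha t\rfloor+\{\alpha t\}$. The boundary cases $m=0$ and $m=R$ are checked separately: there $\{\alpha t\}=0$, so the right side of \eqref{g2:pk:indicator} is $0$, respectively $2t$, matching the convention $\Phi_{0,R}=0$, $\Phi_{R,R}(t)=2t$. So assume $0<m<R$.

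\emph{Case $2m\le R$.} Here $\theta=1-\alpha$, and
\[
 \lceil\alpha(t+1)\rceil-1=\lfloor\alpha t\rfloor+\lceil\{\alpha t\}+\alpha\rceil-1 .
\]
If $\{\alpha t\}=0$, the last ceiling is $\lceil\alpha\rceil=1$, since $0<\alpha<1$. Otherwise $\{\alpha t\}+\alpha\in(0,1+\alpha)$, so the ceiling is $1$ or $2$. It equals $2$ exactly when $\{\alpha t\}>1-\alpha=\theta$. This gives \eqref{g2:pk:indicator}.

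\emph{Case $2m\ge R$.} Here $\theta=\alpha$, and
\[
 \lceil\alpha(t-1)\rceil=\lfloor\alpha t\rfloor+\lceil\{\alpha t\}-\alpha\rceil .
\]
Since $\{\alpha t\}-\alpha\in[-\alpha,1-\alpha)\subset(-1,1)$, this ceiling is $1$ exactly when $\{\alpha t\}>\alpha=\theta$, and $0$ otherwise. This again gives \eqref{g2:pk:indicator}. When $2m=R$ the two cases agree because $\theta=\tfrac12$ in both.

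\emph{Parity form \eqref{g2:pk:parity}.} Write $q=\lfloor2\alpha t\rfloor=2\lfloor\alpha t\rfloor+\lfloor2\{\alpha t\}\rfloor$ and $\rho/R=\{2\alpha t\}$.
\begin{itemize}
\item If $q$ is even, then $\{\alpha t\}<\tfrac12\le\theta$. The indicator vanishes and $\Phi=q$.
\item If $q$ is odd, then $\{\alpha t\}\ge\tfrac12$, so $\Phi=q-1+\mathbf 1_{\{\alpha t\}>\theta}$, and moreover $\{2\alpha t\}=2\{\alpha t\}-1$. Hence $\{\alpha t\}>\theta$ iff $\rho/R>2\theta-1=|R-2m|/R$. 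Therefore $\Phi=q-1$ exactly when $\rho\le|R-2m|$.
\end{itemize}

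\emph{Reflection \eqref{g2:pk:reflection}.} Replacing $m$ by $R-m$ replaces $\alpha$ by $1-\alpha$ and leaves $\theta$ unchanged. Moreover
\[
 (1-\alpha)(R-t)=(R-t-m)+\alpha t,
\]
so the floor shifts by the integer $R-t-m$ and the fractional part is again $\{\alpha t\}$. Applying \eqref{g2:pk:indicator} to $\Phi_{R-m,R}(R-t)$ therefore gives $2(R-t-m)+\Phi_{m,R}(t)$. The boundary cases $m\in\{0,R\}$ are consistent with this by direct substitution.

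\emph{Inequality.} If the indicator in \eqref{g2:pk:indicator} is $1$, then $\{\alpha t\}>\theta\ge\tfrac12$, so $\Phi=2\lfloor\alpha t\rfloor+1<2\alpha t$ strictly. Otherwise $\Phi=2\lfloor\alpha t\rfloor\le2\alpha t$, with equality iff $\{\alpha t\}=0$, i.e.\ iff $R\mid mt$. If $\gcd(m,R)=1$ and $0<t<R$, then $R\nmid mt$, so the inequality is strict.

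I expect no real obstacle. The only care needed is in the two ceiling evaluations: the value $\{\alpha t\}=0$ must be handled separately in the first case, and strictness of the threshold inequalities must be tracked, since the case $\{\alpha t\}=\theta$ must not trigger the indicator.
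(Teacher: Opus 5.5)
Your proof is correct and follows essentially the same route as the paper. You derive the indicator formula from the two ceiling evaluations, then get the parity and reflection identities by fractional-part arithmetic; the one cosmetic difference is in the strictness step, where you split on the value of the indicator directly, while the paper first notes that equality forces $R\mid 2mt$, hence $t=R/2$ with $m$ odd, and then shows the indicator vanishes there.
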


\begin{proof}
Write $\alpha t=w+x$ with $w$ integral and $0\le x<1$.
For $\alpha\le1/2$,
$\lceil\alpha(t+1)\rceil-1=w+\mathbf 1_{x>1-\alpha}$;
for $\alpha\ge1/2$,
$\lceil\alpha(t-1)\rceil=w+\mathbf 1_{x>\alpha}$.
This proves \eqref{g2:pk:indicator}.  Since $\theta\ge1/2$, the
indicator can be one only when $x>1/2$.  Separating $x<1/2$ and
$x\ge1/2$ gives \eqref{g2:pk:parity}, with the weak inequality at its
endpoint exactly as written.  The reflection identity follows from
\[
 (1-\alpha)(R-t)=R-t-m+\alpha t
\]
and the fact that $\alpha$ and $1-\alpha$ have the same $\theta$.
The upper slope bound is immediate from \eqref{g2:pk:indicator}.
For a primitive pair, equality at a proper rank could occur only
if $R$ is even, $t=R/2$, and $m$ is odd.  At that rank
$\{\alpha t\}=1/2$, and the strict inequality in the indicator gives
$\Phi_{m,R}(R/2)=m-1<m$.
\end{proof}

\begin{lemma}[Equal-slope convolution]\label{g2:pk:equal-convolution}
Suppose $(m_i,R_i)$ are positive proportional pairs with
$0<m_i<R_i$, and $(m,R)=\sum_i(m_i,R_i)$.  For any ranks $t_i$
in the corresponding intervals,
\[
 \sum_i\Phi_{m_i,R_i}(t_i)
 \le\Phi_{m,R}\left(\sum_i t_i\right).
\]
\end{lemma}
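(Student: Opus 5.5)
Since all the pairs $(m_i,R_i)$ are proportional, they share the common ratio $\alpha=m_i/R_i=m/R$, and hence also the same $\theta=\max\{\alpha,1-\alpha\}$. The natural tool is therefore the indicator formula \eqref{g2:pk:indicator} from Lemma~\ref{g2:pk:profile-identities}, which expresses every profile in terms of $\alpha$ and $\theta$ alone:
\[
 \Phi_{m_i,R_i}(t_i)=2\lfloor\alpha t_i\rfloor+\mathbf 1_{\{\alpha t_i\}>\theta}.
\]
The boundary ranks $t_i=0$ and $t_i=R_i$ also obey this formula, since $\Phi(0)=0$ and $\Phi(R_i)=2m_i=2\alpha R_i$ with $\alpha R_i$ an integer. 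Put $t=\sum t_i$, $w_i=\lfloor\alpha t_i\rfloor$ and $x_i=\{\alpha t_i\}$. Then $\alpha t=\sum w_i+\sum x_i$, so that
\[
 \lfloor\alpha t\rfloor=\sum w_i+\Bigl\lfloor\sum x_i\Bigr\rfloor,\qquad
 \{\alpha t\}=\Bigl\{\sum x_i\Bigr\}.
\]

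Substituting, the inequality to prove becomes purely arithmetic:
\[
 \sum_i\mathbf 1_{x_i>\theta}\le 2\Bigl\lfloor\sum x_i\Bigr\rfloor+\mathbf 1_{\{\sum x_i\}>\theta}.
\]
Let $N$ be the number of indices with $x_i>\theta$. Since $\theta\ge1/2$, each such index contributes more than $\theta\ge1/2$ to $X=\sum x_i$, so $X>N\theta\ge N/2$.

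I would then argue by parity of $N$. If $N=2j$ is even, then $X>2j\theta\ge j$, so $\lfloor X\rfloor\ge j$ and the right side is at least $2j=N$. If $N=2j+1$ is odd, then $X>(2j+1)\theta\ge j+\theta$. Either $\lfloor X\rfloor\ge j+1$, and the right side is at least $2j+2>N$; or $\lfloor X\rfloor=j$, in which case $\{X\}=X-j>\theta$, the indicator on the right equals one, and the right side is $2j+1=N$. This settles all cases. If $N=0$ there is nothing to prove.

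The only step that needs real care is the case $N$ odd with $\lfloor X\rfloor=j$: there the bound is tight, and it depends on the strict inequality $x_i>\theta$ passing to a strict inequality $\{X\}>\theta$, which is exactly why the definition uses a strict indicator. I would also check that the common value of $\theta$ is legitimate, which holds because proportionality makes all the $\alpha$'s equal. The hypothesis $0<m_i<R_i$ is needed only so that the general formula \eqref{g2:pk:profile} applies to each summand. No geometric input enters.
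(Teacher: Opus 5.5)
Your argument is correct and is essentially the paper's proof: both use the common $\alpha$ and $\theta$ to rewrite every term through \eqref{g2:pk:indicator}, and then use $\theta\ge1/2$ on the fractional parts. The only difference is bookkeeping. The paper proves two-term superadditivity of $\psi(t)=2\lfloor\alpha t\rfloor+\mathbf 1_{\{\alpha t\}>\theta}$ and inducts on the number of blocks, whereas you treat all blocks at once by counting the $N$ indicators and splitting on the parity of $N$.
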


\begin{proof}
All pairs have the same $\alpha$ and $\theta$ in
\eqref{g2:pk:indicator}.  The function
$\psi(t)=2\lfloor\alpha t\rfloor+
\mathbf 1_{\{\alpha t\}>\theta}$ is superadditive.
Indeed, put $x=\{\alpha u\}$ and $y=\{\alpha v\}$.
If $x+y\ge1$, the floor carry contributes two on the right,
dominating the two possible indicators on the left.
If $x+y<1$, at most one left indicator is one, because
$\theta\ge1/2$; when one is one, the indicator for $x+y$ is also
one.  Thus $\psi(u)+\psi(v)\le\psi(u+v)$, and induction proves
the result for any number of blocks.
\end{proof}

\begin{lemma}[Farey convolution]\label{g2:pk:farey-convolution}
Let $\gcd(m,R)=1$ and $2\le m<R$.  Choose the unique integers
$0<a<m$ and $0<h<R$ with $aR-hm=1$, and put
$b=m-a$ and $\ell=R-h$.  Except when $m=R-1$, both parents
$(a,h)$ and $(b,\ell)$ satisfy $0<a<h$ and $0<b<\ell$.
For $0<t<R$ and $t_1+t_2=t$ in the parent rank intervals,
\[
 \Phi_{a,h}(t_1)+\Phi_{b,\ell}(t_2)
 \le\Phi_{m,R}(t),
\]
except for the split $(t_1,t_2)=(h,0)$.  That split has value $2a$,
and it is the unique split attaining $2a$ at total rank $h$.
For $2\le m\le R-2$ one has $\Phi_{m,R}(h)=2a-1$.
For $m=R-1$, the parents are $(1,1),(R-2,R-1)$ and
$\Phi_{m,R}(1)=0$.
\end{lemma}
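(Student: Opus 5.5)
The plan is to work entirely with the closed forms of Lemma~\ref{g2:pk:profile-identities}, mainly the parity formula \eqref{g2:pk:parity}, together with the Farey relation $aR-hm=1$. First come the arithmetic preliminaries. From $aR-hm=1$, $a+b=m$, $h+\ell=R$ one gets $b\ell^{-1}<mR^{-1}<ah^{-1}$ and $m\ell-bR=1$, so $\gcd(a,h)=\gcd(b,\ell)=1$. The inequalities $0<a<h$ and $0<b<\ell$ follow from $m<R$ and the Farey relations. The only degenerate parent is $\ell=b$, i.e.\ $(b,\ell)=(R-2,R-1)$ when $m=R-1$; solving $aR-h(R-1)=1$ there gives $(a,h)=(1,1)$, and $\Phi_{R-1,R}(1)=0$ is read off from \eqref{g2:pk:profile} in the case $2m\ge R$. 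Next, at rank $h$ one has $2mh/R=2a-2/R$, so $q=2a-1$, which is odd, with remainder $\rho=R-2$. For $2\le m\le R-2$ we have $|R-2m|\le R-4<\rho$, so the indicator in \eqref{g2:pk:parity} vanishes and $\Phi_{m,R}(h)=2a-1$. Finally, the split $(h,0)$ has value $\Phi_{a,h}(h)+0=2a$ by the endpoint convention.

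For the main inequality, fix a split $t=t_1+t_2$ and compare real bounds first. Using the slopes,
\[
 \frac{2at_1}{h}+\frac{2bt_2}{\ell}-\frac{2mt}{R}=\frac{2t_1}{hR}-\frac{2t_2}{\ell R},
\]
which is less than $2/R$ unless $(t_1,t_2)=(h,0)$. By the strictness part of Lemma~\ref{g2:pk:profile-identities} (both parents are primitive), each parent term at a proper rank is an integer strictly below its slope bound. In the generic case both $t_i$ are proper, and the left side $\Sigma$ is then an integer with $\Sigma<2mt/R+2/R$, hence $\Sigma\le q+1$ roughly. I will sharpen this to $\Sigma\le\Phi_{m,R}(t)$ using the parity description on each side: each parent value is its $\lfloor\cdot\rfloor$ minus a parity indicator, so $\Sigma\le q_1+q_2-(\text{indicators})$ where $q_1=\lfloor 2at_1/h\rfloor$ and $q_2=\lfloor 2bt_2/\ell\rfloor$. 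The remainders satisfy $\rho=\rho_1\ell R^{-1}\cdot(\ldots)$-type relations: concretely, $R(2at_1h^{-1}-q_1)$ and $R(2bt_2\ell^{-1}-q_2)$ differ from $\rho$ by the explicit correction $2t_1/h-2t_2/\ell$. The case analysis is on whether $q_1+q_2$ equals $q$ or $q+1$, and on the parities of $q,q_1,q_2$. In the boundary cases $t_1\in\{0,h\}$ or $t_2\in\{0,\ell\}$, one term is exact ($0$, $2a$, or $2b$), and the difference becomes $\pm2t_j/(\cdot R)$ with only one strict term. The case $t_2=\ell$ lies below the slope line by $2/R$ and is easy; $t_1=h$ with $t_2>0$ gives $2a+\Phi_{b,\ell}(t_2)$ and uses the reflection identity \eqref{g2:pk:reflection} to transfer the $1/(\ell R)$ deficit.

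Uniqueness of the maximizing split at total rank $h$ then follows. Every other split at rank $h$ satisfies the inequality, so it is at most $2a-1<2a$.

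The main obstacle is the parity bookkeeping in the generic case. The naive slope bound only gives $\Sigma\le q+1$ or $\Sigma\le q$, whereas $\Phi_{m,R}$ subtracts one more when $q$ is odd and $\rho\le|R-2m|$. What I would try first is to show that when $q$ is odd with small remainder, the Farey relation forces one of $q_1,q_2$ to be odd with small remainder as well, so that the matching parent indicator fires. The tool is the mediant structure: $\rho$ small means $2t/R$ is within $|R-2m|/R$ of an odd multiple of $1/m$-type fractions, which transfers to a parent. If this gets messy, I would fall back on the indicator form \eqref{g2:pk:indicator}, comparing fractional parts $\{\alpha_1t_1\}$, $\{\alpha_2t_2\}$, and $\{\alpha t\}$ via the identity $\alpha t=\alpha_1t_1+\alpha_2t_2-t_1/(hR)+t_2/(\ell R)$. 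That is the same superadditivity argument as in Lemma~\ref{g2:pk:equal-convolution}, perturbed by an error smaller than $1/R$, which cannot cross any of the relevant thresholds, since these are multiples of $1/h$, $1/\ell$, or $1/R$.
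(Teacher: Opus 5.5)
Your preliminaries are correct: $m\ell-bR=1$ and primitivity of both parents, the identity $\frac{2at_1}{h}+\frac{2bt_2}{\ell}=\frac{2mt}{R}+\frac{2\delta}{R}$ with $\delta=t_1/h-t_2/\ell$, the value $2a$ at $(h,0)$, and $\Phi_{m,R}(h)=2a-1$ from $q=2a-1$, $\rho=R-2>|R-2m|$. (One slip: for $m=R-1$ the degenerate parent is $(a,h)=(1,1)$, not one with $b=\ell$. Also, $a<h$ is equivalent to $(R-m)h>1$, so it does not follow from $m<R$ alone.)

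The gap is that you never prove the inequality $\Phi_{a,h}(t_1)+\Phi_{b,\ell}(t_2)\le\Phi_{m,R}(t)$, which is the whole content of the lemma. You correctly isolate the difficulty: recovering the unit that $\Phi_{m,R}$ loses when $q$ is odd and $\rho\le|R-2m|$. But then you only say what you would try. Slope bounds together with strictness of the parent profiles cannot supply that unit, even in the case $t_2=\ell$ that you call easy. Take $(m,R)=(3,8)$, with parents $(2,5)$ and $(1,3)$. At $t=7$ the split $(4,3)$ has slope value $16/5+2<21/4$, and strictness gives only $\Phi_{2,5}(4)\le3$, hence $\Sigma\le5$. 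Yet $\Phi_{3,8}(7)=4$, and the inequality holds only because the parent's parity indicator fires: $\Phi_{2,5}(4)=2$.

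Your fallback sentence does not fill this either. Lemma~\ref{g2:pk:equal-convolution} works with a single common threshold $\theta$. Here, for $2m<R$, the three thresholds are different: $1-a/h=\theta-\frac1{Rh}$, $1-b/\ell=\theta+\frac1{R\ell}$ and $\theta=1-m/R$. So ``an error below $1/R$ crosses no threshold'' has to be replaced by an actual estimate.

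That estimate is the paper's proof. Let $W,W_1,W_2$ be the floors of $mt/R$, $at_1/h$, $bt_2/\ell$, and $r,r_1,r_2$ the remainders.
\begin{enumerate}
\item $W_1+W_2\le W$ except when $r=R-1$ and $\delta=1$, i.e.\ at $(h,0)$.
\item If $W_1+W_2\le W-1$, the carry absorbs both parent indicators.
\item If $W_1+W_2=W$, then $r_1/h+r_2/\ell=(r+\delta)/R\le1$. So the two parent indicators cannot both fire, since that would force the sum above $2-a/h-b/\ell>1$.
\item In the same case, if the first indicator fires then $r_1\ge h-a+1$. Together with $Ra/h=m+1/h$ this gives $r\ge R-m-1+(R-1)/h>R-m$, so the total indicator fires. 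Symmetrically, $r\ge R-m-1+(R+1)/\ell$ for the second.
\end{enumerate}
The mechanism is that integrality of $r_1$ (a step of $1/h$ in the fractional part) absorbs both the threshold shift and $\delta/R$.

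Your parity route is also viable. Its key step is this: for $2m<R$, with $q$ odd, $\rho\le R-2m$, $q_1+q_2=q$ and $q_1$ the odd one, the bound $\rho_1\le h-2a+(2+2\delta h)/R<h-2a+2$ together with $\rho_1\equiv h\pmod 2$ forces $\rho_1\le h-2a$. That parity step is not in your proposal. Also missing are:
\begin{itemize}
\item the case $2m>R$, where the thresholds are $\alpha$ rather than $1-\alpha$; the paper reduces it to $2m<R$ by \eqref{g2:pk:reflection};
\item the main inequality for $m=R-1$, where the parent $(1,1)$ has no proper ranks and one uses $\Phi_{R-2,R-1}(v)=2v-2$.
\end{itemize}
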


\begin{proof}
The defining congruence gives $a$, and
$h=(aR-1)/m$ lies strictly between zero and $R$.
Both parent pairs are primitive.  Their slopes satisfy
\[
 \frac ah=\frac mR+\frac1{Rh},\qquad
 \frac b\ell=\frac mR-\frac1{R\ell}.
\]
First assume $2m<R$.  Then $a/h\le1/2$, $b/\ell<1/2$,
$h\ge2$, and $\ell\ge3$.  For instance $2a>h$ would imply
$R\le 2+(2m-R)h\le0$, a contradiction.

For a proper total rank write
\[
 W=\lfloor mt/R\rfloor,\quad W_1=\lfloor at_1/h\rfloor,
 \quad W_2=\lfloor bt_2/\ell\rfloor,
\]
and let $r,r_1,r_2$ be the respective remainders modulo $R,h,\ell$.
Set $\delta=t_1/h-t_2/\ell$, so $-1\le\delta\le1$.
The Farey equation gives
\begin{equation}\label{g2:pk:farey-weight}
 \frac{at_1}{h}+\frac{bt_2}{\ell}
 =\frac{mt}{R}+\frac\delta R.
\end{equation}
As $1\le r\le R-1$, one has $W_1+W_2\le W$ except possibly
when $r=R-1$ and $\delta=1$.  The latter occurs exactly at
$(t_1,t_2)=(h,0)$; there the sum of profiles is $2a$.

For any other split with $W_1+W_2\le W-1$, the two possible
indicators in \eqref{g2:pk:indicator} give a sum at most $2W$,
which suffices.  Suppose therefore that $W_1+W_2=W$.
Equation~\eqref{g2:pk:farey-weight} becomes
\begin{equation}\label{g2:pk:farey-remainders}
 r=\frac{Rr_1}{h}+\frac{Rr_2}{\ell}-\delta.
\end{equation}
The two parent indicators cannot both be one: that would give
$r_1/h+r_2/\ell>2-a/h-b/\ell>1$, whereas the same sum is
$(r+\delta)/R\le1$.
If the first indicator is one, then $r_1\ge h-a+1$, and
\eqref{g2:pk:farey-remainders} gives
\[
 r\ge R-m-1+\frac{R-1}{h}>R-m,
\]
using $Ra/h=m+1/h$ and $h\le R-3$.
If the second is one, the corresponding calculation gives
\[
 r\ge R-m-1+\frac{R+1}{\ell}>R-m.
\]
In either case the total indicator is one as well.  This proves
the claimed inequality for every nonexceptional split in the
lower half.  At rank $h$ it gives a strict upper bound below $2a$.

For $2m>R$ with $m\le R-2$, apply the reflection identity
\eqref{g2:pk:reflection} to both parents and to the total pair.
The reflected parents, in the opposite order, are the Farey parents
of $(R-m,R)$; the linear correction terms add.  Their exceptional
split corresponds exactly to $(h,0)$ in the original ranks.
This proves the assertion in the upper half.  A primitive pair
with $2m=R$ and $m\ge2$ does not occur.

For $2\le m\le R-2$ we have $mh=aR-1$, and substituting in
\eqref{g2:pk:indicator} gives $\Phi_{m,R}(h)=2a-1$:
the fractional part $(R-1)/R$ is strictly greater than
$\max(m,R-m)/R$.
Finally, if $m=R-1$, then $a=h=1$.  The profile of the second
parent is $2v-2$ at proper positive ranks and $2(R-2)$ at full
rank.  The two possible splits therefore give a sum at most
$2t-2=\Phi_{R-1,R}(t)$ for $2\le t<R$.
At $t=1$ the full first block gives two, uniquely, while
$\Phi_{R-1,R}(1)=0$.  This includes $R=3$, where the second
parent is the rank-two one-tail pair.
\end{proof}

\subsubsection{The uniform theorem}

\begin{theorem}[Universal kernel profile]\label{g2:pk:theorem}
For every $R\ge2$ and $0<m<R$, and for general selected points,
a general member $P_m(W)$ of \eqref{g2:pk:family} is semistable and
satisfies
\[
 \deg S\le\Phi_{m,R}(t)
 \qquad\text{for every rank-$t$ subsheaf }S\subset P_m(W).
\]
If $\gcd(m,R)=1$, it is stable.  The same assertions hold for a
general kernel with $m$ distinct tail supports.
More generally, if $m=jR+s$ with $0\le s<R$, a general pointing
kernel of rank $R$ and degree $2m$ is semistable and has profile
\[
 \deg S\le 2jt+\Phi_{s,R}(t).
\]
It is stable whenever $\gcd(m,R)=1$.  Rank-one kernels are line
bundles and are stable.
\end{theorem}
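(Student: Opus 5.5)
The proof will be a strong induction on $R$, following the Farey strategy described in the introduction. The base cases are $m=1$ and $m=R-1$. For $m=1$, Proposition~\ref{g2:pk:one-tail} shows that a general $P_1$ has no positive-degree proper subsheaf. Since $\Phi_{1,R}(t)=0$ for $0<t<R$, this is exactly the profile, and stability follows. For $m=R-1$, I will use the complementary duality of Lemma~\ref{g2:pk:duality}: it identifies $P_{R-1}(W)^*\otimes K$ with a general member $P'_1$ built from the points $\iota(q_i)$. Those points are again general, so the one-tail result applies. A rank-$t$ subsheaf of $P_{R-1}$ therefore has quotient degree at least $2(R-t)-2\cdot 0$ after dualizing, that is, degree at most $2(R-1)-2(R-t)=2t-2=\Phi_{R-1,R}(t)$. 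I will also check this against the reflection identity \eqref{g2:pk:reflection}.

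For the inductive step with $\gcd(m,R)=1$ and $2\le m\le R-2$, take the Farey parents $(a,h)$ and $(b,\ell)$ from Lemma~\ref{g2:pk:farey-convolution}. Choose $W=W_I\oplus W_J$ along a coordinate partition of sizes $h$ and $\ell$. By \eqref{g2:pk:block}, $P_m(W)$ specializes to $P_a(W_I)\oplus P_b(W_J)$. By induction both summands satisfy their profiles, and Lemma~\ref{g2:pk:openness} then bounds every subsheaf of the sum by $\Phi_{m,R}$, with one exception. The exception is rank $h$, where only the summand $P_a(W_I)$ itself, of degree $2a=\Phi_{m,R}(h)+1$, exceeds the bound. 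This holds because that split is unique, and because inductive stability of the parent rules out any other rank-$h$ subsheaf of degree $2a$ mapping through it.

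The key point is to remove this candidate with a moving-determinant argument in the style of Lemma~\ref{g2:pk:determinant}, applied to the property ``rank-$h$ subsheaves have degree at most $2a-1$'' rather than to semistability. I will use a second coordinate partition, with one index swapped between $I$ and $J$. The two candidate determinants $\bigotimes_{i\in I}T_i\otimes K^a$ then differ, because the $T_i$ are pairwise nonisomorphic. The family over $\operatorname{Gr}(m,R)$ is rational, so a rank-$h$ degree-$2a$ subsheaf of the general member would determine, through a determinant map, a constant point of the Picard variety; this contradicts the two fibers.

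Making this rigorous is the main obstacle. The destabilizer is not the Harder--Narasimhan piece, so I must argue directly with the relative Quot scheme of rank-$h$, degree-$2a$ subsheaves. By stability of the parents, at the special fibers this scheme consists of a single point, so the dominating component is generically finite. If it dominated, I would pass to its Stein factorization or monodromy and use that the determinant map is constant on a rational cover. I would try to show that this Quot scheme is generically of degree one over the base, which would make its image rational. Once it has been excluded, the general member satisfies $\Phi_{m,R}$, and stability follows from the strict inequality in Lemma~\ref{g2:pk:profile-identities}.

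The non-primitive case $m=dm'$, $R=dR'$ is handled by splitting into $d$ blocks of type $(m',R')$ and applying Lemma~\ref{g2:pk:equal-convolution}, which gives semistability. For $m\ge R$, Lemma~\ref{g2:pk:collision} lets me specialize $j$ full clusters to a twist by $\OO_C(2\sum p)$, adding $2jt$ to the profile. Finally, the distinct-support statement follows from Lemma~\ref{g2:pk:collision} together with openness (Lemma~\ref{g2:pk:openness}).
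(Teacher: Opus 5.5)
\textbf{The central step is left open.} Your overall architecture matches the paper: Farey block specialization, exchange of one selected index, duality for $m=R-1$, equal-slope blocks for nonprimitive pairs, and clusters plus collision for large weight and distinct supports. Treating $m=R-1$ entirely by duality is also fine, since $2t-2<2(R-1)t/R$ for $0<t<R$ already gives stability. The gap is the central step. You call it the main obstacle and leave it unresolved. Your proposed fix, showing that the Quot scheme of rank-$h$, degree-$2a$ subsheaves is generically of degree one over $\operatorname{Gr}(m,R)$, is neither proved nor needed. It rests on the mistaken premise that the object to be excluded is not a Harder--Narasimhan piece.

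\textbf{The missing observation is arithmetic.} From $aR-hm=1$ one gets $2mh/R=2a-2/R$. Hence a rank-$h$ subsheaf of degree $2a$ has slope strictly above $2m/R$, so it is strictly destabilizing. Moreover $2a$ is the least strictly destabilizing integer degree in rank $h$, and $\lfloor 2mh/R\rfloor=2a-1=\Phi_{m,R}(h)$. So at rank $h$ the profile bound is exactly semistability, and Lemma~\ref{g2:pk:determinant} applies as stated.

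Suppose the general member were unstable. Its canonical first Harder--Narasimhan subbundle has a determinant, which gives a rational map from the rational Grassmannian to a Picard variety; such a map is constant. At each block fiber this subbundle specializes to a strict destabilizer, so after saturation it is the parent summand. Since $2a$ is the least strict degree in rank $h$, there is no saturation gain. The constant determinant would then equal both $\bigl(\bigotimes_{i\in I}T_i\bigr)K^a$ and $\bigl(\bigotimes_{i\in I'}T_i\bigr)K^a$, which differ. Semistability then yields the bound $2a-1$ at rank $h$, and openness from the block member gives every other rank. This is the paper's argument.

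In fact, once the general member is known to be unstable, any rank-$h$ subsheaf of degree $2a$ would be contained in the first Harder--Narasimhan piece with the same rank and degree, so it would equal that piece. Your Quot-scheme route could be rescued by taking the norm of the determinants over the finite generic fiber. That would additionally require $T_iT_j^{-1}$ to be non-torsion, which is an extra condition and an unnecessary detour.
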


\begin{proof}
We induct on $R$. For a primitive pair $(m,R)$, the direct sum of
general kernels for the two Farey parents satisfies the required
bounds except in rank $h$, where the parent of larger slope is the
unique strict destabilizer (Lemma~\ref{g2:pk:farey-convolution}).
Exchanging one selected point between the parents changes its
determinant, so Lemma~\ref{g2:pk:determinant} shows that the general
common-point kernel is semistable. Openness then gives the profile in the ranks
other than $h$, semistability gives it in rank $h$ when $m\le R-2$,
and a duality argument gives the whole profile when $m=R-1$.
Nonprimitive pairs reduce to primitive blocks. Finally, a
common-point kernel is a specialization of kernels with distinct
supports (Lemma~\ref{g2:pk:collision}), so openness transfers the
results to distinct supports, and full-rank clusters give the case
$m\ge R$.

\casehead{Step 1: setup.}
We first prove the common-point assertion by induction on $R$.
The endpoint pairs $m=0,R$ are $P_0$ and $P_0\otimes K$, with
the indicated profiles.  The case $m=1$ is
Proposition~\ref{g2:pk:one-tail}, including rank two.
Choose the selected points so that the finitely many lower-rank
open conditions needed below hold for every selected coordinate
subset; impose them also for the conjugate points.  This is possible
because the coordinate projections from $C^R$ are dominant and
$C^R$ is irreducible.

\casehead{Step 2: primitive pairs, semistability.}
Suppose $\gcd(m,R)=1$ and $m\ge2$.  Take the two parents in
Lemma~\ref{g2:pk:farey-convolution}.  By induction each parent admits
the asserted profile and is stable; a parent of rank one is a line
bundle.  Choose such members on disjoint coordinate blocks of
sizes $h$ and $\ell$, obtaining a member $A\oplus B$ of
\eqref{g2:pk:family}.  Lemmas~\ref{g2:pk:openness} and
\ref{g2:pk:farey-convolution} show that every strict destabilizing
subsheaf must have rank $h$ and degree at most $2a$.
Strict instability in that rank requires integral degree at least
$2a$, since
\[
 \frac{2mh}{R}=2a-\frac2R,\qquad R\ge3.
\]
Moreover the unique maximizing split is $(h,0)$, so a subsheaf
of that degree lies in $A$ and has its full rank and degree.
It equals $A$.  Thus $A$ is the unique strict destabilizing
subsheaf of the block member.

The common-point parameter space is the rational Grassmannian
$\operatorname{Gr}(m,R)$.  Repeat the block construction after
exchanging one selected index in the first block with one in the
second.  The two unique destabilizers have determinants
\[
 \left(\bigotimes_{i\in I}T_i\right)K^a,
\]
which differ because the exchanged degree-zero lines are
nonisomorphic.  Lemma~\ref{g2:pk:determinant} proves semistability
of the general common-point member.

\casehead{Step 3: primitive pairs, the profile.}
At every proper rank $t\ne h$, the block member already has the
claimed profile, so Lemma~\ref{g2:pk:openness} gives that profile
on a nonempty open subset.  At rank $h$, when
$2\le m\le R-2$, semistability gives the integral upper bound
$2a-1=\Phi_{m,R}(h)$.
The endpoint $m=R-1$ needs an additional argument: semistability
alone would bound the degree of a line by one, whereas the claimed
profile is zero.  By Lemma~\ref{g2:pk:duality}, the dual of a general
such kernel, twisted by $K$, is a one-tail kernel $P'_1$.
A saturated rank-$t$ subbundle $S$ of $P_{R-1}$ gives a quotient
\[
 P'_1\longrightarrow S^*\otimes K.
\]
For $0<t<R$ its kernel has degree at most zero by
Proposition~\ref{g2:pk:one-tail}.  The quotient has degree at least
two, so $2t-\deg S\ge2$, or
$\deg S\le2t-2=\Phi_{R-1,R}(t)$.
This proves the missing endpoint and the entire primitive profile.
Its strict slope bound in Lemma~\ref{g2:pk:profile-identities}
proves stability.

\casehead{Step 4: nonprimitive pairs.}
If $g=\gcd(m,R)>1$, take $g$ disjoint coordinate blocks, each
carrying a general primitive kernel for $(m/g,R/g)$.
These have smaller rank and the same slope, so their direct sum
is semistable.  Lemma~\ref{g2:pk:equal-convolution} bounds its
profile by $\Phi_{m,R}$.  Openness proves both assertions for
a general common-point member, completing the rank induction.

\casehead{Step 5: distinct supports and large weight.}
Lemma~\ref{g2:pk:collision} and the openness of all finitely many
profile bounds transfer these assertions to general distinct
supports and directions.  The parameter space of ordered distinct
supports and directions is irreducible, so a nonempty open
locus gives the claimed generality.  Finally, for $m=jR+s$,
specialize $jR$ directions to $j$ full-rank clusters away from
the remaining supports.  The resulting bundle is an
$s$-tail kernel tensored with a line bundle of degree $2j$.
Its rank-$t$ degrees are therefore bounded by
$2jt+\Phi_{s,R}(t)$.  Twisting preserves semistability and
stability, and $\gcd(m,R)=\gcd(s,R)$.  Separating the clusters
and applying openness proves the last assertion.
\end{proof}

\subsection{Tail degenerations and specialization}\label{g2:sec:normal-degenerations}

We now construct the pointing models of the introduction in genus two.

\begin{proposition}[Pointing models in genus two]\label{g2:prop:models}
Fix the curve $C$ and the final line bundle $L$ in
\eqref{g2:eq:notation}, and let $1\le h\le c$ and $2\le b\le r-2$.
Choose disjoint reduced divisors $D_p=\sum_{i=1}^h p_i$ and
$D_q=\sum_{j=1}^b q_j$, put $H'=L(-D_p-D_q)$, and let
$W\subset H^0(H')$ be an $(n+1-b)$-dimensional subseries that embeds
$C$. The normalized normal bundle of the embedding of $C$ by a
general $(n+1)$-dimensional subseries of $H^0(L)$ then specializes,
with $C$ and $L$ fixed, to a bundle $M$ in an exact sequence
\begin{equation}\label{g2:eq:normal-limit}
 \begin{gathered}
 0\longrightarrow P_h(b)\longrightarrow M\longrightarrow Q\longrightarrow0,\\
 (\rk P_h(b),\deg P_h(b))=(b,2h),\qquad
 (\rk Q,\deg Q)=(r-b,2(k-h)),
 \end{gathered}
\end{equation}
with the following properties.
\begin{enumerate}
\item The quotient $Q$ is the normalized normal bundle
$N_{(C,H',W)}H'^{-1}K_C^{-1}$ of the residual embedding. Its excess
is $c-h$, so the residual series is complete exactly when $h=c$.
\item The kernel is $P_h(b)=P_0[2p_1\to y_1]\cdots[2p_h\to y_h]$,
where $P_0=\bigoplus_j\OO_C(2q_j)K_C^{-1}$ and the $y_i$ are general
points of the span of the $q_j$. It is a pointing kernel with $h$
distinct tail supports, in the sense of
Theorem~\ref{g2:pk:theorem}.
\item The determinants of the two factors are
\begin{equation}\label{g2:eq:normal-determinants}
 \det P_h(b)=\OO_C(2D_p+2D_q)K_C^{-b},\qquad
 \det Q=L^2K_C^{1-r}(\det P_h(b))^{-1}.
\end{equation}
\end{enumerate}
If $C$ is general, the divisors, the series $W$ and the directions
$y_i$ can moreover be chosen so that the residual embedding is
general and Theorem~\ref{g2:pk:theorem} applies to the kernel,
together with finitely many further nonempty open conditions; the
determinants \eqref{g2:eq:normal-determinants} then still vary
with $D_q$.
\end{proposition}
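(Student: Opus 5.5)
The plan is to realize the specialization in two stages, each keeping $C$ and $L$ fixed: first a degeneration to a reducible curve with $h$ one-secant lines, and then a projection from the points $q_1,\ldots,q_b$. The limit bundle $M$ will be computed on the component $C$ of the limit.

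\emph{Step 1: the degeneration.} Consider the series spanned by $W$ together with $b$ extra sections, viewed as a subseries of $H^0(L(-D_p))$ whose first $n+1-b$ coordinates vanish on $D_q$. It embeds $C$ as a curve $C_0$ of degree $d-h$ in $\PP^n$ that spans $\PP^n$. At each $p_i$ I attach a line $\ell_i$ meeting $C_0$ once, with direction $y_i$ chosen in the span $\Lambda$ of the $q_j$. The union $X_0=C_0\cup\bigcup\ell_i$ is a nodal curve of arithmetic genus $g$, and it is a limit of embeddings of $C$ by subseries of $H^0(L)$. To see this, deform the line bundle on a family whose special fibre is $X_0$: on $C_0$ the restriction is $L(-D_p)$, and the attached rational tails have degree one. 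After contracting the tails, the abstract curve remains $C$ and the bundle remains $L$. Since the lines are smoothable at the nodes, the locus of such embeddings is irreducible, so the general $(n+1)$-dimensional subseries of $H^0(L)$ specializes to $X_0$. I then restrict the normal bundle of $X_0$ to $C_0$. By the standard formula for a one-secant line attached at a node, this gives $N_{C_0}[p_1\to\ell_1]\cdots[p_h\to\ell_h]$, an elementary modification of degree one at each $p_i$. After normalizing by $L^{-1}K_C^{-1}$, the twist by $\OO(D_p)$ in $L$ versus $L(-D_p)$ accounts for the doubling, so each modification raises the degree by two.

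\emph{Step 2: the projection sequence.} Project $C_0$ from $\Lambda$. By Lemma~\ref{g2:cb:projection}, with base divisor $D=D_q$ and $T=0$ because $W$ embeds $C$, the pointing image of $\Lambda$ in $N_{C_0}L(-D_p)^{-1}$ has rank $b$ and determinant $\OO(2D_q)$. The quotient is the residual normal bundle $N_{(C,H',W)}$ twisted by $H'^{-1}$. Because the $q_j$ are independent, the pointing image splits as $\bigoplus_j\OO(2q_j)$. After normalizing by $K_C^{-1}$ this becomes $P_0$, with $P_0=\bigoplus_j\OO_C(2q_j)K_C^{-1}$. Since every $y_i$ lies in $\Lambda$, each modification direction lies in the subbundle $P_0$. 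Modifications in directions contained in a subbundle modify only that subbundle and leave the quotient unchanged. Hence $0\to P_h(b)\to M\to Q\to0$.

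\emph{Step 3: the local check and the remaining properties.} What remains is to check that the tail direction at $p_i$, as a first jet in $P_0$, is exactly $D_\sigma(z)y_i$, so that $P_h(b)$ matches the family in Theorem~\ref{g2:pk:theorem}. The normalizing twists should then be recorded carefully. The ranks, degrees and the determinants \eqref{g2:eq:normal-determinants} follow by additivity, using $\det F=L^2K_C^{1-r}$ up to the chosen normalization. The excess of the residual series is $(d-h-b)-(n-b)-2=c-h$.

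For general $C$, I choose $D_p$, $D_q$, $W$ and the $y_i$ generally. The residual triple then ranges over a dominant family, so the residual embedding is general. The pointing kernel also ranges over an open set of the family of Theorem~\ref{g2:pk:theorem}, and finitely many further open conditions can be imposed. The determinants contain $\OO(2D_q)$, which moves as $D_q$ moves.

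I expect the main obstacle to be Step 1: justifying that the limit of the general member keeps $C$ and $L$ fixed, and identifying the modification direction at the node to first order. The plan for this is to use the explicit local model of a node with a line attached, as in \cite{ALY}.
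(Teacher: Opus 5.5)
Your overall architecture matches the paper's: a core curve given by a series $V\subset H^0(L(-D_p))$, one-secant tails whose directions lie in the secant span, projection from that span, and modifications lying inside $P_0$. However, Step~1 computes the wrong limit bundle. Restricting $N_{X_0}$ to the component $C_0$ gives $N_{C_0}[p_1\to\ell_1]\cdots[p_h\to\ell_h]$, of degree $(n+1)(d-h)+2+h$. Any specialization of $N_C$ as a bundle on the fixed curve must have degree $(n+1)d+2$.

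Your repair, that ``the twist by $\OO(D_p)$ accounts for the doubling,'' cannot work numerically. Twisting a rank-$r$ bundle by $\OO(D_p)$ changes its degree by $rh$, not by $h$.

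The limit as a family of bundles on $C$ comes from \emph{contracting} the tails. The total space is the blow-up of $C\times\mathbf A^1$ at the points $(p_i,0)$, and the tails are its exceptional curves. The relative normal bundle has to be modified along them and pushed down to $C\times\mathbf A^1$. This is the simultaneous normal-bundle contraction the paper imports from Coskun--Smith (their Lemma~3.4). Its output is $N_{(C,H,V)}[2p_1\to y_1]\cdots[2p_h\to y_h](D_p)$: a length-two modification at each $p_i$ along the germ of the pointing direction toward $y_i$, together with a twist by $D_p$. That twist is then cancelled exactly by normalizing with $L^{-1}K_C^{-1}$, since $L=H(D_p)$.

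Only this gives $\deg P_h(b)=2h$. It is also what your Step~3 needs. A one-step modification sees only the value $y_i$, whereas the family $P_0[2p\to D_\sigma(z)W]$ of Theorem~\ref{g2:pk:theorem} depends on the first jet of the direction. The first-jet matching you plan to check therefore only makes sense after the contraction.

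The smoothing with $C$ and $L$ fixed is also only asserted. Smoothability of the nodes and irreducibility of ``the locus'' do not show two things you need:
\begin{itemize}
\item that this particular $X_0$, with prescribed directions, is a limit of embeddings of the same $C$ by subseries of the same $L$;
\item that the generic fibre can be taken general.
\end{itemize}
The paper writes the family explicitly. Let $f_0,\ldots,f_n$ be a basis of $V$, and choose $v_1,\ldots,v_h\in H^0(L)$ with standard-basis evaluations on $D_p$; these exist because $H^1(L(-D_p))=0$. Take the series $\sigma_{D_p}f_j+t\sum_i a_{ij}v_i+t^2\sum_\nu b_{\nu j}v_\nu$. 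On the blow-up its leading terms near $p_i$ are $zf_j(p_i)+ta_{ij}$, so each exceptional curve maps onto the line through $p_i$ with direction $y_i$. The free $t^2$ coefficients put the generic series in any required dense open set.

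Two smaller checks are also missing. First, general $y_i$ in the span must give transverse, pairwise disjoint one-secant lines; this uses $b\ge2$ and immersivity of the residual projection. Second, the pointing lines $\OO(2q_j)K_C^{-1}$ fill the saturated kernel because they are generically independent and their sum has the same determinant degree. Independence of the $q_j$ alone does not give this.
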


\begin{proof}
Put $H=L(-D_p)$, so that $H'=H(-D_q)$. The residual ambient dimension
is $n-b\ge3$, and $\deg H'=n-b+2+(c-h)\ge5$. Embedding subseries $W$
exist: the complete series embeds the genus-two curve, and a general
projection to dimension at least three remains an embedding. We
degenerate $C$ to a core curve, embedded by a series of $H$, with $h$
one-secant tails attached at the $p_i$, and project the core from the
span of the $q_j$.

\casehead{Step 1: the core and its pointing kernel.}
Since $H^1(H')=0$, evaluation on $D_q$ gives a quotient of
$H^0(H)$ of dimension $b$. Lift its standard basis to sections
$u_1,\ldots,u_b$ and set
\[
 V=\sigma_{D_q}W+\langle u_1,\ldots,u_b\rangle\subset H^0(H).
\]
This $(n+1)$-dimensional series embeds the core. Away from $D_q$,
the residual coordinates separate points and tangent vectors.
At $q_j$ they vanish to first order with a nonzero derivative,
while the additional coordinates give the distinct coordinate
point $[u_j]$. No point outside $D_q$ maps to their span.
Deleting any subset of these coordinate directions, and removing
the corresponding base points, gives the same construction with
fewer added coordinates. Thus all these inner projections remain
embeddings, and full projection recovers $W$.

Let $A=N_{(C,H,V)}H^{-1}K^{-1}$.
The first-jet tangent-plane calculation leading to
\eqref{high:pointing-det} applies to any embedding series; the
determinant calculation does not use completeness. It gives a
saturated rank-$b$ pointing kernel of normalized determinant
$\OO_C(2D_q)K^{-b}$, with quotient the normalized residual bundle
$Q$. Its individual pointing lines are
\[
 T_j=\OO_C(2q_j)K^{-1}.
\]
They are generically independent because the residual projection
is immersive. Their sum has the same determinant degree as the
saturated kernel, hence equals that kernel. Therefore
\[
 0\longrightarrow P_0:=\bigoplus_{j=1}^bT_j
 \longrightarrow A\longrightarrow Q\longrightarrow0.
\]
The ambient vector representing $q_j$ gives a section $\sigma_j$
of $KT_j$ with divisor $2q_j$. In these factors the pointing germ
toward $y$ in the secant span consequently has components
$(a_j\sigma_j(z))_j$, with $a_j$ the coordinates of $y$.
This identifies the first jets with the family \eqref{g2:pk:family} of
Section~\ref{g2:pk:section}.

Choose $y_i$ in the span and modify at the distinct $p_i$.
The directions remain in $P_0$, so
$M=A[2p_1\to y_1]\cdots[2p_h\to y_h]$ has exactly
\eqref{g2:eq:normal-limit}. The common-point and coordinate-block
specializations used for kernel bounds take place in the bundle
family. Openness then gives distinct supports and general
directions before attaching curves. The determinant of $P_0$ is
$\OO_C(2D_q)K_C^{-b}$, each modification adds $2p_i$, and
$\det M=\det A\otimes\OO_C(2D_p)=L^2K_C^{1-r}=\det F$; this gives
\eqref{g2:eq:normal-determinants}.

\casehead{Step 2: fixed-polarization smoothing.}
For general $y_i$, each line $p_i y_i$ is transverse and
one-secant to the core. A second intersection outside $D_q$ would
identify distinct points in the residual embedding; one in $D_q$
would require $y_i=q_j$. Tangency would make the residual
differential vanish. Two such lines cannot meet outside the
secant span, since their residual images are distinct, and cannot
meet in the span if $y_i\ne y_{i'}$. General choices avoid these
restrictions because $b\ge2$.

Let $f_0,\ldots,f_n$ be a basis of $V$.
Since $H^1(H)=0$, choose $v_1,\ldots,v_h\in H^0(L)$ with
standard-basis evaluations at $D_p$. Complete them to a basis
$v_1,\ldots,v_c$ of a complement to $\sigma_{D_p}V$ in $H^0(L)$.
If $y_i=[a_{i0}:\cdots:a_{in}]$ in the core coordinates, take
\begin{equation}\label{g2:nd:graph-family}
 \sigma_{D_p}f_j+t\sum_{i=1}^h a_{ij}v_i
   +t^2\sum_{\nu=1}^c b_{\nu j}v_\nu,
 \qquad 0\le j\le n.
\end{equation}
Blow up the disjoint points $(p_i,0)$ in $C\times\mathbf A^1$.
With parameter $z$ at $p_i$, the leading terms are
$z f_j(p_i)+t a_{ij}$. On the exceptional line they give
$[z:t]\mapsto[z f_j(p_i)+t a_{ij}]_j$.
The central fiber is therefore the embedded core with the
prescribed one-secant lines, while the general fiber has
underlying curve $C$ and polarization $L$.
At any fixed $t\ne0$, varying $b_{\nu j}$ covers the full affine
Grassmannian chart. General higher-order coefficients consequently
put the generic series in any required dense open subset of the
fixed-$(C,L)$ series space without changing the central data.

The simultaneous normal-bundle contraction of
\cite[Lemma 3.4]{CS}, applied to this family, has central bundle
\[
 N_{(C,H,V)}[2p_1\to y_1]\cdots[2p_h\to y_h](D_p)
\]
on the contracted fixed curve. Twisting by $L^{-1}K^{-1}$ cancels
$D_p$, since $L=H(D_p)$, and gives $M$ above. This proves the
claimed fixed-polarization limit.

\casehead{Step 3: generality of the residual.}
For fixed divisors, lifting the evaluation sections proves
dominance of the intersection map from core series to residual
series. For fixed $C,L,D_p$, varying $D_q$ also dominates the
Picard variety of $H'$: every degree-$b$ line bundle on a genus-two
curve is effective when $b\ge2$, so the Abel map from
$\operatorname{Sym}^bC$ is surjective.
For a general fixed curve, any property of general embeddings of
the residual type therefore holds on a nonempty open subset of this
divisor-and-series space. The kernel theorem gives another
nonempty open condition on selected points, supports, and
directions. These opens meet in the irreducible parameter space,
together with the finitely many other conditions used at a given
induction step. Thus the factor determinants can vary with $D_q$
while the final $C,L$ and the determinant of a hypothetical
generic Harder--Narasimhan subbundle remain fixed.
\end{proof}

The construction treats $b\ge2$; the rank-one models needed at the
shifted diagonals are supplied by Lemma~\ref{g2:prop:rank-one}.

\paragraph{Profiles, HN determinants, and saturation.}
A degree profile $U_B(t)$ bounds degrees of \emph{all} rank-$t$
subsheaves of $B$, with endpoints $U_B(0)=0$ and
$U_B(\rk B)=\deg B$. Filtering a subsheaf of an extension through
its kernel gives
\begin{equation}\label{g2:eq:convolution}
 \deg S\le\max_{u+v=\rk S}\bigl(U_P(u)+U_Q(v)\bigr).
\end{equation}
A proper full-rank
image is bounded by the endpoint $\deg Q$; it is not treated as a
proper-rank subbundle. Saturation can only improve upper bounds
for a fixed rank.

Existence of a subsheaf of a fixed rank and degree at least a
given integer is closed in the bundle family, by relative Quot
properness. Equivalently, modify a higher-degree subsheaf to the
specified exact degree and use its Quot scheme. Hence the profile
of a special member bounds the general profile.

If a bundle family over a rational base is generically unstable,
its canonical first HN subbundle defines a rational map from that
base to a Picard variety. Such a map is constant. For normal
bundles, fix $C,L$ and use the rational series Grassmannian.
By Proposition~\ref{g2:prop:models}, the factor determinants
\eqref{g2:eq:normal-determinants} vary with the secant divisor.

The specialization of the generic HN subbundle has its original
rank, degree, and determinant before saturation. If saturation
adds an effective divisor $Z$, its determinant becomes $T(Z)$.
We compare it with \eqref{g2:eq:normal-determinants} only when the
numerics force $\deg Z=0$.

\subsection{Exact cohomological bases}\label{g2:co:section}

We establish all finite cohomological inputs used in the starting
bands and in propagation. Their construction is independent of the
semistability induction.

\begin{proposition}[Finite cohomological witnesses]\label{g2:co:seeds}
The normal bundle of a general genus-two embedding is semistable at
each of the following types:
\begin{align*}
\mathcal B_0=\{& (5,8),(6,10),(6,11),(7,10),(7,12),(7,13),\\
              & (8,11),(8,12),(8,14),(9,12),(9,13),(9,14),\\
              & (10,15),(10,16),(11,15),(11,17),\\
              & (14,17),(14,20),(18,22)\},\\
\mathcal B_1=\{& (8,15),(10,18),(12,19)\}.
\end{align*}
The nineteen types in $\mathcal B_0$ supply the finite starting-band
calculation. The three types in $\mathcal B_1$ supply the additional
propagation inputs. All twenty-two assertions follow from exact
cohomological vanishing as described below.
\end{proposition}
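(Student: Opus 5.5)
The plan is to use the cohomological criterion announced in the introduction (Lemma~\ref{g2:co:criterion}). Suppose a vector bundle $G$ on $C$ satisfies $\chi(F\otimes G)=0$ and $H^0(F\otimes G)=0$. If $S\subset F$ had slope $\mu(S)>\mu(F)$, then $\chi(S\otimes G)>0$ would give $H^0(S\otimes G)\ne0\subset H^0(F\otimes G)$, which is a contradiction. Here the vanishing $\chi(F\otimes G)=0$ forces $\mu(G)=g-1-\mu(F)$.

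For each of the twenty-two types $(n,d)$ I would first fix $r=n-1$ and $k=d-n+2$, so that $\mu(F)=2k/r$. I would then choose $G$ of rank $r/\gcd(r,2k)$ and suitable degree. For example, $G$ could be a general direct sum or elementary modification of line bundles $\OO_C(D)\otimes K^{-a}$. The natural choice is $G=\OO_C(-D)$ twisted by a line bundle when $r\mid 2k$; otherwise I would take a general stable bundle built by elementary modification. The aim is to reduce the vanishing of $H^0(F\otimes G)$ to a finite linear-algebra problem.

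Semicontinuity is what makes a single example suffice. Vanishing of $H^0(F\otimes G)$ is an open condition on the irreducible parameter space of (curve, nonspecial $L$, series, $G$). So it is enough to exhibit one explicit member where it holds, and that member need not be general.

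I would realize this over $\mathbb Z$ localized at a prime. Take an explicit genus-two curve $y^2=f(x)$ with $f\in\mathbb Z[x]$ of degree $5$ or $6$ whose reduction modulo $p=101$ or $10007$ is still smooth of genus two. Take explicit divisors for $L$ and $G$, and an explicit $(n+1)$-dimensional subspace of $H^0(L)$ defining an embedding. I would compute $H^0(N_C\otimes L^{-1}K^{-1}\otimes G)$ as follows. Present $N_C(-1)$ as the cokernel in the jet sequence \eqref{high:jet}, generalized to incomplete series as in Lemma~\ref{g2:cb:projection}:
\[
0\to J^1(L)^*\to V^*\otimes\OO_C\to N_C\otimes L^{-1}\to0 .
\]
Then compute global sections of the twisted cokernel via Riemann--Roch spaces of rational functions. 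Concretely, this is the kernel of an explicit matrix over $\mathbb F_p$ with rows indexed by a basis of sections of $V^*\otimes K^{-1}G$ and relations from $H^0(J^1(L)^*\otimes K^{-1}G)$ and $H^1$ terms. Equivalently, I could check injectivity of $H^0(F\otimes G)\to\bigoplus_{x\in D}F\otimes G|_{x}$ at a large enough divisor $D$.

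Upper semicontinuity on the family over $\operatorname{Spec}\mathbb Z_{(p)}$ then transfers the vanishing from the special fiber to the generic fiber in characteristic zero. Openness then transfers it to a general embedding of that type. One also needs that the chosen reduction is still an embedding with the correct $(n,d)$ and that $L$ is nonspecial; I would verify both by the same rank computations, namely $h^0$ of $L$ and separation of points and tangents via the rank of the series on a suitable finite set.

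I expect the main obstacle to be the choice of a twisting bundle $G$ that actually yields vanishing when $\gcd(r,2k)$ is small, such as $(8,15)$ with $r=7$, $2k=18$. In that case $G$ has rank $7$, the matrices become large, and a poor choice of $G$ (for instance one that is itself unstable) can make $H^0$ nonzero even though $F$ is semistable. My first attempt would be $G$ a general direct sum of line bundles of the two adjacent degrees, with the number of each fixed so that $\chi=0$. If that fails, I would use a general elementary modification of such a sum.

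The remaining work is computational bookkeeping. Each of the twenty-two types reduces to one exact rank computation over a finite field, performed by the programs in the ancillary files, and these computations constitute the certificate.
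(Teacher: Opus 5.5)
Your framework matches the paper's: the criterion of Lemma~\ref{g2:co:criterion}, a single explicit witness over $\mathbb F_{101}$ or $\mathbb F_{10007}$, then semicontinuity over a mixed-characteristic base and openness in the irreducible space of curves, polarizations and series. The gap is in the one nonformal ingredient, the bundle $G$, and your concrete first choice fails for every type in the list. None of the twenty-two types has $r\mid 2k$, so $\mu(G)=1-2k/r$ is not an integer. A direct sum of line bundles of the two adjacent degrees therefore contains a summand $G_1$ with $\deg G_1>\mu(G)$. Then $\chi(F\otimes G_1)=2k+r(\deg G_1-1)>0$, so $0\ne H^0(F\otimes G_1)\subset H^0(F\otimes G)$. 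The same argument, applied to any subbundle $G'\subset G$, shows that a witness must itself be semistable. So no split $G$ can ever work here, and your integral-slope ``natural choice'' never applies. All the content therefore sits in your fallback, a general elementary modification, and for it you give neither a specific modification nor a way to compute sections of $F\otimes G$. Fixing the minimal rank $r/\gcd(r,2k)$ is also an unneeded constraint: nothing guarantees a witness of that rank, and the paper uses rank $r$ in all but one case.

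The missing idea is to take $G$ to be a twisted syzygy bundle. Choose line bundles $A$ and $B$ with $\deg B=2k+r(\deg A-1)$, choose a generated $(r+1)$-dimensional subseries $V'\subset H^0(B)$, and put $G=M_{V',B}\otimes A$. Then $\chi(F\otimes G)=0$ holds automatically. Tensoring $0\to M_{V',B}\to V'\otimes\OO_C\to B\to0$ with $FA$ identifies $H^0(F\otimes G)$ with the kernel of the multiplication map $V'\otimes H^0(FA)\to H^0(FAB)$. With $L=\OO_C(dp)$ and $A=\OO_C((d+7)p)$ one gets $FA=N(5p)$. This is positive enough that both section spaces are explicit quotients of Riemann--Roch spaces by dual first-jet relations, so vanishing becomes a single full-rank check.

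This is also what your computational scheme lacks. For semistable $G$ of negative slope, $H^0(V^*\otimes K^{-1}G)=0$. So in your jet presentation all of $H^0(F\otimes G)$ lies inside $H^1(J^1(L)^*K^{-1}G)$, not in a matrix indexed by sections of $V^*\otimes K^{-1}G$. The paper needs such $H^1$ computations only for $(5,8)$ and $(7,10)$. There it takes $G=M_B\otimes A$ with complete $B$ and computes the vanishing by a resolution complex with a \v{C}ech transgression. Your other suggestion, injectivity of evaluation of $H^0(F\otimes G)$ on a large divisor, is not a vanishing test.
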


\begin{lemma}\label{g2:co:criterion}
Let $F,G$ be vector bundles of positive rank on a smooth curve of
genus $g$. If $\chi(F\otimes G)=0$ and $H^0(F\otimes G)=0$, then
$F$ is semistable.
\end{lemma}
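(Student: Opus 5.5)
The plan is to argue by contradiction, starting from a destabilizing subsheaf and using only Riemann--Roch together with the multiplicativity of $\chi$ and slopes under tensor product. Suppose $F$ is not semistable, and let $S\subset F$ be a saturated subbundle with $\mu(S)>\mu(F)$. Saturation gives a locally free subbundle of the same rank with larger degree, so we may assume $S$ is a vector bundle. Tensoring the inclusion with $G$ gives $S\otimes G\subset F\otimes G$, since tensoring with a locally free sheaf is exact. Hence
\[
 h^0(S\otimes G)\le h^0(F\otimes G)=0 .
\]

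Next I would compute the Euler characteristics with Riemann--Roch. For any bundle $E$ of positive rank,
\[
 \chi(E\otimes G)=\rk E\,\rk G\,\bigl(\mu(E)+\mu(G)+1-g\bigr),
\]
because $\mu(E\otimes G)=\mu(E)+\mu(G)$. Since $\rk F\,\rk G>0$, the hypothesis $\chi(F\otimes G)=0$ gives
\[
 \mu(F)+\mu(G)=g-1 .
\]
Applying the same formula to $S$ then yields
\[
 \chi(S\otimes G)=\rk S\,\rk G\,\bigl(\mu(S)-\mu(F)\bigr)>0 .
\]

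This forces $h^0(S\otimes G)\ge\chi(S\otimes G)>0$, which contradicts the vanishing established in the first step. Therefore no destabilizing subsheaf exists, and $F$ is semistable.

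I expect no serious obstacle. The only points to check are that saturation lets us restrict to subbundles, and that tensoring the inclusion with the locally free sheaf $G$ keeps it injective. Both are standard facts on a smooth curve.
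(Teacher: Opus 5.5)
Your proof is correct and is the same argument as the paper's: Riemann--Roch turns $\chi(F\otimes G)=0$ into $\mu(F)+\mu(G)=g-1$, so a destabilizing $S$ would have $\chi(S\otimes G)=\rk S\,\rk G\,(\mu(S)-\mu(F))>0$, contradicting $H^0(S\otimes G)\subset H^0(F\otimes G)=0$. The saturation step is harmless but not needed, since any subsheaf of a vector bundle on a smooth curve is already locally free.
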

\begin{proof}
The Euler-characteristic hypothesis says $\mu(G)=g-1-\mu(F)$.
A strict destabilizer $S\subset F$ would have
\[
 \chi(S\otimes G)=\rk S\,\rk G\bigl(\mu(S)-\mu(F)\bigr)>0,
\]
contradicting $H^0(S\otimes G)\subset H^0(F\otimes G)=0$.
\end{proof}

\subsubsection{Two small first-projection certificates}

For $(n,d)=(5,8),(7,10)$, let $F=NL^{-1}K^{-1}$, of degree ten.
Over $\mathbb F_{101}$ take
\[
 y^2=x^5+x+1,\quad p=\infty,\quad q=(0,1),\quad s=(3,34),
 \quad L=\mathcal O_C(dp).
\]
The ordered pole basis of $H^0(L)$ is
$(1,x,x^2,y,x^3,xy,x^4,x^2y,x^5)$, truncated at pole order $d$.
Use the series $f_i+a_i x^{d/2}$, omitting the final basis vector,
with coefficients
\[
\begin{array}{c|l}
d& (a_i)\\\hline
8&(17,72,97,8,32,15)\\
10&(17,72,97,8,32,15,63,97).
\end{array}
\]
The quintic is squarefree. The projection functional, on polynomial
monomials, has a nonzero three-by-three Hankel minor. If the functional
factored through any length-two subscheme, its multiplication pairing
on polynomial sections would have rank at most two. This excludes
every secant and tangent projection center, so the series embeds the
curve geometrically.

Let $M_B$ be the kernel of complete evaluation of $B$, and put
$G=M_B\otimes A$, with
\[
\begin{array}{c|c|c|c|c}
(n,d)& B&A&\rk G&\deg G\\\hline
(5,8)&\mathcal O(6p)&\mathcal O(q+s-2p)&4&-6\\
(7,10)&\mathcal O(5p)&\mathcal O(q+s-p)&3&-2.
\end{array}
\]
Both complete evaluation series are base-point-free: the constant
section handles the affine curve, and an exact maximal-pole section
handles infinity. These bundles satisfy $\chi(F\otimes G)=0$.

Here is the finite complex used to compute the vanishing. Set
$W=H^0(B)$ and $D=K^2A^{-1}$, so
$D=B(-q-s)$ and $DB^{-1}=\mathcal O(-q-s)$.
Tensor the conormal first-jet sequence with the evaluation resolution
of $M_B^*$ and with $D$. The resulting complex is
\[
\begin{split}
 \mathcal C^{-1}&=V_0DB^{-1},\\
 \mathcal C^0&=(V_0W^*D)\oplus J^1(L)DB^{-1},\\
 \mathcal C^1&=J^1(L)W^*D.
\end{split}
\]
Its sole sheaf cohomology is $(F\otimes G)^*K$ in degree zero.
The maps are $u\mapsto((b_i u)_i,-j(u))$ and
$((a_i),z)\mapsto(j(a_i)+b_i z)_i$. Here the jet map is
$\mathcal O_C$-linear on the fixed space $V_0$: scalar coefficients
are not differentiated.

With $t=x^2/y$ at infinity and $\eta=dx/y$, a jet pair $(a,b)$
in $J^1(L)(mp)$ has pole bounds $N=d+m$, $N+3$, and relation
\[
 -2[b]_{-N-3}+d[a]_{-N}=0.
\]
Vanishing at $q,s$ imposes the four independent evaluations of the
two coordinates. The only nonzero component cohomology dimensions are
\[
\begin{array}{c|ccc}
(n,d)&h^1(\mathcal C^{-1})&h^0(\mathcal C^0)&h^0(\mathcal C^1)\\\hline
(5,8)&18&102&120\\
(7,10)&24&80&104.
\end{array}
\]
The last $h^1(\mathcal C^{-1})$ columns come from the \v{C}ech
transgression. A basis for $H^1(\mathcal O(-q-s))$ consists of the two
Weierstrass gap principal parts $t^{-3},t^{-1}$ and the two-point
constant-value class $(1,0)$. For each such class $h$ and section
$f\in V_0$, choose polynomial finite parts $g_i$ of $b_i h$ and a
polynomial jet finite part $g$ of $h\,j(f)$. The transgression column
is $(j(f)g_i-b_i g)_i$. Subtraction by pole order followed by
linear interpolation in $x$ at $q,s$ determines the finite parts;
different choices change the column only by a global-differential
column. The resulting square matrices have exact determinants
\[
 \det M_{5,8}=20,\qquad\det M_{7,10}=16\pmod {101}.
\]
Thus $H^1((F\otimes G)^*K)=0$, and duality gives the desired
vanishing. The ancillary files contain both complete matrices.

\subsubsection{Twenty witnesses from one multiplication construction}

For $F$ of rank $r=n-1$ and degree $2k$, take a generated
$(r+1)$-dimensional subseries $V'\subset H^0(B)$ and put
$G=M_{V',B}\otimes A$. Choose
\[
 \deg B=2k+r(\deg A-1).
\]
Then $\chi(F\otimes G)=0$, and $H^0(F\otimes G)$ is the kernel
of multiplication
\[
 V'\otimes H^0(F A)\longrightarrow H^0(FAB).
\]

On $y^2=f(x)$, with $f$ monic squarefree of degree five and $p=\infty$,
use the pole monomials $x^i,yx^i$. The derivative dual to $dx/y$ is
\[
 \partial(g_0+yg_1)=fg_1'+\tfrac12 f'g_1+yg_0'.
\]
Set $L=\mathcal O(dp)$, $A=\mathcal O((d+7)p)$, so $F A=N(5p)$.
For every $\ell\ge5$, dual first-jet evaluation computes
$H^0(N(\ell p))$ as the quotient of
$V_0^*\otimes H^0(\mathcal O((d+\ell)p))$ by the following relations:
\[
 (\alpha s_j)_j,\qquad
 (\beta\partial s_j+\gamma s_j)_j.
\]
Here $\alpha$ ranges through pole order at most $\ell$, and $\beta$
through pole order at most $\ell-2$. For the unique top-pole monomial
$\beta$, take $\gamma=-(d/2)M_{\ell+1}$, where $M_{\ell+1}$ is
the monomial of that pole order; otherwise take $\gamma=0$.
To verify completeness of these relations, the two dual-jet
coefficients have pole bounds $\ell+1$ and $\ell-2$ with leading
relation $[\gamma]_{\ell+1}=-(d/2)[\beta]_{\ell-2}$. The two
line-bundle $H^1$ terms vanish for $\ell\ge5$. The relation map is
injective because first-jet evaluation is onto. Its dimension is
$h^0(\mathcal O(\ell p))+h^0(\mathcal O((\ell-2)p))$.

These spaces are compatible with multiplication. Their source and
target dimensions in the preceding multiplication map are equal.
An economical elimination first quotients $H^0(\mathcal O((d+\ell_2)p))$
by products $V'H^0(\mathcal O((d+5)p))$, where
$\ell_2=5+\deg B$, and then reduces the target jet relations.
The resulting cokernel dimension equals $h^0(F\otimes G)$.

Over $\mathbb F_{10007}$, the recorded coefficient matrices give
zero cokernel for every type in
$(\mathcal B_0\cup\mathcal B_1)\setminus\{(5,8),(7,10)\}$.
The full row ranks of the reduced relation matrices are:
\[
\begin{array}{c|r@{\qquad}c|r}
(n,d)&\text{rank}&(n,d)&\text{rank}\\\hline
(6,10)&154&(6,11)&168\\
(7,12)&208&(7,13)&224\\
(8,11)&216&(8,12)&234\\
(8,14)&270&(8,15)&288\\
(9,12)&260&(9,13)&280\\
(9,14)&300&(10,15)&352\\
(10,16)&374&(10,18)&418\\
(11,15)&384&(11,17)&432\\
(12,19)&520&(14,17)&540\\
(14,20)&630&(18,22)&874
\end{array}
\]
Each displayed rank equals the number of rows, so the corresponding
cokernel vanishes.
All curve coefficients and both series matrices are stored explicitly
in the ancillary files. The verification program also checks
squarefreeness,
base-point-freeness of $V'$, and injectivity of the differential
everywhere. Affine conditions are tested by the unit ideal generated
by the relevant minors in $\mathbb F_{10007}[x,y]/(y^2-f)$, and the
highest two pole coefficients test the differential at infinity.

\subsubsection{Lifting to characteristic zero}

Each example lifts to a smooth proper curve over a discrete valuation
ring of mixed characteristic, with its two locally free bundles.
The first two examples use the prime above $101$ with
$\sqrt{247}\equiv34$ in $\mathbb Q(\sqrt{247})$; the twenty multiplication
examples use the integral lifts of the recorded coefficients at
$10007$. Smoothness and the required bundle conditions hold after
shrinking. Semicontinuity lifts the cohomology vanishing to
characteristic zero, including after algebraic closure.

For the later examples, an everywhere unramified map suffices to
define the locally free normal quotient. Its parameter space contains
the nonempty open locus of embeddings. The vanishing locus and this
embedding locus meet in the same irreducible space of curve, line
bundle and series data. Thus, by Lemma~\ref{g2:co:criterion}, all
twenty-two examples give general Brill--Noether semistability. The
reconstruction programs and the complete input files are among the
ancillary files. This proves Proposition~\ref{g2:co:seeds}.

\subsection{A finite reduction for the first four projection degrees}
\label{g2:ld:section}

We now prove the starting bands $5\le k\le8$ used in the induction.
The proof combines a finite calculation with the periodic lifting
Lemma~\ref{g2:ld:periodic}. The geometric inputs are the kernel
profile of Theorem~\ref{g2:pk:theorem}, the complete profile in
Section~\ref{g2:cb:section}, and the pointing models of
Proposition~\ref{g2:prop:models} with their determinants.

\begin{proposition}\label{g2:ld:theorem}
For $1\le c\le4$ and every $n\ge3$, the normal bundle of a general
genus-two curve of degree $n+2+c$ in $\PP^n$ is semistable.
\end{proposition}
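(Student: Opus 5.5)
We work in the coordinates \eqref{g2:eq:notation}, so $k=c+4\in\{5,6,7,8\}$, $F$ has rank $r=n-1\ge2$ and degree $2k$. We must show that no rank-$t$ subsheaf has surplus $re-2kt>0$.

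\textbf{Small ranks.} These come first. For $r=2$ (space curves) we quote the classification of \cite[Theorem 1]{CLV}; for $r=3$ we use the $\PP^4$ results of \cite{CS}. Whenever $r\mid 2k$ we would like to apply the interpolation argument of Lemma~\ref{lem:integral}. In genus two, however, we must check that the type is not one of the interpolation exceptions of \cite{ALY}. The nineteen types in $\mathcal B_0$ of Proposition~\ref{g2:co:seeds} are taken as given; they are exactly the low-rank types where the argument below has no room.

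\textbf{Main argument: pointing models.} For every remaining type we choose a pointing model from Proposition~\ref{g2:prop:models}. We take weight $h\le c$ and rank $b$ with $2\le b\le r-2$, and prefer $h=c$, so that the residual $Q$ is a complete bundle $F_{r-b}$ of rank $R=r-b\ge6$. This keeps us away from the four unstable complete types. Bounds on subsheaves then come from the convolution \eqref{g2:eq:convolution}. The kernel is bounded by the profile $2jt+\Phi_{s,b}(t)$ of Theorem~\ref{g2:pk:theorem}. The quotient is bounded by the iterated complete profile $U_R$ of Proposition~\ref{g2:cb:iterated}, sharpened by the caps $6$ and $5$ of Proposition~\ref{g2:cb:degree-bound}. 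When $h<c$ the residual is projected of smaller excess, and we use the inductive result in lower $c$ together with the interpolation squeeze \eqref{eq:interpolation-squeeze}.

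\textbf{Excluding the remaining candidates.} If the convolution bound rules out every strict type, openness finishes the proof. Otherwise we arrange that the only survivor is the kernel itself, or a lift of the quotient, with exact rank and degree. The moving-determinant Lemma~\ref{g2:pk:determinant} then excludes it, since by \eqref{g2:eq:normal-determinants} the determinants change as $D_q$ moves.

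\textbf{Reduction to a finite check.} For fixed $c$ the convolution maximum depends on $r$ only through floors of linear functions. So we would prove a periodicity statement, Lemma~\ref{g2:ld:periodic}. It should say that if a choice $(h,b)$ excludes all strict types at rank $r$, then the choice $(h,b+6)$ (or $b$ shifted by a suitable period) works at rank $r+6$. The idea is that adding one Weierstrass block of six trivial lines, as in \eqref{high:projection}, shifts both profiles compatibly. This reduces the proposition to finitely many residue classes plus finitely many small ranks, which we then settle by an exact integer computation over $(t,u)$.

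\textbf{Main obstacle.} We expect the hard step to be proving that this periodicity is genuine. The complete profile $U_R$ involves $\lceil 6t/(R-3)\rceil$, which is not exactly periodic in $R$. So for large $R$ we would replace it by the eventual simplified bound: degree at most $5$ or $6$ together with the strict inequality $6t>f(R-3)$. We would then check that the surplus inequalities become linear in $r$ with coefficients of the right sign. Below that threshold we rely on direct enumeration, falling back on the certified types in $\mathcal B_0$ wherever the enumeration leaves a strict type open.
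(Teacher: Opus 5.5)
Your architecture is the paper's: pointing models with $h=c$ and a complete residual of rank $R\ge6$, convolution of the kernel profile with the complete profile of Proposition~\ref{g2:cb:iterated}, whole-factor exclusion by moving determinants, and a reduction to finitely many residue classes plus finitely many small ranks, checked by integer arithmetic with the inputs of \cite{CLV}, \cite{CS} and $\mathcal B_0$.

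The gap is in the step you flag as the main obstacle, and it is more basic than the non-periodicity of $\lceil 6t/(R-3)\rceil$. The periodicity you state is false, and its proposed mechanism cannot work. If $r\mapsto r+6$ and $b\mapsto b+6$ with $h$ fixed, then $R=r-b$ does not change, so the quotient keeps slope $8/R$ while the target slope $2k/r$ decreases. Two things then go wrong.
\begin{enumerate}
\item The surplus $8r-2kR$ of a lift of the whole quotient grows without bound. Once it exceeds $r$, degree $8$ is no longer the least integer above the target slope at rank $R$, so the moving-determinant exclusion no longer applies.
\item Proper pieces of the quotient, for instance rank $R-1$ and degree $6$ with surplus $6r-2k(R-1)$, become strict. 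These are not whole factors, so nothing excludes them.
\end{enumerate}
A Weierstrass block $\OO^6$ does not repair this. It has slope zero, so adjoining it pulls the model away from the target slope. Moreover, \eqref{high:projection} is proved only for complete bundles, not for the projected target or the kernel. To keep both factor slopes straddling $2k/r$, the kernel and quotient ranks must grow in the ratio $c:4$ of their weights. With $r\mapsto r+6$ this would require $b\mapsto b+6c/k$, which is not even an integer for $k=5,7$.

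What is needed is Lemma~\ref{g2:ld:periodic}. It takes $r=r_0+6km$, $b=b_0+6cm$, $R=R_0+24m$, with $b_0\ge2c$ and $R_0\ge6$, and replaces exact periodicity by monotonicity. The strict cutoff $\lfloor(er-1)/2k\rfloor$ increases by exactly $3em$. Every inverse-profile cost increases at least as fast:
\begin{enumerate}
\item For $b\ge2c$, the kernel costs $\lceil \ell b/c\rceil$ and $\lfloor(\ell+1)b/c\rfloor$, in degrees $2\ell$ and $2\ell+1$, grow by $6\ell$ and $6(\ell+1)$.
\item The complete costs $\max\{\lceil jR/8\rceil,\lfloor j(R-3)/6\rfloor+1\}$, for $1\le j\le5$, grow by at least $3j$. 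The term you worried about grows by $4j$, which is harmless.
\item The costs $R-1$ and $R$, for $j=6,7,8$, grow by $24$.
\end{enumerate}
Hence in each degree the minimal rank minus $3em$ is nondecreasing in $m$, so every exclusion at $m=0$ persists. The whole-factor surpluses $2cr-2kb$ and $8r-2kR$ are independent of $m$, so the least-integer and uniqueness conditions behind the determinant exclusion persist as well. This is why residues are taken modulo $6k$, with representatives $6k\le r_0<12k$, and the direct finite check only has to cover $r<6k$.
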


For a degree profile $U$, define the minimum rank needed for degree at
least $e$ by
\[
 a_U(e)=\min\{t:U(t)\ge e\}.
\]
Only nonnegative $e$ occur below; set $a_U(0)=0$. For an extension with
kernel of degree $2c$ and quotient of degree eight, the rank of every
positive-degree $e$ subsheaf is at least
\begin{equation}\label{g2:ld:cost}
 a(e)=\min_{\substack{i+j=e\\0\le i\le2c,\ 0\le j\le8}}
       \bigl(a_P(i)+a_Q(j)\bigr).
\end{equation}
This remains valid when one image has negative degree: replace
that contribution by zero and lower the other contribution to $e$.
Minimum ranks for degree at least a specified value are monotone.

\begin{lemma}[Periodic lifting]\label{g2:ld:periodic}
Fix $1\le c\le4$, $k=c+4$, and integers $r_0,b_0$ satisfying
$b_0\ge2c$ and $R_0=r_0-b_0\ge6$. For $m\ge0$, set
\[
 r=r_0+6km,\qquad b=b_0+6cm,\qquad R=R_0+24m.
\]
Use the $c$-tail kernel of rank $b$ and the complete normal quotient
of rank $R$. For each degree $e$, the minimum rank in
\eqref{g2:ld:cost}, minus $3em$, is nondecreasing in $m$.
Consequently every degree--rank exclusion at $m=0$ persists after
replacing a candidate rank $t$ by $t+3em$.

A whole-factor determinant exclusion also persists if, at $m=0$, its
degree is the least integer strictly above the target slope, it is the
only degree allocation that fits its rank, and every larger degree
requires larger rank.
\end{lemma}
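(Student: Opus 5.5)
The plan is to reduce the lemma to a separate monotonicity statement for each factor of the cost \eqref{g2:ld:cost}. Write $a_P^{(m)}$ for the minimum-rank function of the $c$-tail kernel of rank $b=b_0+6cm$, with profile $\Phi_{c,b}$ from Theorem~\ref{g2:pk:theorem}. Write $a_Q^{(m)}$ for that of the complete quotient of rank $R=R_0+24m$, with profile $U_R$ from \eqref{g2:cb:profile}. I will prove that $a_P^{(m)}(i)-3im$ and $a_Q^{(m)}(j)-3jm$ are nondecreasing in $m$ for each fixed $i\in[0,2c]$ and $j\in[0,8]$. Since $3im+3jm=3em$ whenever $i+j=e$, and the index set of splits does not depend on $m$, taking the minimum over splits shows that $a^{(m)}(e)-3em$ is nondecreasing. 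The first exclusion assertion follows at once: if no subsheaf of degree $e$ fits in rank $t$ at $m=0$, then $a^{(0)}(e)>t$, so $a^{(m)}(e)>t+3em$.

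For the quotient, $U_R$ is monotone in $t$, so $a_Q(j)$ is the largest of the thresholds imposed by its three entries. The entry $\lfloor 8t/R\rfloor\ge j$ means $t\ge jR/8$. This threshold moves by exactly $3jm$ when $R$ is replaced by $R+24m$. The entry $\lceil 6t/(R-3)\rceil-1\ge j$ means $t>j(R-3)/6$, and this threshold moves by $4jm\ge3jm$. The constant caps $6$ and $5$ (the latter in corank at least two) are independent of $R$. A degree $j\in\{7,8\}$, or $j=6$ in corank one, can only be reached at the endpoints $t=R-1$ or $t=R$, which move by $24m\ge3jm$. A maximum of thresholds that each move by at least $3jm$ moves by at least $3jm$, which gives the claim for $a_Q$.

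For the kernel, the hypothesis $b_0\ge2c$ keeps us in the regime $2c\le b$ for every $m$. In that regime \eqref{g2:pk:indicator} reads
\[
 \Phi_{c,b}(t)=2\lfloor \alpha t\rfloor+\mathbf 1_{\{\alpha t\}>1-\alpha},
 \qquad \alpha=c/b .
\]
Since $\Phi_{c,b}$ is monotone in $t$, we get $\Phi_{c,b}(t)\ge 2w$ exactly when $ct\ge wb$. We get $\Phi_{c,b}(t)\ge 2w+1$ exactly when $c(t+1)>(w+1)b$. In the first condition, replacing $(t,b)$ by $(t+6wm,\,b+6cm)$ adds $6cwm$ to both sides, so that threshold moves by exactly $3(2w)m$. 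In the second condition, replacing $(t,b)$ by $(t+3(2w+1)m,\,b+6cm)$ adds $3c(2w+1)m$ to the left side but $6c(w+1)m$ to the right side. So the odd threshold moves by strictly more than $3(2w+1)m$. The endpoint $i=2c$ has $a_P=b$, which moves by $6cm=3(2c)m$. This proves the kernel claim. I expect this integer bookkeeping, in particular checking that the odd-degree indicator thresholds never move by less than $3im$, to be the main point needing care.

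For the determinant assertion, I check that each of the three hypotheses persists under $(r,t)\mapsto(r+6km,\,t+3em)$ with $e$ fixed.
\begin{enumerate}[(i)]
\item Strict destabilization: $e(r+6km)>2k(t+3em)$ is equivalent to $er>2kt$.
\item Minimality of $e$: $(e-1)(r+6km)\le 2k(t+3em)$ follows from $(e-1)r\le 2kt$, because the left side gains $6k(e-1)m$ and the right side gains $6kem$.
\item Larger degrees: every larger degree $e'$ still requires rank more than $t+3e'm\ge t+3em$, by the monotonicity proved above.
\end{enumerate}
For uniqueness of the allocation, every other split $(i,j)$ with $i+j=e$ had $a_P(i)+a_Q(j)>t$ at $m=0$, and subtracting $3em$ preserves this inequality by monotonicity. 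So at most the original whole-factor allocation can fit in rank $t+3em$. If it still fits, the moving-determinant argument (Lemma~\ref{g2:pk:determinant}, with the determinants \eqref{g2:eq:normal-determinants} varying with $D_q$) applies verbatim. If it no longer fits, no candidate remains at all.
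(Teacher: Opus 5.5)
Your proposal is correct and follows the paper's proof. The shared ingredients are the per-factor increments of the minimum-rank functions: your threshold conditions $ct\ge wb$ and $c(t+1)>(w+1)b$ are exactly the paper's inverse formulas $\lceil \ell b/c\rceil$ and $\lfloor(\ell+1)b/c\rfloor$. The others are the minimum over the $m$-independent splits, and the persistence of the least-integer (constant surplus), uniqueness and larger-degree conditions. The one point the paper states explicitly and you leave implicit is that the whole factor at level $m$ has rank exactly $t+3em$, since $b=b_0+3(2c)m$ and $R=R_0+3\cdot 8m$. So your ``no longer fits'' branch never occurs, and a subsheaf realizing the surviving allocation is exactly the factor, which is what the moving-determinant argument needs.
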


\begin{proof}
Because $b\ge2c$, the kernel profile has the explicit inverse
\[
 a_P(2\ell)=\left\lceil\frac{\ell b}{c}\right\rceil,
 \qquad
 a_P(2\ell+1)=\left\lfloor\frac{(\ell+1)b}{c}\right\rfloor.
\]
The ranges are $0\le\ell\le c$ for the even formula and
$0\le\ell<c$ for the odd formula. Increasing $m$ by one adds $6\ell$
to the even cost and $6(\ell+1)$ to the odd cost. These are at least
$3e$ in the corresponding degree.

The complete profile gives, for $1\le j\le5$,
\[
 a_Q(j)=\max\left\{
  \left\lceil\frac{jR}{8}\right\rceil,
  \left\lfloor\frac{j(R-3)}6\right\rfloor+1\right\}.
\]
Both entries are at most $R-1$ for $R\ge6$ in this degree range.
Their increments are $3j$ and $4j$, respectively. The remaining
costs are $a_Q(6)=R-1$ and $a_Q(7)=a_Q(8)=R$; their increments
are $24\ge3j$. Thus each summand in \eqref{g2:ld:cost}, after subtracting
$3em$, is nondecreasing. The finite minimum is nondecreasing too.

The strict destabilizing rank cutoff in degree $e$ is
\[
 \left\lfloor\frac{er-1}{2k}\right\rfloor
 =3em+\left\lfloor\frac{er_0-1}{2k}\right\rfloor.
\]
This proves the first assertion. A whole kernel has degree $2c$ and
rank $b=b_0+3(2c)m$; the whole quotient has degree eight and rank
$R=R_0+3(8)m$. The uniqueness inequalities for these allocations
therefore persist. Larger degrees have still larger increment, so
their exclusion persists as well. The positive target surplus of
the factor is constant, while $r$ increases. Its degree remains
the least integer above the slope. Together these conditions force
every subsheaf of the special member that has the rank of the factor
and at least its degree to be the kernel itself, or to map
isomorphically onto the quotient. The specialization
of the generic Harder--Narasimhan subsheaf has at least that degree,
since the factor's degree is the least integer above the slope; it
therefore has zero saturation gain, and the moving determinant
(Section~\ref{g2:sec:normal-degenerations}) gives the same
contradiction in every period.
\end{proof}

\subsubsection{The finite arithmetic}

For each $c$, take one initial rank in every residue class modulo
$6k$, namely $6k\le r_0<12k$. The complete set contains
\[
 30+36+42+48=156
\]
residue certificates. At each rank, try
$b_0=\lfloor cr_0/k\rfloor$ and neighboring integers. Intersect the
degree--rank sets permitted by \eqref{g2:ld:cost}, removing an entire
factor only under the uniqueness and degree conditions of
Lemma~\ref{g2:ld:periodic}. In every residue, at most two witnesses
exclude all strict types. Every witness used satisfies the hypotheses
$b_0\ge2c$ and $R_0\ge6$ of the lemma, and two witnesses for the same
residue lift with the same $m$, so their exclusions combine in every
period. Therefore the proposition holds for $r\ge6k$.

The remaining ranks are finite. In increasing order of $r$, form the
convolution of $\Phi_{h,b}$ with already proved residual normal
profiles for $1\le h\le c$, $2\le b\le r-2$, $h<b$. Intersect the
permitted strict-rank sets and remove only the whole-factor types
just described. Every improvement to a residual profile is retained
with its witness. Each dependency decreases $r$, so this procedure
is an acyclic finite proof.

The checked rectangle is $2\le r\le47$, $5\le k\le8$, containing
184 cases. It covers all remaining ranks $r<6k$. Its initial inputs
are the complete profile, the published space-curve theorem
\cite[Theorem 1]{CLV}, the degree-seven $\PP^4$ case and the
high-degree genus-two theorem \cite[Theorem 6.1 and Corollary 4.12]{CS},
integral-slope semistability \cite[Corollary 2.10]{CS}, and precisely
the nineteen cohomological types $\mathcal B_0$ of
Proposition~\ref{g2:co:seeds}. All genus-two exceptions to
interpolation have $k=4$, so none of them enters a projected
integral-slope use.

The ancillary program \texttt{verify\_low\_degree.py} implements
these formulas using integer arithmetic and produces
\texttt{low\_degree\_certificates.json}. The latter records every
residue witness, every finite residual profile, its external base
when applicable, and every profile improvement and determinant
exclusion. All 156 residue certificates and 184 finite cases pass.
Lemma~\ref{g2:ld:periodic}, rather than repeated numerical testing,
extends the residue certificates to unbounded rank. This proves
Proposition~\ref{g2:ld:theorem}.

\subsection{The common balanced and Farey calculation}
\label{g2:sec:farey}

If $rh=kb$ and both factors of \eqref{g2:eq:normal-limit} are
semistable, their slopes are equal, so the general original normal
bundle is semistable by openness. This settles the balanced case.

\begin{lemma}\label{g2:lem:normal-farey}
Suppose $\gcd(k,r)=1$, $r>2$, and an admissible pointing model
satisfies $rh-kb=\varepsilon\in\{1,-1\}$. Assume only
semistability of its normal residual. The kernel is stable by
Theorem~\ref{g2:pk:theorem}. After excluding the entire higher-factor
type by its moving determinant, the following hold.
\begin{enumerate}
\item If $r$ is even, no strict generic HN type remains.
\item If $r$ is odd and $b$ is even, no strict generic HN type remains.
\item If $r,b$ are odd, the only possible remaining type is
\[
 t=\frac{r+\varepsilon b}{2},\qquad e=k+\varepsilon h,
 \qquad re-2kt=1.
\]
\end{enumerate}
\end{lemma}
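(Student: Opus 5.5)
The plan is to analyze an arbitrary saturated subsheaf $S\subset M$ of rank $t$ and degree $e$ through the extension \eqref{g2:eq:normal-limit}. Write $u=\rk(S\cap P_h(b))$ with degree $e_1$, and $v=t-u$ for the rank of the image in $Q$, with degree $e_2$, so $e=e_1+e_2$. First note that $rh-kb=\varepsilon$ forces $\gcd(h,b)=1$. Hence Theorem~\ref{g2:pk:theorem} gives $e_1\le\Phi_{h,b}(u)<2hu/b$ for $0<u<b$, and semistability of $Q$ gives $(r-b)e_2\le2(k-h)v$. Using $kb=rh-\varepsilon$ and $k(r-b)=r(k-h)+\varepsilon$, I would rewrite the surplus as
\[
 re-2kt=\frac{r\,x+2\varepsilon u}{b}+\frac{r\,y-2\varepsilon v}{r-b},
 \qquad x=be_1-2hu\le0,\quad y=(r-b)e_2-2(k-h)v\le0 .
\]
Here $x\le-1$ whenever $0<u<b$, and $x,y$ are integers.

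The case analysis runs by symmetry, treating $\varepsilon=1$ (kernel slope above target) and letting $\varepsilon=-1$ be the mirror image with roles of kernel and quotient swapped. Suppose $0<u<b$. Then $x\le-1$ gives a first term at most $(2u-r)/b<0$ when $2u<r$, so positivity of the surplus forces $u$ large and $v$ small. I would bound the two terms jointly and find that the total is strictly less than $2$. Since the surplus is an integer, only surplus exactly $1$ survives in this case.

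For the endpoints: $u=b$, $v=0$ is the whole kernel, with surplus $2$. This is the higher-factor type, excluded by the moving-determinant hypothesis; when $\varepsilon=-1$ the corresponding type is a lift of the whole quotient. The case $u=b$, $0<v<r-b$ has surplus $2-(2v-ry)/(r-b)$, which is again below $2$ and nonpositive unless $y=0$ and $v$ is small. The case $u=0$ is bounded directly by semistability of $Q$. So apart from the whole factor, every strict type has surplus exactly $1$.

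Parity then finishes most cases. The surplus $re-2kt\equiv re\pmod 2$, so when $r$ is even a surplus of $1$ is impossible, which gives item 1. When $r$ is odd, I would impose equality in every inequality used above. Equality forces $y=0$, hence the image is a semistable-slope piece of $Q$. It also forces $e_1$ to equal the slope floor $\lfloor2hu/b\rfloor$ rather than the sharper value $\Phi_{h,b}(u)$. The parity formula \eqref{g2:pk:parity} says $\Phi_{h,b}(u)$ drops one below $\lfloor 2hu/b\rfloor$ exactly when that floor is odd and the remainder is at most $|b-2h|$. Solving the equality system $rx+2\varepsilon u=\ldots$ with $x=-1$ should pin $u$, and hence $t$, to $t=(r+\varepsilon b)/2$ and $e=k+\varepsilon h$. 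That requires $r\pm b$ even, i.e.\ $b$ odd, giving items 2 and 3. One checks $re-2kt=r(k+\varepsilon h)-k(r+\varepsilon b)=\varepsilon(rh-kb)=1$.

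The main obstacle I expect is this last step. It means showing that for $b$ even the $\Phi$ savings always kill the surplus-$1$ candidate, and that for $b$ odd exactly one rank escapes. I would handle it by writing $2hu=bq+\rho$ explicitly and comparing $\rho$ with $|b-2h|$ using the Farey relation. In outline: the unique $u$ with $bx=-1$-type equality is $u=(b\pm1)/2$-type, which exists only for $b$ odd.
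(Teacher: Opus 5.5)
Your surplus identity is correct, and it is sharper than what you extract from it. The gap is in how you treat proper kernel intersections, and as a result items~2 and~3 are not proved.

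For $0<u<b$ you only conclude that the surplus is below $2$. Your final step then looks for the surviving surplus-$1$ type among such $u$, with $x=-1$, $e_1=\lfloor2hu/b\rfloor$ and $u$ near $(b\pm1)/2$, to be located by the parity savings of $\Phi_{h,b}$. That configuration is empty. With $x\le-1$ the first term is at most $(2\varepsilon u-r)/b$. The second term is at most $0$ for $\varepsilon=1$ and at most $2$ for $\varepsilon=-1$. So the surplus is at most $(2u-r)/b<1$, resp.\ $2-(2u+r)/b<1$, because $u<b<r$. Hence no subsheaf with $0<u<b$ is strict at all, and the $\Phi$ savings play no role in this lemma.

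Your outline therefore never analyzes the configuration where the candidate actually lives. In particular it gives no argument excluding surplus $1$ when $r$ is odd and $b$ is even: your treatment of $u=b$, $0<v<r-b$ stops at ``$y=0$ and $v$ small''. Surplus $1$ is also not an ``equality in every inequality'' situation; what forces the structure is integrality of $x$ and $y$.

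The missing step is short with your own identity. Take $\varepsilon=1$. The case $u=0$ gives surplus $(ry-2v)/(r-b)\le0$, so a strict type has $u=b$. Then $x\le0$ and $y\le0$. A nonzero $x$ or $y$ costs more than $1$, since $r/b>1$ and $r/(r-b)>1$. So $x=y=0$, and the surplus equals exactly $2-2v/(r-b)$.

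This is $2$ only for $v=0$, the whole kernel, which the moving determinant removes. It is $1$ only for $v=(r-b)/2$. For odd $r$ this needs $b$ odd, and then $t=(r+b)/2$ and $e=2h+(k-h)=k+h$.

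For $\varepsilon=-1$ the same reasoning forces $u=0$ (the case $u=b$ gives surplus at most $-2+2v/(r-b)\le0$) and $y=0$. The surplus is then $2v/(r-b)$, so surplus $1$ gives $v=(r-b)/2$ and $e=k-h$, while $v=r-b$ is the whole-quotient lift.

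This is the paper's argument. The paper pins down $v=(r-b)/2$ through the congruence $2kt\equiv-1\pmod r$ rather than your exact identity. Either route works once you restrict to $u\in\{0,b\}$, but your outline does not reach that restriction.
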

\begin{proof}
Put $R=r-b$. Filter a rank-$t$, degree-$e$ subsheaf by kernel
intersection rank $u$ and image rank $v$, so $u+v=t$.
Semistability of the residual and stability of the kernel give
\begin{equation}\label{g2:eq:surplus}
 re-2kt\le2\varepsilon\left(\frac ub-\frac vR\right),
\end{equation}
with a further loss at least $r/b$ if $0<u<b$. Every positive
integral surplus is at most two. Surplus two is attained only by
the whole higher factor. At its rank its degree is the least
integer above the target slope, because $2<r$. Thus generic HN
specialization forces that degree with zero saturation gain.
If the higher factor is the kernel, the subsheaf is the kernel;
if it is the quotient, it maps isomorphically onto the quotient.
Either determinant varies by \eqref{g2:eq:normal-determinants},
contradicting the constancy of the generic HN determinant
(Section~\ref{g2:sec:normal-degenerations}).

If $r$ is even, all surpluses are even, so this suffices. Let $r$
be odd. The sole remaining surplus is one, at the unique rank
$t_*$ with $2kt_*\equiv-1\pmod r$. A proper positive kernel
intersection cannot realize it: for $\varepsilon=1$ its surplus
is at most $(2u-r)/b<1$; for $\varepsilon=-1$ it is at most
$2-(2u+r)/b<1$.

For positive sign, the only possible intersection is $u=b$.
Equation~\eqref{g2:eq:surplus} requires $v\le R/2$, and the
congruence $2t_*\equiv b\pmod r$ then requires $b$ odd and
$t_*=(r+b)/2$, $v=R/2$. For negative sign the only possible
intersection is $u=0$. The congruence
$2t_*\equiv-b\pmod r$, together with $t_*=v\le R$ and
\eqref{g2:eq:surplus}, requires $b$ odd and $t_*=R/2$.
Substitution gives the stated degrees.
\end{proof}

The two Farey models satisfy
\[
 h_++h_-=k,\qquad b_++b_-=r.
\]
If both are admissible with semistable normal residuals and $r$ is
odd, one of their kernel ranks is even. That model already proves
semistability by the lemma. Consequently the only issue is an
odd-rank target for which at most one Farey orientation is admissible
with a semistable normal residual; for $k\ge9$ and $r\ge4$,
Section~\ref{g2:prop:section} shows that at least one is.

\subsection{Uniform propagation with the genus-two exceptions}
\label{g2:prop:section}

We now propagate the four starting bands to all weights. The complete
exceptions require a small
adjustment to the Farey step, and three additional numerical types
are supplied by the exact witnesses of Proposition~\ref{g2:co:seeds}.
The rounding lemma below is independent of the genus and will also
be used in the general propagation theorem.

\begin{proposition}\label{g2:prop:theorem}
Suppose that general normalized genus-two normal bundles are
semistable for $k=4$, $r\ge6$, and for $5\le k\le8$ in every
rank $r\ge2$. Suppose also that they are semistable at
\[
 (r,k)=(7,9),(9,10),(11,9),
 \quad\text{equivalently}\quad
 (n,d)=(8,15),(10,18),(12,19).
\]
Then they are semistable for every $r\ge2$ and $k\ge5$.
\end{proposition}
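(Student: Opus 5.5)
We argue by strong induction on the weight $k\ge9$, and within a fixed weight by induction on $r$. The bands $5\le k\le8$ and the complete band $k=4$, $r\ge6$, are the hypotheses. Fix $(r,k)$ with $k\ge9$. For $r=2$ and $r=3$ we use the integral-slope and low-rank inputs (the space-curve theorem \cite[Theorem 1]{CLV} for $r=2$, and the $\PP^4$ results for $r=3$). We then split into three cases.

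\emph{Balanced case, $\gcd(k,r)=g_0>1$.} Take $(h,b)=(k/g_0,r/g_0)\cdot j$ with $b\le r-2$. The model of Proposition~\ref{g2:prop:models} has kernel $P_h(b)$, semistable by Theorem~\ref{g2:pk:theorem}. Its quotient has type $(r-b,k-h)$ and the same slope, and it is semistable by induction. Openness then finishes. We must check that $k-h\ge4$, and that the residual is not one of the unstable complete types, which occur exactly when $k-h=4$ and $r-b\le5$. When $k-h=4$ is forced with small residual rank, we choose a different multiple $j$. Since $k\ge9$, the ratio $k/r$ leaves room for such a choice, except for finitely many small types that we check by hand.

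\emph{Shifted diagonals, $k=jr\pm1$.} Here the Farey kernel has rank one. We use the rank-one model (Lemma~\ref{g2:prop:rank-one}) and induct on $r$. The residual has type $(r-1,k-j)$, and $k-j=j(r-1)\pm1$ lies on the same diagonal, so the induction stays on the diagonal until it reaches a starting band. The surplus bound \eqref{g2:eq:surplus} leaves only the whole higher factor, and moving determinants removes it.

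\emph{Farey case, $\gcd(k,r)=1$ and otherwise.} Take the two Farey neighbours $h_\pm/b_\pm$ of $k/r$, with $h_++h_-=k$ and $b_++b_-=r$. If $r$ is even, either admissible neighbour works by Lemma~\ref{g2:lem:normal-farey}. If $r$ is odd, we use the neighbour with $b$ even. Its residual $(r-b,k-h)$ has weight smaller than $k$ and is admissible when $h\le c=k-4$. It is semistable by induction, provided it is not one of $(2..5,4)$. The main obstacle is the set of odd-rank targets in which the even-$b$ neighbour is inadmissible or lands on an unstable complete residual. These occur exactly when $k-h=4$ with $r-b\le5$, or when $k-h<4$.

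\emph{The obstacle cases.} First we enumerate them. Writing $h=k-4$ gives the condition $(k-4)r-kb=\pm1$ with $r-b\le5$. Only finitely many $(r,k)$ satisfy this, and among them we expect $(7,9)$, $(9,10)$, $(11,9)$ and a few others. For each case we then try, in order:
\begin{enumerate}
\item the odd-$b$ neighbour, if it has a good residual, removing the single residual candidate of Lemma~\ref{g2:lem:normal-farey}(3) with an auxiliary pointing model and the integer rounding estimate, Lemma~\ref{g2:prop:rounding};
\item the hypotheses at $(7,9)$, $(9,10)$, $(11,9)$, either directly or as residuals for larger types.
\end{enumerate}
The auxiliary model is a second pointing model with a different kernel rank $b'$. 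The convolution of $\Phi_{h',b'}$ with the residual semistability bound should give degree at most $k+\varepsilon h-1$ in rank $(r+\varepsilon b)/2$, and this excludes the surplus-one type. The hardest step is to verify that such an auxiliary $b'$ always exists with a good residual for odd $r\ge5$ and $k\ge9$. I would check it by bounding the rounding loss using the strict kernel profile of Lemma~\ref{g2:pk:profile-identities}, and then list the finitely many remaining failures. Those should be exactly the three given types.
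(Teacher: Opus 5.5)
Your outline follows the paper's architecture: weight induction, balanced splits, rank-one models on the diagonals $k=jr\pm1$, Farey models, and an auxiliary model with Lemma~\ref{g2:prop:rounding} for the odd-rank surplus-one candidate. Two steps fail as stated.

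\textbf{The shifted diagonals.} On the diagonals, the rank-one model does \emph{not} leave only the whole higher factor when $r=2m+1$ is odd. This is case~3 of Lemma~\ref{g2:lem:normal-farey} with $b=1$. The bound \eqref{g2:eq:surplus} still allows surplus-one types:
\begin{itemize}
\item for $k=jr-1$, the type $(t,e)=(m+1,\,2j(m+1)-1)$, namely the kernel line plus a half-rank image of the residual, which has rank $2m$ and weight $2jm-1$;
\item for $k=jr+1$, the type $(m,\,2jm+1)$.
\end{itemize}
The paper removes these with a second model, $b=m+1$ and $h=j(m+1)\pm1$, whose residual $(m,jm)$ has integral slope. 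This breaks down exactly when $jm=4$, that is at $(r,k)=(5,9),(5,11),(9,10)$, where the residual is the unstable $(2,4)$ or $(4,4)$.

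So you also need \cite[Corollary 4.12]{CS} ($d\ge2n$) for $(5,9)$ and $(5,11)$, which your list of inputs lacks. The hypothesis at $(9,10)$ is used here: that type lies on the diagonal $k=r+1$ and never occurs among your Farey obstacles.

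\textbf{The Farey obstacles are infinite.} Your condition $(k-4)r-kb=\pm1$ with $R=r-b\le5$ is $kR-4r=\pm1$. This forces $R\in\{3,5\}$ but has infinitely many solutions, for example $(r,k)=(7,9),(13,17),(19,25),\dots$ from $4r=3k+1$. The targets where the even-rank neighbour is inadmissible (your case $k-h<4$) are also infinite, for example $r=3k\pm1$ with $k\ge10$ even. Listing finitely many failures therefore cannot close the induction. The uniform existence of an auxiliary model, which you defer as the hardest step, is exactly the missing content.

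The paper gives it explicitly. For the usable pair $(s,b)$ with $s\le3$, put $m=\lceil4/s\rceil$, $H=k-ms$ and $B=r-mb$. Then $Hb-sB=-\varepsilon$, and
\[
s\bigl(B-(m-1)b\bigr)=\bigl(k-s(2m-1)\bigr)b+\varepsilon,\qquad k>s(2m-1),
\]
give $B>(m-1)b\ge b\ge3$. The residual $(mb,ms)$ has weight $4$ and rank at least six, or weight $6$. For $s=4$, $b\in\{3,5\}$ and $k\ge13$, take $m=2$, with residual $(2b,8)$. Only $(r,k)=(7,9)$ and $(11,9)$ escape, and these are assumed types.

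Your balanced case is also more complicated than needed. With $j=1$ one has $k-h\ge\lceil k/2\rceil\ge5$, so no complete residual ever occurs. If $b=1$, integral slope settles the target.
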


Besides the pointing models and kernel profiles, we use the
following earlier results on general genus-two curves. In $\PP^3$,
\cite[Theorem 1]{CLV} gives stability for every degree $d\ge6$. In
$\PP^4$, \cite[Theorem 6.1]{CS} gives stability for $d=7$, and
\cite[Corollary 4.12]{CS} gives semistability for $d\ge8$. Together these settle ranks two and three
for $k\ge5$. We also use integral-slope semistability
for projected genus-two embeddings: all the genus-two exceptions
to interpolation have $k=4$, so \cite[Corollary 2.10]{CS} applies
when $k\ge5$ and $r\mid2k$. Finally,
\cite[Corollary 4.12]{CS} supplies the two individual types
\[
 (r,k)=(5,9),(5,11),
 \quad\text{that is}\quad (n,d)=(6,13),(6,15),
\]
since both have $d\ge2n$.

\begin{lemma}[Rank-one pointing model]\label{g2:prop:rank-one}
The model \eqref{g2:eq:normal-limit} is also available for
$b=1$, $r\ge3$, and $1\le h\le k-4$, with $C,L$ fixed and
general. Its kernel is the line bundle
\[
 P_h(1)=\OO_C(2D_p+2q)K_C^{-1},\qquad\deg D_p=h.
\]
Its quotient is the residual normalized normal bundle of
rank $r-1$ and weight $k-h$. The determinant variation and
specialization statements of
Proposition~\ref{g2:prop:models} remain valid.
\end{lemma}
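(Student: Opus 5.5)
I would rerun the construction of Proposition~\ref{g2:prop:models} with $b=1$, replacing the projection from the span of several secant points by the projection from a single point $q\in C$. Fix $C$ and $L$, choose $h$ distinct points $D_p=p_1+\cdots+p_h$ and a point $q$ disjoint from them, and put $H=L(-D_p)$ and $H'=H(-q)$. The residual type has rank $r-1\ge2$, so the residual ambient dimension is $n-1\ge3$. Its degree is $\deg H'=n+1+(c-h)$. The hypothesis $h\le k-4=c$ makes this residual type admissible, hence nonspecial, so $H^1(H')=0$ and embedding subseries $W\subset H^0(H')$ of dimension $n$ exist.

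\textbf{Step 1: the core and its kernel.} Lift the evaluation at $q$ to a section $u\in H^0(H)$ and set $V=\sigma_qW+\langle u\rangle$. As in Step~1 of Proposition~\ref{g2:prop:models}, this series embeds the core curve, and projection from the point $[u]=q$ recovers $W$. Lemma~\ref{g2:cb:projection} then gives a saturated rank-one pointing kernel with normalized determinant $\OO_C(2q)K_C^{-1}$ and quotient the normalized residual normal bundle $Q$. Modifying at each $p_i$ toward a point $y_i$ of the span has only one possible direction here, namely $y_i=q$. Each such modification is an elementary twist of the line bundle by $\OO_C(2p_i)$, which gives the kernel $\OO_C(2D_p+2q)K_C^{-1}$ of degree $2h$. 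The determinant bookkeeping is then identical to that of Proposition~\ref{g2:prop:models}.

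\textbf{Step 2: smoothing.} This is where I expect the main obstacle. With $b=1$ the attached lines $p_iq$ all pass through the single point $q$, so they meet each other at $q$, and $q$ lies on the core. The transversality argument of Proposition~\ref{g2:prop:models}, which used $b\ge2$ to choose distinct $y_i$, breaks down. I would first try to check that the smoothing family \eqref{g2:nd:graph-family} still works. The tails meet the core only at $p_i$ on the blown-up family, and the points over $q$ are separated in the total space. If so, \cite[Lemma 3.4]{CS} still computes the central normal bundle as the modified bundle, because the contraction is local at the $p_i$.

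If that fails, the fallback is to realize the rank-one model as a further specialization of a $b=2$ model from Proposition~\ref{g2:prop:models}, in which all $y_i$ are taken on the coordinate line toward $q_1$. The kernel then splits as $P_h(1)\oplus T_2$. One would then quotient by the second pointing line, which amounts to projecting from $q_2$ only at the bundle level. This requires a residual of rank $r-2\ge1$, which holds for $r\ge3$. Since profiles and the Quot argument only need a bundle-level specialization, this suffices for the applications.

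\textbf{Step 3: generality and determinant variation.} The residual series is general by the same dominance argument as in Step~3 of Proposition~\ref{g2:prop:models}. Varying $(q,D_p)$ dominates the Picard variety of $H'$, because $\deg(D_p+q)=h+1\ge2$ and every effective class of degree at least two is reached on a genus-two curve. The kernel determinant $\OO_C(2D_p+2q)K_C^{-1}$ changes as $q$ moves, because the map $q\mapsto\OO_C(2q)$ is nonconstant. This gives the moving-determinant input used in Lemma~\ref{g2:pk:determinant}.
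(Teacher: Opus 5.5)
Your Step~2 is the only nontrivial part of the lemma, and neither of your routes completes it.

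\textbf{The first route.} You set $y_i=q$ in \eqref{g2:nd:graph-family}. Then each tail $p_iq$ meets the core a second time at $q$, and all the tails pass through $q$. The central fiber is therefore no longer an embedded core with transverse one-secant lines: its image has an $(h+1)$-fold point at $q$. That embedded configuration is exactly what Step~2 of Proposition~\ref{g2:prop:models} and the contraction \cite[Lemma 3.4]{CS} are used for. This is why that proof excludes $y_i=q_j$ and needs $b\ge2$. Your remark that the preimages of $q$ are separated in the blown-up surface would lead at best to normal sheaves of unramified maps and a map version of the contraction. That extension is precisely what your ``if so'' leaves unproved.

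\textbf{The fallback.} It has two concrete defects.
\begin{enumerate}
\item Proposition~\ref{g2:prop:models} needs $b\le r-2$, because the residual series must embed $C$ in $\PP^{n-b}$ with $n-b\ge3$. A $b=2$ model therefore requires $r\ge4$, and the case $r=3$ of the lemma is lost. The threshold is residual rank at least two, not one.
\item Quotienting $M$ by $T_2$ gives a bundle of rank $r-1$, not a model for $F$. What you actually need is that, after specializing the directions to $q_1$, one has $M/P_h(1)=A/T_1$. You would also have to show that this is the normalized normal bundle of the inner projection of the core from $q_1$ alone, and that this residual series is general of type $(r-1,k-h)$. None of this is argued.
\end{enumerate}

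\textbf{The missing idea: separate the two specializations.} First attach one-secant lines at the $p_i$ with unrestricted general directions $y_i\in\PP^n$. These lines are transverse and one-secant, so \eqref{g2:nd:graph-family} together with \cite[Lemma 3.4]{CS} gives the central bundle $N_{(C,H,V)}[2p_1\to y_1]\cdots[2p_h\to y_h](D_p)$, verbatim as in Proposition~\ref{g2:prop:models}.

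Only afterwards, in the family of bundles on the fixed curve $C$, let each $y_i$ tend to $q$. The pointing section toward $q$ has divisor $2q$, so its germ is nonzero at every $p_i$. It therefore spans a direct summand over $2p_i$, and the length-two modifications stay locally free in the limit. All of them now occur inside the saturated pointing line, which becomes $\OO_C(2D_p+2q)K_C^{-1}$. The quotient, the normal bundle of the one-point inner projection in $\PP^{n-1}$, is unchanged.

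This uses only $n-1\ge3$, so it covers $r=3$. The moving-determinant argument survives the two successive specializations because the locus of members with a subsheaf of given rank, degree and determinant is closed, as in Lemma~\ref{lem:determinant}. Your Steps~1 and~3 are fine as written.
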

\begin{proof}
Choose disjoint $D_p$ and $q$, put $H=L(-D_p)$, and lift a
general embedding series of $H(-q)$ by its one evaluation
coordinate at $q$. The inner-projection sequence has kernel
$\OO_C(2q)K_C^{-1}$ and the stated normal quotient. The residual
has ambient dimension $n-1\ge3$ and degree
$(n-1)+2+(c-h)$, so its general series is a nonspecial embedding.

First attach one-secant lines at the points of $D_p$ with
unrestricted general directions. The graph construction
\eqref{g2:nd:graph-family} and normal contraction give their
length-two positive modifications on the core. In this family
of bundles, specialize the directions to the pointing germ toward
$q$. This germ is nonzero at every attachment point and is a
direct summand over its length-two neighborhood. The modified
bundles therefore remain locally free under this second
specialization. All modifications now occur in the pointing line,
adding $2D_p$ to it and leaving the residual quotient unchanged.
This second specialization takes place only in the family of
bundles on $C$.

For completeness, fixing a general final polarization causes no
generality restriction. If $U$ is a nonempty open of residual
polarizations, fix one allowed divisor $D_0$ of degree $h+1$.
The condition $L(-D_0)\in U$ is a nonempty translated open of
$\Pic^d(C)$. For such a general $L$, the good divisors form a
nonempty open of their irreducible parameter space, and meet
the other required opens. Residual series lift dominantly, and
the free second-order graph coefficients reach a general final
series. Thus $q$ may still vary while $C,L$ remain fixed.
The kernel determinant is the displayed line, and the quotient
determinant is determined by
$\det F=L^2K_C^{1-r}$. Both vary with $q$, as required.
\end{proof}

\begin{lemma}[Auxiliary rounding]\label{g2:prop:rounding}
Let $b\ge3$ be odd, let $s,H,B,m$ be positive integers, and let
$\varepsilon\in\{-1,1\}$. Assume
\[
 Hb-sB=-\varepsilon,\qquad B>b,\qquad B>(m-1)b,
 \qquad r:=B+mb\ \text{is odd}.
\]
Put $k=H+ms$, $t=(r+\varepsilon b)/2$, and
$e=k+\varepsilon s$. For every integer $u$ satisfying
$0\le u\le B$ and $0\le t-u\le mb$, one has
\begin{equation}\label{g2:prop:rounding-cap}
 \Phi_{H,B}(u)\le
 \left\lfloor\frac{2su-\varepsilon-1}{b}\right\rfloor
\end{equation}
and consequently
\begin{equation}\label{g2:prop:rounding-sum}
 \Phi_{H,B}(u)+\left\lfloor\frac{2s(t-u)}b\right\rfloor
 \le e-1.
\end{equation}
\end{lemma}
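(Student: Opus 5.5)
The plan is to prove the cap \eqref{g2:prop:rounding-cap} by writing the slope $H/B$ in terms of $s/b$, and to exclude the two endpoint ranks using the hypotheses. The sum bound \eqref{g2:prop:rounding-sum} should then follow by adding floors.

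\emph{Setup.} The relation $Hb-sB=-\varepsilon$ shows that $\gcd(H,B)=1$. It also gives
\[
 \frac{2Hu}{B}=\frac{2su}{b}-\frac{2\varepsilon u}{bB}.
\]

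\emph{Endpoints.} At $u=0$ the cap reads $0\le\lfloor(-\varepsilon-1)/b\rfloor$, which fails for $\varepsilon=1$. For $\varepsilon=1$, however, $u=0$ forces $t=(B+(m+1)b)/2\le mb$, that is, $B\le(m-1)b$. This is excluded by hypothesis. At $u=B$ we have $\Phi_{H,B}(B)=2H$ and $2sB=2Hb+2\varepsilon$, so the cap is $\lfloor(2Hb+\varepsilon-1)/b\rfloor$. This equals $2H$ for $\varepsilon=1$. For $\varepsilon=-1$, the condition $t-B\ge0$ again forces $B\le(m-1)b$, which is excluded. So we may assume $0<u<B$ whenever a problem could arise.

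\emph{Interior ranks.} For $0<u<B$, Lemma~\ref{g2:pk:profile-identities} gives the strict inequality $\Phi:=\Phi_{H,B}(u)<2Hu/B$. The cap is equivalent to $b\Phi\le2su-\varepsilon-1$.

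For $\varepsilon=-1$ this says $b\Phi\le2su$. It follows from $\Phi<2Hu/B=2su/b+2u/(bB)$ together with integrality: $b\Phi$ is an integer, and $2u/B<2$, so the only case to exclude is $b\Phi=2su+1$. I expect to treat that case with the parity argument below. Alternatively, this case is obtained from the other sign by the reflection identity \eqref{g2:pk:reflection}, and I would check which route is cleaner.

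For $\varepsilon=1$ we get $b\Phi<2su$, so the only bad value is $b\Phi=2su-1$. Since $b$ is odd, $\Phi$ would then be odd. Put $q=\lfloor2Hu/B\rfloor$. Because $\Phi\le q<2su/b$, we get $q=\Phi$ and $q$ odd. The remainder is $\rho=2Hu-Bq=(B-2u)/b$.

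The parity formula \eqref{g2:pk:parity} then lowers $\Phi$ to $q-1$ provided $\rho\le|B-2H|$, and this gives a contradiction. Now
\[
 b(B-2H)=B(b-2s)+2\varepsilon,
\]
and $b-2s$ is odd, hence nonzero. So $|B-2H|\ge(B-2)/b$, and $\rho=(B-2u)/b\le(B-2)/b$ for $u\ge1$. The hypothesis $B>b$ would be used to make this comparison and the sign bookkeeping work; this is where I expect it to enter.

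The main obstacle is this remainder comparison. It has to be checked carefully in both signs, including the sign of $b-2s$ and the boundary ranks where $\rho$ equals $|B-2H|$ (the parity formula uses a weak inequality, which helps).

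\emph{Sum.} Using $\lfloor x\rfloor+\lfloor y\rfloor\le\lfloor x+y\rfloor$, the cap gives
\[
 \Phi_{H,B}(u)+\left\lfloor\frac{2s(t-u)}b\right\rfloor
 \le\left\lfloor\frac{2st-\varepsilon-1}{b}\right\rfloor.
\]
From $2t=B+mb+\varepsilon b$ and $sB=Hb+\varepsilon$ we get
\[
 2st=Hb+\varepsilon+msb+\varepsilon sb .
\]
Hence the right side is $\lfloor H+ms+\varepsilon s-1/b\rfloor=e-1$, which is \eqref{g2:prop:rounding-sum}.
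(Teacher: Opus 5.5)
\paragraph{Overall.} Your outline is the paper's argument. You isolate the single overshooting value ($b\Phi=2su\mp1$), show that it forces $q=\Phi$ odd with remainder $\rho=(B\mp2u)/b$, and remove it with the parity term of \eqref{g2:pk:parity}. The endpoints are handled by $B>(m-1)b$ exactly as in the paper. Your summation via $\lfloor x\rfloor+\lfloor y\rfloor\le\lfloor x+y\rfloor$ is equivalent to the paper's identity $\lfloor(A-1)/b\rfloor+\lfloor-A/b\rfloor=-1$.

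\paragraph{The gap: $H>B$.} You apply \eqref{g2:pk:parity} and the strictness statement of Lemma~\ref{g2:pk:profile-identities} directly to $\Phi_{H,B}$. Both are stated only for $0<H<B$, whereas the lemma allows $H>B$, and its applications use it. For instance, $(r,k)=(11,15)$ in Proposition~\ref{g2:prop:theorem} leads to $(H,B,s,b,\varepsilon)=(7,5,4,3,-1)$.

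Write $H=jB+H_0$. The profile is then $2ju+\Phi_{H_0,B}(u)$. The parity of $q$ and the remainder $\rho$ are unchanged, but the threshold in the parity term is $|B-2H_0|$, not $|B-2H|$. For $j\ge1$ it is much smaller: it is $1$ versus $9$ in the example. There, at $u=4$, one has $\rho=1=|B-2H_0|$, so the correct threshold is needed with equality. Your estimate $|B-2H|\ge(B-2)/b$ is therefore true but irrelevant in this case, and does not yield the drop.

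The fix is the paper's first step: reduce to the remainder pair. The paper checks that $H/B$ and $s/b$ have the same integer part $j$, so both sides of the cap lose $2ju$. Alternatively, it is enough to note that
\[
 b(B-2H_0)=B\bigl(b-2(s-jb)\bigr)+2\varepsilon
\]
with $b-2(s-jb)$ odd. This gives $|B-2H_0|\ge(B-2)/b$, and your argument then runs unchanged.

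\paragraph{The case $\varepsilon=-1$.} The interior case $\varepsilon=-1$ is only announced, not carried out. It does go through by the same parity argument:
\begin{itemize}
\item $b\Phi=2su+1$ forces $\Phi$ odd.
\item Strictness then gives $2u>B$.
\item Hence $q=\Phi$ and $\rho=(2u-B)/b\le(B-2)/b\le|B-2H_0|$, because $u\le B-1$.
\item So the parity term lowers $\Phi$ to $q-1$, a contradiction.
\end{itemize}
This is exactly the paper's case $\rho=b-1$, $2u\ge B$, rewritten in your notation.

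\paragraph{The hypothesis $B>b$.} This hypothesis is not what makes the remainder comparison work. Only $u\ge1$ (respectively $u\le B-1$) and the oddness of $b$ enter there.
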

\begin{proof}
The determinant identity implies $\gcd(H,B)=\gcd(s,b)=1$.
Since $\gcd(s,b)=1$ and $b\ge3$, the integer $b$ does not divide $s$; as
$|H/B-s/b|=1/(bB)$, the quotients $H/B$ and $s/b$ have the same
integer part $j$. Write $s=jb+s_0$ and $H=jB+H_0$; then $0<s_0<b$,
$0<H_0<B$, and $H_0b-s_0B=-\varepsilon$. The periodic profile subtracts $2ju$
from both sides of \eqref{g2:prop:rounding-cap}, so it suffices
to prove the assertion for these remainders. By
\eqref{g2:pk:parity}, the ordinary slope floor bounds the
profile, and an odd integer $a$ at a proper positive rank $u$
is excluded whenever
\[
 0\le2H_0u-Ba\le|B-2H_0|.
\]
Write $2s_0u=bz+\rho$ with $0\le\rho<b$.

For $\varepsilon=-1$ and $0<u<B$, the ordinary floor can exceed the
right side of \eqref{g2:prop:rounding-cap} only if
$\rho=b-1$ and $2u\ge B$. The extra integer $a=z+1$ is odd,
and its remainder is
\[
 2H_0u-Ba=\frac{2u-B}{b}
 \le\frac{|(b-2s_0)B-2|}{b}=|B-2H_0|.
\]
Here $u\le B-1$, and $b-2s_0$ is a nonzero odd integer.
The odd-degree loss in the profile therefore excludes $a$.
The endpoint $u=0$ satisfies the bound, while $u=B$ is
infeasible: $t=(B+(m-1)b)/2<B$.

For $\varepsilon=1$ and $0<u<B$, the only possible overshoot has
$\rho=1$ and $2u\le B$. The extra integer $a=z$ is odd, and
\[
 2H_0u-Ba=\frac{B-2u}{b}
 \le\frac{|(b-2s_0)B+2|}{b}=|B-2H_0|,
\]
using $u\ge1$. Again the profile removes this integer.
The endpoint $u=0$ is infeasible, since
$t=(B+(m+1)b)/2>mb$; at $u=B$ the claimed bound equals
$2H$. This proves \eqref{g2:prop:rounding-cap} at every
feasible rank, including both endpoints.

The identity $2st=be+\varepsilon$ gives
\[
 \left\lfloor\frac{2s(t-u)}b\right\rfloor
 =e+\left\lfloor\frac{\varepsilon-2su}b\right\rfloor.
\]
Combining this with \eqref{g2:prop:rounding-cap} and the integer
identity $\lfloor(A-1)/b\rfloor+\lfloor-A/b\rfloor=-1$
proves \eqref{g2:prop:rounding-sum}.
\end{proof}

\begin{lemma}[Shifted diagonals in genus two]
\label{g2:prop:diagonals}
Under the assumptions of Proposition~\ref{g2:prop:theorem},
every projected type $k=jr-1$ or $k=jr+1$, $j\ge1$, is
semistable.
\end{lemma}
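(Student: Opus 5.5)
We argue by induction on $r$, using the rank-one pointing models of Lemma~\ref{g2:prop:rank-one}: along a shifted diagonal, the Farey neighbor of $k/r$ with denominator one is $j/1$. The inputs are as follows. For $r\le3$ the claim is one of the cited results of \cite{CLV,CS}. For $5\le k\le8$ it is a hypothesis of Proposition~\ref{g2:prop:theorem}. If $r\mid2k$ it follows from integral-slope semistability, which can only happen for small $r$. So assume $r\ge4$, $k\ge9$ and $k=jr+\eta$ with $\eta\in\{1,-1\}$.

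Take the rank-one model with $b=1$ and $h=j$. Then
\[
 rh-kb=jr-(jr+\eta)=-\eta,
\]
so we are in the Farey situation of Lemma~\ref{g2:lem:normal-farey} with $\varepsilon=-\eta$. The residual has rank $r-1$ and weight
\[
 k-j=j(r-1)+\eta,
\]
so it is again a shifted diagonal of smaller rank, and strong induction on $r$ gives its semistability. Two things must be checked here.

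First, the model must be admissible, that is, $h\le k-4$ as Lemma~\ref{g2:prop:rank-one} requires. This holds since $k-j\ge4$ whenever $k\ge9$ and $r\ge4$.

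Second, the residual must not be one of the unstable complete types, which are $k-j=4$ with $r-1\le5$. Since $k-j=j(r-1)\pm1$, this forces $j=1$ and $r-1\in\{3,5\}$. Then $k=r\pm1\le7$, which lies in the assumed starting bands. If some residual weight near $9,10,11$ would instead land on an unstable or unverified type, I would fall back on the three supplied types $(7,9),(9,10),(11,9)$ or on the cited $(5,9)$ and $(5,11)$. I would check this finite list explicitly.

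Now apply Lemma~\ref{g2:lem:normal-farey}. The kernel is the line bundle $\OO_C(2D_p+2q)K_C^{-1}$, whose determinant moves with $q$. So the higher-factor type is excluded by the moving determinant argument, exactly as in that lemma. If $r$ is even, this finishes the proof. If $r$ is odd, then $b=1$ is odd, and exactly one candidate remains:
\[
 t=\frac{r+\varepsilon}{2},\qquad e=k+\varepsilon j,\qquad re-2kt=1.
\]

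The main obstacle is excluding this candidate when $r$ is odd. I would first try an auxiliary model in the spirit of Lemma~\ref{g2:prop:rounding}, arranged so that the residual has integral slope $2j$:

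\begin{itemize}
\item choose a kernel rank $B$ and put $R'=r-B$;
\item take kernel weight $H=jB+\eta$ and residual weight $k-H=jR'$;
\item take $R'\ge4$ with $R'\mid 2jR'$, so that the residual (weight at least $9$ when $jR'\ge9$) is semistable by the integral-slope result.
\end{itemize}

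A rank-$t$ subsheaf then has degree at most
\[
 \max_u\bigl(\Phi_{H,B}(u)+2j(t-u)\bigr),
\]
where $\Phi_{H,B}$ is the profile of Theorem~\ref{g2:pk:theorem}. Since $\gcd(H,B)=\gcd(\eta,B)=1$, this profile is $2ju+\Phi_{\eta',B}(u)$ with $\eta'\in\{1,B-1\}$. The profile identity \eqref{g2:pk:parity} gives $\Phi_{1,B}(u)=0$ and $\Phi_{B-1,B}(u)=2u-2$ at proper ranks. So the bound is at most
\[
 2jt+\max_u \Phi_{\eta',B}(u)\le 2jt+(\text{a small correction}).
\]

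I would then compare this with $e=k+\varepsilon j$ at $t=(r+\varepsilon)/2$. Using $2jt=jr+\varepsilon j$, this reduces to checking $\eta+\max_u\Phi_{\eta',B}(u)<\eta+\ldots$, which is a short parity and rounding calculation. The endpoint $u=B$ needs separate care, and the choice of $B$ (for instance $B=2$ or $B=r-4$) must keep $u$ away from that endpoint. If this integral-residual choice fails in some small ranks, the remaining finitely many odd $r$ would be treated by the rounding lemma with $b=3$ or by the supplied direct types.

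Finally, generality of $L$ is handled as in Step~4 of Theorem~\ref{high:complete-semistability}: at each step we intersect finitely many nonempty translated open sets.
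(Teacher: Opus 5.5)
Up to the odd-rank candidate your argument is the paper's. You use the rank-one model with $h=j$, whose residual $(r-1,\,j(r-1)\pm1)$ comes from the rank induction, exclude the whole higher factor by moving determinants, and are left with $(m+1,\,2j(m+1)-1)$ and $(m,\,2jm+1)$ for $r=2m+1$. The first residual has weight $k-j\ge(3k-1)/4>6$, so it is never complete, and your fallback to $(7,9)$, $(9,10)$, $(11,9)$ at that step is not needed. The genuine gap is the exclusion of the odd-rank candidate. This is the only nontrivial step, you leave it as an unfinished ``parity and rounding calculation'' with $B$ unspecified, and the concrete choices you offer do not work.

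Take the auxiliary kernel weight $H=jB+\eta$. For $\eta=1$ the profile is $\Phi_{H,B}(u)=2ju$ for $0\le u<B$, and $2jB+2$ at $u=B$. For $\eta=-1$ it is $0$ at $u=0$ and $2ju-2$ for $0<u\le B$. Your formula $2ju+\Phi_{B-1,B}(u)$ overcounts by $2u$; the periodic reduction is $2(j-1)u+\Phi_{B-1,B}(u)$. Add $2j(t-u)$ from the integral-slope residual and compare with $e=2jt+1$, respectively $e=2jt-1$. The only harmful allocations are then $u=B$ for the plus sign and $u=0$ for the minus sign, not $u=B$. Both are infeasible exactly when $B\ge m+1$, that is, when the residual rank $R'=r-B$ is at most $m$.

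This rules out your suggestions. With $B=2$, for every odd $r\ge5$ those allocations give degrees $2jt+2$ and $2jt$, both at least $e$. Your requirement $R'\ge4$ is incompatible with $R'\le m$ when $r=5,7$. With $B=r-4$ and $j=1$, the residual is the unstable complete type $(4,4)$.

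The paper takes $B=m+1$, $H=j(m+1)\pm1$, and residual $(m,jm)$. In the minus case $t=m+1>R'$ forces $u\ge1$, and in the plus case $t=m<B$ forces $u<B$. This gives degree at most $2jt-2$, respectively $2jt$, which is below $e$ in both cases.

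You must still check the residual. In genus two, integral-slope semistability needs a projected type, so you need $jm\ge5$. Since $m\ge2$, $jm\le3$ forces $k\le8$. The case $jm=4$ gives exactly $(r,k)=(5,9),(5,11),(9,10)$. Because $R'\le m$ is forced, no other choice of $B$ rescues these three cases. They must be removed in advance, using the two Coskun--Smith types and the assumed type $(9,10)$. This auxiliary step, not the first model, is where those special inputs enter.
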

\begin{proof}
Fix $j$ and a sign and induct on $r$, asserting the result only
when $k\ge5$. Ranks two and three follow from the cited
low-dimensional results, and $k\le8$ is in the assumed initial
bands. We may therefore suppose $r\ge4$ and $k\ge9$.
Use Lemma~\ref{g2:prop:rank-one} with $h=j$. Its residual has
rank $R=r-1$ and weight $jR\pm1$, and
\[
 k-j\ge\frac{3k-1}{4}\ge\frac{26}{4}>6.
\]
In particular its weight is at least seven, so it is projected
and semistable by the rank induction. No unstable complete
bundle is used at this step.

The candidate calculation is as follows. Subtract
$2(j-1)$ times the rank from each degree. The kernel line then
has degree two, and the quotient cap at rank $v$ is
$2v-\lceil2v/R\rceil$ for the minus sign and
$2v+\lfloor2v/R\rfloor$ for the plus sign. Intersecting a
rank-$a$ subsheaf with the kernel line permits only intersection
ranks zero and one. The resulting caps, with infeasible terms
omitted, are
\[
\begin{array}{ll}
 k=jr-1:&
 \max\{2a-\lceil2a/R\rceil,
          2a-\lceil2(a-1)/R\rceil\},\\
 k=jr+1:&
 \max\{2a+\lfloor2a/R\rfloor,
          2a+\lfloor2(a-1)/R\rfloor\}.
\end{array}
\]
Comparison with $2(r\pm1)a/r$ leaves the whole higher-slope
factor, of surplus two, and, only when $r=2m+1$ is odd, the
additional types in the following table. Degrees in the table
are before the numerical shift.
\[
\begin{array}{c|c|c}
 &\text{whole higher factor}&\text{additional odd-rank type }(a,e)\\ \hline
 k=jr-1&(1,2j)&(m+1,2j(m+1)-1)\\
 k=jr+1&(r-1,2j(r-1)+2)&(m,2jm+1).
\end{array}
\]
The whole-factor degree is attained by its unique allocation.
Its surplus is two, less than $r$, so there is no saturation
gain or torsion image defect. The moving-determinant argument
of Section~\ref{g2:sec:normal-degenerations} excludes it.

For the odd-rank types, $m\ge2$ and $jm\ge4$. Indeed, if
$jm\le3$, then $j=1$ and $k=2m+1\pm1\le8$, contrary to
the remaining range. Choose the auxiliary kernel
\[
 b=m+1,\qquad h=j(m+1)\pm1.
\]
Its residual has rank $m$ and weight $jm$. If $jm\ge5$, it
is semistable by integral slope. The only possibilities with
$jm=4$ are $(m,j)=(2,2),(4,1)$. At $k\ge9$ these give
exactly
\[
 (r,k)=(5,9),(5,11),(9,10).
\]
The first two are the Coskun--Smith inputs specified above, and
the third is one of the types assumed in
Proposition~\ref{g2:prop:theorem}. We therefore remove these three
targets before using the auxiliary model, so that no residual is one
of the four unstable complete types.

In every other odd-rank case the auxiliary quotient is
semistable, $1\le h\le k-4$, and $2\le b\le r-2$. The
proper-rank kernel profiles are
\[
 \Phi_{jb-1,b}(u)=2ju-2\quad(0<u<b),\qquad
 \Phi_{jb+1,b}(u)=2ju\quad(0\le u<b).
\]
For the minus candidate $a=b$, zero intersection is infeasible
because the quotient has rank $m$. All positive proper
intersections, and the full kernel, give total degree at most
$2ja-2<e$. For the plus candidate $a=m<b$, every intersection
is proper, giving total degree at most $2ja<e$. This excludes
the remaining candidate and completes the rank induction.
\end{proof}

\begin{proof}[Proof of Proposition~\ref{g2:prop:theorem}]
Induct on $k\ge9$, simultaneously in all ranks. Ranks two and
three follow from the low-dimensional results, so take $r\ge4$.
The shifted diagonals are available by
Lemma~\ref{g2:prop:diagonals}, which was proved independently
of this weight induction.

If $z=\gcd(k,r)>1$, put $h=k/z$ and $b=r/z$. If $b=1$,
the target has integral slope and is semistable. Otherwise
$2\le b\le r-2$ and the residual weight satisfies
\[
 k-h\ge\lceil k/2\rceil\ge5.
\]
It is therefore a projected type of smaller weight, semistable
by induction. The kernel is semistable and has the same slope,
so its extension is semistable. This settles the balanced case
without any complete residual.

Assume $\gcd(k,r)=1$. The two complementary Farey pairs satisfy
\[
 hr-kb=\varepsilon\in\{-1,1\},\qquad
 0<h<k,\quad0<b<r.
\]
A rank-one kernel or complementary factor puts the target on
one of the shifted diagonals. Otherwise both kernel and
complementary ranks are at least two. At least one pair has
$h\le k-4$. Call such a pair \emph{usable} if its residual
$(r-b,k-h)$ is not one of the four unstable complete types
$(2,4),(3,4),(4,4),(5,4)$.

There is always a usable pair. Indeed, if an admissible pair
has an unstable complete residual, that residual has weight
four. Each Farey factor is primitive, so its rank is either
three or five. The complementary orientation then has
\[
 h=4,\qquad b\in\{3,5\},
\]
and residual weight $k-4\ge5$; it is admissible and usable.
All usable residuals are semistable: their weights are smaller
than $k$, and a complete residual has rank at least six.

By Lemma~\ref{g2:lem:normal-farey}, a usable pair settles the
target if $r$ is even or its kernel rank is even. Thus suppose
$r$ is odd and every usable kernel rank is odd. Complementary
kernel ranks have opposite parity, so exactly one pair is
usable. Write it as $(s,b)$, with
\[
 sr-kb=\varepsilon,\qquad r,b\text{ odd}.
\]
There are precisely two reasons its complement is not usable:
\begin{enumerate}
\item the complement is inadmissible, in which case $1\le s\le3$;
\item the complement has an unstable complete residual, in which
case $s=4$ and $b\in\{3,5\}$.
\end{enumerate}
After the whole-factor exclusion, the only possible strict
type is
\begin{equation}\label{g2:prop:last-type}
 t=\frac{r+\varepsilon b}{2},\qquad e=k+\varepsilon s,
 \qquad re-2kt=1.
\end{equation}

In the first case set $m=\lceil4/s\rceil$, so
\[
 (s,m,ms)=(1,4,4),(2,2,4),(3,2,6).
\]
One has $k>s(2m-1)$: the right sides are seven, six, and nine,
and equality in the last case is impossible because
$\gcd(k,s)=1$. Put
\[
 H=k-ms,\qquad B=r-mb.
\]
Then
\begin{equation}\label{g2:prop:aux-identities}
 Hb-sB=-\varepsilon,\qquad
 s\bigl(B-(m-1)b\bigr)
   =\bigl(k-s(2m-1)\bigr)b+\varepsilon>0.
\end{equation}
Thus $B>(m-1)b\ge b\ge3$, $H\ge1$, and $H\le k-4$.
The residual has rank $mb\ge6$ and weight $ms\in\{4,6\}$.
If its weight is four, its rank is at least six, so the
complete theorem applies; if its weight is six, it lies in
the initial projected bands. The auxiliary model is therefore
admissible with a semistable normal residual. Its kernel
profile and quotient semistability give precisely the left
side of \eqref{g2:prop:rounding-sum} at any intersection rank.
Lemma~\ref{g2:prop:rounding} bounds the degree by $e-1$,
contradicting \eqref{g2:prop:last-type}.

In the second case $s=4$ and $b=3$ or $5$. If $9\le k\le12$,
the equations $4r-kb=\varepsilon$ with $r$ odd give exactly
\[
 (r,k,b,\varepsilon)=(7,9,3,1),(11,9,5,-1).
\]
For clarity, even $k$ is impossible modulo two; $k=11$ gives
$r=8$ or $14$, both even. The two remaining targets are the
types $(n,d)=(8,15),(12,19)$ assumed in the proposition.

For $k\ge13$, choose $m=2$, $H=k-8$, and $B=r-2b$.
Now $4r=kb+\varepsilon>12b$, so $r>3b$ and $B>b$.
The kernel weight satisfies $1\le H\le k-4$, while the
residual has rank $2b\in\{6,10\}$ and weight eight, hence
is semistable by the fourth projected band. The identities
\eqref{g2:prop:aux-identities} still hold, and the rounding
lemma again excludes \eqref{g2:prop:last-type}.
This treats every coprime target and closes the weight induction.
\end{proof}

\begin{proof}[Proof of Theorem~\ref{g2:thm:main}]
For $c=0$, Theorem~\ref{thm:hyperelliptic-iff} proves semistability for $n\ge7$,
and the scroll line proves instability for $n=3,4,5,6$.
Proposition~\ref{g2:ld:theorem} supplies $5\le k\le8$ in
every rank. Proposition~\ref{g2:co:seeds} supplies the three
additional types in Proposition~\ref{g2:prop:theorem}, which
therefore proves every $k\ge5$. These cases are exactly the
incomplete nonspecial embeddings. Finally, a semistable bundle
of coprime rank and degree is stable; apply this to the
normalized rank $r=n-1$ and degree $2k$, with
$\gcd(r,2k)=\gcd(n-1,2d+2)$.
\end{proof}

The proof has used the four starting bands, three additional
computed types, two low-dimensional inputs from \cite{CS}, and the
same kernel arithmetic that we now apply in arbitrary genus. Its
extra cases arise precisely from the four unstable complete
residuals.

\section{Pointing models}\label{sec:models}
The genus-two proof used two kinds of input: geometry supplying initial
normal-bundle estimates, and an induction built from pointing kernels.
We now extend the latter construction to a general curve of genus
$g\ge3$. The numerical kernel profile is unchanged, although its
one-tail geometric proof is different. In particular, this section does
not use the genus-two classification as an induction hypothesis.

Throughout put $r=n-1$, $k=d-n+g$, and $c=k-2g$. The normalization is
$F=N_{C/\PP^n}L^{-1}\Lambda^{-1}$, where $\deg\Lambda=2$; hence
$(\rk F,\deg F)=(r,2k)$. The curve, final polarization, and residual
series are chosen on their specified nonempty open loci. A
\emph{vector-bundle specialization} below may consist of successive
specializations on the fixed smooth curve; it need not be a normal
bundle on every intermediate fiber.

\begin{theorem}\label{thm:models}
Let $c=k-2g\ge0$, $0\le h\le c$, and $1\le b\le r-2$.
There is a vector-bundle specialization of the normalized normal
bundle, on the fixed general curve with fixed general final
polarization, of the form
\begin{equation}\label{eq:model}
0\longrightarrow P_h(b)\longrightarrow M\longrightarrow Q\longrightarrow0,
\end{equation}
where $(\rk P_h(b),\deg P_h(b))=(b,2h)$ and $Q$ is the normalized
normal bundle of a general residual series with invariants
$(r-b,k-h)$. For general selected points, supports and directions,
its kernel has the profile of Theorem~\ref{thm:kernel-profile}.
The determinants are
\begin{equation}\label{eq:model-determinants}
\begin{aligned}
\det P_h(b)&=\OO_C(2D_p+2D_q)\Lambda^{-b},\\
\det Q&=L^2K_C\Lambda^{-r}(\det P_h(b))^{-1},
\end{aligned}
\end{equation}
where $\deg D_p=h$ and $\deg D_q=b$.
\end{theorem}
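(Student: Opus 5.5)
The plan is to repeat the construction of Proposition~\ref{g2:prop:models} and Lemma~\ref{g2:prop:rank-one} on a general curve of genus $g\ge3$. The only genus-two-specific input there was the replacement of $\Lambda$ by $K_C$, and that replacement only affects normalization. The kernel-profile assertion I would take from Theorem~\ref{thm:kernel-profile} as a black box.

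\textbf{Setup.} Choose disjoint reduced divisors $D_p=p_1+\cdots+p_h$ and $D_q=q_1+\cdots+q_b$. Put $H=L(-D_p)$ and $H'=H(-D_q)$. Then $\deg H'=d-h-b=(n-b)+g+(c-h)$ with $c-h\ge0$. Since $n-b\ge3$, a general $(n+1-b)$-dimensional series $W\subset H^0(H')$ is a general nonspecial embedding of the residual type. One point needs care: $H^1(H')=0$ is not automatic for $g\ge3$, because $\deg H'$ can be as small as $n-b+g$. I would get it as follows. For $b\le g$, pick $D_q$ general. For larger $b$, use that $L$ is general, so that $H'$ is a general line bundle of degree at least $g$ and is therefore nonspecial. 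This is the translation argument at the end of the proof of Lemma~\ref{g2:prop:rank-one}: a translate of a nonempty open set in $\Pic$ is nonempty.

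\textbf{Core and kernel.} Lift the evaluation on $D_q$ to sections $u_1,\ldots,u_b$ and form $V=\sigma_{D_q}W+\langle u_j\rangle\subset H^0(H)$. This embeds the core by the same argument as in Step~1 of Proposition~\ref{g2:prop:models}. Lemma~\ref{g2:cb:projection} then gives the inner-projection sequence $0\to\bigoplus_j\OO_C(2q_j)\Lambda^{-1}\to A\to Q\to0$, where $A=N_{(C,H,V)}H^{-1}\Lambda^{-1}$. The kernel has determinant $\OO_C(2D_q)\Lambda^{-b}$, and $Q$ is the normalized residual bundle. Next choose points $y_i$ in the span of the $q_j$ and attach one-secant lines $p_iy_i$. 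The graph family \eqref{g2:nd:graph-family}, which needs only $H^1(H)=0$, smooths this to $(C,L)$ with a general final series. The contraction of \cite[Lemma 3.4]{CS} then yields $M=A[2p_1\to y_1]\cdots[2p_h\to y_h]$, after the twist by $\OO(D_p)$ cancels. Each modification lies in the kernel and adds $2p_i$, which gives \eqref{eq:model-determinants}. The quotient determinant follows from $\det F=L^2K_C\Lambda^{-r}$. For $b=1$ I would argue as in Lemma~\ref{g2:prop:rank-one}, specializing the attached directions to the pointing germ toward $q$.

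\textbf{Main obstacle.} I expect the hardest part to be the generality bookkeeping. We need the residual series to be general of its type, the kernel points and directions to be general in the sense required by Theorem~\ref{thm:kernel-profile}, and $C$ and $L$ to stay fixed and general, all at once. For $g\ge3$ we can no longer use surjectivity of the Abel map $\operatorname{Sym}^bC\to\Pic^b$ when $b<g$. So dominance onto the residual polarizations must instead come from varying $D_p$, $D_q$ and $L$ together, combined with the translation trick above. I would then check that the final constraints are finitely many nonempty open conditions on an irreducible parameter space, namely curve, $L$, divisors, series and directions, which therefore intersect. The conditions to verify are: transversality and one-secancy of the attached lines (using $b\ge2$, or the rank-one variant), embedding of all inner projections, and the kernel-profile open set.
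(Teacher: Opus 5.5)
Your proposal is correct and follows the paper's proof essentially step by step. The shared steps are: the lifted evaluation coordinates forming the core, the inner-projection kernel $\bigoplus_j\OO_C(2q_j)\Lambda^{-1}$, one-secant lines with directions in the secant span smoothed by the graph family and contracted by \cite[Lemma 3.4]{CS}, the direction-specialization for $b=1$, and residual generality obtained by translating a good open subset of the Picard variety instead of relying on surjectivity of the Abel map. Two small points: the direct sum of pointing lines is the whole saturated kernel because their determinant degrees agree, and your case split on $b$ for $H^1(H')=0$ is unnecessary, since the translation argument makes $H'$ general, hence nonspecial, for every $b$.
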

\begin{proof}
Choose disjoint reduced divisors $D_p=\sum p_i$ and $D_q=\sum q_j$.
Put $H=L(-D_p)$ and $H'=H(-D_q)$. A general residual series
$W\subset H^0(H')$ of dimension $n+1-b$ is nonspecial and embeds
the curve in $\PP^{n-b}$, since
\[
\deg H'=(n-b)+g+(c-h),\qquad n-b\ge3.
\]
These assertions hold for a general line bundle $H'$ of this degree;
the last paragraph of the proof shows that $D_p$ and $D_q$ can be
chosen so that $H'$ is general.

Lift the $b$ evaluation coordinates at the $q_j$ to sections
$u_j\in H^0(H)$ and form
$V=\sigma_{D_q}W+\langle u_1,\ldots,u_b\rangle$. This embeds
the core: outside $D_q$ the residual coordinates separate points
and tangents; at each $q_j$ its first derivative in the residual
coordinates is nonzero and the additional coordinates distinguish the
selected points. The first-jet inner-projection sequence identifies
the kernel of its normalized normal projection with
\[
P_0=\bigoplus_{j=1}^b T_j,
\qquad T_j=\OO_C(2q_j)\Lambda^{-1}.
\]
The pointing lines are independent, and their determinant degree
equals the degree of the saturated kernel, so their sum is the whole
kernel. In local frames
the pointing germ toward a point of the secant span is
$(a_j\sigma_j(z))_j$, where $\sigma_j$ has divisor $2q_j$.

For $b\ge2$, attach $h$ general one-secant lines to the core at
the $p_i$, with their directions in the selected secant span. These
lines are transverse and pairwise disjoint away from their attachment
points. Another intersection or a tangency would contradict the
embedded residual projection; directions through one of the selected
points are avoided. Let $f_0,\ldots,f_n$ span $V$, and choose a
complement $v_1,\ldots,v_c$ to $\sigma_{D_p}V$ in $H^0(L)$,
with the first $h$ sections having standard-basis evaluations at
$D_p$. If $y_i=[a_{i0}:\cdots:a_{in}]$ in the core coordinates, the
series
\begin{equation}\label{eq:graph-family}
 \sigma_{D_p}f_j+t\sum_{i=1}^h a_{ij}v_i
       +t^2\sum_{\nu=1}^c b_{\nu j}v_\nu
       \quad(0\le j\le n)
\end{equation}
has, after blowing up $(p_i,0)$, the desired central core with
one-secant lines. Indeed its leading coordinates near $p_i$ are
$z f_j(p_i)+t a_{ij}$. The exceptional line therefore maps to the
line with the prescribed direction. For $t\ne0$, varying the
$b_{\nu j}$ covers the full affine chart of the final series
Grassmannian.

Simultaneous normal contraction \cite[Lemma 3.4]{CS} gives the
central bundle
$N_{(C,H,V)}[2p_1\to y_1]\cdots[2p_h\to y_h](D_p)$.
After twisting by $L^{-1}\Lambda^{-1}$ the divisor $D_p$ cancels,
and the modifications occur entirely inside $P_0$. The quotient is
the residual normal bundle. This proves \eqref{eq:model}.

For $b=1$, first attach one-secant lines with unrestricted
general directions and contract their normal bundles. Then specialize
their directions to $q_1$ in the vector-bundle family. The pointing
value at each $p_i$ is nonzero, so its germ is a direct summand over
the length-two neighborhood. This second specialization stays locally
free and gives the kernel
$\OO_C(2q_1+2D_p)\Lambda^{-1}$; it takes place only in the family of
bundles on $C$. For $h=0$ use inner
projection alone. The determinant formulas follow from these
constructions and $\det F=L^2K_C\Lambda^{-r}$.

Finally, residual generality can be imposed with $C,L$ fixed and
general. Given a residual good open $U\subset\Pic^{d-h-b}(C)$,
fix one allowed divisor $D_0$. The condition $L(-D_0)\in U$ is
a nonempty open of $\Pic^d(C)$. For such a general $L$, the allowed
divisors satisfying $L(-D)\in U$ form a nonempty open. This argument
does not require $h+b\ge g$. Lifting residual sections dominates
their series parameters. The finitely many conditions at each
induction step, including the open conditions of
Theorem~\ref{thm:kernel-profile} on selected points, supports and
directions, therefore meet on the same general curve and final
polarization, and \eqref{eq:graph-family} reaches the generic final
series.
\end{proof}

\begin{lemma}[Moving determinants]\label{lem:determinant}
Fix general $C,L$. If the general normalized normal bundle is
unstable, the determinant of its first Harder--Narasimhan subbundle
is constant as the embedding series varies. A specialization which
forces this subbundle to be a whole factor in \eqref{eq:model},
with no torsion defect or saturation gain, is impossible when that
factor's determinant varies with $D_q$.
\end{lemma}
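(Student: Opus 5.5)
The argument follows the genus-two version in Lemma~\ref{g2:pk:determinant} and the paragraph on HN determinants after Proposition~\ref{g2:prop:models}, now on a fixed general curve of genus $g\ge3$.

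\emph{Constancy.} Fix general $C$ and $L$. The embedding series range over a dense open subset $X$ of the Grassmannian $\Gr(n+1,H^0(L))$, which is smooth and rational. On $X$ we have the family of normalized normal bundles $F$. If the general member is unstable, the rank and degree of the first Harder--Narasimhan subbundle are constant on a dense open subset $X^\circ\subset X$. The relative HN filtration exists over $X^\circ$ after shrinking, so its determinant defines a rational map $X\dashrightarrow\Pic^e(C)$. A rational map from a rational variety to an abelian variety is constant. To see this, restrict to general lines in a rational chart, extend over $\PP^1$ using properness, and use that an abelian variety contains no rational curves. Call the resulting constant determinant $T$. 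The twist by $\Lambda$ is fixed throughout, so normalization causes no issue.

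\emph{Specialization.} Theorem~\ref{thm:models} gives the special member $M$ as the limit along a curve in the final series space, through the graph family \eqref{eq:graph-family}. We may assume the generic point of that curve lies in $X^\circ$, because the higher-order coefficients $b_{\nu j}$ reach a general series. When the specialization is a succession of bundle specializations on $C$, we compose them, or argue one step at a time. Each step is a family of bundles over a curve whose generic member carries a subsheaf with determinant $T$.

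By properness of the relative Quot scheme, after a finite base change the HN subsheaf extends to a flat family of quotients. Its kernel on the special fiber is a subsheaf $S\subset M$ with the same rank and degree. Since $C$ is smooth, $S$ is locally free. By separatedness of the Picard scheme, $\det S=T$.

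The hypothesis says the numerics force the saturation of $S$ to be a whole factor of \eqref{eq:model}, with no saturation gain and no torsion defect. In the kernel case this means $S=P_h(b)$. In the quotient case $S$ maps isomorphically onto $Q$, so $\det S=\det Q$. In either case $T$ equals the determinant of that factor, computed by \eqref{eq:model-determinants}.

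\emph{Contradiction.} Now vary $D_q$, keeping $C$ and $L$ fixed. Both factor determinants involve $\OO_C(2D_q)$, with $D_p$ and $L$ fixed. The map $D_q\mapsto\OO_C(2D_q)$ is nonconstant on $C^{(b)}$. By the last paragraph of the proof of Theorem~\ref{thm:models}, two admissible choices of $D_q$, both satisfying the open generality conditions, give different determinants. Yet both must equal the single constant $T$, which is a contradiction.

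The main obstacle is making sure the whole construction stays inside a single irreducible family over the rational base with $C$ and $L$ fixed. The intermediate bundle-only specializations, namely direction specialization for $b=1$ and the common-point specializations, are not normal bundles of embeddings. The remedy is to transport $T$ through each step by the same Quot and Picard separatedness argument. This works because every family in the chain lives on the fixed curve $C$, and its generic member already has a subsheaf of the given rank, degree and determinant $T$.
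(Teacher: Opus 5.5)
Your proof is correct and follows the paper's argument. You show the generic Harder--Narasimhan determinant is constant via a rational map from the rational series Grassmannian to the Picard variety, use Quot-properness and Picard separatedness to carry rank, degree and determinant to the special fiber, let the numerics force the whole factor, and then vary $D_q$ to reach a contradiction. For the bundle-only successive specializations, the paper phrases the transport as closedness of the locus of members having a subsheaf of given rank, degree and determinant, which is equivalent to your step-by-step argument along curves.
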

\begin{proof}
The series Grassmannian is rational. Its generic canonical
Harder--Narasimhan subbundle defines a rational map to a Picard
variety, hence a constant map. Relative Quot properness preserves
its rank, degree, and determinant on specialization before
saturation. If the numerical bounds force zero saturation gain and
zero image defect, its determinant equals that of the forced factor.
Varying a selected point varies $\OO_C(2D_q)$ nontrivially in
characteristic zero, contrary to constancy. The same holds after
successive specializations, as in the case $b=1$ of
Theorem~\ref{thm:models}: in a family over an irreducible base, the
members having a subsheaf of the given rank and degree with a given
determinant form a closed subset, because the relative Quot
scheme is proper and the determinant is a morphism to the Picard
scheme of $C$.
\end{proof}

\section{Pointing-kernel profiles}\label{sec:kernels}
For integers $b>0$, $h\ge0$, define $\Phi_{h,b}(0)=0$ and
$\Phi_{h,b}(b)=2h$. At proper ranks and $0<a<b$, put
\begin{equation}\label{eq:phi}
\Phi_{a,b}(u)=
\begin{cases}
\lfloor au/b\rfloor+\lceil a(u+1)/b\rceil-1,&2a\le b,\\
\lfloor au/b\rfloor+\lceil a(u-1)/b\rceil,&2a\ge b.
\end{cases}
\end{equation}
Set $\Phi_{0,b}(u)=0$ and
$\Phi_{jb+a,b}(u)=2ju+\Phi_{a,b}(u)$ for $0\le a<b$.
The formulas agree at $2a=b$.

\begin{theorem}\label{thm:kernel-profile}
Let $C$ be a general curve of genus $g\ge3$. For general selected
points, supports and directions, every rank-$u$ subsheaf of the
pointing kernel $P_h(b)$ of Theorem~\ref{thm:models} has degree at
most $\Phi_{h,b}(u)$. In particular, these kernels are semistable;
if $\gcd(h,b)=1$ they are stable.
\end{theorem}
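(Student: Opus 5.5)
The plan is to copy the genus-two proof of Theorem~\ref{g2:pk:theorem} step by step. The only genus-dependent input is the one-tail base case. The Farey convolution (Lemma~\ref{g2:pk:farey-convolution}), the equal-slope convolution (Lemma~\ref{g2:pk:equal-convolution}), the profile identities, the collision specialization (Lemma~\ref{g2:pk:collision}) and determinant rigidity (Lemma~\ref{g2:pk:determinant}, or Lemma~\ref{lem:determinant}) are purely arithmetic or formal. Their statements are genus-free once $K$ is replaced by $\Lambda$ and $\mathcal O(2p)\simeq K$ is dropped; the kernel determinant becomes $\bigotimes_iT_i\otimes\OO_C(2D_p)$. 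So I would first set up the common-point family $P_m(W)=P_0[2p\to D_\sigma(z)W]$ over $\operatorname{Gr}(m,R)$, now with $T_i=\OO_C(2q_i)\Lambda^{-1}$ and a general point $p$ of $C$. By Lemma~\ref{g2:pk:collision} it specializes the distinct-support kernels of Theorem~\ref{thm:models}. The block decomposition \eqref{g2:pk:block} holds verbatim.

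The main new step is a one-tail proposition: for general $q_i$, general $p$ and full-support direction, $P_1$ of rank $R\ge2$ has no positive-degree proper subsheaf. I would follow Lemma~\ref{g2:pk:one-rank}. A positive saturated $S$ forces $S_0=S\cap P_0$ to have degree $-1$ and to gain length two at $p$, with all projections $S_0\to T_i$ nonzero. Peeling off degree-zero coordinate lines then reduces us to the following situation: a stable bundle $G=S_0^*$ of rank $u$ and degree $1$ receiving nonzero maps from at least $u+2$ of the lines $T_i^{-1}$. It therefore suffices to prove an analogue of Lemma~\ref{g2:pk:selected}. In genus $g\ge3$ this is easier than in genus two: for $u=1$ we need that no degree-one line bundle $B$ has $H^0(BT_i)\ne0$ for two indices. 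Now $BT_i$ has degree one, so it is effective only on the curve $\{\OO(x)\}\subset\Pic^1$, and $T_i=\OO(2q_i)\Lambda^{-1}$ moves in a one-dimensional family. So $T_i^{-1}\otimes\OO(x)\simeq T_j^{-1}\otimes\OO(y)$ gives $\OO(2q_j-2q_i)\simeq\OO(y-x)$. For general $q_i,q_j$ on a general curve of genus $\ge3$ this has no solutions, by dimension count: $\OO(2q_j+x)\simeq\OO(2q_i+y)$ would give a $g^1_3$. Hence at most one selected map exists, which is stronger than needed. The induction on $u$ then runs as in Lemma~\ref{g2:pk:selected}. If two or more components survive, I would instead argue as in Proposition~\ref{g2:pk:one-tail} via degree-one quotients $Q$: the components lie in $H^0(QT_i^{-1})$, and $QT_i^{-1}$ has degree one. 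So at most one component is nonzero. That component must then make $u_i\sigma_i$ vanish to order two at $p$, and this fails because $p\notin\{q_i\}$ and the divisor of $u_i\sigma_i$ is $x+2q_i$ with only one free point, which cannot be $2p$. This proves the one-tail profile $\Phi_{1,R}$; the bound $\deg\le 0$ at proper ranks is exactly $\Phi_{1,R}$.

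With the base case in hand, I would run the rank induction as in Steps 2--5 of Theorem~\ref{g2:pk:theorem}.
\begin{itemize}
\item \emph{Primitive pairs.} Put general kernels for the two Farey parents on disjoint coordinate blocks. Then the parent of larger slope is the unique strict destabilizer, in rank $h$ with degree $2a$. Exchanging one selected point between the blocks changes its determinant by $\OO(2q_i-2q_j)\ne\OO$, so Lemma~\ref{g2:pk:determinant} gives semistability. Openness gives the profile away from rank $h$, and semistability gives $2a-1$ at rank $h$.
\item \emph{The endpoint $m=R-1$.} Here I need a duality replacing Lemma~\ref{g2:pk:duality}, namely $P_{R-1}^*\otimes\OO(2p)\cong P'_1$ with $T_i'=T_i^{-1}$. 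The genus-two proof used the ramification of the canonical map at a Weierstrass point. Instead I would simply note that the dual lattice of a codimension-$(R-1)$ modification, twisted by $\OO(2p)$, is a one-tail modification of $\bigoplus T_i^{-1}$ in the direction annihilating $D_\sigma(z)W$ to first order. This is again a full-support first-order germ, hence general enough for the one-tail proposition, since that proposition only used full support. Its bound then gives $\deg S\le 2t-2$.
\item \emph{Nonprimitive pairs and large weight.} Nonprimitive pairs go through equal-slope blocks. Large weight goes through full-rank clusters, which contribute twists by $\OO(2\cdot\text{support})$, and stability follows from the strict inequality in Lemma~\ref{g2:pk:profile-identities}.
\end{itemize}

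The step I expect to be the main obstacle is the one-tail case, specifically making the genericity conditions simultaneous. The duality step needs the one-tail result for the dual germ, which is no longer of the special form $D_\sigma(z)a$. So I would prove the one-tail proposition for \emph{every} full-support first-order direction at $p$, with open conditions only on $(q_1,\dots,q_R,p)$. I would then impose these open conditions for all coordinate subsets and for the dual data $T_i^{-1}$, using irreducibility of $C^{R+1}$.
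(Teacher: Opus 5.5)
Your proposal is correct and is essentially the paper's proof. The genus enters only through the condition $T_iT_j^{-1}\notin C-C$, which the paper checks by the same $g^1_3$ argument, made precise separately for $g\ge5$, $g=4$ and plane quartics. That condition bounds by $u$ the number of selected lines mapping nontrivially to a stable degree-one bundle of rank $u$, and the Farey, duality and equal-slope induction is then run over all full-support diagonal germs, so the dual one-tail kernels are covered.

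One correction to the one-tail step. Peeling off coordinate lines only guarantees $u+1$ selected maps, since $\rk S\le R-1$, not $u+2$. So it is exactly your bound $s\le u$, not the genus-two bound $s\le u+1$, that closes that case. The extra line-quotient argument via $u_i\sigma_i$ is therefore unnecessary, and it would not apply to the dual germs anyway.
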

\begin{proof}
The profile identities and convolution inequalities in
Lemmas~\ref{g2:pk:profile-identities},
\ref{g2:pk:equal-convolution}, and~\ref{g2:pk:farey-convolution}
are statements about integers and apply without a genus restriction.
We supply the geometric inputs that let the same rank induction as in
Theorem~\ref{g2:pk:theorem} apply on the present curve. The genus
enters only through the condition \eqref{eq:selected} below: Step~2
deduces the one-tail bound from it, and Step~6 verifies it for a
general curve of genus at least three.

\casehead{Step 1: the family.}
Let $T_1,\ldots,T_b$ be degree-zero lines satisfying
\begin{equation}\label{eq:selected}
T_iT_j^{-1}\notin C-C\quad(i\ne j).
\end{equation}
Fix a local parameter $z$ at $p$, frames, and a diagonal matrix $D(z)$
of units. For $W\in\Gr(h,b)$, $0<h<b$, consider the lattice
$P_0+z^{-2}D(z)(W\otimes\mathcal O_{C,p})$ in the meromorphic
extension of $P_0=\bigoplus_i T_i$. The germ $D$ is fixed before
choosing $W$ generally.

\casehead{Step 2: one tail.}
First take $h=1$ and a direction with full support. Suppose a
positive-degree proper subbundle $S$ exists. Its saturated intersection
$S_0$ with $P_0$ has degree at most zero and gains at most two units
of length. A degree-zero intersection is a coordinate sum and cannot
gain length in a full-support direction. Consequently
$\deg S_0=-1$, $\deg S=1$, and every coordinate projection of
$S_0$ is nonzero. Delete any degree-zero coordinate factors contained
in $S_0$ until the remaining bundle is stable of degree $-1$.
Its dual is stable of degree one and receives nonzero maps from more
distinct selected inverse lines than its rank.

This is impossible. A stable degree-one bundle of rank $u$ receives
nonzero maps from at most $u$ selected lines. For rank one this is
\eqref{eq:selected}; in larger rank a selected map is a saturated
injection, its quotient is again stable of degree one, and all the
other selected maps remain nonzero in the quotient. Induction proves
the assertion. Thus the one-tail proper-rank profile is zero, for
every full-support first jet.

\casehead{Step 3: duality and the induction on $b$.}
Exact lattice duality gives
\[
 P_h(D,W)^*\otimes\OO_C(2p)
 =P'_{b-h}(D^{-1},W^\perp),
\]
where $P'_0=\bigoplus T_i^{-1}$. Condition \eqref{eq:selected}
is invariant under inversion, so this enlarged family is closed
under the required duality. Induct on $b$, allowing all selected
lines satisfying \eqref{eq:selected} and every fixed diagonal germ.
The cases $h=0,b$ are split bundles, and the case $h=1$ was just
proved. All lower-rank open conditions may be imposed simultaneously
on each of the finitely many coordinate blocks.

\casehead{Step 4: primitive pairs.}
For a remaining primitive pair $2\le h<b$, the block specialization associated
to the Farey parents satisfies the desired profile by
Lemma~\ref{g2:pk:farey-convolution}, except at the entire
higher-slope block. Writing that block's rank and weight as
$(b_1,h_1)$, the Farey identity is $h_1b-hb_1=1$.
Its degree $2h_1$ is the least strict integer above
$2hb_1/b=2h_1-2/b$. The unique maximizing allocation is the whole
block, so both image defect and saturation gain are zero.
A hypothetical generic first Harder--Narasimhan determinant is
constant on the rational coefficient Grassmannian. Exchanging one
selected line between the two blocks changes the forced determinant
by the nontrivial ratio of those selected lines. This proves
semistability. At the exceptional rank, when $2\le h\le b-2$,
semistability gives the required bound $2h_1-1$; at all other ranks
the block specialization already gives the full profile. Openness
therefore supplies these finitely many bounds simultaneously.

\casehead{Step 5: the endpoint $h=b-1$, and nonprimitive pairs.}
For $h=b-1$, duality with the one-tail bundle gives
$\deg S\le2u-2$ at every proper positive rank $u$: the dual
quotient has degree at least two because its kernel has degree at
most zero. This gives the endpoint profile which semistability alone
would not imply. Nonprimitive pairs use equal-slope blocks and
Lemma~\ref{g2:pk:equal-convolution}. This proves the full profile.
Its strict slope inequality for primitive pairs, proved in
Lemma~\ref{g2:pk:profile-identities}, proves stability.

\casehead{Step 6: the condition \eqref{eq:selected} for pointing lines.}
It remains to verify \eqref{eq:selected} for the pointing lines
$T_i=\OO_C(2q_i)\Lambda^{-1}$. A failure gives
$2q_i+x_j\sim2q_j+x_i$, so $h^0(\OO_C(2q_i+x_j))\ge2$; since a general
curve of genus at least three is not hyperelliptic,
$|2q_i+x_j|$ is then a base-point-free $g^1_3$ in which $q_i$ is a
ramification point. For a general curve of genus at least
five no such pencil exists. In genus four the finitely many trigonal
pencils have finitely many ramification points; choosing $q_i$
away from them excludes the relation. For a plane quartic, the
relation says that the two tangent lines meet at a point of the
quartic, or coincide. For fixed $q_i$ only finitely many $q_j$
satisfy this: there are finitely many possible intersection points,
and finitely many tangents through each, as follows from the polar
cubic. The failure locus is closed, being the preimage of the closed
subset $C-C$ of $\Pic^0(C)$. Thus a general finite tuple satisfies
the condition.

\casehead{Step 7: distinct supports and large weight.}
The pointing germs have the diagonal form already treated, with
entries $\sigma_i(z)$. Separate the columns of $W$ into nearby
supports, using generators $v_i(z)/(z-t_i)^2$; this is a locally
free family. Openness gives the same profile for general distinct
tails. Full-rank clusters give degree-two twists, proving the
periodic formula for arbitrary $h$. All openness assertions involve
finitely many rank and degree bounds, since the common-point bundle
is contained in $P_0(2p)$.
\end{proof}

\begin{lemma}[A single Farey model]\label{lem:farey}
Assume $\gcd(k,r)=1$, $r>2$, and an admissible model has
$hr-kb=\epsilon\in\{-1,1\}$ and semistable normal residual.
After excluding the whole higher factor by Lemma~\ref{lem:determinant},
there is no strict generic Harder--Narasimhan type if $r$ is even
or $b$ is even. If both are odd, the only remaining possible type is
\[
 (t,e)=\left(\frac{r+\epsilon b}{2},\ k+\epsilon h\right),
 \qquad re-2kt=1,
\]
and a subsheaf of the model of this type meets the kernel in the
whole kernel if
$\epsilon=1$ and in zero if $\epsilon=-1$; in both cases its image
in the residual has rank $(r-b)/2$ and degree $k-h$.
\end{lemma}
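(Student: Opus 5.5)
The argument is the genus-independent counterpart of Lemma~\ref{g2:lem:normal-farey}, and I would copy its proof, now using Theorem~\ref{thm:models} for the model, Theorem~\ref{thm:kernel-profile} for the kernel, and Lemma~\ref{lem:determinant} for the determinant step.

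\emph{Surplus bound.} Put $R=r-b$. Since $\gcd(h,b)$ divides $hr-kb=\epsilon$, the kernel $P_h(b)$ is stable, and the residual $Q$, of rank $R$ and degree $2(k-h)$, is semistable by hypothesis. Take a subsheaf of the model $M$ of rank $t$ and degree $e$. Let $u$ be the rank of its intersection with the kernel and $v=t-u$ the rank of its image in $Q$. The kernel part has degree at most $2hu/b$, and the image has degree at most $2(k-h)v/R$; this includes full-rank images with a torsion defect, whose degree is bounded by $\deg Q$. Using $hr-kb=\epsilon$ and $(k-h)r-kR=-\epsilon$, we get
\[
 re-2kt\le2\epsilon\Bigl(\frac ub-\frac vR\Bigr).
\]
When $0<u<b$, stability of the kernel gives $\deg\le(2hu-1)/b$, which improves this bound by at least $r/b$. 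So every positive integral surplus is at most $2$. Surplus exactly $2$ needs $(u,v)=(b,0)$ when $\epsilon=1$, or $(u,v)=(0,R)$ when $\epsilon=-1$; that is, the whole higher-slope factor. Its degree is the least integer strictly above the target slope at its rank, because $2<r$. Hence a specialized generic Harder--Narasimhan subsheaf of that rank has zero saturation gain and zero image defect. It is therefore the kernel itself, or maps isomorphically onto $Q$. Lemma~\ref{lem:determinant} now excludes it, since the determinants in \eqref{eq:model-determinants} vary with $D_q$.

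\emph{Surplus one.} If $r$ is even, then $re-2kt$ is even, so surplus one is impossible and we are done. If $r$ is odd, surplus one occurs only at the unique rank $t_*$ with $2kt_*\equiv-1\pmod r$. Since $2kb\equiv-2\epsilon\pmod r$, this congruence reads $2t_*\equiv\epsilon b\pmod r$.

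A proper intersection $0<u<b$ cannot produce surplus one. For $\epsilon=1$ its surplus is at most $(2u-r)/b<1$, and for $\epsilon=-1$ it is at most $2-(2u+r)/b<1$. Also $u=0$ with $\epsilon=1$ gives nonpositive surplus, and so does $u=b$ with $\epsilon=-1$.

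For $\epsilon=1$ we are left with $u=b$. Surplus at least one then needs $v\le R/2$, so $t_*\le(r+b)/2<r$. The congruence forces $t_*=(r+b)/2$; when $b$ is even this value is not an integer, and when $b$ is odd it gives $v=R/2$.

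For $\epsilon=-1$ we are left with $u=0$. Surplus at least one needs $v\ge R/2$, so $t_*\ge(r-b)/2$, and the congruence with $t_*\le R$ forces $t_*=R/2$, which again requires $b$ odd.

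In both cases $R=r-b$ must be even, so $r$ and $b$ are both odd. Substituting back gives $e=k+\epsilon h$ and $re-2kt=1$. The image in the residual then has degree $e-2h=k-h$ when $\epsilon=1$, or $e=k-h$ when $\epsilon=-1$, as stated.

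The delicate step is excluding the whole factor. One must check that the numerical bounds leave no saturation gain and no image defect at that rank, so that Lemma~\ref{lem:determinant} really applies. The congruence bookkeeping is routine.
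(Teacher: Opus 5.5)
Your proposal is correct and follows the paper's own proof essentially step for step: the surplus bound from kernel stability and residual semistability, the extra loss $r/b$ for proper intersections, the moving-determinant exclusion of the whole higher factor, parity for even $r$, and the congruence $2t\equiv\epsilon b\pmod r$ that pins down the surplus-one type. The one step you leave implicit is the last one. At $(u,v)=(b,R/2)$ or $(0,R/2)$ the surplus bound equals exactly one, so surplus one forces equality in both degree bounds; for $\epsilon=1$ this makes the rank-$b$ intersection have degree $2h$, hence equal to the whole kernel, and only then is your image-degree computation $e-2h=k-h$ justified.
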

\begin{proof}
Write $R=r-b$ and filter a candidate by kernel intersection rank
$u$ and residual image rank $v$, with $u+v=t$. Stability of the
primitive kernel and semistability of the residual imply
\[
 re-2kt\le2\epsilon\left(\frac{u}{b}-\frac{v}{R}\right),
\]
with an additional loss of at least $r/b$ when $0<u<b$.
A positive integral surplus is at most two. Surplus two forces
the whole higher factor, with its exact degree and zero image
defect. Since $2<r$, this degree is the least strict integer at
that rank; saturation gain is also zero. Moving determinants
exclude this case.

For even $r$ every surplus is even. For odd $r$, surplus one has
a unique possible rank modulo $r$. Proper positive intersection
cannot attain it: its bound is $(2u-r)/b<1$ for positive sign,
and $2-(2u+r)/b<1$ for negative sign. Positive sign therefore
requires $u=b$, $v\le R/2$, and $2t\equiv b\pmod r$.
Negative sign requires $u=0$, $t=v\le R$, and
$2t\equiv-b\pmod r$. These conditions force $b$ odd and give
exactly the displayed rank and degree, with $v=R/2$. Surplus one then
requires equality in both bounds, which gives the stated
intersection and image.
\end{proof}

\section{A finite-width reduction in arbitrary genus}\label{sec:uniform-propagation}

We now package the balanced and Farey steps of the preceding proof
into a theorem with an arbitrary genus parameter $g\ge3$. A wider
initial strip makes it possible to handle the exceptional diagonals
and the odd-rank remainder internally. The proof does not use the
genus-two classification. The result separates the infinite induction from the geometric work
needed to establish the starting strip. The genus-two proof already
contains its four steps: shifted diagonals, balanced splits, Farey
splits, and auxiliary rounding. Here the hypothesis on the complete
row removes the three extra finite targets caused by the genus-two
exceptions.

\begin{theorem}\label{uni:finite-width}
Fix an integer \(g\ge3\), and put \(q=2g\). Assume that the normal
bundle of a general nonspecial Brill--Noether curve is semistable in
every ambient dimension \(n\ge3\) whenever
\[
 q\le k=d-n+g\le3q-4.
\]
Then it is semistable for every \(n\ge3\) and every \(d\ge n+g\).
\end{theorem}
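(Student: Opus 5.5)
The plan is to follow the proof of Proposition~\ref{g2:prop:theorem} verbatim, with the genus-two ingredients replaced by Theorems~\ref{thm:models} and~\ref{thm:kernel-profile}, Lemmas~\ref{lem:determinant} and~\ref{lem:farey}, and the integer Lemma~\ref{g2:prop:rounding}. We induct on the weight $k\ge3q-3$, simultaneously in all ranks $r\ge2$. Weights with $q\le k\le3q-4$ are the hypothesis. Ranks two and three come from \cite[Theorem 1]{CLV} in $\PP^3$ and from \cite{CS} in $\PP^4$. Integral slope is Lemma~\ref{lem:integral}, which has no exceptions for $g\ge3$.

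The first difference from genus two is that the complete row $k=q$ is included in the hypothesis in every rank. So no residual is ever an unstable complete type, and the three extra targets of genus two do not arise. The only constraint on a residual $(r-b,k-h)$ is admissibility, $k-h\ge q$; such a residual is then semistable by the hypothesis or by induction.

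First I would treat the shifted diagonals $k=jr\pm1$, by induction on $r$, as in Lemma~\ref{g2:prop:diagonals}. The rank-one model of Theorem~\ref{thm:models} with $h=j$ has residual $(r-1,j(r-1)\pm1)$. This residual is admissible, since $k\ge3q-3$ forces $k-j\ge q$ once $r\ge4$. The whole higher factor is excluded by Lemma~\ref{lem:determinant}. For odd $r=2m+1$ the extra candidate is excluded by the auxiliary model with $b=m+1$, $h=j(m+1)\pm1$, whose residual has rank $m$ and integral-slope weight $jm$. That residual is semistable by Lemma~\ref{lem:integral} provided $jm\ge q$, i.e.\ it is admissible. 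When $jm<q$, one has $k=j(2m+1)\pm1<3q$, and the width of the strip should make the target itself lie in $k\le3q-4$. I would verify this inequality carefully at the boundary; it is the reason for the choice $3q-4$.

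Next come the balanced case and the Farey case. If $z=\gcd(k,r)>1$, take $h=k/z$, $b=r/z$. The residual weight is $k-k/z\ge k/2\ge q$, so the extension has two semistable pieces of equal slope. If $\gcd(k,r)=1$, take the two Farey pairs $h_\pm/b_\pm$, which satisfy $h_++h_-=k$. At least one has $h\le k-q$, since $k\ge2q$. If both are admissible and $r$ is odd, one kernel rank is even and Lemma~\ref{lem:farey} finishes. Otherwise exactly one pair $(s,b)$ is usable, its complement is inadmissible, so $s\le q-1$, and $r,b$ are odd. The single remaining type $(t,e)=((r+\varepsilon b)/2,k+\varepsilon s)$ is then killed by an auxiliary model. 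Take $m=\lceil q/s\rceil$, $H=k-ms$, $B=r-mb$. The residual has rank $mb$ and weight $ms$, with $q\le ms\le2q-2\le3q-4$, so it is covered by the hypothesis. The kernel $P_H(B)$ has profile $\Phi_{H,B}$ by Theorem~\ref{thm:kernel-profile}. The inequalities $B>b$ and $B>(m-1)b$ follow from $k>s(2m-1)$, and the convolution bound together with Lemma~\ref{g2:prop:rounding} caps the degree at $e-1$.

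The main obstacle is the arithmetic check that every inequality holds for all $k\ge3q-3$. In particular I need $k>s(2m-1)$ with $s(2m-1)\le2ms-s<2q+2s-s\le 3q-2$, and I need $H\ge q$ or otherwise admissible, together with the diagonal boundary cases. I expect $3q-4$ to be exactly the threshold making $k-s(2m-1)>0$ and $H\ge1$ hold uniformly, using $\gcd(k,s)=1$ to rule out equality. I would check the extremal $s$ (namely $s=1$, $s=q-1$, and $s$ just below a divisor of $q$) by hand.
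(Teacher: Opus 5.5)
Your proposal is correct and is essentially the paper's proof. It treats the shifted diagonals first by rank induction, using the rank-one model and then the auxiliary model with $b=m+1$. It then handles balanced splits, Farey splits via Lemma~\ref{lem:farey}, and the remaining odd-rank candidate through the auxiliary model with $m=\lceil q/s\rceil$ and Lemma~\ref{g2:prop:rounding}.

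The arithmetic you left open closes as in the paper:
\begin{itemize}
\item In the diagonal case, $jm\le q-1$ with $m\ge2$ forces $j\le(q-2)/2$. Hence $k\le5q/2-2\le3q-4$; the paper phrases this as a contradiction proving $jm\ge q$.
\item Your bound $s(2m-1)\le3q-2$ sharpens to $3q-3$. For $s\le q/2$ one has $(m-1)s\le q-1$, giving at most $5q/2-2$. For $s>q/2$ one has $m=2$ and $3s\le3q-3$.
\item Equality $k=s(2m-1)$ is excluded by $\gcd(k,s)=1$, as you suggest; the paper instead uses that $k-s$ is odd.
\item For the auxiliary model you only need $1\le H\le k-q$. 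The upper bound is admissibility $ms\ge q$, which you already have; $H\ge q$ is not needed.
\end{itemize}
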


We use the pointing models of Theorem~\ref{thm:models}, the kernel
profiles of Theorem~\ref{thm:kernel-profile}, the Farey calculation of
Lemma~\ref{lem:farey}, and the determinant exclusion of
Lemma~\ref{lem:determinant}. The other inputs are the nonspecial
semistability theorem in \(\PP^3\) \cite[Theorem 1]{CLV}, a
large-degree semistability theorem in \(\PP^4\)
\cite[Theorem 1.1]{CS}, and semistability when the normal slope is
integral. The unstable exceptions of \cite{CLV} lie outside the
nonspecial range for $g\ge3$. In \(\PP^4\), \cite[Theorem 1.1]{CS}
gives semistability whenever $d\ge g+9$; we need it only for weights
$k\ge3q-3$, that is, for $d=k+4-g\ge5g+1$, which is at least $g+9$.

For the integral-slope input use the normal-bundle interpolation
theorem \cite[Theorem 1.4]{LV}; none of its exceptions is a
nonspecial type of genus at least three. To recall the implication,
interpolation provides an effective divisor of degree
\(\mu(N)+1-g\) for which \(N(-D)\) is acyclic. A subbundle of slope
greater than \(\mu(N)\) would have positive Euler characteristic after
this twist, contradicting acyclicity. Thus a normalized normal bundle
of rank \(r=n-1\) and degree \(2k\) is semistable when \(r\mid2k\).
See also \cite[Corollary 2.10]{CS}. We first establish the
rank-one boundary cases by induction on rank, and then induct on
$k$ simultaneously in all ranks.

\begin{lemma}\label{uni:shifted-diagonals}
Under the hypothesis of Theorem~\ref{uni:finite-width}, all the
admissible types \(k=jr-1\) and \(k=jr+1\), for \(j\ge1\), are
semistable.
\end{lemma}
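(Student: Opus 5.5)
We follow the proof of Lemma~\ref{g2:prop:diagonals}, with the strip hypothesis replacing the genus-two inputs. Fix $j\ge1$ and a sign, and induct on $r$, asserting the result only for admissible types $k=jr\pm1\ge q$. If $k\le3q-4$, the type lies in the assumed strip. Rank two is covered by \cite[Theorem 1]{CLV}. Rank three at weights $k\ge3q-3$ is covered by \cite[Theorem 1.1]{CS}, as noted above. So we may suppose $r\ge4$ and $k\ge3q-3$.

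\textbf{The rank-one model.} Apply Theorem~\ref{thm:models} with $b=1$ and $h=j$. The kernel is a line bundle of degree $2j$. The residual has rank $R=r-1$ and weight $k-j=jR\pm1$. The inequality $k-j\ge(3k-1)/4$ shows that the residual weight is at least $q$ once $k\ge3q-3$, since $(9q-10)/4\ge q$; and $h=j\le c$ holds by the same estimate. The residual is therefore admissible. It is semistable by the rank induction, or directly by the hypothesis when its weight lies in the strip.

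\textbf{The candidate computation.} Subtract $2(j-1)$ times the rank and run the same computation as in genus two. The kernel profile is two in rank one. The quotient caps are $2v-\lceil 2v/R\rceil$ for the minus sign and $2v+\lfloor 2v/R\rfloor$ for the plus sign. Comparing with $2(r\pm1)a/r$ leaves two kinds of candidate:
\begin{enumerate}[(i)]
\item the whole higher factor, which has surplus two;
\item when $r=2m+1$, the odd-rank type listed in the table of Lemma~\ref{g2:prop:diagonals}.
\end{enumerate}
Type (i) has the least strict degree at its rank and a unique allocation. It is excluded by Lemma~\ref{lem:determinant}, since both factor determinants vary with $q_1$.

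\textbf{Excluding the odd-rank type.} Use an auxiliary model with $b=m+1$ and $h=j(m+1)\pm1$. Its residual has rank $m$ and weight $jm$. The kernel profiles $\Phi_{jb\mp1,b}$ from Theorem~\ref{thm:kernel-profile} equal $2ju-2$ and $2ju$ at proper ranks. These bound every allocation by $e-1$, exactly as in genus two.

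\textbf{Main obstacle.} The difficulty is the residual of weight $jm$ in the auxiliary model. It must be admissible and semistable: the model needs $jm\ge q$ and $h\le c$. It then has integral slope, so it is semistable by interpolation whenever it is admissible. This is where the wide strip is used. Since $k=j(2m+1)\pm1\ge3q-3$, we get $jm\ge(3q-4-j)/2$. When $j$ is small relative to $q$ this gives $jm\ge q$.

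When $j$ is large, $m$ is small, and I would treat such cases separately:
\begin{itemize}
\item For $m=1$ we have $r=3$, which is already covered.
\item For $m\ge2$ we have $jm\ge 2j$, together with $j(2m+1)\ge 3q-4$. If this still gives $jm<q$, then $k<3q/2+\ldots$ lies inside the strip.
\end{itemize}
Checking this last inequality precisely, including $h\le c$ and $2\le b\le r-2$, is the step I expect to need care. No unstable complete residuals arise, because complete types for $g\ge3$ are handled by the hypothesis.
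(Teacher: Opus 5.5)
Your proposal follows the paper's proof essentially step for step. The shared steps are the rank induction along a fixed diagonal via the rank-one model with $h=j$, the whole-factor exclusion by moving determinants, and the auxiliary model $b=m+1$, $h=j(m+1)\pm1$, whose residual of weight $jm$ is semistable by integral slope.

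The step you flagged closes in one line, as in the paper. Since $r\ge4$ is odd, $m\ge2$ automatically, so $jm\le q-1$ would give $j\le(q-1)/2$ and hence $k\le 2jm+j+1\le(5q-3)/2<3q-3$, a contradiction. (Your ``$3q/2$'' should be roughly $5q/2$.) Thus $jm\ge q$, which is exactly $h\le k-q$, and $2\le b\le r-2$ amounts to $m\ge2$.
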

\begin{proof}
Fix \(j\), and induct on \(r\) for each sign. The assumed strip
settles \(k\le3q-4\), and the cited theorems in \(\PP^3\) and
\(\PP^4\) settle ranks two and three for \(k\ge3q-3\). So assume
\(r\ge4\) and \(k\ge3q-3\).
Use the rank-one pointing model with kernel degree \(2j\). It is
admissible, since
\[
 j\le\frac{k+1}{r}\le\frac{k+1}{4},\qquad
 k-j\ge\frac{3k-1}{4}\ge\frac{9q-10}{4}\ge q.
\]
Its quotient has rank \(R=r-1\), weight \(jR\pm1\), and is
semistable by the rank induction.

Here is the complete numerical calculation for this model. Subtract
\(2(j-1)s\) from the degree of an object of rank \(s\). For the
minus sign the quotient cap becomes
\(2v-\lceil2v/R\rceil\), and the kernel line has degree two.
Intersecting a subsheaf with the line gives intersection rank zero
or one. The resulting cap in rank \(s\) is
\[
 \max\left\{2s-\left\lceil\frac{2s}{R}\right\rceil,
 2s-\left\lceil\frac{2(s-1)}{R}\right\rceil\right\},
\]
with terms omitted when the quotient rank is outside \([0,R]\).
For the plus sign, replace both negative ceilings by positive floors.
Comparison with \(2(r\pm1)s/r\), including \(v=R\), gives the
following strict types. There is always the entire higher-slope
factor: the kernel line in the minus case and the quotient in the
plus case. Its surplus \(re-2ks\) is two, and its only allocation
has the full degree of that factor. Lemma~\ref{lem:determinant}
excludes this type. If \(r\) is even, no other strict type remains.
If \(r=2m+1\) is odd, the respective remaining types are
\[
 (s,e)=\bigl(m+1,2j(m+1)-1\bigr),\qquad
 (s,e)=\bigl(m,2jm+1\bigr).
\]
Both have surplus one.

We have \(m\ge2\) and \(jm\ge q\). Indeed, if \(jm\le q-1\),
then, because \(q\) is even and \(m\ge2\),
\(j\le\lfloor(q-1)/2\rfloor=(q-2)/2\). Even with the plus sign this
would give
\[
 k=2jm+j+1\le 2(q-1)+(q-2)/2+1
   =5q/2-2<3q-3,
\]
a contradiction.

Now choose a second model with
\[
 b=m+1,\qquad h=j(m+1)\pm1.
\]
Its quotient has rank \(m\) and weight \(jm\), hence is nonspecial
and semistable by integral slope. The inequalities
\(2\le b\le r-2\) and \(1\le h\le k-q\) hold. The proper-rank
kernel profiles are
\[
 \Phi_{jb-1,b}(u)=2ju-2\quad(0<u<b),\qquad
 \Phi_{jb+1,b}(u)=2ju\quad(0\le u<b).
\]
For the minus candidate \(s=b\), intersection rank zero is impossible
because the quotient has rank \(m\). Every proper positive
intersection has total degree at most \(2js-2\); the full-kernel
intersection has exactly this same upper bound. Both are below
\(e=2js-1\). For the plus candidate \(s=m<b\), every intersection
is proper, giving degree at most \(2js<e=2js+1\). Thus the odd-rank
candidate is excluded, completing the rank induction.
\end{proof}

\begin{proof}[Proof of Theorem~\ref{uni:finite-width}]
The shifted diagonals are now available before the induction on
weight. Induct on \(k\ge3q-3\), simultaneously in every rank.
Ranks two and three are already known, so take \(r\ge4\).

First suppose \(z=\gcd(k,r)>1\). Put \(h=k/z\), \(b=r/z\).
If \(b=1\), integral slope settles the target. Otherwise
\(b,r-b\ge2\) and \(k-h\ge k/2\ge q\). The corresponding pointing
model has a semistable kernel and a smaller-weight semistable quotient of the
same slope. Its extension is semistable, proving the target by
specialization.

Suppose therefore that \(\gcd(k,r)=1\). The two complementary
Farey pairs satisfy
\[
 hr-kb=\epsilon\in\{-1,1\},\qquad
 0<h<k,\quad0<b<r.
\]
If either kernel or complementary rank is one, then
\(k=jr\pm1\) for some \(j\ge1\), and
Lemma~\ref{uni:shifted-diagonals} applies. Otherwise both ranks of
both pairs lie between two and \(r-2\). At least one pair is
admissible, since its weight is at most \(k/2\le k-q\).
Lemma~\ref{lem:farey}, followed by
Lemma~\ref{lem:determinant} for the unique whole-factor type, proves
the target when \(r\) is even or an admissible kernel rank is even.
When \(r\) is odd and both pairs are admissible, one kernel rank
is even. Thus the only case still requiring proof has an admissible
pair \((s,b)\) with
\begin{equation}\label{uni:last-configuration}
 1\le s<q,\qquad sr-kb=\epsilon,\qquad r,b\text{ odd},
\end{equation}
and sole possible strict type
\begin{equation}\label{uni:last-candidate}
 t=\frac{r+\epsilon b}{2},\qquad e=k+\epsilon s,
 \qquad re-2kt=1.
\end{equation}
The inequality \(s<q\) expresses inadmissibility of the
complementary weight \(k-s\). Reduction modulo two also gives
\(k-s\) odd.

Set
\[
 m=\left\lceil\frac q s\right\rceil,
 \qquad H=k-ms,\qquad B=r-mb.
\]
Then
\begin{gather}
 q\le ms\le2q-2,\qquad
 Hr-kB=-m\epsilon,\qquad Hb-sB=-\epsilon,
 \label{uni:aux-identities}\\
 s\bigl(B-(m-1)b\bigr)
   =\bigl(k-s(2m-1)\bigr)b+\epsilon.
 \label{uni:aux-positive}
\end{gather}
For completeness, the elementary multiplier bound is
\(s(2\lceil q/s\rceil-1)\le3q-3\). If \(s\le q/2\), the
inequality \((m-1)s\le q-1\) gives a bound
\(2q-2+s\le5q/2-2<3q-3\). If \(s>q/2\), then \(m=2\) and
the bound is \(3s\le3q-3\), with equality only for \(s=q-1\).
Consequently equality \(k=s(2m-1)\) in the present range would
force \(k=3q-3\), \(s=q-1\), contrary to \(k-s\) odd. Thus
\[
 k>s(2m-1),\qquad B>(m-1)b\ge b\ge3.
\]
The second inequality follows directly from
\eqref{uni:aux-positive}, since \(b\ge3\).
In particular \(H\ge1\), \(H\le k-q\), and the model with
kernel of rank \(B\) and degree \(2H\) is admissible: its quotient has
rank \(mb\ge6\) and weight \(ms\in[q,2q-2]\), already in the assumed
strip. Moreover \(\gcd(H,B)=1\) by
\eqref{uni:aux-identities}.

All the hypotheses of Lemma~\ref{g2:prop:rounding} now hold,
with $r=B+mb$ and $k=H+ms$. The arithmetic lemma is independent of
the curve and of its genus. At every feasible kernel-intersection
rank $u$, it gives
\begin{equation}\label{uni:rounding}
 \Phi_{H,B}(u)\le
 \left\lfloor\frac{2su-\epsilon-1}{b}\right\rfloor.
\end{equation}
If $v=t-u$ is the rank of the image in the quotient, semistability
of the auxiliary residual bounds its degree by $\lfloor2sv/b\rfloor$.
Consequently \eqref{g2:prop:rounding-sum} gives
\[
 \deg S\le\Phi_{H,B}(u)+\left\lfloor\frac{2sv}{b}\right\rfloor
 \le e-1,
\]
contradicting \eqref{uni:last-candidate}.
This closes the weight induction.
\end{proof}

\section{An independent large-dimension estimate}
\label{sec:affine}
The finite-width theorem still has infinitely many ranks in each
starting band. The estimate below supplies all sufficiently large
ranks without assuming that the starting bands have already been
settled. Its proof uses the gap of
Corollary~\ref{high:general-curves} instead of the hypothesis of
Theorem~\ref{uni:finite-width}.

\subsection{The first excess bands and the half-rank exclusion}

Throughout this section put
\[
 q=2g,\qquad \alpha=5g-3,\qquad\gamma=4g-3=2q-3.
\]
The normalized normal of weight $k=d-n+g$ and rank $r=n-1$
has degree $2k$.  We use the pointing models of
Theorem~\ref{thm:models}: a kernel of rank $b$ and degree $2h$, and a
normal quotient of rank $r-b$ and degree $2(k-h)$.  The kernel is semistable
and has the refined profile of Theorem~\ref{thm:kernel-profile}.
Generality on a fixed general curve with fixed general
polarization, and the specialization of subsheaf degrees and
determinants, are as in Theorem~\ref{thm:models} and
Lemma~\ref{lem:determinant}.  In particular finitely many nonempty
open conditions on the residual series can be imposed together.

For a subsheaf of rank $u$ and degree $v$, call
$2ku-rv$ its deficit relative to the target slope.  Deficits add
under the intersection-and-image sequence.  A negative total
deficit is exactly a strict destabilizing type.

\begin{proposition}\label{high:first-bands}
Let $g\ge3$.  For $q\le k\le2q$, a general nonspecial normalized
normal bundle is semistable whenever
\begin{equation}\label{high:affine-region}
 r\ge\alpha k-\gamma.
\end{equation}
\end{proposition}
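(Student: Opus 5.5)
We use a single pointing model of Theorem~\ref{thm:models} whose quotient is a \emph{complete} embedding. We then control the quotient by the positive-degree gap of Corollary~\ref{high:general-curves} instead of mere semistability. Write $k=q+c$ with $0\le c\le q$.

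\emph{The case $c=0$.} Here the target is complete. Since $r\ge\alpha q-\gamma\ge3g$, it follows directly from Corollary~\ref{high:general-curves}.

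\emph{The case $c\ge1$.} Take $h=c$ and choose $b$ with $b/r$ close to $c/k$, say $b=\lceil cr/k\rceil$ or its neighbour, so that $1\le b\le r-2$. The residual then has weight $q$, so it is complete, of rank $R=r-b$. The region \eqref{high:affine-region} guarantees $R\ge3g$ (indeed $R$ is roughly $rq/k$). So Corollary~\ref{high:general-curves} gives two facts for the residual $Q$: every subsheaf of rank $v$ has degree at most $4gv/R$, and for positive degree $f$ and proper rank, $(4g-2)v\ge f(R-g)$. The kernel $P_c(b)$ has the profile $\Phi_{c,b}$ of Theorem~\ref{thm:kernel-profile}.

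Filter a candidate destabilizer $S$ of rank $t$ and degree $e$ through the model, with kernel intersection of rank $u$ and image of rank $v$. We then bound the total deficit $2kt-re$ from below, term by term in deficits.

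\emph{The image term.} If the image has positive degree $f$ and proper rank, the gap gives $f\le(4g-2)v/(R-g)$. The deficit $2kv-rf$ is then nonnegative once $2k(R-g)\ge r(4g-2)$. Using $R\approx rq/k$, this is roughly $2q r-2kg\ge(4g-2)r$, i.e.\ $2r\ge2kg$, i.e.\ $r\ge kg$. The region $r\ge(5g-3)k-(4g-3)$ supplies this with room to spare; the extra room absorbs the rounding in $b$. If the image has degree $\le0$, its deficit is trivially $\ge0$. The endpoint cases are the full image $Q$ and a proper full-rank image; these are handled by the endpoint $\deg Q=4g$.

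\emph{The kernel term.} The intersection with the kernel has deficit $2ku-r\Phi_{c,b}(u)\ge 2ku-2rcu/b$. This is nonnegative when $b\ge cr/k$, so I would choose $b=\lceil cr/k\rceil$. The small loss from the other rounding falls on the quotient side and is covered by the slack above.

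\emph{Remaining strict types.} After these estimates, the only possible strict types are those with small deficit, coming from the whole quotient (or a subsheaf whose image is all of $Q$) when $cr/k$ is not an integer. With $b=\lceil cr/k\rceil$, the kernel slope is at most the target, so the quotient slope is at least the target. The whole quotient is the unique allocation attaining a strict degree, and its degree is the least strict integer at its rank. So Lemma~\ref{lem:determinant} excludes it by moving $D_q$.

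\emph{Main obstacle.} I expect the main difficulty to be the mixed case. This is a subsheaf containing a partial kernel intersection together with an image whose degree is positive but has rank close to $R/2$: there the slack from the gap must compensate the rounding of $\Phi_{c,b}$ at the level of integer deficits. I would first try to show that the linear gap, evaluated at $v\le R-1$, yields a deficit at least $2k(R-g)-r(4g-2)$ times a positive factor, which already exceeds the maximal kernel loss $r$. Checking that this inequality holds exactly on $r\ge\alpha k-\gamma$ is where the constants $\alpha=5g-3$ and $\gamma=4g-3$ should come from, and this bookkeeping is what I would grind out carefully.
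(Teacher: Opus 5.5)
\paragraph{Gap: the whole quotient is not the only strict type.}
Your setup is the right one: a pointing model with complete quotient, the gap of Corollary~\ref{high:general-curves} on the quotient, the kernel profile, and moving determinants. But the step \emph{Remaining strict types} fails with a single model.

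Put $\delta=cr-k\lfloor cr/k\rfloor$. In your model with $b=b_+=\lceil cr/k\rceil$, the whole quotient has surplus $2(k-\delta)$, which need not be small. A rank-$u$ kernel intersection, however, only has deficit $2ku-r\Phi_{c,b_+}(u)$, and the profile allows this to be as small as about $2u(k-\delta)/b_+<2(k-\delta)$. So a subsheaf whose image is all of $Q$ and which also meets the kernel in positive rank can still be strict. Your claim that the whole quotient is the unique strict allocation is therefore false.

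Concretely, take $g=3$, $k=8$, $r=87=\alpha k-\gamma$. Then $b_+=22$, $\delta=6$ and $R=65$. Besides $(t,e)=(65,12)$, of surplus $4$, the value $\Phi_{2,22}(11)=2$ (attained on the block specialization $P_1(11)\oplus P_1(11)$) permits $(t,e)=(76,14)$. Its surplus is $87\cdot14-16\cdot76=2>0$. A generic Harder--Narasimhan subbundle of this type is not forced to specialize to a whole factor, so Lemma~\ref{lem:determinant} says nothing about it. The same problem occurs at $c=q$ with $r$ odd.

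\paragraph{Missing idea: use the complementary model $b_-=\lfloor cr/k\rfloor$ as well.}
This model has quotient rank $(qr+\delta)/k$. There every kernel intersection has deficit at least $-2\delta$, and every nonzero image has deficit at least $2\delta$:
\begin{itemize}
\item exactly $2\delta$ for the whole quotient;
\item at least $2k$ in nonpositive degree;
\item at least $2\delta+r$ for a proper full-rank image;
\item at least $f(r-kg+\delta)/(q-1)\ge2\delta$ for a positive-degree proper image, by the gap.
\end{itemize}
The last inequality says $r-kg-(2q-3)\delta\ge0$ for all $\delta\le k-1$, which is exactly $r\ge(5g-3)k-(4g-3)$. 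This is where $\alpha$ and $\gamma$ come from, not the mixed case you single out. In the $b_+$ model proper images cause no trouble at all: kernel deficits are nonnegative there, and positive-degree proper images have positive deficit.

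So strict types of the $b_-$ model have zero image and rank at most $b_-$. Your $b_+$ analysis correctly shows that strict types there have full image and rank at least $r-b_+$. A strict destabilizer of the general bundle specializes, by Quot-closedness, to a strict type in both models. Since $b_-+b_+<r$ when $c<q$, no such destabilizer exists. Only for $c=q$ with $r$ odd do the two windows meet, at rank $(r-1)/2=b_-$. There strictness forces degree $2q$, hence the whole $b_-$ kernel, and the moving-determinant argument excludes it.
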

\begin{proof}
At $k=q$, the target is complete and
$r\ge10g^2-10g+3\ge3g$, so
Corollary~\ref{high:general-curves} applies.  For $k=q+c$,
$1\le c\le q$, use the two pointing models with $h=c$ and
\[
 b_- =\lfloor cr/k\rfloor,\qquad b_+=\lceil cr/k\rceil.
\]
Their normal quotients are complete.  If $cr/k$ is integral,
the kernel and quotient have equal slope and are semistable,
so their extension is semistable.  Otherwise put
$\delta=cr-kb_-\in\{1,\ldots,k-1\}$.  The quotient ranks are
\begin{equation}\label{high:first-residuals}
 R_-=(qr+\delta)/k,\qquad
 R_+=(qr+\delta-k)/k.
\end{equation}
Both models exist, and Corollary~\ref{high:general-curves} applies to
their quotients.  Indeed
\[
 \frac rk\ge5g-3-\frac{4g-3}{k}>5g-5,
\]
so both kernel ranks are at least $5g-5\ge10$, and
\[
 R_\pm\ge qr/k-1\ge10g^2-10g+2\ge5g+2,\qquad
 R_\pm\ge2g^2.
\]

In the $b_-$ model, every kernel intersection has deficit at
least $-2\delta$; in fact, for rank $u$ its deficit is at least
$-2\delta u/b_-$.  In the $b_+$ model every kernel intersection
has nonnegative deficit.  For a positive-degree proper-rank
quotient image of rank $t$ and degree $f$, the gap of
Corollary~\ref{high:general-curves} and \eqref{high:first-residuals}
give, respectively,
\begin{equation}\label{high:first-image-deficits}
 2kt-rf\ge
 \frac{f(r-kg+\delta)}{q-1},\qquad
 2kt-rf\ge
 \frac{f(r-kg+\delta-k)}{q-1}.
\end{equation}
The first is at least $2\delta$, because
\[
 r-kg-(2q-3)\delta
 \ge(2q-3)(k-1-\delta)\ge0.
\]
The second is positive, because
\[
 r-k(g+1)+\delta\ge4(g-1)(k-1)>0.
\]
A nonzero image of nonpositive degree has deficit at least
$2k>2\delta$ in both models.  A proper full-rank image has
degree at most $2q-1$, so its deficits are at least
\[
 2\delta+r,\qquad -2(k-\delta)+r,
\]
respectively.  The latter is positive because
\eqref{high:affine-region} implies $r>2k-2$.

It follows that a strict type in the $b_-$ model can only have
zero image: a whole image has deficit $2\delta$, offsetting
even the full possible kernel loss.  Thus its rank is at most
$b_-$.  In the $b_+$ model a strict type can only have the
whole quotient as image, so its rank is at least $r-b_+$.
For $c<q$ these intervals are disjoint, since
$(q-c)r>k$ and hence $b_-+b_+<r$.

For $c=q$, even $r$ is the balanced case.  If $r$ is odd, the
only common rank is $b_-=(r-1)/2$.  Strictness forces degree
at least $2q$, while the $b_-$ kernel has degree exactly $2q$.
Therefore the only specialized allocation is the entire kernel,
of degree $2q$, with zero torsion defect.  Its positive surplus
is $2q<r$, so saturation of a specialization cannot raise its
degree without exceeding this cap.  The moving determinant
exclusion of Lemma~\ref{lem:determinant} rules out this type.
Concretely the whole kernel determinant varies as
$\OO(2D_p+2D_q)\Lambda^{-b_-}$ while the fixed generic
Harder--Narasimhan determinant cannot vary.  The construction of
Theorem~\ref{thm:models} permits this variation while keeping the
residual series general.  This proves the proposition, including the
endpoint $c=q$.
\end{proof}

\begin{lemma}\label{high:half-rank}
Let $g\ge3$, $q<a<2q$, and let $R$ be even with
$R>\alpha a-\gamma$.  A general nonspecial normalized normal
of rank $R$ and degree $2a$ has no rank-$R/2$ subsheaf of
degree at least $a$.
\end{lemma}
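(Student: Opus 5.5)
The plan is to reuse the $b_+$ pointing model from the proof of Proposition~\ref{high:first-bands}, now at the target $(R,a)$, and to show that at the single rank $R/2$ every allocation has \emph{strictly positive} deficit. Openness (relative Quot properness) then transfers the strict bound $\deg<a$ to the general member. Equality of slope is exactly what is being excluded, so we cannot use the $b_-$ model, whose kernel loss is only balanced and not beaten. We also cannot use moving determinants, which handle only strict destabilizers. The $b_+$ model avoids both problems, since every kernel intersection there has nonnegative deficit.

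\textbf{Setup.} Write $a=q+c$ with $1\le c<q$. Take $h=c$ and $b_+=\lceil cR/a\rceil$, so the residual is complete, of rank $R_+=R-b_+$ and weight $q$. Put $\delta=cR-ab_-$; if $cR/a$ is integral, take $b_+=cR/a$ and $\delta=a$, so that the formula $R_+=(qR+\delta-a)/a$ of \eqref{high:first-residuals} still holds. From $R>\alpha a-\gamma$ we get, exactly as in Proposition~\ref{high:first-bands}, that $R_+\ge2g^2$ and $R_+\ge5g+2$. Hence Theorem~\ref{thm:models} supplies the model, Theorem~\ref{thm:kernel-profile} gives a semistable kernel of slope at most $2a/R$, and Corollary~\ref{high:general-curves} gives both semistability and the gap \eqref{high:gap} for the quotient.

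\textbf{Deficits in the $b_+$ model.} Filter a rank-$R/2$ subsheaf by its kernel intersection (rank $u$) and its image in the quotient (rank $t$, degree $f$). The kernel part has deficit $\ge0$, because the kernel slope $2c/b_+$ is at most $2a/R$. For the image there are four cases.
\begin{enumerate}
\item A positive-degree proper image has deficit at least $f(R-ag+\delta-a)/(q-1)$. This is positive because $R-a(g+1)+\delta>0$, which is the second displayed inequality of Proposition~\ref{high:first-bands}.
\item A nonzero image of nonpositive degree has deficit at least $2at>0$.
\item A proper full-rank image has degree at most $2q-1$, so its deficit is at least $R-2(a-\delta)$. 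This is positive since $R>2a-2$.
\item If the image is zero, the subsheaf lies in the kernel, so $R/2\le b_+$. If the image is the whole quotient, then $R/2\ge R-b_+$.
\end{enumerate}

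\textbf{Key step: the rank window.} It remains to show $b_+<R/2$, which rules out both alternatives in case 4. Since $b_+\le cR/a+1$, it suffices that $(q-c)R>2a$, i.e.\ $(a-2c)R>2a$ with $a-2c=q-c\ge1$. This holds because $R>\alpha a-\gamma>2a$. This rank comparison is the step I expect to need the most care: the boundary values $c=q-1$ and the integral case $\delta=a$ must be checked, and one must confirm that the inequality is strict rather than an equality.

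\textbf{Conclusion.} With $0<t<R-b_+$ forced, every allocation at rank $R/2$ has positive total deficit. Thus the special member has no rank-$R/2$ subsheaf of degree $\ge a$, and openness of the finitely many bounds completes the argument.
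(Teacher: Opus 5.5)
Your argument is correct, but it uses the other of the two models: the paper's proof takes $b=\lfloor cR/a\rfloor$, i.e.\ the $b_-$ model. Your remark that the $b_-$ model cannot be used is mistaken.

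In the paper's proof, $\delta=cR-ab\in\{0,\ldots,a-1\}$ and the complete quotient has rank $T=(qR+\delta)/a$. The deficits are as follows:
\begin{itemize}
\item kernel intersections have deficit at least $-2\delta$;
\item every nonzero image other than the whole quotient has deficit strictly greater than $2\delta$;
\item the whole quotient has deficit exactly $2\delta$.
\end{itemize}
The strict inequality for proper images comes from $R-ag+\delta-2\delta(q-1)=R-\alpha a+\gamma(a-\delta)>0$, and this is where the full hypothesis $R>\alpha a-\gamma$ is used. So a rank-$R/2$ subsheaf of nonpositive deficit would need zero image (rank $\le b$) or the whole quotient as image (rank $\ge T$). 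Both are impossible because $b<R/2<T$. The exact balance between the kernel loss and the whole quotient is therefore harmless, for the same reason as in your argument: the rank window.

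What your choice of the $b_+$ model buys is that kernel deficits are nonnegative, so you only need image deficits to be positive. That requires only $R>a(g+1)-\delta$, $R>2a$, and a residual rank of at least $3g$. Your proof therefore works under a much weaker hypothesis, roughly $R\ge(g+1)a$, whereas the paper's estimate needs the full affine region. The price is the slightly tighter window $b_+<R/2$, which you verify correctly via $(q-c)R>2a$. The convention $\delta=a$ in the integral case is consistent with \eqref{high:first-residuals} and with all four of your image cases.
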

\begin{proof}
Set $c=a-q$, $b=\lfloor cR/a\rfloor$, and
\[
 \delta=cR-ab\in\{0,\ldots,a-1\},\qquad
 T=R-b=(qR+\delta)/a.
\]
Since $R/a>\alpha-2$, one has $b\ge\alpha-2\ge10$,
$b<R/2<T$, and $T>q(\alpha-2)\ge5g+2$.
Use the pointing model with kernel $(b,2c)$ and complete quotient
of rank $T$.  Relative now to the target $(R,2a)$, every kernel
intersection has deficit at least $-2\delta$.
By Corollary~\ref{high:general-curves}, every positive proper-rank
quotient image has deficit at least
\[
 \frac{f(R-ag+\delta)}{q-1}>2\delta,
\]
because
\[
 R-ag+\delta-2\delta(q-1)
 =R-\alpha a+\gamma(a-\delta)>0.
\]
Nonzero nonpositive-degree images have deficit at least
$2a>2\delta$; proper full-rank images have deficit at least
$2\delta+R$.  Thus an image whose total deficit is nonpositive
can only be zero or the whole quotient.  Its rank is accordingly
at most $b$ or at least $T$, neither of which can be $R/2$.
The argument also covers $\delta=0$ and equality of the target
subsheaf slope, which is essential here.
\end{proof}

\subsection{Propagation of the affine range to all excesses}

\begin{theorem}\label{high:affine}
For every $g\ge3$, the normal bundle of a general nonspecial
Brill--Noether curve is semistable whenever
\[
 r\ge(5g-3)k-4g+3.
\]
Equivalently, if $c=d-n-g\ge0$, it is sufficient that
\[
 n\ge10g^2-10g+4+(5g-3)c.
\]
\end{theorem}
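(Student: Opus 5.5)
Proposition~\ref{high:first-bands} already settles the bands $q\le k\le2q$ in the region $r\ge\alpha k-\gamma$. For $k>2q$ I would induct on $k$, simultaneously in all ranks of the region, and run the same balanced, shifted-diagonal and Farey steps as in the proof of Theorem~\ref{uni:finite-width}. The hypothesis of that theorem is replaced by the inductive hypothesis restricted to the affine region. The first thing to check is that the region is closed under the splittings used.

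Suppose $h/b$ is near $k/r$ and $(r-b,k-h)$ is the residual. From $r\ge\alpha k-\gamma$ one gets
\[
 r-b\ge\alpha(k-h)-\gamma+\bigl(\alpha h-b\bigr),
\]
and $b\approx hr/k$. Thus $r-b\approx(r/k)(k-h)\ge(\alpha-\gamma/k)(k-h)$. I need $r-b\ge\alpha(k-h)-\gamma$, which amounts to
\[
 \gamma\bigl(1-(k-h)/k\bigr)\ge\text{rounding}.
\]
In the balanced case, $(r-b)/(k-h)=r/k$ exactly. Then $r-b=(r/k)(k-h)\ge\alpha(k-h)-\gamma(k-h)/k\ge\alpha(k-h)-\gamma$. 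For Farey neighbours the error in $b$ is at most $1/k$ times something small. Since $\gamma h/k\ge\gamma/k\cdot h$, and the Farey identity gives $|b-hr/k|=1/k$, the inequality survives for $h\ge1$ after a short check. Residual weights $k-h\ge q$ are needed for admissibility. When $k-h\le2q$, the residual lies in the starting bands, which hold by Proposition~\ref{high:first-bands} exactly in this region.

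The shifted diagonals $k=jr\pm1$ cannot occur with $j\ge1$ once $r\ge\alpha k-\gamma>k+1$. Indeed, the region forces $r>k$, so $j=0$ and only the $k=r\pm1$ type $b=1$ kernels could arise. These do not occur because $r\gg k$. So every coprime target has two Farey pairs, both with kernel and residual ranks at least two. Lemma~\ref{lem:farey}, together with Lemma~\ref{lem:determinant}, then finishes when $r$ is even or some admissible kernel rank is even.

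The main obstacle is the odd-rank configuration in which only one Farey pair is admissible, i.e.\ the complementary weight $k-s<q$. There the auxiliary rounding model of Lemma~\ref{g2:prop:rounding} needs a residual of weight $ms\in[q,2q-2]$ and rank $mb$. That residual must be semistable, but $(mb,ms)$ need not lie in the affine region, since $mb/ms=b/s\approx r/k$, so this should be fine up to rounding. Its semistability then follows from Proposition~\ref{high:first-bands}, provided $mb\ge\alpha ms-\gamma$. Failing that, I would instead exclude the unique surplus-one candidate $t=(r+\epsilon b)/2$ directly with the half-rank mechanism of Lemma~\ref{high:half-rank}. The candidate's image in the residual has rank $(r-b)/2$ and degree $k-h$ by Lemma~\ref{lem:farey}. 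The residual has weight $k-s$ and even rank $r-b$. When $k-s\in(q,2q)$, Lemma~\ref{high:half-rank} forbids exactly this image, because its degree is half of the residual degree at half rank. I expect this to be the intended route: the half-rank lemma was stated with the non-strict slope precisely for this use. The boundary case $k-s=q$ is handled by the complete gap of Corollary~\ref{high:general-curves}, and the remaining effort is verifying the region inequality $r-b>\alpha(k-s)-\gamma$.
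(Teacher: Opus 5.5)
\textbf{Gap: the trigger for switching from the auxiliary model to the half-rank lemma.} Most of your architecture matches the paper. You start from Proposition~\ref{high:first-bands}, induct on $k\ge 2q+1$ with balanced and Farey models, and check that residuals stay in the region. Writing $r=\alpha k+\eta$, the paper's check is $R-\alpha a+\gamma=(a\eta+\varepsilon+\gamma k)/k\ge(\gamma h+\varepsilon)/k>0$. You note that no rank-one kernels occur, and you close the odd-rank leftover with either the auxiliary rounding model or Lemma~\ref{high:half-rank}.

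The problem is how you choose between those last two tools. You fall back to the half-rank lemma only when the auxiliary residual $(mb,ms)$ leaves the region, and that never happens. Since $H=k-ms\ge 2q+1-(2q-2)=3$, one has $mb-\alpha ms+\gamma=(ms\eta-m\varepsilon+\gamma k)/k\ge(\gamma H-m\varepsilon)/k>0$. What can fail is the hypothesis $B>(m-1)b$ of Lemma~\ref{g2:prop:rounding}. By the identity $s(B-(m-1)b)=(k-s(2m-1))b+\varepsilon$, this hypothesis is equivalent to $k>s(2m-1)$. In Theorem~\ref{uni:finite-width} that inequality followed from $k\ge 3q-3$ and the parity of $k-s$. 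Here the induction starts at $k=2q+1$, and the inequality genuinely fails.

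Here is an example. Take $g=3$ and $(r,k)=(171,14)$, which lies in the region since $171\ge 159$. The admissible odd pair is $(s,b)=(5,61)$ with $\varepsilon=1$, and $m=2$ gives $B=49<61$. Then intersection rank $u=0$ is feasible, and the bound $\lfloor 2st/b\rfloor=\lfloor 1160/61\rfloor=19=e$ does not exclude the candidate $(t,e)=(116,19)$. Your plan would apply the rounding model here and obtain no contradiction.

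\textbf{Repair.} Use the tool you already named, but split on $k>s(2m-1)$, as the paper does. If the inequality holds, the rounding model works. If $k\le s(2m-1)$, then $a=k-s$ satisfies $q<a\le 2(m-1)s<2q$, because $(m-1)s<q$. The residual rank $R=r-b$ is even, and $R>\alpha a-\gamma$ strictly by the Farey computation above. So Lemma~\ref{high:half-rank} excludes the rank-$R/2$, degree-$a$ image that Lemma~\ref{lem:farey} forces. In the example, $R=110>99$ and $a=9\in(6,12)$.

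Equivalently, you may split on $k-s<2q$, since $k-s\ge 2q$ already implies $k>s(2m-1)$. Finally, your ``boundary case $k-s=q$'' never arises: $k-s$ is odd and at least $q+2$.
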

\begin{proof}
We induct on the weight with the balanced and Farey models, as in
Theorem~\ref{uni:finite-width}. The region \eqref{high:affine-region}
is preserved: every normal quotient used lies in it and has smaller
weight, and the auxiliary quotients lie in the range of
Proposition~\ref{high:first-bands}, which replaces the hypothesis of
Theorem~\ref{uni:finite-width}. When the auxiliary model is not
available, Lemma~\ref{high:half-rank} excludes the remaining
candidate.

\casehead{Step 1: setup.}
Proposition~\ref{high:first-bands} establishes $q\le k\le2q$.
Induct on $k\ge2q+1$, simultaneously in the indicated ranks.
Write $r=\alpha k+\eta$, where $\eta\ge-\gamma$.

\casehead{Step 2: balanced targets.}
If $z=\gcd(k,r)>1$, take the balanced split
$h=k/z$, $b=r/z$, and let $a=k-h$, $R=r-b$.  Then
\[
 b-\alpha h\ge-\gamma/z>-\gamma,\qquad
 R-\alpha a\ge-(z-1)\gamma/z>-\gamma.
\]
Both ranks are greater than $\alpha-\gamma=g$, hence at least
$g+1$, and $a\ge\lceil k/2\rceil>q$.  The corresponding pointing
model is admissible, and its smaller-weight normal quotient is
semistable by induction.  Its semistable kernel has the same slope,
so the extension is semistable.

\casehead{Step 3: Farey targets.}
Suppose $\gcd(k,r)=1$.  The two complementary Farey pairs
satisfy $hr-kb=\varepsilon\in\{-1,1\}$, with
$0<h<k$, $0<b<r$.  For $a=k-h$, $R=r-b$,
\begin{align}
 b-\alpha h+\gamma
 &=\frac{h\eta-\varepsilon+\gamma k}{k}
 \ge\frac{\gamma a-\varepsilon}{k}>0,
 \label{high:farey-affine-kernel}\\
 R-\alpha a+\gamma
 &=\frac{a\eta+\varepsilon+\gamma k}{k}
 \ge\frac{\gamma h+\varepsilon}{k}>0.
 \label{high:farey-affine-quotient}
\end{align}
In particular neither side has rank one; indeed both ranks are
at least $g+1$.  Every admissible normal quotient remains
strictly inside the affine region and has smaller weight.
At least one pair is admissible, since one weight is at most
$k/2\le k-q$.

Apply Lemma~\ref{lem:farey}, with its whole-factor exclusion by
Lemma~\ref{lem:determinant}.
It settles even $r$, an admissible even kernel rank, and the
case when both complementary pairs are admissible.  The only
remaining configuration has an admissible pair $(s,b)$ with
\begin{equation}\label{high:last-configuration}
 1\le s<q,\qquad sr-kb=\varepsilon,\qquad r,b\text{ odd},
\end{equation}
and the sole possible strict type
\begin{equation}\label{high:last-type}
 t=\frac{r+\varepsilon b}{2},\qquad e=k+\varepsilon s,
 \qquad re-2kt=1.
\end{equation}
In addition $k-s$ is odd.  The Farey calculation supplies its
allocation, which is needed below: for $\varepsilon=1$ the
kernel intersection is the whole kernel and the normal image
has rank $(r-b)/2$ and degree $k-s$; for $\varepsilon=-1$
the kernel intersection is zero and the normal image has that
same rank and degree. Put $m=\lceil q/s\rceil$.

\casehead{Step 4: the case $k>s(2m-1)$.}
Suppose $k>s(2m-1)$, and set
\[
 H=k-ms,\qquad B=r-mb.
\]
The identities
\begin{gather}
 q\le ms\le2q-2,\qquad Hr-kB=-m\varepsilon,\qquad
 Hb-sB=-\varepsilon,
 \label{high:aux-identities}\\
 s\bigl(B-(m-1)b\bigr)
   =\bigl(k-s(2m-1)\bigr)b+\varepsilon
 \label{high:aux-positive}
\end{gather}
show $B>(m-1)b\ge b\ge g+1$, $H\ge1$,
$H\le k-q$, and $\gcd(H,B)=1$.  The auxiliary pointing model
has kernel $(B,2H)$ and quotient $(mb,2ms)$, and is admissible.
Its quotient lies in the range of Proposition~\ref{high:first-bands},
since
\begin{equation}\label{high:aux-affine}
 mb-\alpha ms+\gamma
 =\frac{ms\eta-m\varepsilon+\gamma k}{k}
 \ge\frac{\gamma H-m\varepsilon}{k}>0.
\end{equation}
For the last sign use $m\le q<\gamma$ and $H\ge1$.

The rounding estimate of Lemma~\ref{g2:prop:rounding} holds here;
for completeness we repeat its proof in the present notation.  For
every feasible kernel-intersection rank $u$ for
\eqref{high:last-type},
\begin{equation}\label{high:rounding}
 \Phi_{H,B}(u)\le
 \left\lfloor\frac{2su-\varepsilon-1}{b}\right\rfloor.
\end{equation}
Since $\gcd(s,b)=1$ and $b\ge3$, the integer $b$ does not divide $s$; as
$|H/B-s/b|=1/(bB)$, the quotients $H/B$ and $s/b$ have the same
integer part $j$.  Write $s=jb+s_0$, $H=jB+H_0$; then $0<s_0<b$,
$0<H_0<B$, and $H_0b-s_0B=-\varepsilon$.
Subtracting $2ju$ reduces the
calculation to these remainders.  By the primitive kernel
profile, the ordinary slope floor is an upper bound, and an
odd degree $a_0$ at a proper positive rank $u$ is disallowed
whenever
\[
 0\le2H_0u-Ba_0\le|B-2H_0|.
\]
Write $2s_0u=bz_0+\rho$, $0\le\rho<b$.

For $\varepsilon=-1$ and $0<u<B$, the ordinary floor can exceed the
right side of \eqref{high:rounding} only if
$\rho=b-1$ and $2u\ge B$.  The extra integer is
$a_0=z_0+1$, which is odd since $b$ is odd, and
\[
 2H_0u-Ba_0=\frac{2u-B}{b}
 \le\frac{|(b-2s_0)B-2|}{b}=|B-2H_0|.
\]
Here $u\le B-1$ and $b-2s_0$ is a nonzero odd integer.
The primitive profile excludes this integer.  The endpoint
$u=0$ satisfies the bound; $u=B$ is infeasible because
$t=(B+(m-1)b)/2<B$.
For $\varepsilon=1$ and $0<u<B$, the only possible overshoot has
$\rho=1$, $2u\le B$, and the odd integer $a_0=z_0$.
Now
\[
 2H_0u-Ba_0=\frac{B-2u}{b}
 \le\frac{|(b-2s_0)B+2|}{b}=|B-2H_0|,
\]
using $u\ge1$, so the same profile excludes it.  The endpoint
$u=0$ is infeasible, since $t=(B+(m+1)b)/2>mb$;
at $u=B$ the bound equals $2H$.  This proves
\eqref{high:rounding} at every feasible rank.

If $v=t-u$ is the rank of the image in the quotient, its semistability
and the identity $2st=be+\varepsilon$ give
\begin{align*}
 \deg S
 &\le\Phi_{H,B}(u)+\left\lfloor\frac{2sv}{b}\right\rfloor\\
 &=e+\Phi_{H,B}(u)
       +\left\lfloor\frac{\varepsilon-2su}{b}\right\rfloor
 \le e-1.
\end{align*}
The last step is the integer identity
$\lfloor(A_0-1)/b\rfloor+\lfloor-A_0/b\rfloor=-1$.
Thus the sole candidate is impossible in this case.

\casehead{Step 5: the case $k\le s(2m-1)$.}
It remains to consider $k\le s(2m-1)$.  Put $a=k-s$.
Since $k\ge2q+1$ and $s<q$,
\[
 q<a\le2(m-1)s<2q.
\]
The original normal quotient has even rank $R=r-b$ and
$R>\alpha a-\gamma$ by \eqref{high:farey-affine-quotient}.
The allocation attached to \eqref{high:last-type} requires in
this quotient a rank-$R/2$ subsheaf of degree $a$.
Lemma~\ref{high:half-rank} excludes it.  This treats both signs
and completes the weight induction.  All normal quotients used
at the induction stage have smaller weight, and the auxiliary
quotients lie in the range of Proposition~\ref{high:first-bands}.
\end{proof}

\section{Finite verification in each genus}
\label{sec:finite-verification}
We now combine the two general theorems. Uniform propagation reduces
the number of degree bands; the large-dimension estimate bounds the
ranks that remain inside those bands. Both ingredients are needed for
finiteness.

\begin{theorem}[Finite verification]\label{thm:finite-verification}
Fix an integer $g\ge3$. Define
\[
 \mathcal T_g=
 \left\{(r,k)\in\mathbb Z^2:
  2g\le k\le6g-4,\quad
  2\le r<(5g-3)k-4g+3\right\}.
\]
Suppose that the normal bundle of a general nonspecial genus-$g$
embedding is semistable for every $(r,k)\in\mathcal T_g$, where
$n=r+1$ and $d=r+1+k-g$. Then it is semistable for every
$n\ge3$ and every $d\ge n+g$.

In particular, it is sufficient to verify the finite rectangle
\begin{equation}\label{eq:finite-rectangle}
 3\le n\le30g^2-42g+15,\qquad
 0\le c=d-n-g\le4g-4.
\end{equation}
The complete row $c=0$ only needs to be checked for $3\le n\le3g$.
For $1\le c\le4g-4$, it is enough to check the smaller triangular set
\begin{equation}\label{eq:finite-triangle}
 3\le n\le10g^2-10g+3+(5g-3)c.
\end{equation}
\end{theorem}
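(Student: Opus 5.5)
The plan is to combine Theorem~\ref{uni:finite-width} with Theorem~\ref{high:affine}. The hypothesis of Theorem~\ref{uni:finite-width} asks for semistability in every rank $r\ge2$ of each band $q\le k\le3q-4$, where $q=2g$; note that $3q-4=6g-4$ is exactly the upper weight in $\mathcal T_g$. Fix such a $k$ and split the ranks in two. If $r\ge(5g-3)k-4g+3$, Theorem~\ref{high:affine} gives semistability directly; it needs no starting-band hypothesis, and its proof does not depend on Theorem~\ref{uni:finite-width}. If $r<(5g-3)k-4g+3$, then $(r,k)\in\mathcal T_g$, and the assumption applies. So the whole strip holds in every rank, and Theorem~\ref{uni:finite-width} gives semistability for all $k\ge q$, i.e.\ for all $n\ge3$ and $d\ge n+g$.

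For the rectangle \eqref{eq:finite-rectangle}, translate the coordinates: $n=r+1$, $c=k-2g$, and $d-n-g=k-2g$. So $2g\le k\le6g-4$ is the same as $0\le c\le4g-4$, and $r\ge2$ is the same as $n\ge3$. The rank bound becomes
\[
 n\le(5g-3)k-4g+3=(5g-3)(2g+c)-4g+3=10g^2-10g+3+(5g-3)c,
\]
which is \eqref{eq:finite-triangle}. This is largest at $c=4g-4$, where it equals $10g^2-10g+3+(5g-3)(4g-4)=30g^2-42g+15$, giving the rectangle bound.

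For the complete row $c=0$, Corollary~\ref{high:general-curves} already gives semistability of general complete embeddings of general curves for $r\ge3g$, that is $n\ge3g+1$. Only $3\le n\le3g$ therefore remains, and these sit inside the triangle since $3g\le10g^2-10g+3$.

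The only delicate point is logical rather than computational: we must make sure Theorem~\ref{high:affine} is not itself proved using Theorem~\ref{uni:finite-width}'s hypothesis. Its proof uses Proposition~\ref{high:first-bands}, Lemma~\ref{high:half-rank}, and Corollary~\ref{high:general-curves} in place of that hypothesis, so there is no circularity. One should also check that both theorems refer to the same notion of general nonspecial embedding and the same normalization \eqref{eq:notation}; they do.
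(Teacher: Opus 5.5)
Your proposal is correct and follows the paper's proof essentially verbatim. You split each band $2g\le k\le 6g-4$ at $r=(5g-3)k-4g+3$, use the hypothesis below that rank and Theorem~\ref{high:affine} above it to supply the strip hypothesis of Theorem~\ref{uni:finite-width}, translate $(r,k)$ into $(n,c)$, and settle the complete row with the general-curve semistability result. The differences are cosmetic: you obtain the rectangle bound by evaluating the triangle at $c=4g-4$ rather than taking the maximal rank in $\mathcal T_g$ plus one, and you cite Corollary~\ref{high:general-curves} in place of the equivalent Corollary~\ref{cor:general-complete}.
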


\begin{proof}
Write $q=2g$. For each $q\le k\le3q-4$, a rank $r\ge2$ either
belongs to $\mathcal T_g$ or satisfies
$r\ge(5g-3)k-4g+3$. In the first case semistability is a hypothesis;
in the second it is Theorem~\ref{high:affine}. Thus all ranks in the
initial strip of Theorem~\ref{uni:finite-width} are semistable. That
theorem proves semistability for all $k\ge q$.

The maximum rank in $\mathcal T_g$ occurs at $k=6g-4$ and is
\[
 (5g-3)(6g-4)-4g+2=30g^2-42g+14.
\]
Adding one gives the dimension bound in
\eqref{eq:finite-rectangle}. Substituting $k=2g+c$ gives
\eqref{eq:finite-triangle}. On the complete row,
Corollary~\ref{cor:general-complete} already gives semistability for
$n\ge3g+1$, leaving only the stated small complete embeddings.
\end{proof}

The starting set is explicit and grows with the genus. Before removing
any cases already covered by the complete theorem or other inputs,
its cardinality is
\begin{align*}
 |\mathcal T_g|
 &=\sum_{k=2g}^{6g-4}\bigl((5g-3)k-4g+1\bigr)\\
 &=(4g-3)(20g^2-26g+7).
\end{align*}
There are $4g-3$ degree bands and a quadratic upper bound on the
ambient dimension, so this coarse finite starting problem has cubic
growth. We have not tried to optimize this bound. Integral slopes,
small-dimensional classifications, stronger complete profiles, and
additional pointing models can remove many entries before any new
geometric construction is required.

\begin{remark}[What finite verification asserts]
The theorem is an implication from finitely many semistability
statements to all nonspecial semistability statements in that genus.
Equivalently, any failure somewhere forces a failure of at least one
statement in the finite starting set. It does not say that every
unstable numerical type itself lies in that set. In particular, a
failed base case cannot simply be discarded while retaining the same
induction: one must prove that the residuals used in the propagation
avoid it, or replace the affected models.

This distinction is realized by genus two. Its four complete
exceptions are unstable, and the proof of
Theorem~\ref{g2:thm:main} selects residuals that avoid them.
\end{remark}

For example, at $g=3$ the uniform strip is $6\le k\le14$ and the
rectangle has $n\le159$. Theorem~\ref{thm:finite-verification} supplies an explicit finite
sufficient test for semistability in every nonspecial degree for each
fixed genus $g\ge3$. The genus-two classification shows separately
how the choice of residuals can be adapted when some complete
starting types are unstable.


\end{document}